\tikzset{node distance=2cm, auto}
\let\mathcal\mathscr
\numberwithin{equation}{section}
\newtheorem{theorem}{Theorem}[section]
\newtheorem{lemma}[theorem]{Lemma}
\newtheorem{proposition}[theorem]{Proposition}
\newtheorem{corollary}[theorem]{Corollary}
\theoremstyle{definition}
\newtheorem*{ack}{Acknowledgements}
\newtheorem{rem}[theorem]{Remark}
\newtheorem*{rem*}{Remark}
\newtheorem{definition}[theorem]{Definition}
\renewcommand{\d}{\mathrm{d}}
\renewcommand{\phi}{\varphi}
\renewcommand{\rho}{\varrho}
\newcommand{\1}{\mathbf{1}}
\newcommand{\0}{\mathbf{0}}
\renewcommand{\AA}{\mathbb{A}}
\newcommand{\FF}{\mathbb{F}}
\newcommand{\ZZ}{\mathbb{Z}}
\newcommand{\NN}{\mathbb{Z}_{>0}}
\newcommand{\QQ}{\mathbb{Q}}
\newcommand{\RR}{\mathbb{R}}
\newcommand{\CC}{\mathbb{C}}
\DeclareMathOperator{\EE}{{\mathbb{E}}}
\newcommand{\cA}{\mathcal{A}}
\newcommand{\cM}{\mathcal{M}}
\newcommand{\cP}{\mathcal{P}}
\newcommand{\cU}{\mathcal{U}}
\newcommand{\cV}{\mathcal{V}}
\newcommand{\cW}{\mathcal{W}}
\newcommand{\Gal}{{\rm Gal}}
\renewcommand{\leq}{\leqslant}
\renewcommand{\geq}{\geqslant}
\renewcommand{\bar}{\overline}
\newcommand{\ma}{\mathbf}
\renewcommand{\a}{\mathbf{a}}
\renewcommand{\b}{\mathbf{b}}
\renewcommand{\c}{\mathbf{c}}
\newcommand{\bd}{\mathbf{d}}
\newcommand{\h}{\mathbf{h}}
\renewcommand{\k}{\mathbf{k}}
\newcommand{\m}{\mathbf{m}}
\newcommand{\q}{\mathbf{q}}
\renewcommand{\u}{\mathbf{u}}
\newcommand{\bv}{\mathbf{v}}
\newcommand{\x}{\mathbf{x}}
\newcommand{\y}{\mathbf{y}}
\newcommand{\s}{\mathbf{s}}
\newcommand{\fo}{\mathfrak{o}}
\newcommand{\fa}{\mathfrak{a}}
\newcommand{\fp}{\mathfrak{p}}
\newcommand{\fq}{\mathfrak{q}}
\newcommand{\fr}{\mathfrak{r}}
\newcommand{\ve}{\varepsilon}
\newcommand{\e}{\mathbf e}
\newcommand{\bla}{{\boldsymbol{\lambda}}}
\newcommand{\bve}{\boldsymbol{\varepsilon}}
\newcommand{\beps}{\boldsymbol{\epsilon}}
\newcommand{\bom}{\boldsymbol{\omega}}
\newcommand{\bka}{\boldsymbol{\kappa}}
\newcommand{\bxi}{\boldsymbol{\xi}}
\DeclareMathOperator{\Pic}{Pic}
\DeclareMathOperator{\supp}{supp}
\DeclareMathOperator{\sign}{sign}
\DeclareMathOperator{\Image}{Im}
\DeclareMathOperator{\Res}{Residue}
\DeclareMathOperator{\n}{N}
\DeclareMathOperator{\nf}{\mathbf{N}}
\DeclareMathOperator{\Ker}{Ker}
\DeclareMathOperator{\nm}{Nm}
\DeclareMathOperator{\Mod}{mod} 
\renewcommand{\bmod}[1]{\,(\Mod{#1})}
\newcommand{\V}{\mathcal{V}}
\renewcommand{\=}{\equiv}
\renewcommand{\t}{\mathbf{t}}
\newcommand\bN{\mathbf{N}}
\DeclareMathOperator{\vol}{vol}
\DeclareMathOperator{\lcm}{lcm}
\newcommand{\eps}{\varepsilon}
\newcommand\<{\langle}
\renewcommand\>{\rangle}
\newcommand{\dsum}{\sideset{}{^{\prime}}{\sum}}
\newcommand{\dagsum}{\sideset{}{^{\dagger}}{\sum}}
\begin{document}

\title[Norm forms and linear polynomials]
{Norm forms for arbitrary number fields 
\\  as products of linear polynomials}

\author{T.D. Browning}
\address{School of Mathematics\\
University of Bristol\\ Bristol\\ BS8 1TW}
\email{t.d.browning@bristol.ac.uk}

\author{L. Matthiesen}
\address{KTH\\
Department of Mathematics\\
10044 Stockholm}
\email{lilian.matthiesen@math.kth.se}

\date{\today}

\thanks{2010  {\em Mathematics Subject Classification.} 14G05 (11B30, 11D57, 
11N37, 14D10)}

\keywords{additive combinatorics, Brauer--Manin obstruction, descent, Hasse 
principle, norm forms, weak approximation}

\begin{abstract}
Given a number field $K/\QQ$ and a polynomial $P\in \QQ[t]$, all of whose
roots are in $\QQ$, let $X$ be the variety defined by the equation
$\nf_K(\x) = P(t)$.
Combining additive combinatorics with descent we show that the
Brauer--Manin obstruction is the only obstruction to the Hasse principle
and weak approximation on any smooth and projective model of $X$.
\newline
\newline
\noindent
\textsc{R\'esum\'e.}
\'Etant donn\'e un corps de nombres $K/\QQ$ et un polyn\^ome $P \in \QQ [ t ]$, 
dont toutes les
racines sont dans $\QQ$, soit $X $ la vari\'et\'e d\'efinie par l'\'equation
$\nf_K ( \x ) = P (t )$. En
combinant la combinatoire additive avec la descente, nous montrons que 
l'obstruction Brauer--Manin est le seul obstacle au principe de Hasse
et  \`a l'approximation faible sur un mod\`ele projectif et lisse de $X$.
\end{abstract}

\maketitle
\setcounter{tocdepth}{1}
\tableofcontents

\section{Introduction}

Let $K/\QQ$ be a finite  extension of number fields of degree $n\geq 2$ 
and fix a basis $\{\omega_1,\dots,\omega_n\}$ for $K$ as a vector space
over $\QQ$. 
We will denote by 
$$
\nf_K(x_1,\dots,x_n)=
N_{K/\QQ}(x_1\omega_1+\dots+x_n\omega_n)
$$ 
the corresponding norm form, where $N_{K/\QQ}$ denotes the field norm.
The objective of this paper is to study the Hasse principle
and weak approximation for the class of varieties $X\subset \AA^{n+1}$
satisfying the Diophantine equation
\begin{equation} \label{eq:norm-bundle}
P(t)=\nf_K(x_1,\dots,x_n), 
\end{equation}
where $P(t)$ is a product of linear polynomials all defined over $\QQ$. 
If $r$ denotes the number of distinct roots of $P$,
then $P$ takes the form
\begin{equation}\label{eq:P}
P(t)=c^{-1}\prod_{i=1}^r (t-e_i)^{m_i}, 
\end{equation}
for $c\in \QQ^*$, $m_1,\dots,m_r \in \NN$ and pairwise distinct 
$e_1,\dots,e_r \in \QQ$.

We let $X^c$ be a smooth and projective model of $X$.
Such a model $X^c$ need not satisfy the Hasse principle and weak
approximation, 
as has been observed by Coray (see \cite[Eq.~(8.2)]{ct-salb}).
Specifically,  when $P(t)=t(t-1)$ and $K$ is the cubic
extension $\QQ(\theta)$, obtained by adjoining a root  $\theta$ of
$x^3-7x^2+14x-7=0$, then the set $X^c(\QQ)$ is not dense in $X^c(\QQ_7)$.
It has, however, been conjectured by Colliot-Th\'el\`ene (see \cite{bud})
that all counter-examples to the Hasse principle and weak approximation
for $X^c$ are accounted for by the Brauer--Manin obstruction.

This conjecture covers the more general case where $X$ arises from an
equation of the form \eqref{eq:norm-bundle}, but the ground field may be
an arbitrary number field $k$ instead of $\QQ$ and the polynomial need
not factorise completely over $k$.
In this more general setting the problem of establishing
Colliot-Th\'el\`ene's conjecture has been addressed under various
assumptions on the  extension $K/k$ and upon the
polynomial $P(t)$. Thus the conjecture is now known to be true for 
Ch\^atelet surfaces 
($[K : k] = 2$ and $\deg(P(t)) \leq  4$) 
by work of  Colliot-Th\'el\`ene, Sansuc and Swinnerton-Dyer 
\cite{crelle-a, crelle-b},  
a family of singular cubic hypersurfaces ($[K : k] = 3$ and $\deg(P(t)) \leq 3$) 
by work of Colliot-Th\'el\`ene and Salberger
\cite{ct-salb}, the case where $K/k$ is arbitrary and $P (t)$ is split over $k$ 
with at most two distinct roots (see \cite{CTHS,HBS, s-s, SJ}) and the case where 
$K/\QQ$ is arbitrary  and $P(t)$ is an irreducible quadratic polynomial over $\QQ$ 
(see  \cite{BHB, DSW}).
Finally, if one assumes Schinzel's hypothesis, then it is true for  $K/k$ cyclic 
and $P(t)$ arbitrary, by work of
 Colliot-Th\'el\`ene, Skorobogatov and Swinnerton-Dyer 
 \cite{98a}.

Suppose now that $k=\QQ$ and $P$ is given by \eqref{eq:P}. 
Until recently, Colliot-Th\'el\`ene's conjecture was only known to hold
unconditionally when $r\leq 2$. 
When $r\leq 1$, the variety $X$ is a principal homogeneous space for
the algebraic torus $R_{K/\QQ}^1$, and so the conjecture  follows
from work of Colliot-Th\'el\`ene and Sansuc \cite{CTSb}.
When $r=2$,  Heath-Brown and Skorobogatov \cite{HBS} prove it
under the additional assumption that $\gcd(n,m_1,m_2)=1$, while
Colliot-Th\'el\`ene, Harari and Skorobogatov \cite[Thm.~3.1]{CTHS}
establish it in general.
Our primary result establishes the conjecture for any $r\geq 1$.

\begin{theorem}\label{t:1}
The Brauer--Manin obstruction is the only obstruction to the Hasse
principle and weak approximation on $X^c$.
\end{theorem}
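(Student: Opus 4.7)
\smallskip
\noindent\textbf{Proof proposal.}
The natural strategy is the descent method of Colliot--Th\'el\`ene and Sansuc: one reduces the Brauer--Manin problem on $X^c$ to the Hasse principle and weak approximation for the universal torsors (or at least for a sufficiently rich family of versal torsors) above $X^c$. Since $X^c$ is birational to a smooth model of the norm equation \eqref{eq:norm-bundle}, the standard torsor computation (carried out for $r \le 2$ in \cite{HBS} and \cite{CTHS}, and which one must now push through for arbitrary $r$) exhibits the torsors as varieties cut out by a system of simultaneous norm relations of the shape
\begin{equation*}
\nf_{K_i}(\y_i) = c_i L_i(u,v), \qquad i = 1,\dots,N,
\end{equation*}
where $L_1,\dots,L_N \in \ZZ[u,v]$ are pairwise non-proportional linear forms (one for each factor $t-e_i$ of $P$, possibly after splitting the multiplicities), $K_i$ are intermediate subfields of a fixed finite extension built from $K$, and $c_i \in \QQ^*$. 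The first step of the argument is thus the explicit torsor calculation, showing that the Brauer--Manin obstruction on $X^c$ is captured by this auxiliary system.

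The second step is to show that any such system of simultaneous norm equations in two variables $(u,v)$ satisfies the Hasse principle and weak approximation unconditionally. This is where additive combinatorics enters in a decisive way. Given an adelic point, one wants to produce integral $(u,v)$ lying in a prescribed product of small local neighbourhoods and satisfying simultaneous congruence conditions, such that each $L_i(u,v)$ (up to a controlled factor) is represented by the norm form $\nf_{K_i}$. Since representability of an integer by $\nf_{K_i}$ is, after restricting to a congruence class modulo a suitable modulus, a condition depending only on the prime factorisation of the integer (via splitting behaviour in $K_i$), the problem becomes one of counting pairs $(u,v)$ with $L_i(u,v)$ lying in a specified multiplicative arithmetic progression with respect to each $K_i$ simultaneously. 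The required asymptotic, with the correct product of local densities, is exactly the kind of result made available by the Green--Tao--Ziegler machinery (pseudorandom majorants, the $U^s$-inverse theorem, and Möbius--nilsequences), as developed in the authors' earlier work on linear sequences of norms.

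I expect the main obstacle to lie in the analytic side of the second step: one needs to show that the multiplicative indicator of "is a norm from $K_i$", after restriction to a congruence class, is well approximated by a bounded nilsequence (or at least majorised by one), and then that several such indicators evaluated at pairwise non-proportional linear forms in $(u,v)$ correlate as predicted by the product of local densities. Handling the weak approximation part requires tracking these estimates uniformly in the adelic data, which is the technical heart of the proof; the descent step is, by contrast, essentially formal once the shape of the torsors is identified.
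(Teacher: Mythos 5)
Your two-step strategy (descent to auxiliary norm varieties, then additive combinatorics to prove the Hasse principle and weak approximation for those) is exactly the architecture of the paper. However, the details of both steps differ from what the paper does, and two of the discrepancies are worth flagging. First, the paper does not pass through universal (or versal) torsors in the classical Colliot-Th\'el\`ene--Sansuc sense: it uses the \emph{vertical} torsors of Schindler and Skorobogatov, whose type is the injection $\widehat{T}^{r}\to \Pic(\bar Y)$ with $T=R_{K/\QQ}^1$. After a short chain of birational simplifications these torsors reduce to the single-field system
$$v=\nf_{K}(\y)\neq 0,\qquad u-e_iv=\lambda_{i}\nf_{K}(\x_i)\neq 0\quad(1\le i\le r),$$
so the fields appearing are all equal to the original $K$; no ``intermediate subfields of a fixed finite extension built from $K$'' arise. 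This matters: the reason the auxiliary Theorem~\ref{t:ut} is stated for arbitrary $K_i$ is generality of that theorem, not because the descent step forces one into subfields. Second, on the analytic side the paper does not work with the indicator function of ``$m$ is represented by $\nf_{K_i}$''; it works with the actual representation-counting function $R_i(m)$, normalised via the fundamental domain for the norm-one units and $W$-tricked into residue classes. It is this counting function, not a $\{0,1\}$-valued indicator, that is shown to have a pseudorandom majorant and to be orthogonal to nilsequences after subtracting its mean on unexceptional progressions; the indicator would not have the divisor-sum structure that the majorant construction and the generalised von Neumann theorem require. Your phrasing ``well approximated by a bounded nilsequence (or at least majorised by one)'' conflates the majorant condition with the orthogonality condition, which are two separate inputs to the Green--Tao transference scheme. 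These points aside, the overall picture you lay out matches the paper's proof.
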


By combining Theorem \ref{t:1} with the Brauer group calculation in
\cite[Cor.~2.7]{CTHS} we obtain the following corollary.

\begin{corollary}
Suppose that $\gcd(m_1,\dots,m_r)=1$ and $K$ does not contain a proper
cyclic extension of $\QQ$. Then $X^c(\QQ)\neq \emptyset$ and $X^c$
satisfies weak approximation.
\end{corollary}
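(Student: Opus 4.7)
The plan is to derive the corollary routinely from Theorem~\ref{t:1} combined with the Brauer group computation in \cite[Cor.~2.7]{CTHS}. Theorem~\ref{t:1} reduces the Hasse principle and weak approximation on $X^c$ to the vanishing of the Brauer--Manin obstruction, so the task splits into two independent verifications: that the algebraic Brauer group is trivial under the stated hypotheses, and that $X^c(\AA_\QQ) \neq \emptyset$.

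For the first point, I would invoke \cite[Cor.~2.7]{CTHS} directly, as a black box: under $\gcd(m_1,\dots,m_r) = 1$ together with the absence of a proper cyclic subextension of $\QQ$ inside $K$, that statement yields $\Br(X^c)/\Br(\QQ) = 0$. Consequently, by reciprocity, $X^c(\AA_\QQ)^{\Br} = X^c(\AA_\QQ)$, and no Brauer--Manin obstruction can occur.

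For local solvability, at each place $v$ of $\QQ$ the image of $\nf_K \colon (K \otimes_\QQ \QQ_v)^* \to \QQ_v^*$ is an open subgroup, so the non-constant polynomial $P$ takes values in this image on a non-empty open subset of $\QQ_v$ on which $P$ does not vanish. At the real place, the hypothesis $\gcd(m_1,\dots,m_r)=1$ guarantees that some $m_i$ is odd, so that $P$ changes sign on $\RR$ and in particular takes positive values, ensuring solvability even when $K$ is totally complex. Any such $t_v$ yields a smooth $\QQ_v$-point of $X$, and hence a $\QQ_v$-point of $X^c$.

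With both ingredients in hand, Theorem~\ref{t:1} gives that $X^c(\QQ)$ is dense in $X^c(\AA_\QQ)$, which is precisely the assertion that $X^c(\QQ) \neq \emptyset$ and that $X^c$ satisfies weak approximation. The substance of the argument lies entirely in Theorem~\ref{t:1}; once it is available, the corollary reduces to the Brauer-group citation \cite[Cor.~2.7]{CTHS} and the routine local check above, so no step here should present a genuine obstacle.
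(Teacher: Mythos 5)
Your high-level strategy matches the paper's: apply Theorem~\ref{t:1}, then cite \cite[Cor.~2.7]{CTHS} to dispose of the Brauer--Manin obstruction; the paper's proof is literally just this citation. The real-place verification you give is correct. However, your argument for local solvability at a finite place $v$ has a gap. You claim that since $N_v := \nf_K\bigl((K\otimes_\QQ\QQ_v)^*\bigr)$ is an open subgroup of $\QQ_v^*$, the polynomial $P$ ``takes values in this image on a non-empty open subset''. This inference is not valid: $N_v$ is a clopen subgroup of finite index, so a continuous map into $\QQ_v^*$ can in principle land entirely in a nontrivial union of cosets of $N_v$. Letting $t\to e_i$, one finds that the class of $P(t)$ in $\QQ_v^*/N_v$ lies in the set $c^{-1}C_i\,(\QQ_v^*)^{m_i}N_v/N_v$ with $C_i=\prod_{j\neq i}(e_i-e_j)^{m_j}$, the multiplicative perturbation coming from the other linear factors being absorbed by the open group $N_v$; openness makes this set contain the trivial class automatically only when $m_i=1$, whereas the hypothesis is merely $\gcd(m_1,\dots,m_r)=1$. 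To conclude in general one must actually use the gcd condition together with the local structure of $K/\QQ$, which is not a one-line consequence of openness. The paper sidesteps all of this by treating local solvability as part of what \cite[Cor.~2.7]{CTHS} delivers; your supplementary local verification, as written, does not stand on its own.
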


In \cite{HBS}, 
for the first time, 
Heath-Brown and Skorobogatov combined 
the descent theory of Colliot-Th\'el\`ene and Sansuc \cite{D2} with 
 the Hardy--Littlewood circle method, 
 in order to study the Hasse
principle and weak approximation.
In  joint work with Skorobogatov~\cite{bms}, we introduced 
additive combinatorics into this subject and showed how it may usefully
be combined with descent.
This approach allowed us to study the variety $X$ when $K/\QQ$ is
quadratic. The case $n=2$ of Theorem \ref{t:1} is a special case of 
\cite[Thm~1.1]{bms}. 
Subsequently, Harpaz, Skorobogatov and Wittenberg \cite{HSW} succeeded
in showing  how the finite complexity case of the generalised
Hardy--Littlewood conjecture for primes, as established by Green and Tao
\cite{GT} and Green--Tao--Ziegler \cite{GTZ}, 
can be  used in place of
Schinzel's hypothesis to  study rational points on varieties using 
fibration arguments.
Their work \cite[Cor.~4.1]{HSW} leads to a version of Theorem~\ref{t:1}
in which the extension $K/\QQ$ is assumed to be cyclic,
a fact that was previously only available  under Schinzel's hypothesis, as
a special case of work by Colliot-Th\'el\`ene and Swinnerton-Dyer
\cite{ct-swd} on pencils of Severi--Brauer varieties. 
Building on work of Wei \cite{wei}, they also handle 
(see \cite[Thm.~4.6]{HSW}) the case in which $K$ is a non-cyclic extension
of $\QQ$ of prime degree such that the Galois group of the normal closure
of $K$ over $\QQ$ has a non-trivial abelian quotient.
We emphasise that the results of the present paper are unconditional and
make no assumptions on the degree of the field extension, nor upon the
type of the extension, other than that the ground field is $\QQ$.

Our approach is based upon the strategy of \cite{bms}.
We use descent theory to reduce Theorem~\ref{t:1} to establishing the
Hasse principle and weak approximation for some auxiliary varieties, which
can be analysed using additive combinatorics. 
To introduce these  varieties, let 
$$f_1,\dots,f_r\in \QQ[u_1,\dots,u_s]$$ 
be a system of pairwise non-proportional homogeneous linear polynomials, 
with $s\geq 2$.
For each $1\leq i\leq r$, let $K_i$ denote a number field of degree
$n_i = [K_i:\QQ] \geq 2$.
Central to our investigation will be the smooth variety 
$\V\subset \AA_\QQ^{n_1+\dots+n_r +s}$, defined by 
\begin{equation}\label{eq:torsor}
0\neq \nf_{K_i}(\x_i) = f_i(u_1,\dots,u_s), \quad  (1\leq i\leq r),
\end{equation}
where $\x_i=(x_{i,1}, \dots,x_{i,n_i})$.
For this variety we establish the following theorem, whose 
proof forms the bulk of this paper. 

\begin{theorem}\label{t:ut}
The variety $\V$ defined by \eqref{eq:torsor} satisfies the Hasse
principle and weak approximation. 
\end{theorem}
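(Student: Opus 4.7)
\emph{Proof plan.} The plan is, following the strategy of \cite{bms}, to combine a counting argument with a structured/uniform decomposition of norm indicator functions, and then to invoke the generalised von Neumann theorem to match the resulting main term with a product of local densities. Fix a finite set $S$ of places of $\QQ$ containing the real place and an adelic point $(P_v)_v \in \V(\AA_\QQ)$. After scaling and clearing denominators, it suffices to produce, for each growing parameter $B$, an integer vector $\u \in \ZZ^s$ lying in a prescribed box $\mathcal{B} \subset [-B,B]^s$ (encoding approximation at the real place) and a prescribed residue class modulo some integer $q$ (encoding approximation at the finite places in $S$), such that each $f_i(\u)$ is a nonzero norm from $K_i$ with the right local behaviour at every finite place in $S$. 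Denoting by $\mathbf{1}_{N_i} : \ZZ \to \{0,1\}$ the indicator of integers admitting such a representation, the goal becomes the lower bound
\begin{equation*}
N(B) \;:=\; \sum_{\u} \prod_{i=1}^r \mathbf{1}_{N_i}(f_i(\u)) \;\gg\; \frac{B^s}{q^s}\, \prod_{i=1}^r \delta_i,
\end{equation*}
where the sum ranges over $\u \in \mathcal{B}$ in the prescribed residue class and $\delta_i$ is the density of $N_i$ in that class; positivity of each local factor at the chosen adelic point then forces $N(B)>0$.

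The analytic heart of the proof is the evaluation of $N(B)$. For each $i$ we seek a decomposition $\mathbf{1}_{N_i} = \mu_i + \omega_i$ into a ``structured'' bounded component $\mu_i$, whose correlations with linear forms can be computed explicitly in terms of Hecke characters of $K_i$, and a ``uniform'' remainder $\omega_i$ with small Gowers $U^d$-norm for some $d$ depending on $r$ and $s$. The existence of such a decomposition, for norm indicators of arbitrary number fields and with equidistribution over arithmetic progressions, is the main arithmetic input and extends the techniques that underlie the quadratic case of \cite{bms}. Granted this, the pairwise non-proportionality of the homogeneous linear forms $f_i$ implies that the system $(f_1,\ldots,f_r)$ has finite Cauchy--Schwarz complexity in the sense of Green--Tao, so the generalised von Neumann theorem renders every mixed term containing a factor $\omega_i$ negligible. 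The remaining structured main term $\sum_{\u} \prod_i \mu_i(f_i(\u))$ can then be computed directly, and an unfolding in terms of Dirichlet/Hecke characters shows that it matches the expected Euler product of local densities.

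The main obstacle is the construction and analysis of the decomposition $\mathbf{1}_{N_i} = \mu_i + \omega_i$: combining the description of integral norms in $K_i$ via idèle class characters with a quantitative Gowers-uniformity estimate, of a strength sufficient to drive the generalised von Neumann argument for the linear system at hand, requires a substantial extension of the available additive-combinatorial machinery beyond the quadratic setting. A secondary difficulty is the bookkeeping of congruences and local obstructions needed to identify the structured contribution with the correct product of local densities; this is where the descent-theoretic form of \eqref{eq:torsor} is essential, since the presence of $s\ge 2$ auxiliary variables $u_1,\dots,u_s$ together with $r$ independent norm conditions provides just enough local freedom for the product of local densities to be positive precisely when a smooth adelic point of $\V$ exists.
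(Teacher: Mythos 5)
Your plan shares the broad philosophy of the paper---count integral points on $\V$ by fibring over the $\u$-variable and analyse the resulting correlation via the generalised von Neumann machinery---but there is a substantive gap at the very first step. You propose to work with the indicator functions $\mathbf{1}_{N_i}$ of the sets of integers representable as norms from $K_i$, and to decompose $\mathbf{1}_{N_i}=\mu_i+\omega_i$ with $\mu_i$ bounded and structured and $\omega_i$ Gowers-uniform. This does not work as stated: the set of norms from any $K_i$ with $[K_i:\QQ]\geq 2$ has density zero (already for quadratic fields, by Landau--Ramanujan), so your claimed lower bound $N(B)\gg (B/q)^s\prod_i\delta_i$ is vacuous if $\delta_i$ is a genuine density, and if $\delta_i$ is a $B$-dependent logarithmic density then one must run the whole argument relative to a carefully normalised pseudorandom majorant---which is exactly where all the work lies. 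The paper avoids this entirely by using representation \emph{counting} functions $R_i(m)$ (Definition~\ref{def:repr-fn}), counting lattice points $\x_i$ with $\nf_{K_i}(\x_i)=m$ in a fundamental domain for the norm-one unit action; these have \emph{constant} average order (Lemma~\ref{lem:average-order}, via~\eqref{eq:210'}), which is what makes Theorem~\ref{t:NB} have a genuine positive main term $\beta_\infty\prod_p\beta_p T^s$. Moreover $R_i$ is controlled by the multiplicative ideal-counting function $r_{K_i}$, whereas $\mathbf{1}_{N_i}$ is not multiplicative and has no comparably tractable arithmetic structure.

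Second, you invoke Hecke characters to identify the structured part $\mu_i$. That is reasonable when $K_i/\QQ$ is abelian, but in that case \cite{HSW} already give a shorter proof; the whole point of Theorem~\ref{t:ut} is to handle non-abelian extensions, for which there is no Hecke-character description of the set of norms. The paper's actual inputs are quite different: the multiplicativity and Euler-product structure of $r_{K_i}$ (\S\ref{s:dedekind}), a pseudorandom majorant built as a product of a truncated-divisor-sum majorant for the restricted multiplicative function $r_{i,\mathrm{res}}$ (\S\ref{s:rres}) and a GPY-type sieve majorant cutting out the primes with no degree-one prime above them (\S\ref{s:sieve}), and a direct lattice-point argument proving orthogonality of the $W$-tricked $R_i$ to nilsequences (Prop.~\ref{p:nilsequences}), using Hensel lifting (Lemma~\ref{lem:C6.4}) and the equidistribution theory of polynomial nilsequences. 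Your plan also omits the $W$-trick altogether, yet it is essential: $R_i$ is far from equidistributed in progressions to small moduli, and no nilsequence-orthogonality statement is even true before restricting to unexceptional classes modulo $W$ as in~\eqref{eq:def-A}.
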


In fact (see Theorem \ref{t:NB}) we shall produce an asymptotic formula  
for the number of suitably constrained integral points on $\V$ of bounded height.
When  $\V$ only involves quadratic
extensions, Theorem \ref{t:ut} recovers \cite[Thm.~1.2]{bms}.
The latter result was established using work of the second author
\cite{lm1,lm2}.
We will build on  this work in order to obtain the general case of Theorem
\ref{t:ut}.
When  $K_1,\dots,K_r$ are all assumed to be cyclic extensions of $\QQ$, 
a shorter proof of Theorem \ref{t:ut} can be found in \cite[Thm.~1.3]{HSW}.

\subsection{Overview}
We indicate how Theorem \ref{t:ut} implies Theorem \ref{t:1} at the end
of this introduction.
The remainder of this paper is organised as follows.
The overall goal is to prove Theorem \ref{t:ut} by asymptotically
counting points of bounded height in  $\V(\ZZ)$, taking into account the
additional constraints that are imposed by the weak approximation
conditions.
The associated counting function is introduced in Section \ref{s:counting}.
The asymptotic formula obtained in Theorem \ref{t:NB} for this counting
function may prove to be of independent interest.
Theorem \ref{t:NB} is proved using Green and Tao's nilpotent
Hardy--Littlewood method (see \cite{GT}) in combination with the
Green--Tao--Ziegler inverse theorem  \cite{GTZ}.

While containing mostly classical material,
Section \ref{s:technical} fixes the notation for the rest of the paper and
describes a certain fundamental domain that is specific to our counting
problems.
Section \ref{s:prelim} contains a variety of technical results required
at later stages in the paper and may 
be consulted as needed.
Section \ref{s:norms} studies the number of solutions to a congruence
$\nf_{K}(\x)\= A \bmod{p^m}$.
These results are used in Section \ref{s:counting} in order to analyse the
non-archimedean local densities that appear in the statement  of Theorem
\ref{t:NB}.
Section~\ref{s:nilsequences} establishes those estimates for the
Green--Tao method that correspond to the minor arc estimates in the
classical Hardy--Littlewood method. 
These are the estimates needed in order to apply the
Green--Tao--Ziegler inverse theorem \cite{GTZ}.
Section~\ref{s:gen-maj} generalises the construction of the divisor
function majorant from \cite{lm0} to a fairly wide class of positive
multiplicative functions.
Section \ref{s:majorant} combines this majorant for a specific function
with a sieve majorant (as appears in Green and Tao's work \cite{GT} on
primes) to form a majorant for our main counting function.
Section \ref{s:linear-forms} shows that this majorant is pseudorandom,
which finally allows us in Section \ref{s:proof} to employ the Green--Tao 
method in combination with the inverse result \cite{GTZ} to prove  
Theorem \ref{t:NB}.

\subsection{Descent}
We close our introduction with the deduction of Theorem \ref{t:1} from 
Theorem~\ref{t:ut}.
We use the construction of ``vertical'' torsors due to
Schindler and Skorobogatov \cite{s-s}.
These are introduced in \cite{s-s}, in order to study varieties given by
equations of a form similar to \eqref{eq:torsor} via the circle method.
Let $\pi:X\rightarrow \AA^1$ be the morphism which maps $(t;\x)$ to
$t$. Let $U_0\subset \AA^1$ be the open subset on which
$\prod_{i=1}^r (t-e_i)\neq 0$ and let $U=\pi^{-1}(U_0).$
Let $T=R_{K/\QQ}^1$ be the torus given by the affine equation
$\nf_K(\x)=1$.
In \cite[\S 2]{s-s}, a partial compactification $Y$ of $X$ is constructed
and  vertical torsors $\mathcal{T}\rightarrow Y$ are shown to exist.
These are torsors
$\mathcal{T}\rightarrow Y$ whose type is the injective map of
$\Gal(\overline{\QQ}/\QQ)$-modules 
$\widehat{T}^{r}\rightarrow \Pic (\bar Y)$.
It follows from \cite[Lemma 2.2]{s-s} that 
the restriction $\mathcal{T}_U$ of $\mathcal{T}$ to $U\subset Y$ is 
$E\times V$, where
$E$ is a principal homogeneous space for $T$ and 
$V\subset \AA^{nr+1}$ is defined by 
\begin{align*}
t-e_i=\lambda_{i}\nf_{K}(\x_i)\neq 0, \quad (1\leq i\leq r),
\end{align*}
for $\lambda_1,\dots,\lambda_r\in \QQ^*$.
Finally, it follows from \cite[Thm.~2.1]{s-s} 
that Theorem \ref{t:1} holds when $V$ is shown to satisfy the Hasse  
principle and weak approximation for any $\lambda_1,\dots,\lambda_r\in \QQ^*$. 
But $V$ is isomorphic to the variety cut out by the system of equations
$$
e_1-e_i=\lambda_i\nf_K(\x_i)-\lambda_1\nf_K(\x_1), \quad (2\leq i\leq r).
$$
By an obvious change of variables it suffices to establish the Hasse
principle and weak approximation for the variety in $\AA^{nr+r}$ defined
by the system of equations
$$
0\neq (e_1-e_i)\nf_K(\y)
=\lambda_i\nf_K(\x_i)-\lambda_1\nf_K(\x_1), \quad (2\leq i\leq r).
$$
But this variety is isomorphic to the variety
\begin{align*}
v&=\nf_{K}(\y)\neq 0,\\
u-e_iv&=\lambda_{i}\nf_{K}(\x_i)\neq 0, \quad (1\leq i\leq r),
\end{align*}
which is a special case of the varieties $\mathcal{V}$ considered in
Theorem~\ref{t:ut}.
This concludes our deduction of Theorem \ref{t:1} from Theorem \ref{t:ut}.

\subsection*{Notation}
In addition to the usual asymptotic notations, we write  $U \asymp V$ to mean
that $U \ll V$ and $U \gg V$, and we write $V=U^{o(1)}$ to express that 
$V= O_\delta(U^{\delta})$ for every $\delta>0$.
If $\cU$ is a finite set, then we define 
$\EE_{u \in \cU} = |\cU|^{-1} \sum_{u\in \cU}$.
We will write $\1_{u\in \mathcal{P}}$, or equivalently 
$\1_{\mathcal{P}}(u)$, to denote the characteristic function of an element
$u$ satisfying property $\mathcal{P}$. 

\begin{ack}
While working on this paper the first  author was
supported by ERC grant \texttt{306457} and the second author was 
supported by EPSRC grant \texttt{EP/E053262/1} and by
ERC grant \texttt{208091}.  
Some of this work was carried out during the programme ``Arithmetic and
geometry'' in 2013 at the {\em Hausdorff Institute} in Bonn.
We are grateful to J.-L.\ Colliot-Th\'el\`ene and A.\ Skorobogatov for
their interest in this work. 
We would also like to thank  U. Derenthal for useful comments on Section \ref{s:counting}
and A.\ Skorobogatov for pointing out a 
simplification in the descent argument above.
Special thanks are due to the anonymous referee for numerous useful comments and for 
giving us a much  simpler proof of  Lemma~\ref{lem:A>0} and Lemma ~\ref{lem:7.1'}.
\end{ack}
  
\bigskip
 \section{Algebraic number theory}
\label{s:technical}
The purpose of this section is threefold.
First, in Section \ref{s:3ways}, we recall mostly standard material from
algebraic number theory (as found in \cite{landau_alg} and \cite{marcus}),
in order to fix the notation for the rest of the paper.
Next, in Section \ref{s:units}, we will turn to our specific situation.
We will ultimately require a counting function that assigns to each
integer $m$ its number of representations by the norm form
$N_{K/\QQ}(x_1\omega_1 + \dots + x_n \omega_n)$.
Writing $\x=(x_1,\dots,x_n)$ and $\bom=(\omega_1,\dots,\omega_n)$, 
this problem will be turned into a finite counting problem by identifying
representations $m= N_{K/\QQ}(\x.\bom)$ and $m= N_{K/\QQ}(\y.\bom)$ if 
$\x.\bom$ and $\y.\bom$ are associated by a unit in the ring of integers 
$\fo_K$ of $K$.
Such a unit is necessarily of norm $+ 1$.
With this in mind, we will need to describe a fundamental domain for the
action by (the free part of) the group of norm $+1$ units, and its
properties relevant to us. 
In particular, in order to apply a lattice point counting result from the
geometry of numbers we will need to show that the regions we work with
have a sufficiently nice boundary. 
Finally, Section~\ref{s:dedekind} collects together some  analytic
information about the Dedekind zeta function.

\subsection{Three ways to view a number field}\label{s:3ways}
Let $K$ be a number field of degree $n$ over $\QQ$.
We let $D_K$ denote its discriminant, let $\fo=\fo_K$ be the ring of
integers and let  $U_K$ be the unit group.
Given any $\alpha \in K$ we will denote its norm by $\n_{K/\QQ}(\alpha)$.
For any integral ideal $\fa\subset \fo$ we write $\n \fa= \#\fo/\fa$ for
its ideal norm.

Let $r_1$ (resp.\ $2r_2$) be the number of distinct real 
(resp.\ complex) embeddings of $K$. Hence $n=r_1+2r_2$.
The $r_1$ distinct real embeddings are denoted by
$\sigma_{1}, \dots, \sigma_{r_1}$, while 
$\sigma_{r_1+1},\dots , \sigma_{r_1+2r_2}$ denote a complete set of
$2r_2$ distinct complex embeddings, with $\sigma_{r_1+i}$ conjugate to
$\sigma_{r_1+r_2+i}$ for $1\leq i\leq r_2$.

The map 
$\phi: \alpha\mapsto (\sigma_1(\alpha),\dots,\sigma_{r_1+r_2}(\alpha))$
canonically embeds $K$ into the $n$-dimensional commutative $\RR$-algebra
$V = K\otimes_\QQ \RR \cong \RR^{r_1}\times \CC^{r_2}$.
We will typically write $v^{(l)}$, for superscripts $1\leq l\leq r_1+r_2$, for
the projection of any $v\in V$ onto the $l$th component, which is uniquely
determined by $\phi$. 
Thus any $v\in V$ can be written 
$v=(v^{(1)},\dots, v^{(r_1+r_2)})$. 
We identify $K$ with its image $\phi(K)$ in $V$.
Under this identification our fixed $\QQ$-basis
$\{\omega_1,\dots,\omega_n\}$ for $K$ gives rise to an $\RR$-basis
$\{\phi(\omega_1),\dots,\phi(\omega_n)\}$ for $V$ and we
may consider $V$ to be the set 
$\{x_{1}\omega_{1}+\dots +x_{n}\omega_{n}: x_{i}\in \RR\}$.
This allows us to associate to $v\in V$ the corresponding 
vector $\x\in \RR^n$ and vice versa.
For $v\in V$ we define 
$$
\nm(v) = 
v^{(1)}\dots v^{(r_1)}|v^{(r_1+1)}|^{2}\dots |v^{(r_1+r_2)}|^{2},
$$
which in accordance with our convention, we shall also denote as
$\nm(\x)$.
This gives us a formal extension of the norm form 
$\nf_{K}:\QQ^n \to \QQ$ to $\RR^n$.
Indeed, if $v= \phi(\alpha)$ for some 
$\alpha = x_1\omega_1+\dots +x_n\omega_n \in K$,
then
$$\nm(\phi(\alpha))=N_{K/\QQ}(\alpha)=\nm(\x)=\nf_{K}(\x).$$

A third way of viewing $K$ is through logarithmic
coordinates (see \cite[\S 5]{marcus} for details).
Writing
$V^*=(K\otimes_{\QQ} \RR)^* \cong (\RR^*)^{r_1}\times (\CC^*)^{r_2}$,
we define the homomorphism $L: V^*\rightarrow \RR^{r_1+r_2}$ to be
$$
v\mapsto (\log |v^{(1)}|, \dots, \log |v^{(r_1)}|, 
2\log |v^{(r_1+1)}|, \dots, 2\log |v^{(r_1+r_2)}|).
$$
Composing $L$ with the embedding $\phi$ from above, we obtain the
diagram
$$\begin{tikzpicture}
  \node (K) {$K^*$};
  \node (calK) [right of=K] {$V^*$};
  \node (A) [below of=calK] {$\RR^{r_1+r_2}$};
  \draw[->] (K) to node {$\phi$}(calK);
  \draw[->] (K) to node[left] {$\psi\,$}(A);
  \draw[->] (calK) to node  {$L$} (A);
\end{tikzpicture}
$$
where $\psi=L\circ \phi$.
For any $\alpha\in K^*$, the coordinate sum of $\psi(\alpha)$ is given
by $$\psi(\alpha).\1=\log|N_{K/\QQ}(\alpha)|,$$
where $\1=(1,\dots,1)\in \RR^{r_1+r_2}$. 
In particular $\psi(U_K)$ is contained in the hyperplane 
\begin{equation}\label{eq:H}
H=\{\bv\in \RR^{r_1+r_2}: \bv.\1=0\} \subset \RR^{r_1+r_2}.
\end{equation}
If $\u_{r_1+r_2}=(1,\dots,1,2,\dots,2)\in \RR^{r_1+r_2}$, then
$\u_{r_1+r_2} \not\in H$. For any $v\in V^*$ we may write
$$
L(v)= \u_H + \xi_{r_1+r_2} \u_{r_1+r_2} 
$$
for some $\u_H \in H$ and $\xi_{r_1+r_2}\in \RR$.
This decomposition allows us to  understand easily the norm of an element
$v$, since
\begin{equation}\label{eq:log-norm}
\log|\nm(v)|=L(v).\1 = \xi_{r_1+r_2} \u_{r_1+r_2}.\1 = n\xi_{r_1+r_2}.
\end{equation}
It follows that $|\nm(v)|\leq 1$ if and only if $\xi_{r_1+r_2}\leq 0$.
Finally, note that $\ker(\psi)=\mu_K$, where $\mu_K < U_K$
denotes the subgroup of roots of unity.
Thus the map $L$ separates the free part of the group of unity from its
torsion part.

\subsection{Units of norm $+1$}
\label{s:units}

We are now ready to discuss the subgroup of $U_K$ relevant to us and its
action on $V$.
Recall that $N_{K/\QQ}(\eta)=\pm 1$ for any $\eta \in U_K$.
We shall work with the subgroup $U_K^{(+)}$ of $\eta \in U_K$ such that
$N_{K/\QQ}(\eta) = 1$. 
Since $U_K^{(+)}$ is  the kernel of the group homomorphism
$N_{K/\QQ}: U_K \rightarrow \{\pm 1\}$, we deduce that 
either $U_K = U_K^{(+)}$ or $U_K /U_K^{(+)} \cong \ZZ/2\ZZ$. 
In particular, $U_K$ and $U_K^{(+)}$ share the same rank
$r=r_1+r_2-1$.

Let $\mu_K^{(+)}$ be the norm $+1$ subgroup of $\mu_K$.
Then we have decompositions
$$
U_K=\mu_K\times Y_K,
\quad 
U_K^{(+)}=\mu_K^{(+)}\times Y_K^{(+)},
$$
where $Y_K\cong Y_K^{(+)}\cong \ZZ^{r}$.
The relation between these decompositions can be described more precisely.
This is only interesting in the case where $U_K^{(+)}$ is a
proper subgroup of $U_K$, which we assume for now.
If $K$ has a root of unity of norm $-1$ then one can ensure that
each generator of $Y_K$ has norm $1$, since each generator may be replaced
by the product of itself and a root of unity.
This allows for decompositions where $Y_K = Y_K^{(+)}$.
If $K$ has no root of unity of norm $-1$, then $\mu_K=\mu_K^{(+)}$, and
one can ensure that exactly one generator of $Y_K$ has norm $-1$.
To see this, suppose $\eta_1 \dots, \eta_r$ is a system of
fundamental units such that $N_{K/\QQ}(\eta_1)=-1$.
We keep $\eta_1$, but replace any other generator $\eta_i$ of norm $-1$
by the product $\eta_1 \eta_i$.
The resulting system of units $\eta_1, \eta'_2 \dots, \eta'_r$, say, 
still generates $Y_K$ and has the required property.
Furthermore, $\eta_1^2, \eta'_2 \dots, \eta'_r$ forms a system of
generators for $Y_K^{(+)}$.

Our main interest in $Y_K^{(+)}$ lies in the action it induces on $V$
and the associated coordinate space $\RR^n$. 
In general, the action of $U_K$ on $V$ by multiplication induces a natural
action on the coordinate space $\RR^n$ as follows.
For any pair $(\eta,\x) \in U_K \times \RR^n$ we let  
$\eta.\x = \y \in \RR^n$ denote the coordinate vector of 
$\eta(x_1\omega_1+\dots +x_n\omega_n) 
= y_1 \omega_1 + \dots + y_n \omega_n$.

We require a fundamental domain $\mathfrak{F}_+ \subset V^*$ for the
action of $\phi(Y_K^{(+)})$ on $V^*$ that is explicit enough to 
allow  lattice point counting arguments to be applied.
In the case where $Y_K^{(+)}$ is replaced by $Y_K$, the construction of
such a domain is classical (see \cite[\S\S5,6]{marcus}), and it is not
difficult to adapt the construction so as to apply to our situation.
This construction builds on the observation that the action of $U_K$ is
easier to understand in the logarithmic space.
This is useful since the restriction of $L$ to
$\phi(Y_K^{(+)})$ is an isomorphism, which allows us to describe a
fundamental domain for the action of $\phi(Y_K^{(+)})$ on $V^*$
in terms of a fundamental domain for the action of $\psi(Y_K^{(+)})$ on
$\RR^{r_1+r_2}$.

Let $\delta_1,\dots,\delta_r$ be generators for
$Y_K^{(+)}$ and let
$\u_i=\psi(\delta_i)$ for $1\leq i< r_1+r_2$.
Then $\psi(U_K^{(+)}) = \psi(Y_K^{(+)})$ is a lattice of rank $r$
contained in the hyperplane $H$ that was defined in \eqref{eq:H}.
We denote this lattice by $\Lambda_+$ and note that it is generated by
$\u_1, \dots, \u_{r_1+r_2-1}$.
Let $F_+ \subset H$ be a fundamental parallelotope for $\Lambda_+$, and
recall that the vector $\u_{r_1+r_2}=(1,\dots,1,2,\dots,2)\in
\RR^{r_1+r_2}$ does not belong to $H$.
Thus $F_+ \oplus \RR\u_{r_1+r_2} $ describes a fundamental domain for
the action of $\psi(Y_K^{(+)})$ on $\psi(K^*)=\RR^{r_1+r_2}$. 
Since $Y_K^{(+)}$ maps isomorphically onto $\Lambda_+$, an application of
\cite[Lemma 1 in \S6]{marcus} implies the following result.
\begin{lemma}\label{lem:F+}
The set
$\mathfrak{F}_+= \{v\in V^*: L(v)\in F_+\oplus \RR\u_{r_1+r_2}\}
$
is a fundamental domain for the action of $\phi(Y_K^{(+)})$ on $V^*$.
\end{lemma}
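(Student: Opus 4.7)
The plan is to transport the problem through the logarithmic map $L$ and reduce it to the (already classical) statement that $F_+\oplus\RR\u_{r_1+r_2}$ is a fundamental domain for $\Lambda_+$ acting by translation on $\RR^{r_1+r_2}$. The whole argument rests on three compatibilities: the action of $Y_K^{(+)}$ on $V^*$ by multiplication, the translation action of $\Lambda_+$ on $\RR^{r_1+r_2}$, and the map $L$ intertwining them.

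First, I would spell out the intertwining property. Because $L$ is a group homomorphism from $(V^*,\cdot)$ to $(\RR^{r_1+r_2},+)$, for any $\eta\in Y_K^{(+)}$ and $v\in V^*$ we have
\[
L(\phi(\eta)v)=L(\phi(\eta))+L(v)=\psi(\eta)+L(v).
\]
Since $\ker\psi=\mu_K$ and $Y_K^{(+)}$ is a complement of the torsion part in $U_K^{(+)}$, the restriction $\psi\colon Y_K^{(+)}\to\Lambda_+$ is a group isomorphism onto its image, which is precisely $\Lambda_+$. Hence the $\phi(Y_K^{(+)})$-action on $V^*$ corresponds, under $L$, to translation of $L(v)$ by elements of $\Lambda_+$.

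Next I would check that $F_+\oplus\RR\u_{r_1+r_2}$ is a fundamental domain for this translation action on all of $\RR^{r_1+r_2}$. Because $\u_{r_1+r_2}\notin H$, we have a direct sum decomposition $\RR^{r_1+r_2}=H\oplus\RR\u_{r_1+r_2}$, and $\Lambda_+\subset H$ acts only on the $H$-component. Since $F_+$ is by construction a fundamental parallelotope for $\Lambda_+$ inside $H$ (taken half-open so that each $\Lambda_+$-orbit meets $F_+$ exactly once), every element of $\RR^{r_1+r_2}$ admits a unique translate by $\Lambda_+$ lying in $F_+\oplus\RR\u_{r_1+r_2}$. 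This is the content of the appeal to \cite[Lemma 1, \S6]{marcus}.

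Finally I would combine these two facts. Given $v\in V^*$, there is a unique $\la\in\Lambda_+$ with $L(v)+\la\in F_+\oplus\RR\u_{r_1+r_2}$; writing $\la=\psi(\eta)$ for the unique $\eta\in Y_K^{(+)}$ supplied by the isomorphism above, we obtain $L(\phi(\eta)v)\in F_+\oplus\RR\u_{r_1+r_2}$, i.e.\ $\phi(\eta)v\in\mathfrak{F}_+$. Conversely, if $v,v'\in\mathfrak{F}_+$ lie in the same $\phi(Y_K^{(+)})$-orbit, then $L(v)$ and $L(v')$ lie in $F_+\oplus\RR\u_{r_1+r_2}$ and differ by an element of $\Lambda_+$, forcing $L(v)=L(v')$, whence $\psi(\eta)=0$ and $\eta=1$ by injectivity of $\psi$ on $Y_K^{(+)}$. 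No genuine obstacle arises; the only thing to be attentive to is the half-open convention for $F_+$ so that uniqueness in the fundamental parallelotope really holds and transfers through $L$ cleanly.
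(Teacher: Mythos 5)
Your proof is correct and takes essentially the same route as the paper: the paper's proof is a one-line appeal to \cite[Lemma 1 in \S6]{marcus} together with the observation that $\psi$ restricts to an isomorphism $Y_K^{(+)}\to\Lambda_+$, and what you have written is precisely the content of that cited lemma unpacked -- the homomorphism $L$ intertwines the multiplicative $\phi(Y_K^{(+)})$-action on $V^*$ with translation by $\Lambda_+$ on $\RR^{r_1+r_2}$, so the preimage of a fundamental domain is a fundamental domain.
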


We now turn to the desirable properties of the domain $\mathfrak{F}_+$, 
that ultimately facilitate lattice point counting.
Recall that a region $S\subset \RR^n$ is said to be a {\em cone} when 
$\x\in S$ if and only if $\lambda \x\in S$, for any $\lambda\in \RR_{>0}$.
Moreover, if $S\subset \RR^n$ is bounded, its boundary is called 
{\em $(n-1)$-Lipschitz parametrisable}
(see \cite[p.166]{marcus}) if it is contained in the union of the images 
of finitely many Lipschitz functions $f:[0,1]^{n-1}\to \RR^{n}$.
It is easy to see that $\mathfrak{F}_+$ is a cone. 
We are interested in the set
\begin{align*}
\mathfrak{F}_+(1)
&=\{v \in \mathfrak{F}_+: |\nm(v)| \leq 1\}\\
&=\{v \in V^*: L(v)\in F_+\oplus \RR_{\leq0}\u_{r_1+r_2}\},
\end{align*}
where the second equality follows from  \eqref{eq:log-norm}.
The proof contained in \cite[pp.168--172]{marcus} applies \emph{mutatis
mutandis} to our situation and establishes the following result.

\begin{lemma} \label{lem:Lipschitz}
The domain $\mathfrak{F}_+(1)$ has an $(n-1)$-Lipschitz parametrisable
boundary.
\end{lemma}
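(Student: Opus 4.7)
The plan is to write $\mathfrak{F}_+(1)$ as the image of an explicit exponential parametrisation from a compact cube, then pull back its boundary and verify that each face maps Lipschitz-continuously into $V$. Fix generators $\u_1,\dots,\u_{r_1+r_2-1}$ of the lattice $\Lambda_+$ so that $F_+=\{\sum_{j=1}^{r_1+r_2-1}s_j\u_j:s_j\in[0,1]\}$. By Lemma~\ref{lem:F+}, combined with the fact from \eqref{eq:log-norm} that $|\nm(v)|\le 1$ is equivalent to $\xi_{r_1+r_2}\le 0$, the set $\mathfrak{F}_+(1)$ consists of those $v\in V^*$ whose logarithm decomposes as
$$
L(v)=\sum_{j=1}^{r_1+r_2-1}s_j\u_j+s_{r_1+r_2}\u_{r_1+r_2}
$$
with $s_1,\dots,s_{r_1+r_2-1}\in[0,1]$ and $s_{r_1+r_2}\in(-\infty,0]$, the point $v$ being recovered from $L(v)$ by additionally specifying signs $\varepsilon_l\in\{\pm 1\}$ at the $r_1$ real places and arguments $\theta_l\in[0,2\pi)$ at the $r_2$ complex places.

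The key observation is that, because $\u_{r_1+r_2}=(1,\dots,1,2,\dots,2)$ matches the factors $1$ and $2$ used in the definition of $L$, solving for $|v^{(l)}|$ gives $|v^{(l)}|=C_l(s_1,\dots,s_{r_1+r_2-1})\,e^{s_{r_1+r_2}}$ at \emph{every} place, where each $C_l$ is a smooth positive function on $[0,1]^{r_1+r_2-1}$. Substituting $\tau=e^{s_{r_1+r_2}}\in(0,1]$ and extending by $\tau=0\mapsto 0$ yields, for each of the $2^{r_1}$ sign patterns, a smooth map
$$
\Phi\colon Q:=[0,1]^{r_1+r_2-1}\times[0,1]\times[0,2\pi]^{r_2}\longrightarrow V,
$$
whose union of images over the sign patterns equals the closure $\overline{\mathfrak{F}_+(1)}=\mathfrak{F}_+(1)\cup\{0\}$. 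Since $Q$ is compact and $\Phi$ smooth, $\Phi$ is Lipschitz, and a linear change of coordinates sends the image into $\RR^n$ with the same property.

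The boundary $\partial\mathfrak{F}_+(1)$ is then covered by the images under $\Phi$ of the boundary faces of $Q$, which are of three types: the top face $\{\tau=1\}$, where $|\nm(v)|=1$; the lateral faces $\{s_j\in\{0,1\}\}$ for $1\le j\le r_1+r_2-1$; and the collapsed face $\{\tau=0\}$, which maps to the origin. Each face has real dimension exactly $n-1=r_1+2r_2-1$, and the restriction of $\Phi$ to each remains Lipschitz. After affinely rescaling each parameter domain to the standard cube $[0,1]^{n-1}$ and taking the finite union over sign patterns, one obtains the required $(n-1)$-Lipschitz parametrisation.

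The main subtlety is to check that $\Phi$ really is Lipschitz all the way down to $\tau=0$. The substitution $\tau=e^{s_{r_1+r_2}}$ is engineered precisely so that $|v^{(l)}|$ depends linearly on $\tau$ at every place, both real and complex; a naive alternative such as $\tau=e^{s_{r_1+r_2}/2}$ would introduce $\tau^{1/2}$-type terms at real places and only H\"older continuity at the origin. Once this is pinned down, the remaining verification is a direct adaptation of the proof of the analogous statement for $Y_K$ in \cite[pp.~168--172]{marcus}: the only change is the passage from the full unit lattice to its norm-$+1$ sublattice, which affects neither the shape of the fundamental parallelotope nor the structure of the exponential map.
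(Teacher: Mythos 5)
Your proof is correct and takes essentially the same route as the paper, which simply cites Marcus (pp.~168--172) and observes that his argument carries over unchanged once $Y_K$ is replaced by $Y_K^{(+)}$; you have helpfully made explicit the exponential-polar parametrisation and the role of the matching weights $(1,\dots,1,2,\dots,2)$ in $L$ and $\u_{r_1+r_2}$ in obtaining linearity in $\tau$ at every place. One small slip in your side-remark: the substitution $\tau=e^{s_{r_1+r_2}/2}$ gives $|v^{(l)}|=C_l\tau^2$, which is still smooth; the substitution that would actually spoil Lipschitz continuity at the origin is $\tau=e^{2s_{r_1+r_2}}$, producing $|v^{(l)}|=C_l\tau^{1/2}$.
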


We will mainly be working in the coordinate space $\RR^n$.
The map 
$$v: \RR^n\setminus \{\0\} \to (\RR^{*})^{r_1}\times(\CC^{*})^{r_2}$$
that takes $\x$ to $v(\x) 
=  x_1 \omega_1+ \dots +  x_n\omega_n 
= (v^{(1)}, \dots, v^{(r_1+r_2)})$ is a linear isomorphism and
preserves Lipschitz parametrisability.
In particular, if
\begin{equation}\label{eq:D+}
\mathfrak{D}_+
= \{\x \in \RR^n: v(\x) \in \mathfrak{F}_+ \} 
\end{equation}
denotes the preimage of the fundamental domain in $\RR^n$, and if 
$$\mathfrak{D}_+(1)=\{ \x \in \mathfrak{D}_+: |\nf_K(\x)| \leq 1\},$$
then Lemma \ref{lem:Lipschitz} implies that
$\mathfrak{D}_+(1)$ has an $(n-1)$-Lipschitz parametrisable
boundary. 

We slightly refine the sets under consideration.
The sign of $\nf_K$ is invariant under the action of $Y_K^{(+)}$.
Thus, for $\epsilon\in \{\pm\}$ and $T>0$ we define the sets
$$
\mathfrak{D}_{+}^\epsilon
= 
\{\x \in \mathfrak{D}_{+}: 0<\epsilon\nf_K(\x)\}
$$
and
\begin{equation}\label{eq:G+}
\mathfrak{D}_{+}^\epsilon (T)
= \{\x \in \mathfrak{D}_+: 0<\epsilon\nf_K(\x)\leq T\}.
\end{equation}
Since $\mathfrak{F}_+$ is a cone, the same is true for $\mathfrak{D}_+$
and $\mathfrak{D}_{+}^\epsilon$.   We deduce that
$\mathfrak{D}_{+}^\epsilon(1)$ has an $(n-1)$-Lipschitz parametrisable
boundary from the same property for $\mathfrak{D}_+(1)$.
Furthermore, we have 
$\mathfrak{D}_{+}^\epsilon(T)
=T^{1/n}\mathfrak{D}_{+}^\epsilon(1)$.

Note that the same facts hold true in the classical setting for
\begin{equation}
\mathfrak{D}^\epsilon(T)
=\{\x\in \mathfrak{D}: 0<\epsilon\nf_K(\x)\leq T\},
\end{equation}
where $\mathfrak{D}=\{\x\in \RR^n: v(\x)\in \mathfrak{F}\}$ and 
$\mathfrak{F}\subset V^*$ is the fundamental domain for $\phi(Y_K)$.

\subsection{Dirichlet coefficients of $\zeta_K$}
\label{s:dedekind}
The construction of the majorant in Section \ref{s:majorant}
relies on a careful analysis of the sequence of Dirichlet coefficients of
the Dedekind zeta function of a number field $K$.
Here we recall the essential properties of $\zeta_K$ and its Dirichlet
coefficients, as found in Landau \cite{landau_alg} or Marcus \cite{marcus}, 
and deduce some preliminary facts required in Section \ref{s:majorant}.

The Dedekind zeta function is defined to be 
\begin{equation}\label{eq:Dedekind}
\zeta_K(s)=\sum_{(0)\neq\fa\subset \fo} \frac{1}{(\n \fa)^s} =
\sum_{m=1}^\infty \frac{r_K(m)}{m^{s}},
\end{equation}
for $s\in \CC$ with $\Re(s)>1$, with 
$$
r_K(m)=\#\{\fa \subset \fo: \n \fa=m\}.
$$
The Dedekind zeta function admits a meromorphic continuation to all of $\CC$ with a simple 
pole at $s=1$ and $\Res_{s=1} \zeta_{K}(s) = h \kappa$, where $h$ is the
class number,
\begin{equation}
\kappa
= \frac{2^{r_1} (2\pi)^{r_2}R_K}{|\mu_{K}| \sqrt{|D_{K}|}},
\end{equation}
and $R_K$ is the regulator. 
It follows from \cite[Thms.~39 and 40]{marcus} that
\begin{equation}\label{eq:210'}
\sum_{m\leq x} r_K(m)=h\kappa x +O(x^{1-1/n}),
\end{equation}
so that the average order of $r_K$ is constant.

The function $r_K$ is multiplicative.
To describe its behaviour at prime powers, let $p$ be any rational prime
and recall that the principal ideal $(p)$ factorises into a product of
prime ideals in $\fo$. That is,
\begin{equation}
 (p)=\fp_1^{e_1}\dots \fp_{r}^{e_r},
\end{equation}
where $e_i=e_{\fp_i}(p), r=r(p)\in \NN$ and 
each $\fp_i \subset \fo$ is a prime ideal satisfying $\n\fp_i=p^{f_i}$,
for some $f_i=f_{\fp_i}(p) \in \NN$.
As in \cite[\S3]{marcus}, we have 
$\sum_{i=1}^re_if_i=n.$
Thus
$$
r_K(p^m)=\#\left\{\fp_1^{m_1}\dots \fp_r^{m_r}\subset \fo:
f_1m_1+\dots+f_r m_r=m\right\},
$$
for any $m\in \NN$. It follows from this that 
\begin{equation}
\label{eq:r-upper}
r_K(p^m)\leq (m+1)^n.
\end{equation}
At rational primes we obtain 
\begin{equation}\label{eq:r}
r_K(p)=\#\{i\in \{1,\dots,r\}: f_i=1\}=\#\{\fp \mid (p): f_{\fp}(p)=1\}.
\end{equation}
In view of \eqref{eq:r}, we partition the set of rational primes into 
three sets 
\begin{equation}\label{eq:PPP}
\begin{split}
\mathcal{P}_0
&=\{p\mid D_K\},\\
\mathcal{P}_1
&=\{p\nmid D_K: 
\exists ~\fp\mid (p) \mbox{ such that } f_{\fp}(p)=1\},\\
\mathcal{P}_2
&=\{p\nmid D_K: 
f_{\fp}(p)\geq 2 ~\forall \fp\mid (p)\}.
\end{split}
\end{equation}
The contributions to $r_K$ from $\cP_0 \cup \cP_1$ and from $\cP_2$
will be dealt with separately.

We end this section with some technical results 
concerning the restricted Euler product
\begin{equation}\label{eq:F(s)}
F(s)=\prod_{p\in \mathcal{P}_2} \Big(1-\frac{1}{p^s}\Big)^{-1},
\end{equation}
for $s\in \CC$ with $\Re(s)>1$.
The following result describes the analytic structure of $F(s)$.

\begin{lemma}\label{lem:F(s)}
There exists $\delta \in \QQ$ satisfying $1/n \leq \delta \leq 1$
such that 
 $\mathcal{P}_1$ has Dirichlet 
 density $\delta$.
Furthermore, there is a function $G(s)$, which is holomorphic and non-zero
in the closed half-plane $\Re(s)\geq 1$, such that 
$F(s)=\zeta(s)^{1-\delta}G(s)$.
\end{lemma}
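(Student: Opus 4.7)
The strategy is, first, to identify $\mathcal{P}_1$ as a Chebotarev set in the Galois closure of $K/\QQ$ in order to compute $\delta$ and establish the bounds, and, second, to pass to logarithms in order to peel $\zeta(s)^{1-\delta}$ off $F(s)$, using the meromorphic continuation and non-vanishing of Artin $L$-functions on $\Re(s)\geq 1$.

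Let $L$ be the Galois closure of $K/\QQ$, and set $G=\Gal(L/\QQ)$ and $H=\Gal(L/K)$, so that $[G:H]=n$. For any prime $p$ unramified in $L$---a condition which excludes only finitely many primes beyond those in $\mathcal{P}_0$---the prime ideals $\fp\mid (p)$ of $K$ correspond bijectively to the orbits of $\langle\mathrm{Frob}_p\rangle$ acting on $G/H$, with $f_\fp(p)$ equal to the length of the associated orbit. Thus $p\in\mathcal{P}_1$ if and only if $\mathrm{Frob}_p$ has a fixed point on $G/H$, equivalently $\mathrm{Frob}_p\in\bigcup_{g\in G}gHg^{-1}$. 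The Chebotarev density theorem then identifies the Dirichlet density of $\mathcal{P}_1$ as
\[
\delta=\frac{|\bigcup_{g\in G} gHg^{-1}|}{|G|}\in\QQ,
\]
which is trivially at most $1$ and at least $|H|/|G|=1/n$, since $H$ is itself among the conjugates appearing in the union.

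For the analytic factorisation, the contribution from prime powers $p^k$ with $k\geq 2$ and from the finite set $\mathcal{P}_0$ defines a function holomorphic on $\Re(s)>1/2$, so modulo such errors
\[
\log F(s)
=\sum_{p\in\mathcal{P}_2}\frac{1}{p^s}
=\log\zeta(s)-\sum_{p}\frac{\mathbf{1}_{\mathcal{P}_1}(p)}{p^s}.
\]
It therefore suffices to show that $\sum_p \mathbf{1}_{\mathcal{P}_1}(p)p^{-s}-\delta\log\zeta(s)$ extends to a holomorphic function on $\Re(s)\geq 1$.

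I would establish this via the character decomposition of the class function $\psi\colon G\to\{0,1\}$ equal to the indicator of $\bigcup_g gHg^{-1}$. Writing $\psi=\delta\,\mathbf{1}_G+\sum_{\rho\neq\mathbf{1}_G}c_\rho\rho$ in the basis of irreducible characters of $G$, evaluating at $\mathrm{Frob}_p$ and summing in $p$ yields
\[
\sum_p\frac{\mathbf{1}_{\mathcal{P}_1}(p)}{p^s}
=\delta\log\zeta(s)+\sum_{\rho\neq\mathbf{1}_G}c_\rho\log L(s,\rho,L/\QQ),
\]
again modulo a function holomorphic on $\Re(s)>1/2$. Brauer's induction theorem exhibits each $L(s,\rho,L/\QQ)$ as an integer combination of Hecke $L$-functions attached to one-dimensional characters of subgroups; by Hecke's classical theorem the non-trivial such $L$-functions are entire and non-vanishing on $\Re(s)\geq 1$, while the trivial contributions---which give factors $\zeta_{L^{H_i}}(s)$ with simple poles at $s=1$---appear with total multiplicity $\langle\rho,\mathbf{1}_G\rangle=0$ for every irreducible $\rho\neq\mathbf{1}_G$, so $L(s,\rho,L/\QQ)$ is itself holomorphic and non-zero on $\Re(s)\geq 1$. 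Consequently $\log F(s)-(1-\delta)\log\zeta(s)$ extends holomorphically to $\Re(s)\geq 1$, and exponentiating produces the required non-vanishing holomorphic $G(s)$. The principal obstacle is precisely the Artin $L$-function analysis on the line $\Re(s)=1$; everything else is routine Chebotarev bookkeeping.
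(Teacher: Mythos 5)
Your proposal is correct and follows essentially the same route as the paper: Chebotarev in the Galois closure for the density and bounds on $\delta$, followed by the Artin $L$-function analysis to peel $\zeta(s)^\delta$ off the $\mathcal{P}_1$ Euler product. The paper compresses the second step into a citation of Chebotarev plus "this implies" a factorization \eqref{eq:yellow}; your character decomposition $\psi=\delta\,\mathbf{1}_G+\sum_{\rho\neq\mathbf{1}_G}c_\rho\rho$ together with Brauer induction and non-vanishing of Hecke $L$-functions on $\Re(s)\geq 1$ supplies exactly the argument being invoked there.
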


\begin{proof}
The first part follows from the \v{C}ebotarev density theorem 
(cf.\ \cite[Cor~13.6]{neukirch}), with  $\delta = 1/n$  if and only if
$K/\QQ$ is a Galois extension.  
This implies that there exists a function $G_1(s)$, which is holomorphic
and non-zero in the closed half-plane $\Re (s)\geq 1$, such that 
\begin{equation}\label{eq:yellow}
\prod_{p\in \mathcal{P}_1}
\Big( 1-\frac{1}{p^{s}}\Big)^{-1} =\zeta(s)^{\delta} G_1(s).
\end{equation}
On the other hand, 
\begin{align*}
\prod_{p\in \mathcal{P}_1}
\Big(1-\frac{1}{p^{s}}\Big)^{-1} 
&=
\zeta(s)
\prod_{p\in \mathcal{P}_0\cup \mathcal{P}_2} 
\Big(1-\frac{1}{p^{s}}\Big)
=
\zeta(s)F(s)^{-1}G_2(s),
\end{align*}
where $G_2(s)$ is entire and non-zero.
Combining these expressions we conclude the proof of the lemma
by taking $G(s)=G_1(s)^{-1}G_2(s)$.
\end{proof}

\begin{corollary}\label{cor:log-asymp}
We have
$$\prod_{\substack{p\in \cP_2 \\p \leq T}}
\left(1-\frac{1}{p}\right)^{-1}
\asymp (\log T)^{1-\delta}.$$
\end{corollary}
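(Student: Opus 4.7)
The plan is to pass from the product to a sum via logarithms and then translate the analytic information about $F(s)$ supplied by Lemma \ref{lem:F(s)} into a Mertens-type asymptotic for $\sum_{p\in\cP_2,\, p\leq T}1/p$. Since $-\log(1-1/p)=1/p+O(1/p^2)$ and $\sum_p 1/p^2$ converges, taking logarithms yields
\[
\log\prod_{\substack{p\in\cP_2\\ p\leq T}}(1-1/p)^{-1}
=\sum_{\substack{p\in\cP_2\\ p\leq T}}\frac{1}{p}+O(1),
\]
so the corollary reduces to the statement
\[
\sum_{\substack{p\in\cP_2\\ p\leq T}}\frac{1}{p}=(1-\delta)\log\log T+O(1).
\tag{$\ast$}
\]

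Next I would extract Dirichlet-series information from Lemma \ref{lem:F(s)}. Expanding the Euler product in \eqref{eq:F(s)} gives $\log F(s)=\sum_{p\in\cP_2}1/p^s+h(s)$, where $h(s)=\sum_{p\in\cP_2,\,k\geq 2}1/(kp^{ks})$ is holomorphic and bounded for $\Re(s)\geq 1$. Using $F(s)=\zeta(s)^{1-\delta}G(s)$, together with $\log\zeta(s)=\log(1/(s-1))+O(1)$ near $s=1$ and the boundedness of $\log G(s)$ on $\Re(s)\geq 1$ (which is the content of Lemma \ref{lem:F(s)}), one obtains
\[
\sum_{p\in\cP_2}\frac{1}{p^s}=(1-\delta)\log\frac{1}{s-1}+O(1)\qquad\text{as }s\to 1^+.
\]

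The final step is a Tauberian passage from this Dirichlet-series behaviour to the partial sum in $(\ast)$. The logarithmic derivative $-F'(s)/F(s)=(1-\delta)(-\zeta'(s)/\zeta(s))-G'(s)/G(s)$ has, by Lemma \ref{lem:F(s)}, a simple pole with residue $1-\delta$ at $s=1$ and a holomorphic extension to the rest of the closed half-plane $\Re(s)\geq 1$. A standard application of the Wiener--Ikehara theorem (after sieving out the negligible contribution of prime powers $p^k$ with $k\geq 2$) then gives the prime number theorem for $\cP_2$, namely $\sum_{p\in\cP_2,\,p\leq T}\log p\sim(1-\delta)T$. Partial summation immediately yields $(\ast)$, and exponentiating recovers the claim $\asymp(\log T)^{1-\delta}$.

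The main obstacle is the Tauberian step: one needs the holomorphic, zero-free extension of $G(s)$ across $\Re(s)=1$ in order to apply Wiener--Ikehara, but this is already built into Lemma \ref{lem:F(s)}. An alternative, more self-contained route would be to bypass Wiener--Ikehara altogether and estimate $\sum_{p\in\cP_2,\, p\leq T}1/p$ directly by comparing $F(s)$ to $\zeta(s)^{1-\delta}$ on a vertical line $\Re(s)=1+1/\log T$ via Perron's formula, but this only sharpens constants and is not needed for the $\asymp$-bound.
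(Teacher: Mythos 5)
Your reduction to the Mertens--type bound $(\ast)$ is fine, as is the extraction of $\sum_{p\in\cP_2}p^{-s}=(1-\delta)\log\tfrac{1}{s-1}+O(1)$ from Lemma~\ref{lem:F(s)}. The gap is in the Tauberian step. Wiener--Ikehara delivers only the qualitative prime number theorem for $\cP_2$, namely $\theta_{\cP_2}(T):=\sum_{p\in\cP_2,\,p\leq T}\log p=(1-\delta)T+o(T)$; partial summation from this gives merely
$$
\sum_{\substack{p\in\cP_2\\ p\leq T}}\frac{1}{p}=(1-\delta)\log\log T+o(\log\log T),
$$
because the $o(t)$ error in $\theta_{\cP_2}$ feeds into an integral of size $o(\log\log T)$, not $O(1)$. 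Exponentiating then yields only $(\log T)^{1-\delta+o(1)}$, which does not imply $\asymp(\log T)^{1-\delta}$. To get $O(1)$ error in $(\ast)$ from a prime-counting estimate one would need a quantitative PNT for $\cP_2$ with error roughly $O(T/\log^2 T)$, which requires a zero-free region to the left of $\Re(s)=1$ that Lemma~\ref{lem:F(s)} does not provide. So the main route, as written, does not prove the corollary.

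The ``alternative route'' you dismiss as merely sharpening constants is in fact the one that closes this gap; it is not optional. One clean implementation avoids Perron entirely: with $\sigma=1+1/\log T$, compare $\sum_{p\in\cP_2}p^{-\sigma}=(1-\delta)\log\log T+O(1)$ to the truncated sum. For $p\leq T$ one has $|p^{1-\sigma}-1|\ll(\sigma-1)\log p$, so the discrepancy over $p\leq T$ is $\ll\frac{1}{\log T}\sum_{p\leq T}\frac{\log p}{p}=O(1)$ by Mertens' first theorem for \emph{all} primes; the tail $\sum_{p>T}p^{-\sigma}$ is $O(1)$ by comparing $\sum_p p^{-\sigma}=\log\tfrac{1}{\sigma-1}+O(1)$ with $\sum_{p\leq T}p^{-\sigma}=\log\log T+O(1)$. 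This is a soft Abelian comparison, not a Tauberian theorem, and it yields $(\ast)$ with the needed $O(1)$ error. With this substitution your argument is complete.

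For contrast, the paper never takes logarithms. It applies the Tauberian theorem \cite[Thm.~5.11]{MV} directly to the Dirichlet series $F(s)$, obtaining $\sum_{m\leq T}\1_{\langle\cP_2\rangle}(m)/m\asymp(\log T)^{1-\delta}$, and bounds this above by the Euler product to get the lower bound. For the upper bound it runs the analogous Tauberian estimate for the $\cP_1$ product via \eqref{eq:yellow}, giving $(\log T)^{\delta}\ll\prod_{p\in\cP_1,\,p\leq T}(1-1/p)^{-1}$, and then divides the Mertens bound $\prod_{p\in\cP_1\cup\cP_2,\,p\leq T}(1-1/p)^{-1}\ll\log T$ by this to deduce $\prod_{p\in\cP_2,\,p\leq T}(1-1/p)^{-1}\ll(\log T)^{1-\delta}$. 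This sidesteps any need for a Mertens estimate with $O(1)$ error for the subset $\cP_2$ itself, and is both shorter and analytically cheaper than the logarithmic route.
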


\begin{proof}
By means of the Tauberian theorem \cite[Thm.\ 5.11]{MV}, applied to the
Dirichlet series $F(s)$, we deduce from Lemma \ref{lem:F(s)} that
$$
(\log T)^{1-\delta} 
\asymp 
\sum_{m\leq T} \frac{\1_{\<\cP_2\>}(m)}{m}
\leq \prod_{\substack{p\in \cP_2 \\p \leq T}}
\left(1-\frac{1}{p}\right)^{-1}.
$$
This gives the correct lower bound.
To establish the  upper bound, we deduce from \eqref{eq:yellow} that
$$
(\log T)^{\delta} 
\asymp 
\sum_{m\leq T} \frac{\1_{\<\cP_1\>}(m)}{m}
\leq \prod_{\substack{p\in \cP_1 \\p \leq T}}
\left(1-\frac{1}{p}\right)^{-1}.
$$
But then 
$$
(\log T)^{\delta} 
\prod_{\substack{p\in \cP_2 \\p \leq T}}
\left(1-\frac{1}{p}\right)^{-1}
\ll \prod_{\substack{p\in \cP_1 \cup \cP_2 \\p \leq T}}
\left(1-\frac{1}{p}\right)^{-1}
\ll \log T,
$$
as required.
\end{proof}

\bigskip
  \section{Technical tools}\label{s:prelim}

\subsection{Geometry of numbers}\label{s:geometry-of-numbers}

We will need to be able to estimate the number of lattice points in
shifts of sufficiently well-behaved expanding regions.
Let $n\in \NN$ and let $\mathcal{B}$ be any bounded subset of $\RR^n$.
Write $T\mathcal{B}=\{T\x: \x\in \mathcal{B}\}$ for the dilation by $T>0$.
The following result is classical.

\begin{lemma}\label{lem:2}
Assume that $\mathcal{B}$ is bounded and that for any $\eps\in (0,1)$
the $\eps$-neighbourhood of the boundary $\partial \mathcal{B}$ 
has volume $O(\eps)$.
Let $\a \in \RR^n$ and let $T\geq 1$. 
Then 
$$
\#\left(\ZZ^n\cap(T\mathcal{B} + \a)\right)
= \vol(\mathcal{B}) T^n + O(T^{n-1}).
$$ 
\end{lemma}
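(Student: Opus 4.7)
The plan is to compare the lattice point count to the volume via the standard unit-cube argument, with the boundary-neighbourhood hypothesis used to control the error.

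First I would associate to each lattice point $\x \in \ZZ^n \cap (T\mathcal{B}+\a)$ the half-open unit cube $C(\x) = \x + [0,1)^n$. The cubes $C(\x)$ are pairwise disjoint and each has volume $1$, so $\#(\ZZ^n \cap (T\mathcal{B}+\a))$ equals the volume of their union $\bigcup_{\x} C(\x)$. The symmetric difference between this union and the region $T\mathcal{B}+\a$ is supported on those cubes $C(\x)$ that meet the boundary $\partial(T\mathcal{B}+\a) = T\partial\mathcal{B}+\a$, since a cube either lies entirely inside, entirely outside, or straddles the boundary.

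Next I would bound the volume of this symmetric difference. Any cube $C(\x)$ that meets $T\partial\mathcal{B}+\a$ is contained in the $\sqrt{n}$-neighbourhood $N_{\sqrt{n}}(T\partial\mathcal{B}+\a)$, because the diameter of $C(\x)$ is $\sqrt{n}$. Translation by $\a$ does not affect volume, so it suffices to bound the volume of $N_{\sqrt{n}}(T\partial\mathcal{B})$. Scaling by $1/T$ identifies $N_{\sqrt{n}}(T\partial\mathcal{B})$ with $T \cdot N_{\sqrt{n}/T}(\partial\mathcal{B})$, hence
\begin{equation*}
\vol\bigl(N_{\sqrt{n}}(T\partial\mathcal{B})\bigr) = T^n \vol\bigl(N_{\sqrt{n}/T}(\partial\mathcal{B})\bigr) = T^n \cdot O(1/T) = O(T^{n-1}),
\end{equation*}
using the hypothesis on the $\eps$-neighbourhood of $\partial\mathcal{B}$ with $\eps = \sqrt{n}/T$ (this is meaningful only for $T \geq \sqrt{n}$, but the statement is trivial when $T$ is bounded, since both sides are then $O(1)$).

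Combining the two steps, $\#(\ZZ^n \cap (T\mathcal{B}+\a)) = \vol(T\mathcal{B}+\a) + O(T^{n-1}) = T^n \vol(\mathcal{B}) + O(T^{n-1})$, which is the desired estimate. There is no serious obstacle here; the only point to be careful about is the harmless implicit dependence on $n$ (which is fixed throughout) when replacing the boundary-neighbourhood parameter $\sqrt{n}/T$ by $1/T$ in the $O$-constant.
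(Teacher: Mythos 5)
Your proof is correct and follows the same route as the paper: both compare the lattice-point count to the volume of $T\mathcal{B}+\a$ by tiling with half-open unit cubes, and both control the discrepancy by showing that the $\eps$-neighbourhood of $\partial(T\mathcal{B})$ rescales to $T^n$ times an $\eps/T$-neighbourhood of $\partial\mathcal{B}$, giving the $O(T^{n-1})$ error. You have simply spelled out the $\sqrt{n}$-diameter step that the paper leaves implicit.
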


\begin{proof}
Observe that the $\eps$-neighbourhood of $\partial T\mathcal{B}$ arises
as dilation by $T$ of the $\eps T^{-1}$-neighbourhood of
$\partial \mathcal{B}$ and has volume $O(\eps T^{n-1})$.
The lemma follows (cf.\ \cite[App.~A]{GT}) since 
$$\#\left(\ZZ^n\cap(T\mathcal{B} + \a)\right)=
\vol\left( (\ZZ^n \cap(T\mathcal{B} + \a))+[0,1)^n\right),$$
and the set in the latter volume agrees with $T\mathcal{B} + \a$ outside an
$O(1)$-neighbourhood of $\partial T\mathcal{B} + \a$.
\end{proof}

It is not hard to see that any non-empty bounded set in $\RR^n$ whose boundary is
$(n-1)$-Lipschitz parametrisable satisfies the hypotheses of Lemma
\ref{lem:2}.
This follows, for example, from the proof of Lemma 2 in \cite[\S6]{marcus}.
Similarly, bounded convex sets in $\RR^n$  satisfy the hypotheses of
the lemma (see \cite[Cor.~A.2]{GT}, for example).

Given a finite set of fixed regions
$\mathcal B_1, \dots, \mathcal{B}_m$ to which Lemma~\ref{lem:2} applies, 
the hypotheses of the lemma are also met by 
any set which arises through unions and intersections of these sets. 
In particular it applies to intersections of bounded convex sets with bounded sets having
$(n-1)$-Lipschitz parametrisable boundary.

\subsection{Complex analysis} 
Throughout this section we will write $\sigma$ for the real part of a complex number $s\in \CC$. 
In the course of Sections \ref{s:majorant} and \ref{s:linear-forms} we
will encounter several truncated Euler products of the following form. 
For a given constant $C>0$, given $x\geq 1$ and a given multiplicative
arithmetic function $h:\NN \to \CC$, define the Euler product 
$$
E_{C,x}(s;h)=
 \prod_{C < p < x} 
 \left(1 + \sum_{k\geq 1} \frac{h(p^k)}{ p^{sk}}\right),
$$
for $\sigma>1$.
Since the product is truncated, one expects that $E_{C,x}(1+s_0;h)$ is
well approximated by its value at $s=1$, provided that $|s_0|$ is sufficiently
small and one has some control on $h$.
The following result makes this statement precise.

\begin{lemma} \label{lem:E-asymp}
Let $c>0$ be a constant. 
Let $H\geq 1$ and suppose $h:\NN \to \CC$ is a multiplicative function
satisfying $|h(p^k)| \leq H^k$ at all prime powers $p^k$.   
Then  the Euler product $E(s)=E_{3eH,x}(s;h)$ satisfies
$$
E(1+s_0) = E(1) + O(|s_0|(\log x)^{O(1)}). 
$$
uniformly in $x$, 
for $s_0 \in \CC$ with $|s_0|\leq x^{-c}$.
Furthermore, we have 
\begin{equation}\label{eq:number}
|E(1)| \asymp
\left| \prod_{3eH < p < x} 
 \Big(1 + \frac{h(p)}{p}\Big)\right|.
\end{equation}
The implied constants in these estimates are allowed to depend on $c$ and $H$.
\end{lemma}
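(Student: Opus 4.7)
The plan is to handle the two claims separately, with part 1 following from a telescoping argument combined with standard Mertens-type bounds, and part 2 from isolating the $k=1$ term in each Euler factor.

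\textbf{Part 1.} Writing $f_p(s) = 1 + \sum_{k \geq 1} h(p^k)/p^{sk}$, I would use the telescoping identity
\begin{equation*}
E(1+s_0) - E(1) = \sum_{3eH < p < x} \Bigl(\prod_{3eH < q < p} f_q(1+s_0)\Bigr)\bigl(f_p(1+s_0) - f_p(1)\bigr)\Bigl(\prod_{p < q < x} f_q(1)\Bigr).
\end{equation*}
The two bracketed products are each bounded by $(\log x)^{O(1)}$: since $|h(p^k)| \leq H^k$ and $p > 3eH$, and $|s_0| \leq x^{-c}$ forces $p^{|s_0|} \leq x^{x^{-c}} = 1 + o(1)$, the series $\sum_{k \geq 1}|h(p^k)|/p^{(1\pm|s_0|)k}$ is a geometric sum of ratio at most $H p^{|s_0|}/p < 1/(2e)$, giving $|f_p(s)| = 1 + O_H(1/p)$; Mertens' theorem then bounds $\prod |f_q|$ by $(\log x)^{O_H(1)}$. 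For the individual differences, from $|p^{-s_0 k}-1| \leq k|s_0|\log p\cdot p^{|s_0|k}$ one obtains
\begin{equation*}
|f_p(1+s_0) - f_p(1)| \leq |s_0|\log p\sum_{k \geq 1} k\left(\tfrac{H p^{|s_0|}}{p}\right)^k = O_H\!\left(\tfrac{|s_0|\log p}{p}\right),
\end{equation*}
where the geometric-type sum converges by the same $3eH$ lower bound on $p$. Summing $\log p/p$ over $p < x$ contributes $O(\log x)$, so the total is $|s_0|(\log x)^{O(1)}$.

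\textbf{Part 2.} For every $p > 3eH$ write
\begin{equation*}
f_p(1) = \Bigl(1 + \frac{h(p)}{p}\Bigr)\Bigl(1 + \frac{\varepsilon_p}{1 + h(p)/p}\Bigr), \qquad \varepsilon_p = \sum_{k \geq 2}\frac{h(p^k)}{p^k}.
\end{equation*}
Since $|h(p)|/p \leq H/p < 1/(3e) < 1/2$, the factor $1 + h(p)/p$ is bounded away from $0$, and a geometric estimate gives $|\varepsilon_p| \leq \sum_{k \geq 2}(H/p)^k = O_H(1/p^2)$. Consequently
\begin{equation*}
\prod_{3eH < p < x}\frac{f_p(1)}{1+h(p)/p} = \prod_{3eH < p < x}\bigl(1 + O_H(1/p^2)\bigr)
\end{equation*}
is an absolutely convergent (partial) product, hence bounded above and below by positive constants depending only on $H$. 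This yields the desired $\asymp$.

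\textbf{Main obstacle.} The only point that requires care is ensuring that the Euler factors $f_p(s)$ remain uniformly controlled under the perturbation $s \mapsto 1+s_0$; this is precisely why the product is truncated at $p > 3eH$ with the safety factor $3e$ rather than just $e$, so that $H p^{|s_0|}/p$ stays bounded away from $1$ across the whole range. Once this uniformity is in hand, both claims reduce to routine applications of the geometric series and Mertens' theorem.
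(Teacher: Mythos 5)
Your proof is correct, but your Part 1 argument differs from the paper's: you expand $E(1+s_0)-E(1)$ via the telescoping identity over primes and then estimate each individual factor difference $f_p(1+s_0)-f_p(1)$ directly, whereas the paper treats $E$ as a holomorphic function on a disc $\{|s|<(\log x)^{-1}\}$ about $s=0$, writes the Taylor expansion $E(1+s)=\sum_j s^j E^{(j)}(1)/j!$, and bounds the coefficients via Cauchy's inequality against $\max_{|s|=(\log x)^{-1}}|E(1+s)|\ll(\log x)^{O(1)}$. Your telescoping route is more elementary and hands-on---it avoids any appeal to complex function theory beyond absolute-value estimates---but requires the pointwise bound $|p^{-s_0 k}-1|\le k|s_0|\log p\cdot p^{k|s_0|}$ and a Mertens-type sum of $\log p/p$, which is essentially what Cauchy's inequality packages for the paper in one stroke. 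Both yield the same quality of error term. For Part 2, your factorisation $f_p(1)=(1+h(p)/p)(1+\varepsilon_p/(1+h(p)/p))$ with $\varepsilon_p=O_H(1/p^2)$ is, after taking logarithms, exactly the paper's use of the logarithmic series; the content is identical. One small point worth spelling out (which you implicitly handle): you should note that for $|s_0|\le x^{-c}$ the quantity $p^{|s_0|}\le x^{x^{-c}}$ tends to $1$ as $x\to\infty$, so that for $x$ large in terms of $c$ one has, say, $p^{|s_0|}\le 3/2$ uniformly in $p<x$, which is what keeps the ratio $Hp^{|s_0|}/p$ safely below $1/2$ throughout the argument; for bounded $x$ the lemma is trivial.
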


For $h$ satisfying $|h(p)|\leq H$ on the primes, we may combine
\eqref{eq:number} with Mertens's theorem to deduce that
$$
(\log x)^{-H} \ll |E(1)|\ll (\log x)^H.
$$
This shows that the main term dominates the error term in our asymptotic
formula for 
$E(1+s_0)$, when $|s_0|\leq x^{-c}$.

\begin{proof}[Proof of Lemma \ref{lem:E-asymp}]
We allow our implied constants to depend on $c$ and $H$.
Let 
$E(s)=E_{3eH,x}(s;h)$ and let  $s \in \CC$ be such that  
$|s| < (\log x)^{-1}$.
Then 
$$
\left|\sum_{k\geq 1} \frac{h(p^k)}{p^{(1+s)k}}\right|
\leq \frac{H}{p^{1+\sigma} - H}.
$$
This is at most $1/2$ for $3eH<p<x$, since 
$$p^{1+\sigma} \geq p^{1-(\log x)^{-1}} \geq p e^{-1} \geq 3H.$$ 
This shows that $E(1+s)$ is non-zero for $s$ satisfying
$|s| < (\log x)^{-1}$ and, furthermore, that $E$ is
holomorphic on a domain containing this disc.
The Taylor expansion about $1$ is given by
$$
E(1+s) = \sum_{j\geq 0} s^j \frac{E^{(j)}(1)}{j!}.
$$
Cauchy's inequality yields
\begin{align*}
\frac{|E^{(j)}(1)|}{j!}
&\leq (\log x)^j 
  \max_{|s|=(\log x)^{-1}}|E(1+s)|. 
\end{align*}
But the right hand side is bounded by
\begin{align*}
&\leq (\log x)^j 
 \prod_{3eH< p < x} 
 \Big(1 + \sum_{k\geq 1} \frac{|h(p^k)|}{ p^{(1 - 1/\log x)k}}\Big) \\
&\leq 
 (\log x)^j
 \prod_{3eH < p < x}
 \Big(1 + \frac{H}{p^{1-\frac{1}{\log x}} - H} \Big)\\
&\leq 
 (\log x)^j
 \prod_{3eH < p < x}
 \Big(1 + \frac{3eH}{2p} \Big)\\
&\ll
 (\log x)^{j+O(1)}.
\end{align*}
Thus for 
$|s_0| < x^{-c}$
we have 
\begin{align*}
|E(1)- E(1+s_0)|
\ll \sum_{j\geq 1} |s_0|^j(\log x)^{j+O(1)}
\leq |s_0| (\log x)^{O(1)}.
\end{align*}

To check the  final claim of the lemma, we recall that 
$|h(p^k)|\leq H^k$. 
Using the logarithmic series we therefore  deduce that
\begin{align*}
\log\left(
 E(1)\prod_{3eH<p<x}\Big(1+\frac{h(p)}{p}\Big)^{-1}\right)
&=
  \sum_{3eH<p<x} \left(
  \log \Big(1 + \sum_{k \geq 1} \frac{h(p^k)}{p^{k}}\Big)
  -\log \Big(1 + \frac{h(p)}{p}\Big) \right) \\
&=
  \sum_{3eH<p<x} \Big(
  \sum_{k \geq 2} \frac{h(p^k)}{p^{k}} +O\Big(\frac{H^2}{p^{2}}\Big)\Big).
\end{align*}
But this is $O(1)$, which therefore concludes the proof.
\end{proof}

\subsection{Lifting lemmas}

This section establishes two fairly general results of Hensel type,
the second of which will be applied in Sections \ref{s:norms},
\ref{s:counting}, \ref{s:nilsequences} and \ref{s:proof}.
Let $p$ denote a prime number and let $v_p(n_1,\dots,n_s)$ denote the $p$-adic order 
of the greatest common divisor of any $s$-tuple of integers $(n_1,\dots,n_s)$. 

\begin{lemma}\label{lem:hensel}
Let $m,\ell, \delta\in \ZZ_{\geq 0}$, with 
$$
m\geq 2\delta+1, \quad 0\leq \delta\leq m-\ell.
$$
Suppose we are given a polynomial $F\in \ZZ[t_1,\dots,t_s]$,  $A\in \ZZ$
and $\a\in \ZZ^s$. 
Let
$$
R_\delta(p^m,A; p^\ell)=\left\{\t\in (\ZZ/p^m\ZZ)^s: 
\begin{array}{l}
F(\t)\=A\bmod{p^m}, ~v_p(\nabla F(\t))=\delta\\
\t\=\a\bmod{p^{\ell}}
\end{array}
\right\}.
$$
Then we have 
$$
\frac{\#R_\delta(p^m,A; p^\ell)}{p^{m(s-1)}} = 
\frac{\#R_\delta(p^{m+1},A+kp^m; p^\ell)}{p^{(m+1)(s-1)}},
$$ 
uniformly for $k\in \ZZ/p\ZZ$.
\end{lemma}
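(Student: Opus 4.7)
The plan is to analyse the reduction map
$$
\mathrm{red}\colon R_\delta(p^{m+1}, A+kp^m; p^\ell) \to R_\delta(p^m, A; p^\ell)
$$
induced by reducing $\t'$ modulo $p^m$. Because $m \geq 2\delta+1 > \delta$ and $m \geq \ell$ (a consequence of $\delta \leq m-\ell$), every condition defining the target set is inherited by the reduction, so the map is well-defined, and it suffices to show that the total fibre size equals $p^{s-1}\cdot\#R_\delta(p^m,A;p^\ell)$ uniformly in $k$. For a fixed $\t \in R_\delta(p^m, A; p^\ell)$ and $\y \in (\ZZ/p\ZZ)^s$, the first-order Taylor expansion
$$
F(\t + p^m\y) \equiv F(\t) + p^m\, \nabla F(\t)\cdot \y \pmod{p^{m+1}}
$$
holds (the remainder is $O(p^{2m})$ and $2m\geq m+1$), while the two conditions in the definition of $R_\delta(p^{m+1},\,\cdot\,;p^\ell)$ other than the one on $F$ itself are automatic for these lifts.

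When $\delta = 0$ the vector $\nabla F(\t)$ is non-zero modulo $p$, so the linear equation $\nabla F(\t)\cdot \y \equiv k - c_\t \pmod{p}$, with $c_\t \in \ZZ/p\ZZ$ defined by $F(\t) \equiv A + p^m c_\t \pmod{p^{m+1}}$, has exactly $p^{s-1}$ solutions in $\y$ for every $k$, and summing over $\t$ immediately gives the identity.

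The delicate case is $\delta \geq 1$: here $\nabla F(\t)\equiv \0 \pmod{p^\delta}$ forces $p^m \nabla F(\t)\cdot \y$ to vanish modulo $p^{m+1}$, so $\t$ contributes $p^s$ lifts when $c_\t \equiv k \pmod p$ and none otherwise. The lemma thus reduces to the claim that the residues $c_\t$ modulo $p$ are equidistributed as $\t$ varies over $R_\delta(p^m, A; p^\ell)$. I would prove this by letting the group $H = \{p^{m-\delta}\y_0 : \y_0 \in (\ZZ/p^\delta\ZZ)^s\}$ act on $R_\delta(p^m,A;p^\ell)$ by translation. A second Taylor expansion,
$$
F(\t + p^{m-\delta}\y_0) \equiv F(\t) + p^m\,(p^{-\delta}\nabla F(\t))\cdot \y_0 \pmod{p^{m+1}},
$$
whose remainder $O(p^{2(m-\delta)})$ is absorbed precisely when $m\geq 2\delta+1$, together with the parallel expansion of $\nabla F$ modulo $p^{\delta+1}$, shows that the action is well-defined on $R_\delta(p^m,A;p^\ell)$; the hypothesis $\delta \leq m-\ell$ is what guarantees that the congruence $\t\equiv\a \pmod{p^\ell}$ is preserved. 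The induced shift $c_{\t + p^{m-\delta}\y_0} - c_\t \equiv (p^{-\delta}\nabla F(\t))\cdot \y_0 \pmod{p}$ is a surjective homomorphism $H \to \ZZ/p\ZZ$, since $p^{-\delta}\nabla F(\t)$ is a well-defined, non-zero vector modulo $p$. This forces equidistribution of $c_\t$ modulo $p$ within each $H$-orbit, and hence globally.

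The main obstacle is precisely this equidistribution step. When $\delta \geq 1$ the naive lifting $\t \mapsto \t + p^m\y$ can no longer separate the various residues $A+kp^m$, and one must supplement it with the finer shift $\t \mapsto \t + p^{m-\delta}\y_0$; this is the moment at which the full strength of both hypotheses $m \geq 2\delta+1$ and $\delta \leq m-\ell$ is used.
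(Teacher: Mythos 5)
Your proof is correct, and it rests on the same two ingredients as the paper's: the first-order Taylor expansion whose remainder is absorbed by $m\geq 2\delta+1$, and the invariance of $R_\delta(p^m,A;p^\ell)$ under translation by $p^{m-\delta}\ZZ^s$ (where $\delta\leq m-\ell$ preserves the congruence $\t\equiv\a\bmod{p^\ell}$). The organization, however, is different. The paper avoids the case split on $\delta$ entirely: it fixes a lift $\t\in(\ZZ/p^{m+1}\ZZ)^s$ of a point of $R_\delta(p^m,A;p^\ell)$, notes that $\t+p^{m-\delta}\t'$ sweeps out the $p^s$ cosets mod $p^{m+1-\delta}$ above the given $p^{m-\delta}$-coset, and reads off the single linear congruence $p^{-m}(F(\t)-A)+p^{-\delta}\t'\cdot\nabla F(\t)\equiv k\bmod p$ with $p^{s-1}$ solutions in $\t'\bmod p$, which already encompasses both your $\delta=0$ and $\delta\geq 1$ analyses. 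You instead lift naively by $p^m\y$, observe that this is too coarse when $\delta\geq 1$ (the fibre is all-or-nothing), and then bring in the $p^{m-\delta}$-translation separately to establish equidistribution of the constants $c_\t$. Both routes work; the paper's is a bit shorter because the two shifts are merged, while yours isolates more explicitly the structural reason that the naive lift fails for $\delta\geq 1$. One small point you should make explicit: in the case $\delta\geq 1$, the constant $c_\t$ is well-defined as a function of the residue class $\t\bmod p^m$, which is needed for the equidistribution claim to even make sense; this holds because changing the integer representative of $\t$ by $p^m\y'$ alters $F(\t)$ by $p^m\nabla F(\t)\cdot\y'\equiv 0\bmod{p^{m+1}}$, using $\nabla F(\t)\equiv\mathbf{0}\bmod p$.
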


\begin{proof}
For any $\t\in R_\delta(p^m,A;p^\ell)$ and any $\t'\in\ZZ^{s}$, 
the condition $m \geq 2\delta+1$ implies that
\begin{align*}
F(\t+p^{m-\delta}\t')
&\equiv F(\t) + p^{m-\delta} \t'.\nabla F(\t) \bmod{p^m}\\
&\equiv A \bmod{p^m}.
\end{align*} 
Similarly, we deduce that
\begin{align*}
\nabla F(\t+p^{m-\delta}\t') -\nabla F(\t)
&\equiv \mathbf{0}
   \bmod{p^{m-\delta}}\\
&\equiv \mathbf{0} \bmod{p^{\delta+1}},
\end{align*}
and, since $\ell \leq m-\delta$, we also have
$\t+p^{m-\delta}\t'\=\a\bmod{p^\ell}$.
Thus $R_\delta(p^m,A;p^\ell)$ consists of cosets modulo $p^{m-\delta}$.

Let  $\t \in (\ZZ/p^{m+1}\ZZ)^s$ such that 
$\t \bmod{p^{m}} \in R_\delta(p^m,A;p^\ell)$.
Then $\t + p^{m-\delta}\t'$ runs through $p^s$ different cosets modulo
$p^{m+1-\delta}$ as $\t'$ runs through $\ZZ^s$.
Moreover, for any $k\in \ZZ/p\ZZ$, we have 
$\t+p^{m-\delta}\t'\in R_\delta(p^{m+1},A+kp^m; p^\ell)$
if and only if 
$$
p^{-m}(F(\t)-A)+p^{-\delta}\t'.\nabla F(\t)\equiv k \bmod{p},
$$
for which there are precisely $p^{s-1}$ incongruent solutions in $\t'$
modulo $p$.
This establishes the lemma.
\end{proof}

Now let 
 $G\in \ZZ[x_1,\dots,x_n]$ be a homogeneous polynomial of degree $n$
and let $A\in \ZZ$.  For given $\a\in \ZZ^n$ and $m,\ell\in \ZZ_{\geq
0}$, 
let
$$
\gamma(p^m,A; p^\ell)=\#\left\{\x\in (\ZZ/p^{m}\ZZ)^n: 
\begin{array}{l}
G(\x)\=A\bmod{p^m}\\
\x\=\a\bmod{p^\ell}
\end{array}
\right\}.$$
The counting function $\gamma$ satisfies the following lifting property.

\begin{lemma}[cf.\ {\cite[Cor.~6.4]{lm1}}]
\label{lem:C6.4}
Assume that $m\geq 1$, $A \not= 0$ and 
$$\ell+v_p(A)+v_p(n)< \frac{m}{2}.
$$
Then we have 
$$
\frac{\gamma(p^m,A; p^\ell)}{p^{m(n-1)}} = 
\frac{\gamma(p^{m+1},A+kp^m; p^\ell)}{p^{(m+1)(n-1)}},
$$ 
uniformly for $k\in \ZZ/p\ZZ$.
\end{lemma}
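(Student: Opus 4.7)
The plan is to deduce Lemma \ref{lem:C6.4} from Lemma \ref{lem:hensel} applied with $F = G$ and $s = n$, by partitioning the counting set according to the $p$-adic size of the gradient. For each $\delta \geq 0$ we have
$$
\gamma(p^m, A; p^\ell) = \sum_{\delta \geq 0} \# R_\delta(p^m, A; p^\ell),
$$
and analogously for $\gamma(p^{m+1}, A + kp^m; p^\ell)$. Once we verify, for each contributing $\delta$, the hypotheses $m \geq 2\delta + 1$ and $\delta \leq m - \ell$ of Lemma \ref{lem:hensel} (and their counterparts at level $m+1$), that lemma delivers $\# R_\delta(p^{m+1}, A + kp^m; p^\ell) = p^{n-1}\, \# R_\delta(p^m, A; p^\ell)$ uniformly in $k$, and summing over $\delta$ followed by division by $p^{(m+1)(n-1)}$ yields the claimed identity.

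The crucial input is Euler's identity for the homogeneous polynomial $G$ of degree $n$:
$$
\sum_{i=1}^n t_i\, \partial_i G(\t) = n\, G(\t).
$$
For any $\t$ with $G(\t) \equiv A \bmod{p^m}$, the right-hand side reduces to $nA \bmod p^m$. The assumption $\ell + v_p(A) + v_p(n) < m/2$ forces $v_p(nA) = v_p(n) + v_p(A) < m$, so the reduction genuinely has this valuation in $\ZZ/p^m\ZZ$. Since the left-hand side, viewed modulo $p^m$, has $p$-adic valuation at least $\delta = v_p(\nabla G(\t))$, we conclude that
$$
\delta \leq v_p(n) + v_p(A).
$$
The identical bound holds at level $m+1$ because $v_p(A + kp^m) = v_p(A)$; in particular $\delta$ is always strictly less than $m$, so $\nabla G$ is never identically zero modulo $p^m$ at a solution and the decomposition above is genuinely finite.

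With this bound secured, the inequalities required by Lemma \ref{lem:hensel} follow at once: from $\delta \leq v_p(n) + v_p(A) < m/2 - \ell$ we obtain $2\delta + 1 \leq m$ and $\delta + \ell \leq m/2 \leq m - \ell$, and the analogous conditions at level $m+1$ are even weaker. Applying Lemma \ref{lem:hensel} term by term and summing over $\delta$ then completes the proof. The main obstacle is really conceptual rather than technical: one has to spot that homogeneity, via Euler's identity, pins down the $p$-adic size of $\nabla G$ at any solution in terms of $v_p(n)$ and $v_p(A)$. Everything else is bookkeeping against the hypotheses of Lemma \ref{lem:hensel}.
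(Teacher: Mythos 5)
Your proposal is correct and follows essentially the same route as the paper: decompose $\gamma$ according to $\delta = v_p(\nabla G)$, use Euler's identity $\x.\nabla G(\x) = nG(\x)$ to bound $\delta < m/2 - \ell$ via the hypothesis on $\ell + v_p(A) + v_p(n)$, and then apply Lemma~\ref{lem:hensel} with $F = G$, $s = n$ to each piece. The paper phrases the Euler-identity step as $p^{\min\{\delta,m\}}\mid nA$ rather than spelling out $\delta \leq v_p(n) + v_p(A)$, but this is the same observation.
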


\begin{proof}
Let $m\geq 1$ and let 
$\x\bmod{p^m}$ be such that $G(\x)\=A\bmod{p^m}$ and 
$\x\=\a\bmod{p^\ell}$ and 
$p^\delta\mid \nabla G(\x)$, for some 
$\delta\geq 0$. Since $\x.\nabla G(\x)=nG(\x)$ we conclude that 
$
p^{\min\{\delta,m\}} \mid nA,
$
whence $\delta<\frac{m}{2}-\ell$
under the hypotheses of the lemma.  
In particular this inequality implies that
$m\geq 2\delta+1$ and $\delta\leq m-\ell$.
Observe that 
\begin{align*}
\gamma(p^m,A; p^\ell)
&=\sum_{0\leq \delta <\frac{m}{2}-\ell} 
  \#R_{\delta}(p^m,A; p^\ell),\\
\gamma(p^{m+1},A+kp^m; p^\ell)
&=\sum_{0\leq \delta <\frac{m}{2}-\ell} 
  \#R_{\delta}(p^{m+1},A+kp^m; p^\ell),
\end{align*}
for any $k\in \ZZ/p\ZZ$, in the notation of Lemma \ref{lem:hensel}.
The statement of Lemma~\ref{lem:C6.4} therefore follows from 
Lemma~\ref{lem:hensel}  with $F=G$ and $s=n$.
\end{proof}

\bigskip
\section{Norm forms modulo $p^m$}\label{s:norms}

Throughout this section $K/\QQ$ will denote a finite extension of degree
$n$, with integral basis $\{\omega_1,\dots,\omega_n\}$ for the ring of
integers $\fo=\fo_K$.
Suppose we are given an integral ideal $\fa\subset \fo$, with
corresponding $\ZZ$-basis $\{\alpha_1,\dots,\alpha_n\}$. 
These bases are both $\QQ$-bases for  $K/\QQ$.
We let $\Delta(\alpha_1,\dots,\alpha_n)=|\det(\sigma_i(\alpha_j))|^2$,
and similarly for $\{\omega_1,\dots,\omega_n\}$.
Let $c_{k \ell}\in \ZZ$ be such that 
\begin{equation}\label{eq:change}
\alpha_k=\sum_{\ell=1}^n c_{k \ell} \omega_\ell, 
\end{equation}
for $1\leq k\leq n$.
Then according to \cite[Satz 40 and 103]{landau_alg}, we have 
$$
\Delta(\alpha_1,\dots,\alpha_n)=(\n \fa)^2 |D_K|=
|\det(c_{k \ell})|^2
\Delta(\omega_1,\dots,\omega_n).
$$
In particular  $\n \fa=|\det(c_{k \ell})|$.

The norm forms we discuss in this section take the more general shape
\begin{equation}\label{eq:blue}
\nf(\x;\fa)=N_{K/\QQ}(x_1\alpha_1+\dots+x_n\alpha_n),
\end{equation}
which defines a homogeneous polynomial of degree $n$ with coefficients in
$\ZZ$. 
Note that $\nf(\x;\fo)=\nf_K(\x)$ in our earlier notation, which we will often abbreviate by $\nf(\x)$.
Given $A\in \ZZ$, $\x_0\in \ZZ^n$ and $M,q\in \NN$ with 
$M\mid q$, we define the counting function
\begin{equation}\label{eq:def-rho}
\rho(q,A,\fa;M)=\#\left\{\x\in (\ZZ/q\ZZ)^n:  
\begin{array}{l}
\nf(\x;\fa)\= A \bmod{q}\\
\x\=\x_0 \bmod{M}
\end{array}
\right\}.
\end{equation}
Such counting functions appear naturally when analysing weak
approximation conditions at non-archimedean places.
In the special case $\fa=\fo$, we put 
\begin{equation}\label{eq:ask}
\rho(q,A;M)=
\rho(q,A,\fo;M).
\end{equation}
Likewise, when $M=1$, we set
$$
\rho(q,A,\fa)=\rho(q,A,\fa;1), \quad 
\rho(q,A)=
\rho(q,A,\fo;1).
$$

This section is devoted to a detailed  analysis of
the quantities $\rho(q,A,\fa)$ and $\rho(q,A)$.
When $M\neq 1$ it will suffice for our purposes to note that 
$\rho(q,A,\fa;M) \leq \rho(q,A,\fa)$ and apply the results for $M=1$. 
By the Chinese remainder theorem we may consider
$\rho(q,A,\fa)$ and $\rho(q,A)$ in the special  case  $q=p^m$ for a
rational prime $p$ and $m\in \NN$. We will mainly be concerned 
with the situation for $p \nmid  D_K \n\fa$.
Our first result shows that any two norm forms are locally equivalent.

\begin{lemma}[cf.\ {\cite[Lemma 4.2]{lm1}}]\label{lem:equivalent'}
Let $m\in \NN$ and let  $p\nmid \n \fa$. Then we have
$$\rho(p^m,A,\fa)=\rho(p^m,A).
$$
\end{lemma}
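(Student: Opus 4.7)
The plan is to exploit the explicit change-of-basis matrix between $\{\alpha_1,\dots,\alpha_n\}$ and $\{\omega_1,\dots,\omega_n\}$ to pass from the norm form $\nf(\x;\fa)$ to $\nf_K(\y)$ via an invertible linear substitution modulo $p^m$. The hypothesis $p\nmid \n\fa$ is exactly what is needed to make that substitution a bijection on $(\ZZ/p^m\ZZ)^n$.

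More concretely, I would first compute using \eqref{eq:change} that
\[
\nf(\x;\fa) = N_{K/\QQ}\Big(\sum_{k=1}^n x_k\alpha_k\Big) = N_{K/\QQ}\Big(\sum_{\ell=1}^n \Big(\sum_{k=1}^n c_{k\ell}x_k\Big)\omega_\ell\Big) = \nf_K(C^{\top}\x),
\]
where $C=(c_{k\ell})$ is the integer matrix of \eqref{eq:change}. Hence the linear map $T:\x\mapsto C^{\top}\x$ carries solutions of $\nf(\x;\fa)\equiv A\bmod{p^m}$ bijectively onto solutions of $\nf_K(\y)\equiv A\bmod{p^m}$, \emph{provided} that $T$ induces a bijection on $(\ZZ/p^m\ZZ)^n$.

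To verify that $T$ is a bijection modulo $p^m$, I would invoke \eqref{eq:peter'} and \eqref{eq:peter} to read off $|\det C|=\n\fa$, so that the hypothesis $p\nmid \n\fa$ makes $\det C$ a unit in $\ZZ/p^m\ZZ$. Therefore $C^{\top}$ is invertible as an endomorphism of $(\ZZ/p^m\ZZ)^n$, and the substitution $T$ gives the required bijection between the two solution sets, establishing $\rho(p^m,A,\fa)=\rho(p^m,A)$.

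There is really no serious obstacle here: the argument is essentially bookkeeping about the discriminant relation. The only point that deserves a line of care is the identification $|\det C|=\n\fa$, which follows immediately by comparing \eqref{eq:peter'} and \eqref{eq:peter} (both sides equal $\Delta(\alpha_1,\dots,\alpha_n)$, and $\Delta(\omega_1,\dots,\omega_n)=|D_K|\neq 0$).
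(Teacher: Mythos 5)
Your proof is correct and follows essentially the same approach as the paper: both compute $\nf(\x;\fa)=\nf_K(\ma{C}^{t}\x)$ from the change-of-basis relation \eqref{eq:change} and then use $|\det \ma{C}|=\n\fa$ together with $p\nmid\n\fa$ to conclude that the substitution is a bijection modulo $p^m$. The only cosmetic difference is that the paper phrases this as an equivalence of forms over $\ZZ_p$, while you phrase it directly as a bijection on $(\ZZ/p^m\ZZ)^n$.
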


\begin{proof}
Let $\ma{C}\in M_n(\ZZ)$ be the matrix with coefficients $c_{k \ell}$ as
in \eqref{eq:change}.
Since $\n \fa=|\det \ma{C}|$, it follows that $p\nmid \det \ma{C}$,
whence $\ma{C}$ and $\ma{C}^t$ are invertible in $\ZZ_p$. 
Let $\bom=(\omega_1,\dots,\omega_n)$. 
Then, for any $\x\in \ZZ^n$, we have 
\begin{align*}
\nf (\x \ma{C}^t; \fo)
&= N_{K/\QQ}\left(\bom. (\x \ma{C}^t)\right)\\
&= N_{K/\QQ}\left((\ma{C}\bom). \x   \right)\\
&= \nf(\x;\fa).
\end{align*}
It follows that
$\nf(\x;\fa)$  and $\nf(\x;\fo)$
are equivalent over $\ZZ_p$, which suffices for the lemma.
\end{proof}

We now have everything in place to record our main result in this
section.

\begin{lemma}\label{lem:A>0}
Let $A\in \ZZ$, let $m\in \NN$, let $p$ be a prime and let $k=v_p(A).$
Then we have 
$$
\frac{\rho(p^m,A)}{p^{m(n-1)}}\ll \min\left\{k+1,m\right\}^n.
$$
Suppose that $p\nmid D_K$ and $k<m$.
Then we have 
$$
\frac{\rho(p^m,A)}{p^{m(n-1)}}
=
r_K(p^k)
\left(1-\frac{1}{p}\right)^{-1}
\prod_{\fp\mid p} \left(1-\frac{1}{\n \fp}\right).
$$
\end{lemma}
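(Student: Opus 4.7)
The plan is to apply Lemma \ref{lem:limit}, which reduces the computation of $\rho(p^m,A)$ to the mean value of $r_K$ along the arithmetic progression $l\equiv A\bmod{p^m}$; the multiplicativity of $r_K$, together with Lemma \ref{lem:ff} and the bound \eqref{eq:r-upper}, then provides the numerical input.

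To establish the explicit formula, I would assume $p\nmid nD_K$ and $k=v_p(A)<m$, and write $A=p^k A'$ with $p\nmid A'$. Since $k<m$, any $l\equiv A\bmod{p^m}$ satisfies $v_p(l)=k$ exactly, so factors uniquely as $l=p^k\ell$ with $\ell\equiv A'\bmod{p^{m-k}}$; the coprimality $\gcd(\ell,p)=1$ is then automatic. Multiplicativity of $r_K$ gives
$$S(x;p^m,A)=r_K(p^k)\sum_{\substack{\ell\leq x/p^k\\ \ell\equiv A'\bmod{p^{m-k}}}} r_K(\ell).$$
A second application of Lemma \ref{lem:limit} to the inner sum, followed by an evaluation of $\rho(p^{m-k},A')$ via Lemma \ref{lem:ff} (both legitimate since $p\nmid nD_K A'$), and then a final appeal to the first part of Lemma \ref{lem:limit} applied to $\rho(p^m,A)$ itself, produces the claimed formula for $\rho(p^m,A)/p^{m(n-1)}$.

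The upper bound splits into three cases. When $k<m$ and $p\nmid nD_K$, it follows at once from the formula just established, using $r_K(p^k)\leq(k+1)^n$ from \eqref{eq:r-upper} and the absolute boundedness of the product over $\fp\mid p$. When $k\geq m$, we have $A\equiv 0\bmod{p^m}$, so $\rho(p^m,A)=\rho(p^m,0)$, and either form of Lemma \ref{lem:limit} reduces matters to bounding $S(x;p^m,0)/x$ in the limit. Stratifying by $v=v_p(l)\geq m$ and writing $l=p^v\ell'$ with $\gcd(\ell',p)=1$ gives
$$S(x;p^m,0)=\sum_{v\geq m}r_K(p^v)\sum_{\substack{\ell'\leq x/p^v\\ \gcd(\ell',p)=1}}r_K(\ell').$$
The inner sum is $O(x/p^v)$ by \eqref{eq:210'} combined with M\"obius inversion on the coprimality constraint, $r_K(p^v)\leq(v+1)^n$ by \eqref{eq:r-upper}, and the elementary tail bound $\sum_{v\geq m}(v+1)^np^{-v}\ll m^np^{-m}$ is uniform in $p\geq 2$; together these deliver $\rho(p^m,0)/p^{m(n-1)}\ll m^n$. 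Finally, when $k<m$ and $p\mid nD_K$, the second part of Lemma \ref{lem:limit} reduces matters to bounding $\lim_{x\to\infty} S(x;p^m,Ae)/x$ for units $e\in(\ZZ/p^m\ZZ)^*$; the factorisation $l=p^k\ell$ from the first step still applies, and the inner AP-sum is handled by a Tauberian analysis of the twisted Hecke $L$-functions $L_K(s,\chi)$ for characters $\chi$ modulo $p^{m-k}$, which yields the required bound $\ll y/p^{m-k}$.

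The step I anticipate as the main obstacle is this last one, where the clean local factorisation underlying Lemma \ref{lem:ff} is unavailable at ramified or wild primes and one must fall back on the general equidistribution of $r_K$ in arithmetic progressions coprime to $p$. The $k\geq m$ case is routine, but the uniformity of its tail estimate in $p\geq 2$ should be made explicit.
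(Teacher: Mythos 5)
Your derivation of the explicit formula is essentially the paper's: write $A = p^kA'$ with $p\nmid A'$, factor $l = p^k\ell$ (valid since $k<m$ forces $v_p(l)=k$), extract $r_K(p^k)$ by multiplicativity, and convert via Lemma~\ref{lem:limit} (applied twice) and Lemma~\ref{lem:ff}. For the upper bound, however, you take a genuinely different route. The paper handles every case through a single sieve input, Shiu's bound \eqref{eq:shiu}: $S(x;q,a)\ll x/\phi(q)$ uniformly for coprime $a,q$ with $0\leq a<q<x^{3/4}$. Applied with $q=p^{m-k}$ when $k<m$, or $q=1$ when $k\geq m$, this gives the needed $\ll y/q$ estimate without ever distinguishing $p\mid nD_K$ from $p\nmid nD_K$. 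You split three ways instead: when $p\nmid nD_K$ and $k<m$ you read the bound off the explicit formula, which is legitimate and arguably cleaner; when $k\geq m$ you stratify by $v_p(l)$ and invoke \eqref{eq:210'}, which coincides with the paper's use of Shiu at $q=1$, and the tail estimate is indeed uniform in $p\geq 2$ via $(m+j+1)^n\leq(m+1)^n(j+1)^n$; when $p\mid nD_K$ and $k<m$ you propose a Tauberian analysis of twisted Hecke $L$-functions modulo $p^{m-k}$. That last route does work---character orthogonality isolates the principal character, whose contribution in the limit is $\ll 1/\phi(p^{m-k})$, while non-principal characters contribute $o(y)$ by entireness of the associated $L$-functions---but it imports Hecke's analytic continuation, which is considerably heavier than Shiu's elementary sieve argument, and one must still verify the residue of the principal twist is bounded uniformly in the modulus. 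Since all that is needed is an upper bound (the limit exists anyway by Lemma~\ref{lem:limit}), the Shiu-based treatment is the more economical choice and keeps the argument uniform across the ramified/unramified divide, precisely dissolving the obstacle you correctly flagged.
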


\begin{proof}
Our proof of this result was suggested to us by the anonymous referee and is 
based on the observation that $\rho(p^m,A)$ is equal to the number of 
$\alpha\in \fo$, modulo $p^m$, for which $N_{K/\QQ}(\alpha)\equiv A \bmod{p^m}$.
It will be convenient to temporarily  abbreviate $N_{K/\QQ}$ by $N$ in what follows. 

We first consider the special case where $p^m\mid A$. 
In this case every $\alpha$ that is counted by $\rho(p^m,A)$ will have an ideal 
divisor $\fq$ such that $p^m\mid \n \fq$, and with the property that $p^m\nmid \n 
\fq'$ for every proper divisor $\fq'\mid \fq$.
Thus $\fq=\prod_\fp  \fp^{e_\fp}$ for prime ideal divisors $\fp\mid (p)$, with 
$e_\fp \leq m$ for each $\fp$. Since there are at most $n$ prime ideal factors of 
$\fp$, there are at most $(m+1)^n$ possibilities for $\fq$. 
For each such $\fq$ the number of $\alpha \bmod {p^m}$ with $\fq\mid (\alpha)$ is 
$p^{mn}/\n \fq\leq p^{m(n-1)}.$ All together, this yields
\begin{equation}\label{eq:rhb-1}
\rho(p^m,A)\leq (m+1)^n p^{m(n-1)} \ll m^n p^{m(n-1)}
\end{equation}
whenever $p^m\mid A$.

Suppose now that $p^k\| A$ with $0\leq k<m$. Then, for any $\alpha$ as above, 
$p^k\| N(\alpha)$ and there is a unique ideal $\fq$ containing $\alpha$, 
with $\n \fq=p^k$. 
Note that $\fq$ contains $p^k$.
It follows that 
$$
\rho(p^m,A)=\sum_{\fq}
\#\{\alpha \bmod{p^m}:  \alpha\in \fq, ~ N(\alpha)\equiv A\bmod{p^m}\},
$$
where the sum is extended over integral ideals $\fq$ of norm $p^k$.
The next goal is to relate, for any of these $\fq$, the cardinality above to 
$\rho(p^{m-k},B)$ for some $B$ that is coprime to $p$.
To this end, recall that there exists a prime ideal $\fr$ in the ideal class 
$[\fq]$ which is coprime to $(p)$.
Suppose that $\fq(\beta)=\fr(\gamma)$, so that $\alpha\in \fq$ if and only if 
$\alpha\beta\gamma^{-1}\in \fr$.
We now have 
\begin{align*}
\#\{\alpha &\bmod{p^m} :   \alpha\in \fq, ~ N(\alpha)\equiv A\bmod{p^m}\}\\
&=
\#\{\alpha \bmod{p^m}:  \alpha\beta\gamma^{-1}\in \fr, ~ N(\alpha\beta\gamma^{-1})\equiv A
N(\beta\gamma^{-1})
\bmod{p^m N(\beta\gamma^{-1})}\}\\
&=
\#\{\nu \bmod{p^m\beta\gamma^{-1}}:  \nu\in \fr, ~ N(\nu)\equiv B
\bmod{p^{m-k} \n \fr}\},
\end{align*}
where $B= A p^{-k} \n \fr$, and where we note that $p^m \beta \gamma^{-1}\in \fr$ 
since $p^m\in \fq$. 
Since $\n \fr | N (v)$ for any $v \in \fr$, we can replace the final congruence 
condition above by $N(\nu)\equiv B \bmod{p^{m-k}}$.
Now choose a $\ZZ$-basis $\{\xi_1,\dots,\xi_n\}$ for $\fr$ and recall the 
definition \eqref{eq:blue} of the associated norm form $\nf(\x;\fr).$ 
Then the above counting function is equal to the number of integer vectors 
$\x \in \ZZ^n$ producing distinct $\nu \bmod{p^m \beta \gamma^{-1}}$ for which 
$\nf(\x;\fr) \equiv B \bmod{p^{m-k}}.$
Our task therefore falls to counting solutions of 
$\nf(\x;\fr)\equiv B \bmod{p^{m-k}}$ lying in cosets of a certain lattice.
To describe this lattice, note that $\fq \mid (p^k)$ and therefore 
$p^{m-k}\fr\mid p^m \fr \fq^{-1}$. 
Further, if $\nu$ and $\nu'$ agree modulo $p^{m-k}\fr$ then we have $\x\equiv \x' 
\bmod{p^{m-k}}$ and
so $\nf(\x;\fr)$ and $\nf(\x';\fr)$ coincide modulo $p^{m-k}$.
It therefore follows that 
\begin{align*}
\#\{\nu &\bmod{p^m\beta\gamma^{-1}}:  \nu\in \fr, ~ N(\nu)\equiv B
\bmod{p^{m-k}}\}\\
&=\frac{\n (p^m\fr\fq^{-1})}{\n (p^{m-k}\fr)}\#\left\{
\x \bmod{p^{m-k}}: \nf(\x;\fr)\equiv B \bmod{p^{m-k}}\right\} \\
&= p^{k(n-1)} \rho(p^{m-k}, B),
\end{align*}
by Lemma \ref {lem:equivalent'}, since $p\nmid \n \fr$.
Observing that the number of ideals $\fq$ of norm $p^k$ is just $r_K(p^k)$, we 
have therefore shown that for any prime $p$,
there exists $B= A p^{-k}\nf \fr\in \ZZ$ such that $p\nmid B$ and 
\begin{equation}\label{eq:rhb}
\rho(p^m,A)=r_K(p^k) p^{k(n-1)} \rho(p^{m-k},B),
\end{equation}
whenever $0\leq k<m$.

It remains to analyse $\rho(p^m,A)$ when $p\nmid A$. 
Consider the group homomorphism
$$\theta: (\fo/(p^m))^*\to (\ZZ/(p^m))^*$$ 
that is induced by the norm. 
Since $\rho(p^m,A) = \#\theta^{-1}(A) \leq \# \Ker{\theta}$,
we proceed by bounding its kernel.
Suppose first that $p>2$ and let $g$ be a primitive root for $p^m$.
Note that the image of $\theta$ contains the subgroup $H$ generated by
$n$th powers of elements of $(\ZZ/(p^m))^*$. Hence 
$$
[(\ZZ/(p^m))^*: \Image (\theta)]
=\frac{[(\ZZ/(p^m))^*: H]}{[\Image (\theta):H]}\leq [(\ZZ/(p^m))^*: H] 
=\frac{\phi(p^m)}{\#H}.
$$
But $\#H=\phi(p^{m})/\gcd(n,\phi(p^m))$, which readily implies that 
$\Image(\theta)$  has index at most $n$ in $(\ZZ/(p^m))^*$. 
When $p=2$ we argue similarly, using the fact that elements of $(\ZZ/(2^m))^*$ 
can be expressed uniquely as $(-1)^u5^v$ for $u\in \{1,2\}$ and 
$v\in \{1,\dots,2^{m-2}\}$, to deduce that 
$[(\ZZ/(p^m))^*: \Image (\theta)]\leq 2n$.
For any prime power $p^m$ it therefore follows that  we have
$\Ker(\theta)\leq 2n\phi_K(p^m)/\phi(p^m)$, 
where $\phi_K$ is the Euler totient function associated to $K$.
Hence
\begin{equation}\label{eq:rhb-2}
\rho(p^m,A)\leq 2n\frac{\phi_K(p^m)}{\phi(p^m)}\ll p^{m(n-1)},
\end{equation}
whenever $p\nmid A$.
We can be more precise when $p$ is further assumed to be unramified.
Assuming that $p\nmid AD_K$,  we claim that 
 \begin{equation}\label{eq:rhb-3}
\frac{\rho(p^m,A)}{p^{m(n-1)}}
=
\left(1-\frac{1}{p}\right)^{-1}
\prod_{\fp\mid p} \left(1-\frac{1}{\n \fp}\right).
\end{equation}
Taking $G=\nf$ and $\ell=0$ in Lemma \ref{lem:C6.4}, we see that
it suffices to establish this fact when $m=1$.
The strategy is to show that the map $\theta$ is onto, which immediately implies 
that $\rho(p,A)=\#\Ker(\theta)=\phi_K(p)/\phi(p)$, so that the case $m=1$ of 
\eqref{eq:rhb-3} follows.
To show that $\theta$ is onto we must show that there exists  $\x\in \FF_p^n$ such 
that  $\nf(\x)\equiv A \bmod{p}$. For this we deduce from the Chevalley--Warning  
theorem (see \cite[\S I.2.2]{serre}) 
that the number of projective solutions is divisible by $p$.  
Moreover, the number of solutions on the hyperplane at infinity is  
 $$
 \#\{\x\in \FF_p^n: \nf(\x)\equiv 0\bmod{p}\}=p^n-\#\{\x\in \FF_p^n: p\nmid \nf(\x)\}=p^n-\phi_K(p).
 $$
Since $p\nmid \phi_K(p)$ for an unramified prime $p$, we may conclude that the 
number of affine solutions to the congruence  $\nf(\x)\equiv A \bmod{p}$ is not 
divisible by $p$.  This shows that $\theta$ is onto, as required. 

We may now conclude the proof of Lemma \ref{lem:A>0}. 
The second part follows from \eqref{eq:rhb} and \eqref{eq:rhb-3}. 
Recalling from \eqref{eq:r-upper} that $r_K(p^k)\leq (k+1)^n$, 
the first part follows from \eqref{eq:rhb-1}, \eqref{eq:rhb} and \eqref{eq:rhb-2}.
\end{proof}

\bigskip
\section{Counting points on systems of norm form equations} 
\label{s:counting}

While the previous two sections described background, notation and
technical tools, we now begin with the proof of our main theorem.
In the first two parts of this section we state and discuss our main
auxiliary result which may be interpreted as an asymptotic formula for
the number of integral points of bounded height on an integral model for
the variety $\V\subset \AA_\QQ^{n_1+\dots+n_r +s}$ defined in
\eqref{eq:torsor}.
In the final part of this section we deduce Theorem \ref{t:ut} from this
asymptotic formula.

\subsection{Representation function and asymptotic formula}
\label{s:repr-fn}
After a change of variables we may assume that we are working with an
integral model for $\V$, defined by the system of equations
\begin{equation*}
 0\neq \nf_{K_i}(\x_i) = f_i(u_1,\dots,u_s), \quad  (1\leq i\leq r),
\end{equation*}
where each $K_i$ is a number field of degree $n_i>1$, each $f_i$ is a
linear form defined over $\ZZ$, and the forms $f_i$ are pairwise
non-proportional. 
We further assume that each $\nf_{K_i}$ is defined using a $\ZZ$-basis 
$\{\omega_{i,1},\dots,\omega_{i,n_i}\}$ for the ring of integers of
$\fo_{K_i}$, so that it too has integer coefficients. 

We will phrase the problem of counting integral points on $\V$ in terms
of representation functions $R_i:\ZZ \to \ZZ_{\geq 0}$ that, in the simplest
instance, count the number of representations $m=\nf_{K_i}(\x_i)$ of
each non-zero integer $m$, where $\x_i$ runs through equivalence classes
with respect to the action of the free part $Y_{K_i}^{(+)}$ of $U_{K_i}^{(+)}$.
Our application to Theorem \ref{t:ut} requires us to incorporate some
flexibility into the definition of $R_i$ as to exactly which representations are counted.
To describe these restrictions, we use the
notation of Section~\ref{s:technical}.
In particular, recall that
$$
\mathfrak{D}_{i,+} = \{\x \in \RR^{n_i}: 
x_1 \omega_{i,1} + \dots + x_{n_i} \omega_{i,n_i}\in \mathfrak{F}_{i,+} \}
$$
is a fundamental domain for the action of $Y_{K_i}^{(+)}$ on the coordinate space
$\RR^{n_i}$. Furthermore, we recall from \eqref{eq:G+} that 
$$
\mathfrak{D}_{i,+}^\epsilon (T)
= \{\x \in \mathfrak{D}_{i,+}: 0<\epsilon\nf_{K_i}(\x)\leq T\},
$$
for $\epsilon\in \{\pm\}$ and  $T\geq 1$.

\begin{definition}[Representation function]\label{def:repr-fn}
Let $i\in \{1,\dots,r\}$ and
let $\mathfrak{X}_i \subset \mathfrak{D}_{i,+}$ be a cone such that each
of the bounded sets 
$\mathfrak{X}_i \cap \mathfrak{D}^{\epsilon}_{i,+}(1)$
has an $(n_i-1)$-Lipschitz parametrisable boundary, unless it is empty.
Let $M\in \NN$ and let 
$\b_i\in (\ZZ/M\ZZ)^{n_i}$ for $1\leq i\leq r$.
For any  $m\in \ZZ$ we define 
$$
R_i(m;\mathfrak{X}_i,\b_i,M)=
\1_{m \not= 0} \cdot~
\#\left\{\x\in \ZZ^{n_i}\cap\mathfrak{X}_{i}:
\begin{array}{l}
\bN_{K_i}(\x)=m\\
\x\=\b_i\bmod{M}
\end{array}
\right\}.
$$
We shall abbreviate $R_i(m)=R_i(m;\mathfrak{X}_i,\b_i,M)$, once
$\mathfrak{X}_i$,$\b_i$ and $M$ are fixed.
\end{definition}

Next, 
let $\mathfrak{K} \subset \RR^{s}$ be any convex bounded
set.
Our interest lies in the counting function
\begin{equation}\label{eq:def-count}
N(T)=
\sum_{\substack{
\u \in \ZZ^s\cap T\mathfrak{K}\\
\u\= \a\bmod{M}}}
\prod_{i=1}^r
R_i(f_i(\u)),
\end{equation}
for given $\a\in (\ZZ/M\ZZ)^s$.
By unravelling the definition of $R_i$, this is seen to express the
number of suitably constrained  points in $\V(\ZZ)$.
For technical reasons, we restrict attention to  $\a\in (\ZZ/M\ZZ)^s$ such that  $p^{v_p(M)} \nmid f_i(\a)$
for any $p\mid M$ and any $1 \leq i \leq r$.

From now on we will view $M, K_1,\dots,K_r$, together with $\a,\b_i$ and
the coefficients of $\V$ as being fixed once and for all.
Any implied constants in our work will therefore be allowed to depend on
these quantities in any way.
Moreover, the regions $\mathfrak{X}_1,\dots,\mathfrak{X}_r$ 
 are also to be considered fixed, with any implied constant
being allowed to depend on the Lipschitz constants of the maps
parametrising the boundaries.

Before revealing our asymptotic formula for $N(T)$ we require a bit more notation. 
For given $q\in \NN$  and $A\in \ZZ$, with $M\mid q$, we let 
$$
\rho_i(q,A;M)=\#\left\{\x\in (\ZZ/q\ZZ)^{n_i}: 
\begin{array}{l}
 \nf_{K_i}(\x)\= A \bmod{q}\\
\x\equiv \b_i \bmod{M}
\end{array}
\right\},
$$
for $1\leq i\leq r$, as in  \eqref{eq:ask}.
Moreover, for $\epsilon\in \{\pm\}$, we define
$$
\RR_{\epsilon}=\{x\in \RR: \epsilon x>0\} 
$$
and 
\begin{equation}\label{eq:def-kappa}
\kappa_i^{\epsilon}(\mathfrak{X}_i) 
= \vol\left(\mathfrak{D}_{i,+}^\epsilon(1) \cap 
  \mathfrak{X}_{i}\right).
\end{equation}
Finally, we denote by $\mathbf{f}:\RR^s\rightarrow \RR^r$ the linear map
defined by the system $\mathbf{f}=(f_1,\dots,f_r)$ of linear forms.
Bearing this notation in mind we have the following result. 

\begin{theorem}\label{t:NB}
Let $f_1, \dots, f_r\in \ZZ[u_1,\dots,u_s]$ be 
pairwise non-proportional linear forms and assume that 
$|f_i(\mathfrak{K})|\leq 1$, for $1\leq i\leq r$.
Suppose that $M$, $\b_i$ and $\a$ are as above;
in particular, $p^{v_p(M)} \nmid f_i(\a)$ for any $p\mid M$ and any $1 \leq i \leq r$.
Then we have 
$$
N(T) = \beta_\infty \prod_p \beta_p \cdot T^s+o(T^{s}), 
\quad (T\rightarrow \infty),
$$
where
$$ 
\beta_{\infty}
=\sum_{\beps \in \{\pm\}^{r}}
 \vol\left(
  \mathfrak{K} \cap 
  \mathbf{f}^{-1}(\RR_{\epsilon_1} \times\dots\times \RR_{\epsilon_r})
 \right)
\prod_{i=1}^r \kappa_i^{\epsilon_i}(\mathfrak{X}_i)
$$
and 
$$
\beta_p=
 \lim_{m\rightarrow \infty} 
 \frac{1}{p^{ms}}\sum_{
 \substack{
 \u\in (\ZZ/p^m\ZZ)^s\\
 \u\=\a\bmod{p^{v_p(M)}}}}
 \prod_{i=1}^r
 \frac{\rho_i(p^m,f_i(\u);p^{v_p(M)})}{p^{m(n_i-1)}},
$$
for each prime $p$.
Furthermore, the product $\prod_p \beta_p$ is absolutely convergent.
\end{theorem}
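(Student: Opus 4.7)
The plan is to prove Theorem~\ref{t:NB} via the nilpotent Hardy--Littlewood method of Green and Tao \cite{GT}, combined with the Green--Tao--Ziegler inverse theorem \cite{GTZ}. As a preliminary reduction, since each $R_i$ vanishes at $m=0$ and is supported on vectors lying in the cone $\mathfrak{X}_i \subset \mathfrak{D}_{i,+}$, it splits as $R_i = R_i^+ + R_i^-$ with $R_i^\epsilon$ supported on integers of sign $\epsilon$. Consequently $N(T) = \sum_{\beps \in \{\pm\}^r} N_\beps(T)$, where $N_\beps(T)$ restricts the sum to $\u \in T\mathfrak{K} \cap \mathbf{f}^{-1}(\RR_{\epsilon_1}\times\dots\times\RR_{\epsilon_r})$ and replaces each $R_i$ by $R_i^{\epsilon_i}$. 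It will suffice to prove the asymptotic for each $N_\beps(T)$ and sum the contributions over sign patterns.

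The heart of the argument is a decomposition $R_i^{\epsilon_i} = R_i^\flat + R_i^\sharp$, in which $R_i^\flat$ is a smoothly varying main term — essentially constant on arithmetic progressions to a suitable modulus $W$, with average value $\kappa_i^{\epsilon_i}(\mathfrak{X}_i)$ as follows from Lemma~\ref{lem:2} applied to $\mathfrak{X}_i \cap \mathfrak{D}_{i,+}^{\epsilon_i}(T)$ — while $R_i^\sharp$ has small Gowers $U^d$-norm for a suitable $d$, measured relative to a pseudorandom majorant $\nu_i$ dominating $R_i^{\epsilon_i}$. The majorants $\nu_i$ are constructed in Sections~\ref{s:gen-maj}--\ref{s:majorant} by combining a divisor-type majorant for the $r_{K_i}$-weighted sums with a Green--Tao type sieve factor, and their pseudorandomness (linear-forms and correlation conditions) is verified in Section~\ref{s:linear-forms}. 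The Gowers-norm bound on $R_i^\sharp$ is the content of Section~\ref{s:nilsequences}: via the inverse theorem \cite{GTZ} it reduces to non-correlation estimates between $R_i^{\epsilon_i}$ and polynomial nilsequences, which play the role of the minor-arc bounds in the classical circle method.

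Once the decomposition is available, expanding $\prod_i (R_i^\flat + R_i^\sharp)(f_i(\u))$ inside $N_\beps(T)$ and applying the generalised von Neumann theorem with the majorants $\nu_i$ shows that every cross term involving at least one factor $R_i^\sharp$ contributes $o(T^s)$. The remaining term $\sum_{\u} \prod_i R_i^\flat(f_i(\u))$ reduces to a sum over arithmetic progressions modulo $W$; passing to the limit $W \to \infty$ produces the volume factor $\vol(\mathfrak{K} \cap \mathbf{f}^{-1}(\RR_{\epsilon_1}\times\dots\times\RR_{\epsilon_r}))\prod_i \kappa_i^{\epsilon_i}(\mathfrak{X}_i)$ together with the product of $p$-adic densities $\beta_p$ of the stated form, directly assembled from resolving the joint congruences $\nf_{K_i}(\x_i) \equiv f_i(\u) \bmod{p^m}$. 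Summing over $\beps$ yields the archimedean factor $\beta_\infty$. Absolute convergence of $\prod_p \beta_p$ follows from Lemmas~\ref{lem:ff} and~\ref{lem:A>0}, which together force $\beta_p = 1 + O(p^{-1-\delta_0})$ at all but finitely many primes for some fixed $\delta_0 > 0$. The decisive step, and the main obstacle, is the construction and verification of the pseudorandom majorants $\nu_i$ together with the Gowers uniformity of $R_i^{\epsilon_i} - R_i^\flat$; these form the technical substance of Sections~\ref{s:gen-maj}--\ref{s:proof}.
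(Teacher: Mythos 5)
Your proposal is correct and tracks the paper's own strategy quite closely: decompose by sign pattern, apply the $W$-trick, construct pseudorandom majorants (divisor-sum plus Green--Tao sieve), establish non-correlation with nilsequences and invoke the Green--Tao--Ziegler inverse theorem to get Gowers-norm smallness, and then apply the generalised von Neumann theorem to extract the main term. One small point of precision: the convergence $\beta_p = 1 + O(p^{-2})$ at large primes is not an immediate consequence of Lemmas~\ref{lem:ff} and~\ref{lem:A>0} alone — one also needs to control the local divisor densities $\alpha_{\mathbf f}(p^{c_1},\dots,p^{c_r})$ of the system of linear forms (this is the content of Proposition~\ref{p:1}); similarly, matching the finite-$W$ singular series $\mathfrak{S}(T)$ with the product $\prod_p \beta_p$ as $T\to\infty$ requires the Hensel-type lifting Lemma~\ref{lem:C6.4} to push the local computations past modulus $p^{\alpha(p)}$.
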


We will show how Theorem \ref{t:NB} implies Theorem \ref{t:ut} in 
Section \ref{s:implies}.
The proof of Theorem~\ref{t:NB} takes up most of the remainder of this
paper.
The first part is established in the course of
Sections \ref{s:nilsequences}--\ref{s:proof}, 
while the final part is dealt with in Section \ref{s:convergence} below.

\begin{rem}
Our proof uses the machinery developed in Green and Tao \cite{GT}. As
such, it in fact covers the case where in the statement of
Theorem \ref{t:NB} each linear form $f_i$ is replaced by a
linear polynomial $f_i + a_i$, for an integer $a_i = O(T)$.
\end{rem}

\begin{rem}\label{rem:kappa}
In the special case where $\mathfrak{X}_i = \mathfrak{D}_{i,+}$ for 
$1\leq i\leq r$  and $M=1$ in $N(T)$, it is straightforward to adapt the
calculation  in \cite[\S 6]{marcus} to find a precise value for 
$  \kappa_i^{\epsilon}(\mathfrak{D}_{i,+})
= \vol\left(\mathfrak{D}_{i,+}^\epsilon(1) \right)$. 
Let us drop the index $i$ and work with a typical field $K$ of degree
$n$. 
Let $\delta_1,\ldots,\delta_{r_1+r_2-1}$ be generators for $Y_K^{(+)}$.
We define a modified regulator $R_K^{(+)}$ to be the absolute value of the
determinant of the $(r_1+r_2)\times (r_1+r_2)$ matrix, whose rows are
given by $\psi(\delta_1), \dots, \psi(\delta_{r_1+r_2-1}), \u_{r_1+r_2}
\in \RR^{r_1+r_2}$, in the notation of 
Section \ref{s:units}. 
Then one finds that 
$$
 \kappa_i^{\epsilon}(\mathfrak{D}_{i,+})
=
\begin{cases}
0, &\mbox{if $\epsilon=-$ and $r_1=0$,}\\
2^{r_{1}-1} (2\pi)^{r_{2}}R_{K}^{(+)}/\sqrt{|D_{K}|}, & \mbox{otherwise}.
\end{cases}
$$
Observing that 
$\psi(\eta_1^2)=2\psi(\eta_1)$, furthermore, 
an inspection of the explicit choice of generators for $Y_K^{(+)}$ given in
Section \ref{s:units} shows that 
$R_{K}^{(+)} = [Y_K:Y_K^{(+)}] R_{K}$.
Theorem \ref{t:NB} recovers \cite[Thm.~1.1]{lm2} when
$K_1,\dots,K_r$ are all taken to be quadratic.
\end{rem}

\subsection{Convergence of the product of local densities}
\label{s:convergence} 
In this section we prove the absolute convergence of the product
$\prod_p \beta_p$ from Theorem \ref{t:NB}, by establishing an asymptotic
estimate for the local density $\beta_p$, valid whenever $p$ is large
compared to
\begin{equation}\label{eq:def-L}
L=\max_{1\leq i\leq r}
  \left\{\|f_i\|, s, r, |D_{K_i}| \right\} 
\end{equation}
and $p \nmid M$.
Here $\|f_i\|$ denotes the maximum modulus of the coefficients of 
$f_i$.

\begin{proposition}\label{p:1}
We have $\beta_p=1+O_L(p^{-2})$ whenever $p \nmid M$ and
$\beta_p=O_{L}(1)$ when $p|M$.
In particular, there exists $L'=O_L(1)$, which is independent of $M$,
such that $\beta_p > 0$ whenever $p>L'$ and $p \nmid M$.
\end{proposition}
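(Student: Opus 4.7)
The plan is to apply Lemma \ref{lem:A>0} to obtain a near-explicit expression for $\beta_p$ when $p$ is large and coprime to $M$. Throughout, set
$$g_i(p) = \Big(1-\tfrac{1}{p}\Big)^{-1}\prod_{\fp\mid p}\Big(1-\tfrac{1}{\n\fp}\Big),$$
the factor appearing in the second part of Lemma \ref{lem:A>0}, where the product is over prime ideals of $\fo_{K_i}$ above $p$. For $p > L$ and $p\nmid M$ one has $p\nmid n_iD_{K_i}$ for every $i$ and $v_p(M)=0$, so the congruence on $\u$ is vacuous. The second part of Lemma \ref{lem:A>0} handles the restriction of the sum to $\u$ with $v_p(f_i(\u)) < m$ for all $i$, while the first part shows the complementary contribution is $O_L(m^{O(1)}p^{-m})\to 0$ as $m\to\infty$. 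Interpreting the normalized average as integration on $\ZZ_p^s$ against Haar measure $\mu$, we obtain
$$\beta_p = \prod_{i=1}^r g_i(p)\cdot \int_{\ZZ_p^s}\prod_{i=1}^r r_{K_i}(p^{v_p(f_i(\u))})\, d\mu(\u).$$

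Next I would pull the product out of the integral up to an error of size $O_L(p^{-2})$. For each fixed $i$, since $f_i$ is non-zero mod $p$, $v_p(f_i(\u))$ is geometrically distributed under $\mu$ with parameter $1/p$, and a direct computation with the local Euler factor of $\zeta_{K_i}$ gives the marginal $\int_{\ZZ_p^s} r_{K_i}(p^{v_p(f_i(\u))})\,d\mu = g_i(p)^{-1}$. For $p > L^2$, pairwise non-proportionality of the $f_i$ over $\QQ$ together with $s\geq 2$ forces any pair $f_i, f_j$ with $i\neq j$ to be linearly independent modulo $p$ (their nonzero $2\times 2$ minors are integers of size $O_L(1)$), so $(f_i(\u), f_j(\u))$ is uniform on $\ZZ_p^2$. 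Writing $T_i(\u) = r_{K_i}(p^{v_p(f_i(\u))}) - 1$, which vanishes outside a set of measure $1/p$, expand
$$\int_{\ZZ_p^s}\prod_i(1+T_i)\, d\mu = \sum_{S\subseteq \{1,\dots,r\}}\int_{\ZZ_p^s}\prod_{i\in S}T_i\, d\mu.$$
The subsets with $|S|\leq 1$ assemble to $\prod_i g_i(p)^{-1}$ up to an error $O_L(p^{-2})$. For $|S|=2$, pairwise independence gives $\int T_iT_j\,d\mu = (\int T_i\,d\mu)(\int T_j\,d\mu) = O_L(p^{-2})$. For $|S|\geq 3$, the support of $\prod_{i\in S}T_i$ is contained in the event that two independent forms in $S$ both have positive valuation, hence has measure $\leq p^{-2}$; combined with the polynomial growth bound $|T_i|\leq (v_p(f_i)+1)^{n_i}$ from \eqref{eq:r-upper}, summation over joint valuation tuples yields $|\int\prod_{i\in S}T_i\,d\mu|\ll_L p^{-2}$. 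Collecting these estimates, $\beta_p = \prod_i g_i(p)\cdot(\prod_i g_i(p)^{-1} + O_L(p^{-2})) = 1 + O_L(p^{-2})$.

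For the remaining primes I would use crude bounds. When $p\mid M$, the congruence modulo $p^{v_p(M)}$ inserts a factor $p^{-sv_p(M)}$ into the normalized average, while the first part of Lemma \ref{lem:A>0} gives $\rho_i(p^m,A;p^{v_p(M)})/p^{m(n_i-1)} \leq \rho_i(p^m,A)/p^{m(n_i-1)} \ll \min(v_p(A)+1, m)^{n_i}$. Absolute convergence of $\sum_{k\geq 0}(k+1)^C/p^k$ for any fixed $C$ then yields $\beta_p = O_{L,M}(1)$; the finitely many primes with $L<p\leq L^2$ and $p\nmid M$ are handled analogously. Positivity of $\beta_p$ for $p>L'$ with $L'$ depending only on $L$ (in particular independent of $M$) is then immediate from the asymptotic $\beta_p = 1 + O_L(p^{-2})$ valid whenever $p\nmid M$.

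The main obstacle will be the correlation estimate $|\int\prod_{i\in S}T_i\,d\mu|\ll_L p^{-2}$ for $|S|\geq 3$. Pairwise independence guarantees that $\prod_{i\in S}T_i$ is supported on a set of measure at most $p^{-2}$, but $|T_i|$ is unbounded on its support, growing polynomially in $v_p(f_i)$, so bounding the integral by supremum times measure fails. Instead one must decompose the integral according to the joint tuple $(k_i)_{i\in S}$ with $k_i\geq 1$, use $|T_i|\leq (k_i+1)^{n_i}$, and bound the measure of $\{v_p(f_i)=k_i\ \forall i\in S\}$ using any two linearly independent forms in $S$; the resulting series must then be checked to converge geometrically to $O_L(p^{-2})$ uniformly in $|S|\leq r$.
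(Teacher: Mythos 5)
Your proposal is correct and follows essentially the same strategy as the paper's proof of Proposition~\ref{p:1}: both invoke the two parts of Lemma~\ref{lem:A>0}, exploit pairwise non-proportionality of the $f_i$ modulo $p$ to control the joint-valuation events, and balance the polynomial growth of $r_{K_i}(p^k)$ against geometric decay in $k$. Your $p$-adic Haar measure formulation with the expansion $\prod_i(1+T_i)=\sum_{S}\prod_{i\in S}T_i$ is a repackaging of the paper's direct work with the finite averages $\beta_p(m)$, the local divisor densities $\alpha_{\mathbf f}$ and the bounds \eqref{eq:ev-alpha}, \eqref{eq:jen}, and the ``main obstacle'' you flag for $|S|\geq 3$ is resolved precisely as you sketch (and as the paper does via \eqref{eq:jen}) by bounding the measure of $\{v_p(f_i(\u))=k_i\ \forall i\in S\}$ using the two largest valuations and summing.
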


This proposition immediately implies the convergence of the product 
$\prod_p \beta_p$.  
The proof of Proposition \ref{p:1} splits into two cases according to
whether $p$ is large or small compared to $L$, and follows that
of \cite[Lemma 8.3]{lm1}.
The main ingredients are the information that Lemma \ref{lem:A>0} provides
about $\rho(q,A)$, and the properties of local divisor densities,
which we discuss next.

Let
\begin{equation}\label{eq:def-Um}
\mathcal{U}_m=
\{\u\in (\ZZ/p^m\ZZ)^s: \u\=\a\bmod{p^{v_p(M)}}\},
\end{equation}
for any $m\in \NN$.
For given $\c\in \ZZ_{\geq 0}^r$ and 
a given system $\mathbf{f}=(f_1,\dots,f_r)$ as above, 
  we define the {\em local divisor density}
(cf.\ \cite[p.1831]{GT} and \cite[Def.~8.4]{lm1}) to be 
\begin{align}\label{eq:def-div.density}
\alpha_{\mathbf{f}}(p^{c_1},\dots,p^{c_r}) = 
\frac{1}{p^{ms}} \sum_{\u\in \mathcal{U}_m}
\prod_{i=1}^{r} \1_{p^{c_i}\mid f_i(\ma{u})},
\end{align}
where
$m=\max\{c_1,\dots,c_r\}$.
Let $n(\c)$ denote the number of non-zero components of $\c$.
Then
\begin{equation}\label{eq:ev-alpha}
\alpha_{\mathbf{f}}(p^{c_1},\dots,p^{c_r}) \begin{cases}
=1, &\mbox{if $n(\c)=0$,}\\
= p^{-\max_i \{c_i\}}, &  \mbox{if $p\gg_L 1$, $p \nmid M$ and $n(\c)=1$,}\\
\leq p^{-\max_{i\neq j}\{c_i+c_j\}},
      & \mbox{if $p\gg_L 1$, $p\nmid M$ and $n(\c)>1$,}\\
\ll_{L} p^{-\max_i \{c_i\}}, &
 \mbox{otherwise.}
\end{cases}
\end{equation}
It is important to note here that even when $p\mid M$ and $n(\c)\geq 1$ the implied constant in the final estimate does not depend on $M$.
Moreover, here (and elsewhere) we take $p\gg_L 1$ to mean that $p$ is sufficiently large in terms of $L$. 
An easy way to bound sums over divisor densities uses the observation that
there are at most $rJ^{r-1}$ choices of $\k\in\ZZ_{\geq 0}^r$ such that
$\max_{i} k_i=J$ and therefore
\begin{equation}\label{eq:jen}
\sum_{J\geq J_0}
\sum_{\substack{\k\in \ZZ_{\geq 0}^r\\ \max\{k_1,\dots, k_r\}=J}}
\frac{J^T}{p^{J}} 
\leq r\sum_{J\geq J_0}
\frac{J^{T+r-1}}{p^{J}}
\ll_{T,r,J_0} \frac{1}{p^{J_0}} 
\sum_{J\geq 0} \frac{J^{T+r-1}}{2^{J}}
\ll_{T,r,J_0} \frac{1}{p^{J_0}}
\end{equation}
for any $T,J_0>0$.

\begin{proof}[Proof of Proposition \ref{p:1}]
We may write $\beta_p=\lim_{m\rightarrow \infty} \beta_p(m)$,
with
\begin{equation}\label{eq:beta_p(m)}
\beta_p(m)=
\frac{1}{p^{ms}}
\sum_{\substack{\k\in \ZZ_{\geq 0}^r}}
\sum_{\substack{\u\in \mathcal{U}_m\\v_p(f_i(\u))=k_i}}
\prod_{i=1}^{r}\frac{\rho_i(p^m,f_i(\u);p^\mu)}{p^{m(n_i-1)}}
\end{equation}
and $\mu=v_p(M)$.
We begin by analysing $\beta_p$ when $p$ is small.
In fact we will show that  $\beta_p=O_{L}(1)$, for any prime $p$, 
which suffices for Proposition \ref{p:1}.

Since
$$
\rho_i(p^m,f_i(\u);p^\mu)\leq \rho_i(p^m,f_i(\u)),
$$ 
an application of the first part of Lemma~\ref{lem:A>0} in
\eqref{eq:beta_p(m)}
shows that
\begin{align*}
\beta_p(m)
&\leq 
\frac{1}{p^{ms}}
\sum_{J\geq 0}
\sum_{\substack{\k\in \ZZ_{\geq 0}^r\\ \max\{k_1,\dots,k_r\}=J}}
\sum_{\substack{\u\in \mathcal{U}_m\\ v_p(f_i(\u))=k_i}}
\prod_{i=1}^{r}\frac{\rho_i(p^m,f_i(\u))}{p^{m(n_i-1)}}\\
&\ll
\frac{1}{p^{ms}}
\sum_{J\geq 0} 
\sum_{\substack{\k\in \ZZ_{\geq 0}^r\\ \max\{k_1,\dots,k_r\}=J}}
\sum_{\substack{\u\in \mathcal{U}_m\\ p^{k_i}\mid f_i(\u)}}
\min\{m,J+1\}^{n_1+\dots +n_r}.
\end{align*}
Next we invoke
\eqref{eq:ev-alpha}
and \eqref{eq:jen} to obtain
\begin{align*}
\beta_p(m)
&\ll_{L}
\sum_{J\geq 0} 
\sum_{\substack{\k\in \ZZ_{\geq 0}^r\\ \max\{k_1,\dots,k_r\}=J}}
\frac{\min\{m,J+1\}^{n_1+\dots +n_r}}{p^J}
\ll_{L} 1.
\end{align*}
Taking the limit $m\rightarrow \infty$, this shows that $\beta_p=O_{L}(1)$,
as required for Proposition \ref{p:1}.

We proceed to  analyse $\beta_p$ when $p\gg_L 1$ and $p\nmid M$.
In particular, we have $\mu=0$ and $\mathcal{U}_m=(\ZZ/p^m\ZZ)^s$.
Let $\mathcal{K}_1=(\ZZ\cap[0,m))^r$ and let
$\mathcal{K}_2=\ZZ_{\geq 0}^r \setminus [0,m)^r$.
Accordingly, we write $\beta_p(m)=\beta_p^{(1)}(m)+\beta_p^{(2)}(m)$, where 
$\beta_p^{(i)}(m)$ is the contribution from $\k\in \mathcal{K}_i$.

Since $p\gg_L 1$, it follows from \eqref{eq:def-L} that $p\nmid D_{K_i}$ for each $1\leq i\leq r$. Thus the second part of Lemma~\ref{lem:A>0} implies that  
\begin{align*}
\beta_p^{(1)}(m)
&=
c_p(K_1)\dots c_p(K_r)
\frac{1}{p^{ms}}
\sum_{\k\in \mathcal{K}_1}
\sum_{\substack{\u\in \mathcal{U}_m\\
v_p(f_i(\u))=k_i
}}
\prod_{i=1}^{r} r_{K_i}(p^{k_i})\\
&=
c_p(K_1)\dots c_p(K_r)
\frac{1}{p^{ms}}
\sum_{\k\in \mathcal{K}_1}
\left(\prod_{i=1}^{r} r_{K_i}(p^{k_i})\right)
\sum_{\substack{\u\in \mathcal{U}_m\\
v_p(f_i(\u))=k_i}}1,
\end{align*}
where
$$
c_p(K_i)=\left(1-\frac{1}{p}\right)^{-1}
\prod_{\fp \mid p,\, \fp \subset \fo_{K_i}} \left(1-\frac{1}{\n
\fp}\right).
$$
For given $\k\in \mathcal{K}_1$, we have
\begin{align*}\frac{1}{p^{ms}}
\sum_{\substack{\u\in \mathcal{U}_m\\
v_p(f_i(\u))=k_i}}1 
&=
\sum_{\bve\in \{0,1\}^r}
(-1)^{\ve_1+\dots+\ve_r}
\alpha_{\mathbf{f}}(p^{k_1+\ve_1},\dots,p^{k_r+\ve_r})\\
&=
\begin{cases}
1-rp^{-1} +O_r(p^{-2}),
&\mbox{if $\k=\ma{0}$,}\\
p^{-1} +O_r(p^{-2}),
&\mbox{if $\k\in \{(1,0,\dots,0), \dots,(0,\dots,0,1)\}$,}\\
O_r(p^{-\max \{k_1,\dots,k_r\}}),
&\mbox{if $n(\k)=1$, $\max\{k_1,\dots,k_r\}>1$,}\\
O_r(p^{-1-\max \{k_1,\dots,k_r\}}),
&\mbox{otherwise,}
\end{cases}
\end{align*}
by \eqref{eq:ev-alpha}.

Since $r_{K_i}(p^k)=O((k+1)^{n_i})$, by \eqref{eq:r-upper}, we deduce from
\eqref{eq:jen} that
$$
\sum_{J \geq 2} 
\sum_{\substack{\k\in \mathcal{K}_1\\ \max\{k_1,\dots, k_r\}=J\\
n(\k)=1}}
p^{-J}
\prod_{i=1}^{r} r_{K_i}(p^{k_i})
+
\sum_{J\geq 1}
\sum_{\substack{\k \in \mathcal{K}_1\\ \max\{k_1,\dots, k_r\}=J \\
n(\k)>1 }}
p^{-J-1}
\prod_{i=1}^{r} r_{K_i}(p^{k_i})
\ll_r \frac{1}{p^2}.
$$
Hence \eqref{eq:r} implies that
\begin{align*}
\beta_p^{(1)}(m)
&=
c_p(K_1)\dots c_p(K_r)
\left(1+
\sum_{i=1}^r\frac{r_{K_i}(p)-1}{p}+O_r\left(\frac{1}{p^2}\right)\right)\\
&=
1+O_r\left(\frac{1}{p^2}\right).
\end{align*}
Putting everything together, we conclude that 
$$
\left| \beta_p(m) - 1\right|
\ll_r
\frac{1}{p^2}+  
\beta_p^{(2)}(m).
$$
The first part of Lemma \ref{lem:A>0} 
can be used to show that  $\beta_p^{(2)}(m)$ is at most
\begin{align*}
\frac{1}{p^{ms}}
\sum_{\k\in \mathcal{K}_2}
\sum_{\substack{\u\in \mathcal{U}_m\\
p^{k_i}\mid f_i(\u)
}}
\prod_{i=1}^{r}\frac{\rho_i(p^m,f_i(\u))}{p^{m(n_i-1)}}
&\ll
\frac{m^{n_1+\dots +n_r}}{p^{ms}}
\sum_{\k\in \mathcal{K}_2}
\sum_{\substack{\u\in \mathcal{U}_m\\
p^{k_i}\mid f_i(\u)
}}1\\
&=m^{n_1+\dots +n_r}
\sum_{\k\in \mathcal{K}_2}
\alpha_{\mathbf{f}}(p^{k_1},\dots,p^{k_r})\\
&\ll
\frac{m^{n_1+\dots +n_r}}{p^{1+m}},
\end{align*}
by \eqref{eq:ev-alpha}.
Substituting this into our expression for $\beta_p$ and 
taking the limit $m\rightarrow \infty$,  this completes the proof of
Proposition \ref{p:1} when $p\gg_L 1$.
\end{proof}

\subsection{Deduction of Theorem \ref{t:ut}}\label{s:implies}

We proceed to show how Theorem \ref{t:ut} follows from Theorem \ref{t:NB}.
Our task is to establish the Hasse principle and weak approximation for
the smooth variety $\V\subset \AA_\QQ^{n_1+\dots+n_r +s}$, which after
the reductions from the start of Section \ref{s:repr-fn} is given by 
$$
0\neq \bN_{K_i}(\x_i) = f_i(u_1,\dots,u_s), \quad  (1\leq i\leq r),
$$
for pairwise non-proportional linear forms $f_1,\dots,f_r$ defined over
$\ZZ$.

Suppose that we are given a point $(\u,\x_i)\in \V(\QQ)$. 
Then  each point in the orbit 
$\{(\u,\eta_i.\x_i):
 \eta_i \in U_{K_i}^{(+)}\}$ 
also belongs to $\V(\QQ)$.
We will therefore content ourselves with looking for points
$(\u,\x_i)\in \V(\QQ)$ such that each $\x_i$ lies in the fundamental
domain $\mathfrak{D}_{i,+}$ which we constructed in Lemma \ref{lem:F+} and 
\eqref{eq:D+} for the free part of $U_{K_i}^{(+)}$. 
We will call such points {\em primary}.

Let $\Omega$ denote the set of places of $\QQ$. 
We assume we are given  points 
$(\u^{(\nu)},\x_i^{(\nu)})\in \V(\QQ_\nu)$ for every $\nu\in \Omega$.
By possibly replacing the adelic point 
$(\x_i^{(\nu)})_{\nu \in \Omega}$ by 
$(\eta_i.\x_i^{(\nu)})_{\nu \in \Omega}$ for an appropriate
$\eta_i \in U_{K_i}^{(+)}$, 
we may assume that $\x_i^{(\infty)}$ belongs to $\mathfrak{D}_{i,+}$ for
each $1 \leq i\leq r$.
Let $S$ be any finite set of places, including the archimedean place as
well as all non-archimedean places corresponding to primes $p < L'$,
where $L'=O(1)$ was determined in Proposition~\ref{p:1}.

Let $\ve>0$. 
Then, in order to prove Theorem \ref{t:ut}, it suffices to show that 
there is a primary point $(\u,\x_i)\in \V(\QQ)$ 
such that
\begin{equation} \label{eq:WA-conditions}
  |\u-\u^{(\nu)}|_\nu<\ve, \quad 
  |\x_i-\x_i^{(\nu)}|_\nu<\ve, \quad (1\leq i\leq r),
\end{equation}
for every $\nu \in S$. 
Here, $|\cdot|_\nu$ denotes the $\nu$-adic norm extended to vectors in
the obvious way, and we follow the convention that 
$|\cdot|_\infty=|\cdot|$.

On rescaling appropriately we may assume that the points 
$(\u^{(\nu)},\x_i^{(\nu)})$ that we are given belong
to $\ZZ_\nu^{n_1+\dots+n_r+s}$ for every finite $\nu\in S$.
By the Chinese remainder theorem we can then produce an
integer vector
$(\u^{(M)},\x_i^{(M)})$ such that 
\begin{equation}\label{eq:horse}
 |\u^{(M)}-\u^{(\nu)}|_\nu<\ve, \quad
 |\x_i^{(M)}-\x_i^{(\nu)}|_\nu<\ve,  \quad (1\leq i\leq r),
\end{equation}
for all finite $\nu \in S$.
We now seek integral points $(\u,\x_i)\in \V(\ZZ)$ satisfying the
following local conditions.
For the finite places we impose
\begin{equation}\label{eq:size1}
\begin{split}
&\u\equiv \u^{(M)} \bmod{M}, \quad
\x_i\equiv \x_i^{(M)} \bmod{M}, \quad (1\leq i\leq r),
\end{split}
\end{equation}
for an appropriate modulus $M\in\NN$ with the property that
$p \nmid M$ whenever $p \not\in S$.
In view of \eqref{eq:horse} these conditions imply \eqref{eq:WA-conditions}. 
To guarantee that 
\begin{equation}\label{eq:technical}
p^{v_p(M)}\nmid  f_i(\u^{(M)}) \text{ for all $p\mid M$}, 
\quad (1\leq i\leq r),
\end{equation}
it suffices to choose $\ve$ sufficiently small, since 
$f_i(\u^{(\nu)}) \neq 0$ in $\QQ_\nu$.
 
For the infinite place we impose that 
\begin{equation}\label{eq:size2}
|\u-B\u^{(\infty)}|<\ve B, \quad
|\x_i-B^{1/{n_i}}\x_i^{(\infty)}|<\ve B^{1/{n_i}}, \quad
(1\leq i\leq r),
\end{equation}
with $B=P^{n_1\dots n_r}$ and $P\in \NN$  tending to infinity
such that $P\equiv 1 \bmod{M}$.
Thus any point $(\u,\x_i) \in \V(\ZZ)$ satisfying \eqref{eq:size1} and
\eqref{eq:size2} gives rise to $(B^{-1}\u,B^{-1/n_i}\x_i) \in \V(\QQ)$
satisfying the original condition \eqref{eq:WA-conditions}.
We aim to detect the existence of integral points satisfying
\eqref{eq:size1} and \eqref{eq:size2} using Theorem \ref{t:NB}. 
For this reason, we now proceed to replace \eqref{eq:size2} by a condition
that is more suitable for an application of the theorem.

Let $1\leq i\leq r$ and let $\ve'>0$.
We begin by defining a cone that is symmetric
about $\x_i^{(\infty)}$, via
$$
\mathfrak{B}_i(\x_i^{(\infty)};\eps')=
\left\{\x\in \RR^{n_i} \cap \mathfrak{D_{i,+}}:  
\left| \frac{\x}{|\nf_{K_i}(\x)|^{1/n_i}} 
     - \frac{\x_i^{(\infty)}}{|\nf_{K_i}(\x_i^{(\infty)})|^{1/n_i}} \right|
< \eps' 
\right\}.
$$ 
Note that $\mathfrak{B}_i(\x_i^{(\infty)};\eps')\neq \emptyset$, 
since $\x_i^{(\infty)}\in \mathfrak{D}_{i,+}$ by our work above.
It follows from Sections~\ref{s:units} and~\ref{s:geometry-of-numbers},
that for $\epsilon \in \{\pm\}$ each  
$\mathfrak{D}_{i,+}^\epsilon(1) \cap \mathfrak{B}_i(\x_i^{(\infty)};\ve')$
is either empty or such that Lemma \ref{lem:2} applies.
Indeed, these sets arise as the intersection of a bounded convex
set with a set that has an $(n_i-1)$-Lipschitz parametrisable boundary.
Moreover, we clearly have 
\begin{equation}\label{eq:kappa-eps>0}
\vol(\mathfrak{D}_{i,+}^\epsilon(1) \cap
\mathfrak{B}_i(\x_i^{(\infty)};\ve')) > 0
\end{equation}
when $\epsilon=\sign (\nf_{K_i}(\x_i^{(\infty)}))$. 
The second condition in \eqref{eq:size2} now holds whenever
\begin{equation}\label{eq:size2-sufficient}
\x_i \in 
\mathfrak{B}_{i}(\x_i^{(\infty)}; \eps')
\cap 
\{\x \in \RR^{n_i} :
 |\nf_{K_i}(\x) - B\nf_{K_i}(\x_i^{(\infty)})| < \eps'B \} 
\end{equation}
for sufficiently small $\eps'$
in terms of $\max_{1\leq i\leq r}|\nf_{K_i}(\x_i^{(\infty)})|$. We  fix such a choice of  $\eps'<\eps$.

In view of the first part of \eqref{eq:size2}, we define the convex
bounded region
$$
\mathfrak{K}(\u^{(\infty)};\eps'')
=\{\u \in \RR^s: |\u-\u^{(\infty)}|<\eps''\},
$$
for $\ve''>0$. 
Observe that for  sufficiently small   $\eps''<\eps$,
the condition 
$\u \in B\mathfrak{K}(\u^{(\infty)}; \eps'')$ 
implies both the first part of \eqref{eq:size2} and, furthermore, 
$$
|f_i(\u) - Bf_i(\u^{(\infty)})| < \eps'B.
$$
In conclusion, any point $(\u,\x_i) \in \V(\RR)$ with 
\begin{equation}\label{eq:size3}
 \u \in B\mathfrak{K}(\u^{(\infty)}; \eps'') \quad 
 \text{and} \quad
 \x_i \in \mathfrak{B}_{i}(\x_i^{(\infty)}; \eps')
\end{equation}
satisfies \eqref{eq:size2-sufficient} and therefore also
\eqref{eq:size2}.

With these choices of $\ve',\ve''$, we fix the representation
functions 
$$ R_i(m) = R_i(m;\mathfrak{B}_i(\x_i^{(\infty)};\eps'),\x_i^{(M)},M)$$
from  Definition \ref{def:repr-fn}, 
for $1\leq i \leq r$.
It is clear that
$$|f_i(\mathfrak{K}(\u^{(\infty)}; \eps''))| \subset [-H,H]$$
for $H=s(1+\eps)|\u^{(\infty)}|\cdot \max_i \|f_i\|$. In particular, 
$H \asymp 1$.
Thus we observe that, on the one hand, 
$$
N(H B)=
\sum_{\substack{
\u \in \ZZ^s\cap HB (H^{-1}\mathfrak{K}(\u^{(\infty)};\ve''))\\
\u\= \u^{(M)}\bmod{M}}}
\prod_{i=1}^r
R_i(f_i(\u))
$$
counts exactly the primary points  $(\u,\x_i) \in \V(\ZZ)$ which satisfy
\eqref{eq:size1} and \eqref{eq:size3}.
On the other hand, $N(HB)$ takes the shape of the counting
function \eqref{eq:def-count} from Theorem \ref{t:NB}, with
$$
\mathfrak{K}=H^{-1}\mathfrak{K}(\u^{(\infty)};\ve''), \quad 
\mathfrak{X}_i=\mathfrak{B}_{i}(\x_i^{(\infty)};\ve'), \quad 
\a=\u^{(M)}, \quad \b_i=\x_i^{(M)}.
$$
Moreover, all the conditions of Theorem \ref{t:NB} are satisfied. We conclude, for
$T=HB$, that 
$$N(HB)= (HB)^s \beta_\infty \prod_p \beta_p + o(B^s),$$
where the error term may depend on 
$\ve'$, $\ve''$, $M$, $K_1,\dots,K_r$, as well as
on $\u^{(M)}$, $\u^{(\infty)}$, $\x_i^{(M)}$, $\x_i^{(\infty)}$ 
and on the coefficients of $\V$.
All that remains now,  in order to deduce Theorem~\ref{t:ut}, is to show
that 
$$
\beta_\infty \prod_p \beta_p\gg 1.
$$

Beginning with $\beta_\infty$ we recall that 
$$
\kappa_i^{\epsilon}
=\kappa_i^{\epsilon}(\mathfrak{B}_{i}(\x_i^{(\infty)};\ve'))
= \vol\left(\mathfrak{D}_{i,+}^{\epsilon}(1) \cap 
  \mathfrak{B}_{i}(\x_i^{(\infty)};\ve')\right).
$$
Thus, \eqref{eq:kappa-eps>0} yields $\kappa_i^{\epsilon_i}>0$ when 
$\epsilon_i=\sign(f_i(\u^{\infty}))$ for each $1\leq i\leq r$.
Next we check that for these choices of 
$\epsilon_1,\dots, \epsilon_r$ we also have
$$\vol(H^{-1}\mathfrak{K}(\u^{(\infty)};\ve'') \cap
\mathbf{f}^{-1}(\RR_{\epsilon_1} \times \dots \times \RR_{\epsilon_r}))
>0.$$
Since $f_1, \dots, f_r$ are linear homogeneous polynomials, the region 
$\mathbf{f}^{-1}(\RR_{\epsilon_1} \times \dots \times \RR_{\epsilon_r})$
is a cone.
Thus
\begin{align*}
\vol\big(H^{-1}&\mathfrak{K}(\u^{(\infty)};\ve'') \cap
\mathbf{f}^{-1}(\RR_{\epsilon_1} \times \dots \times \RR_{\epsilon_r})
\big) \\
&= H^{-s} \vol\big( \mathfrak{K}(\u^{(\infty)};\ve'') \cap
\mathbf{f}^{-1}(\RR_{\epsilon_1} \times \dots \times \RR_{\epsilon_r})
\big),
\end{align*}
which is positive, since $\u^{(\infty)}$ is an element of the open
set 
$\mathbf{f}^{-1}(\RR_{\epsilon_1} \times \dots \times \RR_{\epsilon_r})$.

Turning to the local factors at the non-archimedean places $p$, we recall that 
$$
\beta_p=
 \lim_{m\rightarrow \infty} 
 \frac{1}{p^{ms}}\sum_{
 \substack{
 \u\in \mathcal{U}_m}}
 \prod_{i=1}^r
 \frac{\rho_i(p^m,f_i(\u);p^{v_p(M)})}{p^{m(n_i-1)}},
$$
where $\mathcal{U}_m$ is given by \eqref{eq:def-Um}, with 
$\a=\u^{(M)}$.
By construction, we have $p \nmid M$ whenever $p \not\in S$ and
it follows from Proposition \ref{p:1} that 
$$
\prod_{p\not\in S} \beta_p \gg 1,
$$
and that $\beta_p = O(1)$ for $p \in S$.
This leaves us to show that $\beta_p>0$ for every $p\in S$ in
order to complete the proof of Theorem \ref{t:ut}.
We will deduce this with the help of Lemma \ref{lem:C6.4} from the
existence of local solutions at these primes.
While we are primarily interested in $p \in S$, the following argument works for 
any prime $p$.

With Lemma \ref{lem:C6.4} in mind, we fix a prime $p$ and let
$$
m' 
=  2\Big( 1+v_p(M) 
 + \sum_{i=1}^r v_p(f_i(\u^{(M)})) + \sum_{i=1}^r  v_p(n_i) \Big).
$$
Recall that we are given $(\u^{(p)},\x_i^{(p)}) \in \cV(\QQ_p)$ such
that \eqref{eq:horse} holds for $\nu = p$.
By solving this approximation problem for a smaller value of $\eps$,
we can find 
$(\u',\x'_i) \in \ZZ^{n_1+\dots+n_r+s}$ such that
$$(\u',\x'_i) \equiv (\u^{(M)},\x_i^{(M)}) \bmod{p^{v_p(M)}}
\quad \text{ and } \quad
f_i(\u') \equiv \bN_{K_i}(\x'_i) \bmod{p^{m'}}.
$$
Thus,
$$
 \prod_{i=1}^r
 \rho_i(p^{m'},f_i(\u');p^{v_p(M)})
 \geq 1,
$$ 
where each $\rho_i$ is defined with respect to $\x_i^{(M)}$.
The technical condition \eqref{eq:technical} ensures that such
approximations $\u'$ satisfy $v_p(f_i(\u'))=v_p(f_i(\u^{(M)}))$
and $f_i(\u')\neq 0$.

The definition of $m'$ ensures that the
conditions of Lemma \ref{lem:C6.4} are satisfied when 
$m\geq m'$, $\ell=v_p(M)$,  $A=f_i(\u')$, $\a=\x_i^{(M)}$ and
$G=\bN_{K_i}$, for any $1\leq i\leq r$.
Hence we obtain
\begin{align*}
 \prod_{i=1}^r 
 \frac{\rho_i(p^{m},f_i(\u);p^{v_p(M)})}{p^{m(n_i-1)}}
&=\prod_{i=1}^r 
 \frac{\rho_i(p^{m'},f_i(\u');p^{v_p(M)})}{p^{m'(n_i-1)}}\\
&\geq \frac{1}{p^{m'(n_1+\dots+n_r - r)}},
\end{align*}
whenever 
$\u \in \mathcal{U}'_{m}=\{ \u\in (\ZZ/p^m\ZZ)^s:
\u\=\u'\bmod{p^{m'}}\}$.
The set $\mathcal{U}'_{m}$ has $p^{(m-m')s}$ elements and is clearly a
subset of $\mathcal{U}_m$.
Therefore
\begin{align*}
\beta_p(m)
&\geq 
 \frac{1}{p^{ms}}\sum_{\u\in \mathcal{U}_m'}
 \prod_{i=1}^r
 \frac{\rho_i(p^m,f_i(\u);p^{v_p(M)})}{p^{m(n_i-1)}}\\
&\geq
 \frac{1}{p^{m'(n_1+\dots+n_r + s - r)}},\end{align*}
for every $m \geq m'$, which provides the desired inequality 
$\beta_p>0$ for $p\in S$.

\bigskip
\section{$W$-trick and non-correlation with nilsequences}
\label{s:nilsequences}

The balance of this paper is dedicated to the proof of  Theorem \ref{t:NB}.  
Our proof proceeds via the methods from \cite{GT} and therefore splits
into two tasks. 
This section accomplishes one them.
Recall Definition \ref{def:repr-fn} of $R_i$ for $1\leq i\leq r$.
We show here that the function $R_i$, when passing to suitable
subprogressions and subtracting off its mean value, 
does not correlate with nilsequences.
In Sections \ref{s:gen-maj} and \ref{s:majorant} we deal with the second
task and construct a pseudorandom majorant for $R_i$. 
To ease notation we shall drop the subscript $i$ and consider the
representation function associated to a typical $K$ of degree $n$.

In order for an arithmetic function to be orthogonal to nilsequences, it
first of all needs to be equidistributed in residue classes to small
moduli.
That is, its average value should not change when passing to
subprogressions with respect to small moduli.
For this to be valid in our situation, we will choose a product $W$ 
of powers of small primes, split
$$R(m) = \sum_{A \bmod{W}} R(m)\1_{m \equiv A \bmod{W}}$$
and consider each of the functions $m \mapsto R(W m + A)$ separately.
This operation is called the ``$W$-trick'' and was introduced in
\cite{GT-longAPs}.

Following \cite[p.260]{lm1}, let $w(T)= \log \log T$ and let
\begin{equation} \label{def:W}
W 
= \prod_{p \leq w(T)} p^{\alpha(p)}, 
\end{equation}
where
 $\alpha(p)=\lceil (C_1+1) \log_p \log T\rceil$
for a constant $1 \leq C_1 \ll 1$ to be specified in 
Proposition~\ref{p:unexceptional'}.
In particular,
$$
p^{\alpha(p)-1} < (\log T)^{C_1 + 1} \leq p^{\alpha(p)}.
$$
Taking $T$ sufficiently large, we may henceforth assume that $M\mid W$. 
Moreover, it is clear that $W=O(T^{o(1)})$.

Our first result concerns the average order of the $W$-tricked
functions $m \mapsto R(Wm+A)$.

\begin{lemma}\label{lem:average-order}
Let $R=R(m;\mathfrak{X},\b,M)$ and let $\epsilon\in \{\pm\}$ be such
that $\mathfrak{X} \cap \mathfrak{D}_{+}^{\epsilon} \not= \emptyset$.
For any  $q\in \NN$ with $M\mid q$, we have 
\begin{equation*}
\sum_{\substack{0< \epsilon m \leq x \\ m \equiv A \bmod{q}}} R(m)
= \frac{\rho(q,A;M)}{q^{n}} \kappa^\epsilon x
 + O(qx^{1-1/n}),
\end{equation*} 
where $\rho(q,A;M)$ is given by \eqref{eq:def-rho} and \eqref{eq:ask} with $\x_0=\b$ and 
$\kappa^\epsilon=\kappa^\epsilon(\mathfrak{X})$ is given by
\eqref{eq:def-kappa}.
\end{lemma}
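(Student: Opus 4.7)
The plan is to recognise the left hand side as a lattice point count in an expanding region and apply Lemma \ref{lem:2}. Unfolding the definition of $R$, the sum equals
$$
\#\left\{\x\in \ZZ^n\cap \mathfrak{X} :~ 0<\epsilon\nf_K(\x)\leq x,~ \nf_K(\x)\equiv A \bmod{q},~ \x\equiv \b\bmod{M}\right\}.
$$
Since $M\mid q$, I would split the count according to the residue class $\a\bmod{q}$ of $\x$, keeping only those $\a\in(\ZZ/q\ZZ)^n$ satisfying $\nf_K(\a)\equiv A\bmod{q}$ and $\a\equiv \b\bmod{M}$. By \eqref{eq:def-rho} and \eqref{eq:ask} there are exactly $\rho(q,A;M)$ such classes.

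Next, I would use that $\mathfrak{X}$ is a cone to rewrite the region of summation as $x^{1/n}\mathcal{B}_\epsilon$, where $\mathcal{B}_\epsilon = \mathfrak{X}\cap \mathfrak{D}_+^\epsilon(1)$; this uses the identity $\mathfrak{D}_+^\epsilon(x)=x^{1/n}\mathfrak{D}_+^\epsilon(1)$ recorded after \eqref{eq:G+}. Writing $\x=\a+q\y$ for a fixed representative $\a$ of each admissible residue class, the inner count becomes $\#\bigl(\ZZ^n\cap(T\mathcal{B}_\epsilon - \a/q)\bigr)$ with $T=x^{1/n}/q$.

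At this point Lemma \ref{lem:2} applies: by hypothesis on $\mathfrak{X}$, whenever $\mathcal{B}_\epsilon$ is non-empty (which is the case by the assumption $\mathfrak{X}\cap \mathfrak{D}_+^\epsilon\neq\emptyset$) it has an $(n-1)$-Lipschitz parametrisable boundary, and hence (as noted directly after Lemma \ref{lem:2}) its $\eps$-neighbourhood has volume $O(\eps)$. The lemma then gives, uniformly in $\a$,
$$
\#\bigl(\ZZ^n\cap(T\mathcal{B}_\epsilon - \a/q)\bigr) = \vol(\mathcal{B}_\epsilon)T^n + O(T^{n-1}) = \frac{\kappa^\epsilon x}{q^n} + O\!\left(\frac{x^{1-1/n}}{q^{n-1}}\right),
$$
where $\kappa^\epsilon=\vol(\mathcal{B}_\epsilon)=\kappa^\epsilon(\mathfrak{X})$ by \eqref{eq:def-kappa}. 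The implied constant depends on the Lipschitz constants parametrising $\partial\mathcal{B}_\epsilon$, but these are fixed since $\mathfrak{X}$ is fixed.

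Summing over the $\rho(q,A;M)$ admissible residue classes gives the main term $\rho(q,A;M)\kappa^\epsilon x/q^n$ and an error of size $\rho(q,A;M)x^{1-1/n}/q^{n-1}$. Using the trivial bound $\rho(q,A;M)\leq q^n$ bounds the error by $O(qx^{1-1/n})$, matching the statement. There is no real obstacle here; the only thing to check carefully is that the boundary regularity passes from $\mathfrak{D}_+^\epsilon(1)$ to $\mathcal{B}_\epsilon=\mathfrak{X}\cap\mathfrak{D}_+^\epsilon(1)$, which is exactly built into Definition \ref{def:repr-fn} of an admissible cone $\mathfrak{X}$, and that the resulting Lipschitz constants are independent of $q$, which is clear since $\mathfrak{X}$ and $\mathfrak{D}_+^\epsilon(1)$ are fixed.
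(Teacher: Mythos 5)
Your proposal is correct and follows essentially the same route as the paper: break the sum into residue classes modulo $q$, use the cone property to write the truncated region as a dilate of $\mathfrak{X}\cap\mathfrak{D}_+^\epsilon(1)$, estimate each class via Lemma \ref{lem:2}, and sum over the $\rho(q,A;M)$ admissible classes, bounding the accumulated error with the trivial estimate $\rho(q,A;M)\leq q^n$.
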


\begin{proof}
Let us write 
$
\mathfrak{X}(x)
=\mathfrak{X} \cap \mathfrak{D}_{+}^{\epsilon}(x).
$
Breaking the given sum over $R(m)$ into residue classes, we find that 
\begin{align*}
\sum_{\substack{0< \epsilon m \leq x \\ m \equiv A \bmod{q}}} R(m)
&= \sum_{\substack{\y\in (\ZZ/q\ZZ)^n\\
\nf_K(\y)\=A\bmod{q}\\
\y\=\b \bmod{M}}}
\#\left\{ \x\in \ZZ^n\cap \mathfrak{X}(x):  
 \x\=\y\bmod{q}
\right\}.
\end{align*}
The inner cardinality equals
 $\#\left(\ZZ^n\cap q^{-1}(\mathfrak{X}(x)-\y)\right)$, which in turn
equals
\begin{align*}
\frac{\kappa^\epsilon(\mathfrak{X}) x}{q^{n}}
+O\big(q^{1-n}x^{1-1/n}\big),
\end{align*}
by Lemma \ref{lem:2}. 
The statement of the lemma easily follows.
\end{proof}

The  results that follow  no longer hold for arbitrary residue classes 
$A \bmod{W}$ and we will be forced to work with  the set of \emph{unexceptional}
residue classes
\begin{equation}\label{eq:def-A}
\cA=\left\{A \bmod{W}: 
\begin{array}{l}
 0 \leq v_p(A) < v_p(W)/3 \text{ for all } p<w(T) \cr
 0 \leq v_p(A) < v_p(M) \text{ for all } p\mid M \cr
 \rho(W,A;M) >0
\end{array}
\right\}. 
\end{equation}
To justify this, we shall see in Proposition \ref{p:unexceptional'} that integers 
that are divisible by a large prime power make a negligible contribution 
to the asymptotic formula in Theorem \ref{t:NB}. 
Consequently, such integers may be excluded from consideration altogether.
Next, in view of our assumption that 
$p^{v_p(M)} \nmid f_i(\a)$
for any $p\mid M$ and any $1 \leq i \leq r$, it is clear that there is no contribution 
from progressions 
$\{m \equiv A \bmod{W} \}$ such that 
$v_p(A)\geq v_p(M)$ for any $p\mid M$.
Finally, when $\rho(W,A;M)=0$ then $R$ is identically $0$ on the progression
$\{m \equiv A \bmod{W} \}$ and so we may exclude these residue classes $A$
as well.

The next result shows that the function $m \mapsto R(W m + A)$ is
equidistributed in residue classes to $w(T)$-smooth moduli whenever
$A \bmod{W}$ is an unexceptional residue.

\begin{lemma} \label{lem:major-arc}
Let $\epsilon\in \{\pm\}$ be such that 
$\mathfrak{X} \cap \mathfrak{D}_{+}^{\epsilon} \not= \emptyset$.
Let  $A$ be a representative of a class from $\cA$ such that 
$0< \epsilon A < W$, and let $a,q \in \ZZ$ be such that
$0\leq \epsilon a < q< T/W$.
Suppose further that $q$ is $w(T)$-smooth, and assume that
$T',T'' \in \NN$ such that 
$T' \asymp T/W$ and $T'' \asymp T/(qW)$.
Then we have 
\begin{align*}
 \EE_{0 \leq \epsilon m < T'} R(W m + A)
=\EE_{0 \leq \epsilon m < T''} R(W (q m + a) + A)
 + O(q^2 T^{-1/n+o(1)}).
\end{align*}
\end{lemma}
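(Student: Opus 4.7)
The plan is to apply Lemma \ref{lem:average-order} to each average and then match the main terms via the Chinese Remainder Theorem together with Lemma \ref{lem:C6.4}. For the left-hand side, substituting $n = Wm + A$ recasts the inner sum as $\sum R(n)$ over integers $n$ in the residue class $A \bmod{W}$ lying in an interval of length $\asymp WT'$. Since $M \mid W$, Lemma \ref{lem:average-order} applied with modulus $W$ and $x \asymp WT'$ gives a main term $\kappa^\epsilon \rho(W,A;M)/W^{n-1}$ and an averaged error of size $O(W^2 T^{-1/n})$. An identical manipulation on the right-hand side, with residue class $Wa + A \bmod{Wq}$ and modulus $Wq$ (still divisible by $M$), yields a main term $\kappa^\epsilon \rho(Wq, Wa+A; M)/(Wq)^{n-1}$ and error $O((Wq)^2 T^{-1/n})$. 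Since $W = T^{o(1)}$, both errors are absorbed into $O(q^2 T^{-1/n + o(1)})$.

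What remains is the local density identity
\begin{equation*}
\frac{\rho(W, A; M)}{W^{n-1}} = \frac{\rho(Wq, Wa + A; M)}{(Wq)^{n-1}}.
\end{equation*}
Because $q$ is $w(T)$-smooth, $Wq$ has the same prime support as $W$, so by the Chinese Remainder Theorem it suffices to prove the corresponding identity prime-by-prime at each $p \mid W$. Primes $p \nmid q$ are trivial, since then $p^{v_p(W)} \mid W$ forces $Wa + A \equiv A \bmod{p^{v_p(W)}}$ and the local factors coincide. For each $p \mid q$ (which must satisfy $p \leq w(T)$, hence $p \mid W$ with $v_p(W) = \alpha(p)$), the local identity will be obtained from $v_p(q)$ successive applications of Lemma \ref{lem:C6.4}, lifting from modulus $p^{v_p(W)}$ to $p^{v_p(W) + v_p(q)}$ and producing on the way the successive base-$p$ digits of $Wa/p^{v_p(W)}$ as adjustments to the right-hand side of the congruence.

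The main point requiring care is that the hypotheses of Lemma \ref{lem:C6.4} must hold at every step of the lifting. The $p$-adic valuation of the right-hand side is preserved at $v_p(A)$ throughout, since $v_p(A) < v_p(W) \leq m$ at every intermediate modulus $p^m$, and a non-zero integer representative can always be chosen in order to meet the $A \neq 0$ requirement. The key inequality $v_p(M) + v_p(A) + v_p(n) < m/2$ follows from the unexceptional bound $v_p(A) < v_p(W)/3$ combined with $v_p(M) + v_p(n) < v_p(W)/6$. The latter is automatic for $p \nmid Mn$, while for the finitely many primes dividing $Mn$ it follows from $\alpha(p) \geq C_1 \log_p \log T \to \infty$ once $T$ is taken sufficiently large. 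This is precisely where the $w(T)$-smoothness of $q$ and the unexceptional hypothesis on $A$ together do the work.
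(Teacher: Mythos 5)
Your proposal is correct and follows essentially the same route as the paper: two applications of Lemma \ref{lem:average-order} reduce the claim to the local density identity $\rho(W,A;M)/W^{n-1} = \rho(Wq, A+Wa; M)/(Wq)^{n-1}$, which is then established prime by prime via the Chinese remainder theorem and repeated applications of Lemma \ref{lem:C6.4}, with the unexceptional bound $v_p(A) < v_p(W)/3$ and the growth of $\alpha(p)$ guaranteeing the hypotheses of that lemma. The only notational nicety worth flagging is that the $A \neq 0$ requirement in Lemma \ref{lem:C6.4} is automatic rather than requiring a careful choice of representative, since the integer being lifted has $p$-adic valuation bounded by $v_p(A) < v_p(W) \leq m$ at every step.
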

\begin{proof}
Let $E_1$ denote the sum on the left hand side and let $E_2$ be the sum on the
right hand side. 
Since $T'$ is an integer, the summation range of $E_1$ may be written
as $0 \leq \epsilon m \leq T'-1$.
Since $m$ and $A$ are both of sign $\epsilon$, we deduce that
$0< \epsilon(W m + A) \leq (T'-1)W + \epsilon A < T'W$.
Thus after a change of variable we have
$$
E_1
=\frac{1}{T'}
\sum_{\substack{0< \epsilon m' < T'W \\ m' \equiv A \bmod{W}}} R(m'),
$$
and, similarly, 
$$
E_2
=\frac{1}{T''}
\sum_{\substack{0< \epsilon m' < T''Wq \\ m' \equiv A+Wa \bmod{Wq}}}
R(m').
$$
Recall that $M\mid W$.
Two applications of Lemma \ref{lem:average-order} therefore imply that it
suffices to prove that
\begin{equation}\label{eq:him}
\frac{\rho(W,A;M)}{W^{n-1}} 
= \frac{\rho(W q,A + W a;M)}{(W q)^{n-1}}.
\end{equation}
But this follows from the Chinese remainder theorem and applications
of Lemma \ref{lem:C6.4} for each prime $p<w(T)$.
Indeed, let $p<w(T)$, $G=\nf_K$, $\ell=v_p(M)$ and let $m$ be any integer
such that $m \geq v_p(W)$.
Since $A$ describes an unexceptional residue class, we have
$A \not\equiv 0 \bmod{p^{v_p(W)}}$ and furthermore
$$
v_p(M) + v_p(A) + v_p(n) \leq \frac{v_p(W)}{3} + O(1) < \frac{m}{2}, 
$$
provided $T$ is sufficiently large.
Hence, the conditions of Lemma \ref{lem:C6.4} are satisfied for large
$T$ and we deduce \eqref{eq:him} by applying this lemma once for each
value of $m$ in the range $v_p(W) \leq m < v_p(Wq)$.
\end{proof}

The next goal is to establish that the normalised counting function 
$$
m \mapsto \frac{W^{n-1}}{\rho(W,A;M)}R(W m +A)
$$  
does not correlate with nilsequences if $A$ is unexceptional.
A  discussion of the  various objects appearing in the following
proposition may be found in \cite[\S\S 13--15]{lm1}.
A thorough treatment is contained in \cite{GT-polynomialorbits}, which is
the paper that the results from \cite[\S\S 14--16]{lm1} build on and
extend.

\begin{proposition}\label{p:nilsequences}
 Let $G/\Gamma$ be a nilmanifold of dimension $m_G \geq 1$, let
$G_{\bullet}$ be a filtration of $G$ of degree $d \geq 1$, and let
$g \in \mathrm{poly}(\ZZ,G_{\bullet})$ be a polynomial sequence.
Suppose that $G/\Gamma$ has a $Q$-rational Mal'cev basis $\mathcal X$
for some $Q \geq 2$, defining a metric $d_{\mathcal X}$ on $G/\Gamma$.
Suppose that $F: G/\Gamma \to [-1,1]$ is a Lipschitz function.
Then for $\epsilon\in \{\pm\}$, $T'\asymp T/W$ and $A \in \ZZ$ with 
$A \bmod{W} \in \cA$ and $0 \leq \epsilon A < W$,
we have the estimate
\begin{align*}
\Big|
  \EE_{0< \epsilon m \leq T'} 
  \Big(
    R(W m +A) &- \frac{\rho(W,A;M)}{W^{n-1}} \kappa^{\epsilon}
  \Big)
  F(g(|m|)\Gamma)
\Big| \\
&\ll_{m_G,d,E} 
  \frac{\rho(W,A;M)}{W^{n-1}}
  Q^{O_{m_G,d,E}(1)}
 \frac{1+ \|F\|_{\mathrm{Lip}}}{(\log \log \log T)^{E}},
\end{align*}
for any $E>0$.
\end{proposition}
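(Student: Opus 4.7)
The plan is to follow the Green--Tao template \cite{GT} as extended by the second author in \cite[\S 16]{lm1}, combining the quantitative factorization theorem of Green--Tao \cite{GT-polynomialorbits} for polynomial sequences on nilmanifolds with the equidistribution of $R$ in smooth-modulus arithmetic progressions already furnished by Lemma \ref{lem:major-arc}. The unexceptional hypothesis $A \bmod{W} \in \cA$ is what permits Lemma \ref{lem:C6.4}, and hence the identity \eqref{eq:him} used implicitly in Lemma \ref{lem:major-arc}, to be invoked here.

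First, I would introduce an auxiliary parameter $M_0 \geq 1$, to be calibrated later, and apply the factorization theorem to write $g = \epsilon g' \gamma$, where $\epsilon$ is $(M_0, T')$-smooth (varying slowly at scale $T'/M_0$), $g'$ takes values in a rational subgroup $G' \leq G$ whose orbit is totally $M_0^{-1}$-equidistributed on $G'/(G' \cap \Gamma)$, and $\gamma$ is $M_0$-rational, so that $\gamma(m)\Gamma$ is periodic in $m$ of period $q_0 \leq (Q M_0)^{O_{m_G, d}(1)}$.

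Next, I would partition $\{m : 0 < \epsilon m \leq T'\}$ first into the residue classes modulo $q_0$, and then into subintervals of length $\asymp T'/M_0$ on which $\epsilon$ is essentially constant. On each such piece $\gamma(m)\Gamma$ equals a fixed $\gamma_a \Gamma$, so up to an error coming from the smoothness of $\epsilon$ we may replace $F(g(m)\Gamma)$ by $F_a(g'(m)\Gamma')$ for a Lipschitz function $F_a$ on $G'/(G' \cap \Gamma)$ with $\|F_a\|_{\mathrm{Lip}} \leq Q^{O_{m_G, d}(1)} (1 + \|F\|_{\mathrm{Lip}})$. Writing each subsum over $m \equiv a \bmod{q_0}$ of $(R(Wm+A) - \rho(W,A;M)W^{-(n-1)}\kappa^\epsilon) F_a(g'(m)\Gamma')$ via Abel summation, the equidistribution of $R$ on such a progression supplied by Lemma \ref{lem:major-arc} (together with \eqref{eq:him}, which renders the subtracted mean independent of $a$) pairs off against the increments of $F_a(g'(m)\Gamma')$, the latter being controlled by $\|F_a\|_{\mathrm{Lip}}$ and the total $M_0^{-1}$-equidistribution of $g'$.

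The main obstacle will be to calibrate $M_0$. Enlarging $M_0$ strengthens both the equidistribution of $g'$ and the smoothness approximation for $\epsilon$, yet worsens the bound $q_0 \leq M_0^{O(1)}$, which feeds into the Lemma \ref{lem:major-arc} error $q_0^2 T^{-1/n + o(1)}$ per progression. Since $W = (\log T)^{O(1)}$ while Lemma \ref{lem:major-arc} gains a factor $T^{-1/n}$, there is ample margin: choosing $M_0 \asymp (\log \log \log T)^{C}$ for a sufficiently large $C = C(m_G, d, E)$ drives every error contribution below $(\log \log \log T)^{-E}$, producing the claimed bound. This quantitative calibration mirrors the one carried out for the quadratic case in \cite[\S 16]{lm1}.
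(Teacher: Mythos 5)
Your scaffolding — applying the factorisation theorem of \cite{GT-polynomialorbits}, passing to a smooth/equidistributed/rational decomposition $g = \epsilon g'\gamma$, partitioning into classes modulo $q_0$ and subintervals on which $\epsilon$ is essentially constant, replacing $F(g(m)\Gamma)$ by $F_a(g'(m)\Gamma')$, and then tuning $M_0$ to a power of $\log\log\log T$ — is exactly the reduction the paper carries out (following \cite[Props.~17.1--17.2]{lm1} and \cite[\S2]{GT-nilmobius}). The unexceptionality hypothesis and the identity \eqref{eq:him} are used in the right place, via Lemma~\ref{lem:major-arc}.

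However, the core step in your sketch does not work. Abel summation of $\sum_m \big(R(Wm+A)-\mu\big)F_a(g'(m)\Gamma')$ pairs the partial sums $S_m = \sum_{k\le m}\big(R(Wk+A)-\mu\big)$ against the increments $F_a(g'(m+1)\Gamma')-F_a(g'(m)\Gamma')$. The increments are only $O(1)$ — a polynomial sequence $g'$ on a nilmanifold moves a bounded but not small distance between consecutive steps, and total $M_0^{-1}$-equidistribution controls \emph{averages} of $F_a(g'(m)\Gamma')$ on long sub-progressions, not consecutive differences. Meanwhile Lemma~\ref{lem:major-arc} gives $|S_m| \ll q^2 T^{-1/n+o(1)}\cdot m$, so the Abel sum is bounded by $\ll N^2 q^2 T^{-1/n+o(1)}$ with $N\asymp T'/q$; this does \emph{not} beat the trivial bound $\sum_m |R(Wm+A)-\mu| \asymp \mu N$. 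So partial summation yields nothing here.

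What is actually needed, and what the paper supplies as Proposition~\ref{p:nilsequences}'s companion (the second proposition of Section~\ref{s:nilsequences}), is a genuine ``minor arc'' estimate: the correlation of $R(W(qm+b)+A)$ against $F(g(|m|)\Gamma)$ where $\int_{G/\Gamma}F=0$ and $g$ is totally $\delta$-equidistributed is $\ll \delta^c (1+\|F\|_{\mathrm{Lip}}) \rho(W,A;M)W^{-(n-1)}$. One then splits $F_a = (F_a - \int F_a) + \int F_a$: the constant part is controlled by Lemma~\ref{lem:major-arc}, and the mean-zero part by this minor arc proposition. The minor arc estimate cannot be obtained by summation by parts; it is proved by expanding $R$ as a count of lattice points $\x$ with $\bN_K(\x)=m$, slicing the lattice along lines parallel to a coordinate axis, observing that the argument of the nilsequence becomes a one-variable integral polynomial $P_{\a,\y}(x)$ of degree $n$ on each line, and invoking equidistribution of polynomial orbits (cf.\ \cite[Prop.~15.4]{lm1}). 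Without this ingredient your argument has a genuine gap.
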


Exactly as in  \cite[Props.~ 17.1 and 17.2]{lm1}
we deduce the above proposition from a special case involving only 
``minor arc nilsequences''.   
This reduction is modelled upon  \cite[\S2]{GT-nilmobius}
and we will not give the details.
The key ingredients are Lemma \ref{lem:major-arc}  and
\cite[Thm.~16.4]{lm1}, which is a factorisation theorem for
nilsequences.
Due to the similar set-up, the choice of parameters from the proof
of \cite[Prop.~17.1]{lm1} remains unchanged.

\begin{proposition}
Let $\epsilon\in \{\pm\}$, $T'\asymp T/W$ and $A \bmod{W} \in \cA$ with
$0 \leq \epsilon A < W$.
Suppose that $\delta\in (0,1/2)$ and $S=O(T^{o(1)})$ are
parameters such that $\delta^{-t} \ll_t T$, for all $t\in \NN$.
 Assume that $(G/\Gamma, d_{\mathcal X})$ is an $m_G$-dimensional
nilmanifold with a filtration $G_{\bullet}$ of degree $d$ and
that $g \in \mathrm{poly}(\ZZ,G_{\bullet})$. 
Finally, suppose that for every $w(T)$-smooth number $\tilde q \leq S$ the
finite sequence $(g(\tilde q m)\Gamma)_{0<m\leq T'/\tilde q}$ is totally 
$\delta$-equidistributed in $G/\Gamma$. 

For every Lipschitz function $F:G/\Gamma \to [-1,1]$ satisfying
$\int_{G/\Gamma}F=0$, 
for every $w(T)$-smooth number $q =O( T^{o(1)})$ and every $0\leq b < q$, 
for every $T''\asymp T/(Wq)$, 
there exists $c\asymp_{m_G,d}1$ such that 
$$
\big|\EE_{0< \epsilon m \leq T''} R(W (qm+b) + A) F(g(|m|)\Gamma)\big| 
\ll \delta^{c} (1+\|F\|_{\mathrm{Lip}}) \frac{\rho(W,A;M)}{W^{n-1}}.
$$
\end{proposition}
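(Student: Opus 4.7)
The plan is to follow the Heath--Brown identity approach used in the minor arc analysis of \cite[Prop.~17.1]{lm1}, adapted to the norm form in its full generality. The starting point is to rewrite $R(m)$ in terms of integral ideals: via the bijection between primitive representations and principal ideals that was set up in the proof of Lemma \ref{lem:limit}, summing over the ideal classes $c_1,\dots,c_h$ and the fixed coprime representatives $\fc_i \in c_i^{-1}$ from Section \ref{s:norms}, one writes $R(m)$ as a sum over pairs $(\fa,\alpha)$ with $\fa\subset \fo_K$ of norm $m$ and $\alpha$ a generator lying in a translate of $\mathfrak X$. Using the partition $\cP_0 \cup \cP_1 \cup \cP_2$ from \eqref{eq:PPP} and Corollary \ref{cor:log-asymp}, the contribution from ideals whose norm has a $\cP_2$-part exceeding $(\log T)^{C}$ is negligible, leaving essentially a sum over ideals built out of degree--one primes, to which a classical divisor-type analysis applies.

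Next I would apply the Heath--Brown identity at the variable $m \mapsto Wqm+Wb+A$ in order to decompose the inner counting function into $O((\log T)^{O(1)})$ pieces of Type I and Type II. The Type I pieces have the shape
\begin{equation*}
\sum_{d\le D} \alpha_d \sum_{\substack{0<\epsilon m \le T''\\ d \mid Wqm+Wb+A\\ \text{side conditions}}} F(g(|m|)\Gamma),
\end{equation*}
with $|\alpha_d| = O(d^{o(1)})$ and $D \le T^{1/3}$. After fixing $d$ and absorbing the congruence and fundamental domain conditions, the inner sum runs over an arithmetic progression with $w(T)$-smooth common difference $\tilde q = dq W = O(T^{o(1)})$, so the total $\delta$-equidistribution hypothesis applied to $(g(\tilde q m)\Gamma)$ yields a bound of $\delta^c(1+\|F\|_{\mathrm{Lip}})$, which after trivial summation in $d$ gives the required estimate.

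The Type II pieces have the shape $\sum_{d\asymp D'}\sum_{e\asymp E'} \alpha_d \beta_e \1_{de = Wqm+Wb+A}\, F(g(|m|)\Gamma)$ with $T^{1/3} \le D', E' \le T^{2/3}$. Here I would apply Cauchy--Schwarz in the outer variable to arrive at a sum of the form
\begin{equation*}
\sum_{e,e'} \beta_e \bar\beta_{e'} \sum_{m,m'} F(g(|m|)\Gamma)\bar F(g(|m'|)\Gamma) \1_{\text{linked}},
\end{equation*}
where the linking relation $em' - e'm \equiv 0$ reduces the diagonal to a single parameter. A standard van der Corput manipulation then converts this into a correlation of $F\otimes \bar F$ along a polynomial orbit on the product nilmanifold $G/\Gamma \times G/\Gamma$; the equidistribution of this orbit along smooth progressions is obtained from the hypothesis by passing to the appropriate horizontal quotient, and the resulting bound is again of order $\delta^{c}(1+\|F\|_{\mathrm{Lip}})$.

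The main obstacle will be the bookkeeping required to pass cleanly from the geometric representation function $R$ on $\ZZ^n \cap \mathfrak X$ (with its fundamental domain for $Y_K^{(+)}$ and the congruence $\x \equiv \b \bmod M$) to the multiplicative decomposition on $\ZZ$ needed for the Heath--Brown identity. One must sum over the $h$ ideal classes separately, verify that the $(n-1)$-Lipschitz parametrisable boundary of $\mathfrak X \cap \mathfrak D_{+}^\epsilon(1)$ contributes only an acceptable $O(T^{-1/n+o(1)})$ error (as in Lemma \ref{lem:average-order}), and check that the $W$-trick and the unexceptional residue condition $A \bmod W \in \cA$ are preserved throughout the decomposition, using Lemma \ref{lem:major-arc} as the principal tool for comparing averages on sub-progressions. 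Assuming these pieces fit together as in \cite{lm1}, the stated bound follows with $c \asymp_{m_G,d} 1$ inherited from the Green--Tao quantitative equidistribution theorems.
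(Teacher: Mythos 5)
Your proposal takes a fundamentally different route from the paper, and it has a genuine gap.

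The paper's proof does not invoke any Heath--Brown identity or bilinear Type I/II decomposition. Instead it unfolds $R(W(qm+b)+A)$ as a lattice-point count over $\ZZ^n\cap(B^{1/n}\mathfrak X(1)-\y)/(Wq)$, then \emph{freezes $n-1$ of the $n$ coordinates}. The key structural observation is that the map $x\mapsto (\bN_K(Wq\x+\y)-A-Wb)/(\epsilon Wq)$, with $\pi(\x)=\a$ fixed, is an integral polynomial $P_{\a,\y}$ of degree $n$ with leading coefficient $\asymp (Wq)^{n-1}$. The correlation with the nilsequence thus decomposes into sums of the form $\sum_x F(g(P_{\a,\y}(x))\Gamma)$ along line segments; after controlling the boundary of $\mathfrak X(1)$ by an $\eps$-neighbourhood argument (valid because the boundary is $(n-1)$-Lipschitz parametrisable), \cite[Prop.~15.4]{lm1} transfers the hypothesis on the equidistribution of $(g(\tilde q m)\Gamma)$ directly to equidistribution of $(g(P_{\a,\y}(\tilde q x+\tilde b))\Gamma)$, and choosing $\eps=\delta^{c/2}$ yields the stated bound. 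Incidentally, this is also what \cite[Prop.~17.2]{lm1} does for $n=2$; there is no Heath--Brown step in that argument either, so the premise of your proposal misreads the cited source.

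The gap in your approach is the Type II step. The hypothesis of the proposition only provides total $\delta$-equidistribution of the single sequence $(g(\tilde q m)\Gamma)$ on $G/\Gamma$. After Cauchy--Schwarz you are left with correlations of the form
\[
\sum_{m,m'} F(g(|m|)\Gamma)\bar F(g(|m'|)\Gamma)\1_{em'-e'm\equiv 0},
\]
and bounding these requires controlling the orbit of $(g(m),g(m'))$ (constrained to a lattice) on the \emph{product} $G/\Gamma\times G/\Gamma$, or equivalently the correlation of $F\otimes\bar F$ along a two-variable polynomial sequence. This is not a consequence of the stated one-variable equidistribution hypothesis, and the assertion that one can ``pass to the appropriate horizontal quotient'' is not justified: for general $g\in\mathrm{poly}(\ZZ,G_\bullet)$ the product orbit will not equidistribute in any useful sense without additional hypotheses. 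A secondary concern is that applying the Heath--Brown identity to the ideal-counting function requires a clean Dirichlet-convolution structure with a single $\mu$-like factor; for $r_K$ (which is $1*g$ with $g$ only bounded by $H^{\Omega}$) one does not get bounded bilinear coefficients, so even the Type II ranges cannot be treated by trivial bounds on $\beta_e$. By contrast, the paper's one-variable polynomial reduction sidesteps both issues entirely, which is why it is the right tool here.
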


\begin{proof}
To begin with we note that in the polynomial
$\bN_K \in \ZZ[X_1,\dots, X_n]$   the coefficient of $X_i^n$ is given by 
$N_{K/\QQ}(\omega_i)\neq 0$, 
for 
$1 \leq i \leq n$.

Our first step is to rewrite the given correlation as a sum over lattice
points.
A change of variables yields 
\begin{equation}\label{eq:him'}
\begin{split}
 \EE_{0< \epsilon m' \leq T''} &R(W(qm' + b)+A) F(g(|m'|)\Gamma) \\
&= \frac{1}{T''}
  \sum_{\substack{0< \epsilon m \leq B \\ m\equiv A+Wb \bmod{Wq}}} R(m)
  F\Big(
    g\Big(\frac{m-A-  Wb} {\epsilon Wq}\Big) \Gamma
   \Big),
\end{split}
\end{equation}
for some $B \asymp T$.
Let 
$$
\mathcal Y = 
 \left\{ 
\y \in (\ZZ/Wq\ZZ)^n: 
\begin{array}{l}
 \bN_K(\y) \equiv A+Wb \bmod{Wq} \\
 \y \equiv \b
 \bmod{M}
\end{array}
 \right\},
$$
so that $ \#\mathcal Y = \rho(Wq, A+Wb;M)$. 
The right hand side of \eqref{eq:him'} becomes
\begin{align}\label{eq:nilsequ-1}
  \frac{1}{T''}
  \sum_{\y \in \mathcal{Y}}
  \sum_{\substack{ \x \in \ZZ^n \\
        Wq\x + \y \in 
  B^{1/n} \mathfrak{X}(1) }}
  F\Big(g\Big(
   \frac{\bN_K(Wq\x + \y)-A - Wb}{\epsilon Wq}\Big)
   \Gamma\Big),
\end{align}
where 
$\mathfrak{X}(1)
= \{\x\in \mathfrak{X} : 0<\epsilon\nf_K(\x)\leq 1\}$.

Since the coefficient of $X_n^n$ in $\bN_K(X_1, \dots, X_n)$ is non-zero,
we obtain an integral polynomial of degree $n$ and leading coefficient
$\epsilon N_{K/\QQ}(\omega_n) (Wq)^{n-1}$ when fixing all but the $n$th
variable in
$$
\frac{\bN_K(Wq\x + \y)-A-  Wb}{\epsilon Wq}.
$$ 
Let $\pi:\RR^n \to \RR^{n-1}$ denote the projection onto the
coordinate plane $\{x_n=0\}$, and let 
$P_{\pi(\x),\y}(x)=\gamma_0+\dots +\gamma_n x^n$ denote the above
polynomial, 
for suitable coefficients  $\gamma_0,\dots,\gamma_n\in \ZZ$, 
with $\gamma_n=\epsilon N_{K/\QQ}(\omega_n) (Wq)^{n-1}$.
If
$Wq\x + \y \in B^{1/n} \mathfrak{X}(1)$,
then it follows that $\gamma_i \ll B^{(n-i)/n} (Wq)^{i-1}$, for
$0\leq i\leq n$.
Thus the hypotheses of \cite[Prop.~15.4]{lm1} are met.
We aim to employ this to bound \eqref{eq:nilsequ-1} by splitting the
range of the $\x$-summation into lines on which $\pi(\x)$ is constant.
With this in mind, we proceed to investigate how such lines intersect the
domain $\mathfrak{X}(1)$.

We have $\mathfrak{X}(1) \subset (-\alpha,\alpha)^n$ for some constant
$0<\alpha =O(1)$.
Let $\a=(a_1,\dots,a_{n-1})$, with  $|\mathbf{a}| < \alpha$,
and consider the line 
$\ell_{\a}: (-\alpha,\alpha) \to \RR^n$
given by $\ell_{\a}(x) = (\a,x)$.
For $\eps \geq 0$, let $\partial_{\eps} \mathfrak{X}(1) \subset \RR^n$
denote the set of points at distance at most $\eps$ to the boundary of
the closure of $\mathfrak{X}(1)$.
We note that the set
$$\left\{x \in (-\alpha,\alpha): \ell_{\a}(x) \in 
\mathfrak{X}(1) \setminus \partial_{0}\mathfrak{X}(1) \right\}$$
is the union of disjoint open intervals.
By removing all intervals of length at most $\eps$, we obtain a collection
of at most $2 \alpha \eps^{-1} \ll \eps^{-1}$ open intervals
$I_1(\a), \dots, I_{k(\a)}(\a) \in (-\alpha, \alpha)$ such
that any $x\in (-\alpha, \alpha)$ satisfies the implication
\begin{align*}
\ell_{\a}(x) \in 
\mathfrak{X}(1) \setminus \partial_{\eps}\mathfrak{X}(1)
\implies 
x \in I_j(\a) \text{ for some }j \in \{1,\dots,k(\a)\} .
\end{align*}
We will choose a suitable value of $\eps$ at the end of the proof.

Observe that any interval $(z_0,z_1) \subset (-\alpha,\alpha)$ can be
expressed as a difference of intervals in $(-\alpha,\alpha)$ that have
length at least $2\alpha/3$.
Indeed, $z_0$ and $z_1$ partition $(-\alpha,\alpha)$ into three (possibly
empty) intervals, at least one of which has length at least
$2\alpha/3$.
Thus, one of the three representations
$$
(z_0,z_1)
=(-\alpha, z_1) \setminus (-\alpha, z_0] 
=(z_0, \alpha)\setminus[z_1, \alpha)
$$
has the required property. 
For each $\a$ and $j \in \{1,\dots,k(\a)\}$, we let 
$I_j(\a) = J_j^{(1)}(\a)\setminus J_j^{(2)}(\a)$ be such a
decomposition, where 
$J_j^{(2)}(\a)$
is possibly empty.

Abbreviating 
$\a'=B^{-1/n}(Wq \a + \pi(\y))$, 
we see that \eqref{eq:nilsequ-1} equals
\begin{equation}
\begin{split} \label{eq:nilsequ-1'}
\frac{1}{T''}
  \sum_{\y \in \mathcal{Y}}
  \sum_{\substack{\a \in \ZZ^{n-1}\\ |\a'|<\alpha}} 
  \sum_{j=1}^{k(\a)}
  \sum_{x \in \ZZ}
\Big\{ 
   \1_{ B^{-1/n}Wq x \in J_j^{(1)}(\a')} - 
   \1_{ B^{-1/n}Wq x \in J_j^{(2)}(\a')} 
\Big\} 
  F\Big(g\big( P_{\a,\y} (x) \big)\Gamma\Big) \\
+O\Big(  \frac{1}{T''}\sum_{\y \in \mathcal{Y}}
  \#\{\x \in \ZZ^n: B^{-1/n}(Wq \x + \y) \in 
      \partial_{\eps} \mathfrak{X}(1)\} \Big). 
\end{split}
\end{equation}
Here, the error term accounts for all points in the
$B^{1/n}\eps$-neighbourhood of the boundary of $B^{1/n}
\mathfrak{X}(1)$,
that were excluded through the choice of intervals $I_j(\a)$.
Observe that we made use of the fact that $\|F\|_{\infty} \leq 1$.
Since $\mathfrak{X}(1)$ is $(n-1)$-Lipschitz parametrisable, we have
$\vol(\partial_{\eps} \mathfrak{X}(1)) \asymp \eps$.
Together with an application of \eqref{eq:him} this shows that the error
term is bounded by
\begin{align*}
T''^{-1} \#\mathcal{Y} \frac{\eps B}{(Wq)^n} 
\ll \eps B T^{-1}\frac{\rho(Wq,A+Wb;M)}{(Wq)^{n-1}} 
\ll \eps \frac{\rho(W,A;M)}{W^{n-1}}.
\end{align*}

Turning towards the main term, \cite[Prop.~15.4]{lm1} implies that for
every polynomial $P_{\a,\y}$ there is a $w(T)$-smooth integer
$\tilde q\in \NN$, with $\tilde q \ll T^{o(1)}$, and a constant
$c\asymp_{m_G,d}1$ such that for each $0\leq \tilde b < \tilde q$ the
sequences
$$(g(P_{\a,\y}(\tilde q x + \tilde b))
  \Gamma)_{x \leq (T'/\gamma_n \tilde q^{n})^{1/n}}$$
are totally $\delta^{c}$-equidistributed in $G/\Gamma$, provided that $T$
 is large enough.
Recall that the leading coefficient of $P_{\a,\y}$ satisfies 
$\gamma_n \asymp (Wq)^{n-1}$.
Since the set
$$
 \left\{ x \in \ZZ: 
    \frac{Wq}{B^{1/n}} (\tilde q x + \tilde b) 
   \in J_j^{(1)}(\a') \right\}
$$
is a discrete interval of length
$$
 \# \left\{ x \in \ZZ: 
    \frac{Wq}{B^{1/n}} (\tilde q x + \tilde b) 
   \in J_j^{(1)}(\a') \right\}
 \asymp \frac{T^{1/n}}{Wq \tilde q},
$$
we may employ the above total $\delta^{c}$-equidistribution property to
deduce that
\begin{align*}
\left|  \sum_{\substack{ x \in \ZZ \\
   B^{-1/n}Wq x  \in J_j^{(1)}(\a')
  }}\hspace{-0.3cm}
  F\Big(g\big( P_{\a,\y} (x) \big)\Gamma\Big)\right|
&\leq 
 \sum_{\tilde b=0}^{\tilde q - 1} 
 \bigg|
  \sum_{\substack{ x \in \ZZ \\
   B^{-1/n}Wq (\tilde q x + \tilde b) 
   \in J_j^{(1)}(\a')
  }}\hspace{-0.4cm}
  F\Big(g\big( P_{\a,\y} (\tilde q x + \tilde b) \big)\Gamma\Big) 
 \bigg| \\
&\ll \tilde q 
 T^{1/n}(Wq \tilde q)^{-1}
 \delta^{c} \|F\|_{\mathrm{Lip}} \\
&\ll T^{1/n}(Wq)^{-1}
 \delta^{c} \|F\|_{\mathrm{Lip}}.
\end{align*}
The same holds for $J_j^{(1)}(\a')$ replaced by any non-empty 
$J_j^{(2)}(\a')$. 
Hence \eqref{eq:nilsequ-1'} is bounded by
\begin{align*}
&\ll T''^{-1} \#\mathcal Y 
  \Big(\frac{T^{1/n}}{Wq}\Big)^{n-1} \eps^{-1}
  T^{1/n}(Wq)^{-1}
 \delta^{c} \|F\|_{\mathrm{Lip}}
+\ve \frac{\rho(W,A;M)}{W^{n-1}} \\ 
&\ll T''^{-1} \#\mathcal Y \frac{T}{(Wq)^{n}} \eps^{-1}
 \delta^{c} \|F\|_{\mathrm{Lip}}
+\ve \frac{\rho(W,A;M)}{W^{n-1}}\\ 
&\ll \frac{\rho(W,A;M)}{W^{n-1}} 
 \Big(
  \eps^{-1} \delta^{c} \|F\|_{\mathrm{Lip}}
+\eps \Big),
\end{align*}
where we applied \eqref{eq:him}.
Choosing $\eps = \delta^{c/2}$ completes the proof.
\end{proof}

\bigskip
\section{Majorants for positive multiplicative functions}
\label{s:gen-maj}
The aim of this section is to construct for every multiplicative function
$f: \NN \to \RR_{>0}$ whose growth is controlled in some precise
sense, for every sufficiently small $\gamma \in (0,1)$ and for any
increasing infinite sequence $\mathcal{T} = \{T_1 < T_2 < \dots\}$ of
sufficiently large positive integers, a family of majorant functions
$$
\Big(\nu^{(T)} : \{1,\dots, T\} \to \RR_{> 0}\Big)_{T \in \mathcal{T}}
$$
with the following properties:
\begin{itemize}
 \item[(i)] $f(m) \leq C \nu^{(T)}(m)$ for all $m \leq T$ and some
absolute constant $C>0$;
 \item[(ii)] $\EE_{m \leq T} f(m) \asymp \EE_{m\leq T} \nu^{(T)}(m)$; and
 \item[(iii)] $\nu^{(T)}$ has the structure of a truncated divisor sum.
That is to say, it takes the form 
$$ \nu^{(T)}(m) 
= \sum_{d \leq T^{\gamma}}
  \lambda_d \1_{d|m},
  $$
for suitable coefficients $\lambda_d\in \RR$, 
for all $m \leq T$ that lie outside a sparse exceptional set.
\end{itemize}
In \cite{lm0} such majorant functions were constructed for
the divisor function,  building on work of Erd\H{o}s \cite{erdos}.
Shiu \cite{shiu} observed that Erd\H{o}s' methods carry over to all
multiplicative functions $f:\NN \to \RR_{\geq 0}$ that satisfy the two
conditions:
\begin{itemize}
 \item[(a)] $f(p^k) \leq H^k$ for all prime powers; and
 \item[(b)] $f(m) \ll_{\delta}m^{\delta}$ as $m \to \infty$ for any
$\delta>0$.
\end{itemize}
Equally, the majorant construction from \cite[\S4]{lm0} has
an analogue for a more general class of multiplicative functions,
which we shall describe below.
The results in this section do not require condition (b).
We employ this condition however in Section \ref{s:linear-forms} when
checking the correlation condition.
In order to ensure that condition (ii) from above applies to the type of
majorant we construct, we impose the further condition that 
$g=\mu * f$ is
non-negative 
\begin{definition}\label{def:M_reg}
Let $\cM(H)$ denote the set of multiplicative
functions $f:\NN\rightarrow \RR_{\geq 0}$ such that:
\begin{itemize}
 \item[(a)] $f(p^k) \leq H^k$ for all primes $p$ and $k\in \NN$;
 \item[(b)] $f(m) \ll_{\delta} m^{\delta}$
 for all $m \in \NN$ and any $\delta>0$; and 
 \item[(c)] $f(p^k)\geq f(p^{k-1})$ for all primes $p$ and $k\in \NN$.
\end{itemize}
Let $\cM'(H)$ denote the set of non-negative multiplicative functions $f$
satisfying (a) and (b).
\end{definition}
Property (c) ensures that any $f\in \cM(H)$ always takes positive values.
Moreover, given $f \in \cM(H)$, we note that 
$g=\mu *f$ satisfies $0\leq g(p^k) \leq H^k$.

\begin{rem}
Examples of functions which belong to $\cM(H)$, for suitable $H$, include
the generalised divisor functions $\tau_k$, which appear as Dirichlet
coefficients in $\zeta^k(s)$, and functions of the form $h^{\omega(m)}$,
for any real number $h > 1$.
\end{rem}

For technical reasons we replace all cut-offs, as in (iii) above, by
smooth cut-offs.
For this purpose, let $\chi: \RR \to [0,1]$ be a smooth function that
is supported on $[-1,1]$, monoton on both $[-1,0]$ and $[0,1]$, and 
satisfies $\chi(x)=1$ for $x \in [-1/2,1/2]$.
\begin{definition}[Truncated multiplicative function]
\label{def:truncation}
 Given a cut-off parameter $T$ and any multiplicative function 
$f:\NN \to \RR_{\geq 0}$, let 
$f_{\gamma}^{(T)}:\{1,\dots,T \} \to \RR_{\geq 0}$ be defined by
$$
f_{\gamma}^{(T)}(m) 
= \sum_{d \in \NN} \1_{d|m}g(d)
  \chi\Big(\frac{\log d}{\log T^{\gamma}}\Big),
$$
where $g = \mu * f$. 
\end{definition}

Since $g$ is non-negative if $f\in \mathcal{M}(H)$, we have
$f_\gamma^{(T)}(m)\leq \sum_{d\in \NN} \1_{d\mid m}g(d)=f(m)$ for 
$m \leq T$.
The fact that $\chi(x)=1$ for $x \in [-1/2,1/2]$ implies the lower bound
\begin{align}\label{eq:truncation-lowerbound}
f_{\gamma}^{(T)}(m) 
\geq \sum_{d\leq T^{\gamma/2}} \1_{d\mid m} g(d),
\end{align}
which is an equality for $m\leq T^{\gamma/2}$.
The following lemma generalises a result of Erd\H{o}s, in the form of 
\cite[Lemma~4.1]{lm0}. 

\begin{lemma}\label{lem:erdos}
Let $f \in \cM(H)$ for $H>1$, let $C_1 > 1$ be a fixed constant
and let $\xi < 1/2$.
Furthermore, let $T>1$ be an arbitrary integer, let $m \leq T$ and suppose
that 
$$
f(m) \geq H^{\kappa} f_{2\xi}^{(T)}(m),
$$
for some $\kappa > 2/\xi$. 
Then one of the following three alternatives holds:
\begin{enumerate}
\item $m$ is excessively ``rough'' in the sense that it is divisible by 
some prime power $p^a$, $a \geq 2$, with $p^a > (\log T)^{C_1};$
\item $m$ is excessively ``smooth'' in the sense that
\[ 
\prod_{p \leq T^{1/(\log \log T)^3}} p^{v_p(m)} 
\geq T^{\xi/\log\log T};
\]
\item $m$ has a ``cluster'' of prime factors in the sense that there is a
$\lambda$ in the interval 
$$\log_2 \kappa - 2 \leq \lambda 
< \log_2 ((\log \log T)^3) < 6 \log \log \log T$$ 
such that $m$ has at least $\xi \kappa(\lambda + 3 - \log_2 \kappa)/100$ distinct
prime factors in the superdyadic range 
$I_{\lambda} = [T^{1/2^{\lambda+1}}, T^{1/2^{\lambda}}]$
and is not divisible by the square of any prime in this range.
\end{enumerate}
\end{lemma}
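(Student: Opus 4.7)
The plan is to argue by contrapositive: assuming none of alternatives (1), (2), (3) holds for $m$, derive $f(m) < H^\kappa f_{2\xi}^{(T)}(m)$, contradicting the hypothesis. The first step is to invoke the lower bound \eqref{eq:truncation-lowerbound} with $\gamma = 2\xi$, reducing the task to exhibiting a family of divisors $d \mid m$ of size $\leq T^\xi$ whose $g$-values sum to more than $H^{-\kappa} f(m)$, where $g = \mu * f$.

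Next I would decompose $m = m_0 m_1$, with $m_0$ the $T^{1/(\log\log T)^3}$-smooth part. The failure of (2) yields $m_0 < T^{\xi/\log\log T}$; the failure of (1) confines every squared prime factor of $m$ to primes $p \leq (\log T)^{C_1/2}$, hence (for $T$ large) inside $m_0$, so $m_1$ is squarefree. Partition the primes of $m_1$ into the superdyadic levels $I_i = [T^{1/2^{i+1}}, T^{1/2^i}]$ for $0 \leq i \ll \log\log\log T$, and set $k_i = \#\{p \in I_i : p \mid m_1\}$. Alongside the trivial bound $k_i \leq 2^{i+1}$ (coming from $\prod p \leq m \leq T$), the failure of (3) provides the key refinement $k_i < \xi\kappa(i + 3 - \log_2 \kappa)/100$ throughout the critical window $\log_2 \kappa - 2 \leq i \ll \log\log\log T$.

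By multiplicativity,
\[
f(m) = f(m_0) \prod_i \prod_{\substack{p \in I_i \\ p \mid m_1}}(1 + g(p)),
\]
and for any family of subsets $S_i \subseteq \{p \in I_i : p \mid m_1\}$ the product divisor $d_0 \prod_i \prod_{p \in S_i} p$ with $d_0 \mid m_0$ satisfies $d_0 \prod_i\prod_{p \in S_i}p \leq m_0 \prod_i T^{|S_i|/2^i}$, which is $\leq T^{\xi}$ whenever $\sum_i |S_i|/2^i \leq \xi - \xi/\log\log T$. The combinatorial core of the argument is then to fix a crossover $i^* \approx \log_2 \kappa$: for $i > i^*$ the dyadic weights $2^{-i}$ are small enough (using $k_i = O(i)$ from the failure of (3)) that one can take $S_i$ equal to the full set of primes of $m_1$ in $I_i$ without violating the budget, while for $i \leq i^*$ one shows that the loss from capping $|S_i|$ is absorbed by the factor $H^\kappa$.

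The main obstacle is this final combinatorial balancing: the precise placement of $i^*$, the allocation of the leftover budget $\xi - \sum_{i > i^*} k_i/2^i$ across the levels $i \leq i^*$, and the verification that the accumulated loss factor $\prod_{i \leq i^*}(1 + H)^{k_i - |S_i|}$ stays below $H^\kappa$. This is precisely where the hypothesis $\kappa > 2/\xi$ enters, converting the budget $\xi$ into the threshold $\kappa$. The argument is a direct generalisation of the Erd\H{o}s-style proof of \cite[Lemma 4.1]{lm0}, with the divisor-function value $f(p) = 2$ replaced by $f(p) \leq 1 + H$ throughout and the threshold on $\kappa$ rescaled from $1/\xi$ to $2/\xi$ accordingly.
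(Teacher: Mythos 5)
Your proposal follows essentially the same route as the paper: both reduce to the Erd\H{o}s-style argument of \cite[Lemma 4.1]{lm0}, starting from the lower bound \eqref{eq:truncation-lowerbound} and replacing the divisor-function bounds by bounds on $f$ (the paper simply points to the two specific inequalities in \cite{lm0} where a factor $\tau(p)=2$ must become $f(p)\leq H$, and declares the rest of the argument identical). One small slip in your write-up is the claim $f(p)\leq 1+H$; Definition~\ref{def:M_reg}(a) gives $f(p)\leq H$, which is precisely the constant appearing in the paper's two modified inequality chains.
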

\begin{proof}
In view of \eqref{eq:truncation-lowerbound}, the proof of 
\cite[Lemma 4.1]{lm0} applies when replacing $\tau_{\gamma}$ by 
$f_{2\xi}^{(T)}$ in such a way that $\xi$ takes the role of $\gamma$.
Two obvious changes are necessary:
$$ f(m)
\leq H^{k-j} f(m') \leq H^{2/\xi}f(m') 
\leq H^{2/\xi} f^{(T)}_{2\xi}(m) 
< H^{\kappa} f^{(T)}_{2\xi}(m), $$
and
$$ f(m)
\leq H^{a_{j+1} + \dots + a_k}f(m') 
\leq H^{(r+1)/\xi}f^{(T)}_{2\xi}(m) 
\leq H^{2r/\xi}f^{(T)}_{2\xi}(m).
$$
The rest of the argument is identical.
\end{proof}

The previous lemma allows us to extract majorant functions of truncated
divisor sum type, at least outside the following exceptional set.  

\begin{definition}[Exceptional set]\label{def:ex}
For fixed $\gamma > 0$, let $\mathcal{S}=\mathcal{S}_{C_1,T}$
denote the set of positive integers satisfying condition (1) or (2) of
Lemma \ref{lem:erdos} with $\xi=\gamma/2$.
(This exceptional set is identical to the exceptional set in the divisor
function case \cite{lm0} for $\gamma/2$ instead of $\gamma$.)
\end{definition}

Lemma \ref{lem:erdos} will provide us with a majorant function of the
correct average order for functions $f \in \cM(H)$.
However, in view of the $W$-trick from the previous section, we require
majorant functions for each of the functions $m \mapsto f(Wm + A)$, where
as in \eqref{def:W},
$$
W= \prod_{p<w(T)}p^{\alpha(p)},
$$
with $w(T)=\log\log T$ and $\alpha(p)=\lceil (C_1+1) \log_p \log T\rceil$, and where
$0<A<W$ is such that $v_p(A)<v_p(W)$ for $p<w(T)$. 
Since 
$f(Wm+A) = f(A') f((Wm+A)/A')$, 
where $A'=\gcd(A,W)$,
it suffices to study a function that ignores the contribution from
small prime factors. 
Define the function $h_T: m \mapsto f(m/m')$, where
$$
m'= \prod_{p<w(T)} p^{v_p(m)},
$$ 
for any integer $m$. Then $h_T$ satisfies (a)--(c) in Definition
\ref{def:M_reg} whenever $f$ does.

\begin{proposition}[Majorant]
\label{p:majorant}
Let $\mathcal T = \{T_1 < T_2 < \dots \} \subset \ZZ_{>1}$ be an infinite
set of positive integers, and let $f$ be a non-negative multiplicative
function.
Assume that for all $T \in \mathcal T$ the function 
$h_T:m \mapsto f(m/m')$, where $m'= \prod_{p<w(T)} p^{v_p(m)}$, belongs to
$\cM(H)$.

Fix $\gamma > 0$ of the form $\gamma = 2^{-z}$ for some $z \in \NN$.
For any positive integers $\lambda$, $\kappa$ and $T>1$ let
$$\omega(\lambda,\kappa)
= \left\lceil \frac{\gamma \kappa(\lambda + 3 - \log_2 \kappa)}{200}
\right\rceil
$$
and $I_\lambda=[T^{1/2^{\lambda+1}}, T^{1/2^{\lambda}}]$, and define the
sets 
$$
U(\lambda,\kappa) =
\begin{cases}
\{1\},     &\text{if }\kappa=4/\gamma \text{ and } \lambda =   \log_2\kappa -2,\\
\emptyset, &\text{if }\kappa=4/\gamma \text{ and } \lambda\neq \log_2\kappa -2,\\
\bigg\{ p_1 \dots p_{\omega(\lambda,\kappa)} :
  \begin{array}{l}
   p_i \in I_\lambda \mbox{ distinct primes} \\
   f(p_i) \not= 1
  \end{array}
\bigg\}, & \mbox{if }\kappa > 4/\gamma.
\end{cases}
$$
Let the family of functions
$$
\left(\nu_{f}^{(T)} : \{1,\dots, T\} \rightarrow \RR_{\geq 0}
\right)_{T \in \mathcal{T}}
$$
be defined via
\[ 
 \nu_f^{(T)}(m) =
 \sum_{\kappa = 4/\gamma}^{[(\log \log T)^3]}
 \sum_{\lambda = \lceil \log_2 \kappa - 2\rceil}^{[\log_2((\log \log T)^3)]}
 \sum_{u \in U(\lambda ,\kappa)}
 H^\kappa \1_{u|m} f(u) 
 {h}_{\gamma}^{(T)}\left(\frac{m}{\prod_{p|u} p^{v_p(m)}}\right) 
+
 \1_{m \in \mathcal{S}} h_T(m),
\] 
where ${h}_{\gamma}^{(T)}$ is associated to $h_T$ via 
Definition \ref{def:truncation} and where $\mathcal{S}=\mathcal{S}_{C_1,T}$ is the
exceptional set from Definition \ref{def:ex}.

Then, for all sufficiently large $T>1$ and for all $m \leq T$, we have the 
majorisation property 
$$f(m) \leq H f(m')\nu_f^{(T)}(m) \ll f(m')\nu_f^{(T)}(m).$$
Furthermore, for any $0<A<W$ such that $v_p(A)<v_p(W)$ for $p<w(T)$, we
have 
\begin{equation} \label{eq:average-nu_f}
\EE_{m \leq (T-A)/W} \nu_f^{(T)} (W m + A)
\ll \frac{\EE_{m \leq (T-A)/W} f(W m + A)}{f(\gcd(A,W))},  
\end{equation}
as $T \to \infty$ through $\mathcal{T}$.
The implied constant may depend on $\gamma$ and $H$, but not on $T$.
\end{proposition}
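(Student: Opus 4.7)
The plan is to adapt the divisor-function majorant construction of \cite[\S 4]{lm0} to the class $\cM(H)$. The argument splits into two essentially independent parts: the pointwise majorisation $f(m) \leq H f(m') \nu_f^{(T)}(m)$, and the average inequality \eqref{eq:average-nu_f}.

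For the pointwise bound I would factor $f(m) = f(m') h_T(m)$, and, given $m \leq T$, choose the smallest integer $\kappa^{\star} \geq 4/\gamma$ with $h_T(m) \leq H^{\kappa^{\star}+1} h_\gamma^{(T)}(m)$. If $\kappa^{\star} = 4/\gamma$, the summand indexed by $(\kappa,\lambda,u)=(4/\gamma, z-1, 1)$ already supplies $H^{4/\gamma} h_\gamma^{(T)}(m)$, and the extra factor $H$ in the statement closes the inequality. Otherwise apply Lemma~\ref{lem:erdos} to $h_T$ at parameter $\kappa^{\star}$ with $\xi = \gamma/2$: alternatives (1) and (2) both place $m$ in the exceptional set $\mathcal{S}$, so the summand $\1_{m\in\mathcal{S}} h_T(m)$ absorbs the contribution, while alternative (3) furnishes a square-free product $u = p_1\cdots p_{\omega(\lambda,\kappa^{\star})}$ of primes in some $I_\lambda$ dividing $m$. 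After discarding the primes $p_i$ with $f(p_i)=1$ (which are invisible both to $h_T$ and to the truncated divisor sum), one may take $u \in U(\lambda,\kappa^{\star})$, and the corresponding summand yields $H^{\kappa^{\star}} h_\gamma^{(T)}(m) \geq h_T(m)$. The range $\kappa^{\star} > (\log\log T)^3$ is handled by a pigeonhole count combined with an iteration of the lemma: since $h_T(m) \leq H^{\Omega_{\geq w(T)}(m)}$, the combinatorial constraints imposed by $m \leq T$ force the cluster or prime-power structure of $m$ into alternatives (1) or (2), so $m \in \mathcal{S}$.

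For the average inequality I would substitute the definition of $\nu_f^{(T)}$, expand each $h_\gamma^{(T)}(Wm+A)$ via Definition~\ref{def:truncation} as a smooth divisor sum, and interchange summations. Each inner expectation of $\1_{ud \mid Wm+A}$ reduces, via the Chinese remainder theorem and the coprimality $\gcd(ud,W)=1$ (the small primes having been absorbed in $W$), to a local density of size $(ud)^{-1}$ up to boundary terms. Comparison with the Euler product controlling $\EE_m f(Wm+A)$, via Lemma~\ref{lem:E-asymp} together with the Shiu-type estimate supplied by Definition~\ref{def:M_reg}(a,b), reduces the main-term bookkeeping to a triple sum over $\kappa$, $\lambda$ and $u \in U(\lambda,\kappa)$ of expressions $\ll H^{\kappa}\prod_{p\mid u}(f(p)-1)/p$. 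The exceptional summand $\1_{Wm+A\in\mathcal{S}} h_T(Wm+A)$ is controlled by Rankin's trick: $\mathcal{S}$ has density tending to zero with $T$, which beats the trivial pointwise bound $h_T(Wm+A) = T^{o(1)}$ from Definition~\ref{def:M_reg}(b).

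The principal obstacle is the combinatorial bookkeeping of the triple sum: the number of unordered $\omega(\lambda,\kappa)$-tuples of primes in $I_\lambda$, together with the product of per-prime factors $(f(p)-1)/p$, must produce a quantity small enough to tame $H^{\kappa}$. This is exactly why $\omega(\lambda,\kappa)$ is chosen proportional to $\kappa(\lambda+3-\log_2\kappa)$. The restriction $f(p_i)\neq 1$ in the definition of $U(\lambda,\kappa)$ is essential here, since it makes each per-prime factor strictly positive and amenable to a Mertens-type estimate, yielding geometric series in $\lambda$ and $\kappa$ that sum to an absolute constant depending only on $\gamma$ and $H$.
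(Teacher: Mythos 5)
Your overall strategy matches the paper's: factor $f(m)=f(m')h_T(m)$, stratify by levels $S(\kappa)=\{m\le T: h_T(m)\ge H^{\kappa}h_\gamma^{(T)}(m)\}$, invoke Lemma~\ref{lem:erdos} to extract the cluster divisor $u$, and handle the average via the Chinese remainder theorem and an Euler-product comparison. The average estimate is essentially as you describe, though the paper does not use Lemma~\ref{lem:E-asymp} at this point; the elementary identity \eqref{eq:sq} relating $\sum_{d\le T,\,\gcd(d,W)=1}g(d)/d$ to $\frac{W}{T}\sum_{m\in P}h_T(m)$ is all that is needed, and the exceptional contribution $\1_{m\in\mathcal{S}}h_T(m)$ is bounded trivially by $h_T(Wm+A)=f(Wm+A)/f(\gcd(A,W))$ with no appeal to Rankin's trick.

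There is a genuine gap in your pointwise argument, in the step where you apply Lemma~\ref{lem:erdos} to $m$ and then discard those primes $p_i$ in the resulting cluster with $f(p_i)=1$. The cluster the lemma produces has size at least $\xi\kappa(\lambda+3-\log_2\kappa)/100=\gamma\kappa(\lambda+3-\log_2\kappa)/200$, which up to the ceiling equals $\omega(\lambda,\kappa)$; there is no slack. If even one of these primes has $f(p_i)=1$, discarding it leaves you with too few primes to form an element of $U(\lambda,\kappa)$, and the asserted $u\mid m$ with $u\in U(\lambda,\kappa)$ need not exist. The fix, as in the paper, is to reverse the order: first set $q=\prod_{p\mid m,\,p>T^{1/(C\log\log T)},\,f(p)=1}p$, observe (using Remark~\ref{rem:coprime}) that $h_T(m/q)=h_T(m)$ and $h_\gamma^{(T)}(m/q)=h_\gamma^{(T)}(m)$, so $m/q\in S(\kappa)$ as well, and only then apply Lemma~\ref{lem:erdos} to $m/q$. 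Since every prime factor of $m/q$ lying in the relevant superdyadic range now satisfies $f(p)\ne 1$, the cluster is automatically an element of $U(\lambda,\kappa)$.
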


\begin{rem}\label{back}
Apart from the term $\1_{m \in \mathcal{S}} h_T(m)$, 
the majorant $\nu_f^{(T)}$ has a truncated divisor sum
structure, since each $u \in U(\lambda,\kappa)$ satisfies 
$u \leq  T^{\gamma}$,
by remark~(3) after \cite[Prop.\,4.2]{lm0}.
\end{rem}

\begin{rem}\label{rem:coprime}
In view of Definition \ref{def:truncation} it is clear that
$h_\gamma^{(T)}$ is a divisor sum.
It is not difficult to deduce some information on the set of
positive integers $d$ that \emph{cannot} occur in this sum.
This is the set of integers $d$ such that 
$g(d) = \mu *h_T (d) = 0$.
The definition of $h_T$ implies that $g(d)= 0$
whenever $d$ has a prime factor that is smaller than $w(T)$.
Similarly, $g(d)=0$ if $d$ has a prime factor $p>w(T)$, such that 
$h_T(p^{v_p(d)})=1$.
The latter condition certainly holds when $f(p^k)=1$ for all $k \in \NN$.
Thus the truncated divisor sum ${h}_{\gamma}^{(T)}$ only runs through
divisors that are free from prime factors of both these types.
Moreover, we remark that 
the definition of $U(\lambda, \kappa)$ shows that the sum over $u$ in $\nu_f^{(T)}$ only
contains divisors that are free from primes $p$ with $f(p)=1$.
\end{rem}

Our proof of Proposition \ref{p:majorant} does not actually require
property (b) of Definition \ref{def:M_reg}.
This property will be used when establishing the pseudorandomness of
our majorant function in Section \ref{s:linear-forms}.

\begin{proof}[Proof of Proposition \ref{p:majorant}]
We define for each ``level'' $\kappa \geq 0$ an exceptional set
$$S(\kappa) = \{m \leq T: h_T(m) \geq H^{\kappa} h_{\gamma}^{(T)}(m)\}.$$
For any $m \leq T$ and any $\kappa_0 \geq 0$, we then either have 
$m \not\in S(\kappa_0)$, or else there is some integer $\kappa \geq \kappa_0$ 
such that $m \in S(\kappa) \setminus S(\kappa + 1)$.
For $\kappa_0=4/\gamma + 1$, this yields
\begin{equation}\label{eq:h-upper}
h_T(m) 
\leq
H^{4/\gamma + 1} h_{\gamma}^{(T)}(m)
+ \sum_{\kappa > 4/\gamma} 
H^{\kappa + 1}
\1_{S(\kappa)}(m)
h_{\gamma}^{(T)}(m).
\end{equation}
We claim that Lemma \ref{lem:erdos}, applied with $\xi=\gamma/2$, provides
an upper bound of the form 
$$\1_{S(\kappa)}(m) 
\leq \sum_{\lambda = \lceil\log_2 \kappa - 2\rceil}^{[\log_2((\log \log T)^3)]}
 \sum_{u \in U(\lambda,\kappa)}
 \1_{u|m},
$$
valid for every $ m \not\in \mathcal{S}$.

Taking this claim on trust for the moment, let us first deduce that 
$h_T(m) \leq H \nu_f^{(T)}(m)$.
Since $H>1$, this bound is only non-trivial if $m \not\in \mathcal{S}$.
Note that if $m \in S(\kappa)$ for some $\kappa > (\log \log T)^3$, then the 
third alternative from Lemma \ref{lem:erdos} is empty and we must have 
$m \in \mathcal{S}$.
Thus, if $m \not\in \mathcal{S}$, we can truncate the summation in 
\eqref{eq:h-upper} at $\kappa = [(\log \log T)^3]$. 
Further, if $m \not\in \mathcal{S}$ and $u|m$ for some $u \in U(\lambda,\kappa)$, 
$u>1$, then $v_p(m)=1$ for any $p|u$. 
Thus, for any  $m \not\in \mathcal{S}$, the properties of $\chi$ and 
Definition \ref{def:truncation} imply 
$$
\1_{u|m}h_{\gamma}^{(T)}(m)
\leq \1_{u|m} h_{T}\bigg(\prod_{p|u}p^{v_p(m)}\bigg)
h_{\gamma}^{(T)} \bigg(\frac{m}{\prod_{p|u}p^{v_p(m)}} \bigg)
= \1_{u|m} h_{T}(u)
h_{\gamma}^{(T)} \bigg(\frac{m}{\prod_{p|u}p^{v_p(m)}} \bigg).
$$
Inserting the claimed bound on $\1_{S(\kappa)}(m)$ in all remaining terms of 
the sum in \eqref{eq:h-upper}, making use of the inequality above, and comparing 
with the definition of $\nu_f^{(T)}(m)$, 
we indeed obtain that $h_T(m) \leq H \nu_f^{(T)}(m)$, provided the summations in 
$\kappa$ and $\lambda$ in $\nu_f^{(T)}$ contain each at least one term;
i.e.\ provided $T$ is sufficiently large.

To prove the claim it suffices to check that Lemma \ref{lem:erdos}
guarantees for $m \not\in \mathcal{S}$ (that is, for $m$ which do
not have property (1) or (2)) that there actually is a cluster of
prime divisors all satisfying $f(p)\not=1$.
Thus, suppose $m \not\in \mathcal{S}$.
Every prime $p$ that can appear in a cluster satisfies 
$p>T^{(\log \log T)^{-3}/2}$, which is larger than both $w(T)$ and 
$(\log T)^{C_1}$, provided $T$ is sufficiently large.
Hence $f(p)=h_T(p)$ for such primes and, furthermore, 
$p^2\nmid m$ since $m \not\in \mathcal{S}_{C_1,T}$.
Let 
$$
q=\prod_{\substack{p>T^{(\log \log T)^{-3}/2}
\\
f(p)=1, ~p|m}} p.
$$
Then $g(d)=0$ for all $d|m$ with $\gcd(d,q)>1$, by Remark \ref{rem:coprime}.
This, in turn, implies that $h_{\gamma}^{(T)}(m)=h_{\gamma}^{(T)}(m/q)$. 
Since $h_T(q)=1$, we also have $h_T(m)=h_T(m/q)$. 
Hence $m/q \in S(\kappa)$ and we may apply Lemma \ref{lem:erdos} to $m/q$
in order to obtain a cluster of prime factors as required.

It remains to check \eqref{eq:average-nu_f}. 
We certainly have
$$
\EE_{m \leq (T-A)/W} \1_{Wm+A \in \mathcal{S}} h_T(Wm+A)
\leq \frac{\EE_{m \leq (T-A)/W} f(W m + A)}{f(\gcd(A,W))},
$$
which reduces matters to considering the triple sum from $\nu_f^{(T)}$.
Since $2^\lambda \leq (\log\log T)^3$, any prime divisor $p$ of an element
$u \in U(\lambda, \kappa)$ satisfies $p \geq T^{1/(2 (\log \log T)^3)}$, which
is larger than $w(T)$ when $T$ is large enough. 
Thus we may assume $\gcd(u,W)=1$.
Let $g=\mu*h_T$, which is a non-negative multiplicative function.
If $P=\{m\leq T : m\equiv A \bmod{W}\}$, then 
\begin{align*}
 \frac{1}{|P|}
   \sum_{m\in P} 
   \sum_{u \in U(\lambda,\kappa)} \1_{u\mid m}
    h_T(u)
    h_{\gamma}^{(T)}\left(\frac{m}{\prod_{p|u} p^{v_p(m)}} \right) 
& \leq \frac{2W}{T} 
   \sum_{\substack{m\leq T/A'\\ A'm \in P}}
   \sum_{u \in U(\lambda,\kappa)}  h_T(u)
   \hspace{-.3\baselineskip}
   \sum_{\substack{d \leq T^{\gamma}\\ \gcd(d,u)=1}} 
   g(d) \1_{du \mid m},
 \end{align*}
where $A'=\gcd(A,W)$.
Note that  $g(d)=0$ unless $\gcd(d,W)=1$, by Remark \ref{rem:coprime}.
Since $\gcd(du,W)=1$, 
the right hand side above is
\begin{align} \label{eq:majorant-proof-1}
\nonumber    
&\ll \sum_{u \in U(\lambda,\kappa)} 
  \frac{h_T(u)}{u}
  \sum_{\substack{d \leq T^{\gamma}\\ \gcd(d,uW)=1}} 
  \frac{g(d)}{d}\\
&\ll \sum_{u \in U(\lambda,\kappa)}
  \frac{H^{\omega(u)}}{u}
  \sum_{\substack{d \leq T^{\gamma}\\ \gcd(d,W)=1}} 
  \frac{g(d)}{d}.
\end{align}
Note that 
\begin{align*}
   \sum_{u \in U(\lambda,\kappa)} 
   \frac{H^{\omega(u)}}{u}
\ll\frac{H^{\omega(\lambda,\kappa)}}{\omega(\lambda,
\kappa)!} 
    \Bigg( \sum_{p \in I_{\lambda}} \frac{1}{p} 
    \Bigg)^{\omega(\lambda,\kappa)} 
\ll\frac{(H \log 2 + o(1))^{\omega(\lambda,\kappa)}}
        {\omega(\lambda,\kappa)!}.
\end{align*}
If $\gcd(d,W)=1$ and $d \leq T^{\gamma}$, then the number of integers
$x\leq T$ for which $x\equiv A \bmod{W}$ and $d\mid x$ has order
$T/(Wd)$. Since $h_T=1*g$, 
we deduce that
\begin{equation}\label{eq:sq}
\sum_{\substack{d \leq T^{\gamma} \\ \gcd(d,W)=1}} \frac{g(d)}{d}
\asymp \frac{W}{T}\sum_{\substack{m\leq T\\ m\equiv A \bmod{W}}} h_T(m).
\end{equation}
Hence the inner sum from \eqref{eq:majorant-proof-1} is bounded by 
$T^{-1}W\sum_{m \in P} h_T(m).$

The above estimates allow us to bound the average value of 
$\nu_f^{(T)}(m) - \1_{m\in \mathcal{S}}h_T(m)$
via
\begin{align*}
\left(\sum_{m\in P} h_T(m)\right)^{-1}
&\sum_{m\in P} \Big(\nu_f^{(T)}(m) - \1_{m\in \mathcal{S}}h_T(m)\Big)\\
&=\left(\sum_{m\in P} h_T(m)\right)^{-1}
  \sum_{m\in P} 
  \sum_{\kappa \geq 4/\gamma} 
  \sum_{\lambda \geq \log_2 \kappa - 2}
  \sum_{u \in U(\lambda,\kappa)} 
   H^\kappa \1_{u|m} h^{(T)}_{\gamma}(m) \\
&\ll
  \sum_{\kappa \geq 4/\gamma}
  \sum_{\lambda \geq \log_2 \kappa - 2} 
  \frac{ H^{\kappa} \cdot  
  (H \log 2 +o(1))^{\omega(\lambda,\kappa)}}{\omega(\lambda,\kappa)!}
 \\
&\ll 
  \sum_{\kappa \geq 4/\gamma}
  \sum_{j \geq 1} H^{\kappa}  \cdot 
  \left(\frac{200 \cdot e \cdot (H\log 2 + o(1))}{\gamma \kappa j}
  \right)^{\gamma \kappa j/200} \\
&\ll 
  \sum_{\kappa \geq 4/\gamma} 
  \frac{H^{\kappa}}{\kappa^{\kappa\gamma/200}} 
  \left( 
  \sum_{j \geq 1} 
  \left(\frac{200 \cdot e \cdot (H\log 2 + o(1))}{\gamma j}
  \right)^{\gamma j/200} \right)^{\kappa}.
\end{align*}
This converges absolutely, and hence completes the proof.
\end{proof}

Our final objective in this section is to show that the exceptional set
$\mathcal{S}_{C_1,T}$ is negligible when evaluating correlations such as
the counting function $N(T)$ given by \eqref{eq:def-count},
provided $C_1$ is sufficiently large.

Let $\Psi=(\psi_1, \dots, \psi_r): \ZZ^s \to \ZZ^r$ be a
system of non-constant linear polynomials whose non-constant  parts
are  pairwise non-proportional and have coefficients 
bounded by $L$ in absolute value. 
Let $\mathfrak{K}\subset [-1,1]^{s}$ be such that 
$\Psi(T\mathfrak{K}) \subset (0,T]^r$ 
and assume that
$$
\EE_{\m \in \ZZ^s \cap T\mathfrak{K}} 
\1_{d\mid \psi_i(\m)} \ll_{L} \frac{1}{d},
$$ 
for any $d \in \NN$ and each $1\leq i\leq r$.
Since no $\psi_i$ is constant, the latter condition is guaranteed to
hold when $\mathfrak{K}$ is convex or when $\mathfrak{K}$ has an
$(s-1)$-Lipschitz parametrisable boundary.
Then it follows from \cite{erdos} (cf. \cite[Lemmas 3.2 and 3.3]{lm0})
that the exceptional set satisfies
\begin{equation}\label{eq:exceptional}
\EE_{\m \in \ZZ^s \cap T\mathfrak{K}} 
 \1_{\psi_i(\m)\in \mathcal{S}_{C_1,T}}
\ll_{L,C_1} (\log T)^{-{C_1}/2},
\end{equation}
for each $1\leq i\leq r$.
We shall combine this estimate with the following bound on the $k$th
moment of a non-negative multiplicative function $f$.

\begin{lemma}[$k$th moment bound for $f$]
\label{lem:kth-moment}
Suppose $f$ satisfies Definition \ref{def:M_reg}(a) and (b). 
Let $k$ be a positive integer and let $\Psi$ and $\mathfrak{K}$ be as
above.
Then
$$\EE_{\m \in \ZZ^s \cap T\mathfrak{K}} \prod_{i=1}^r f^k(\psi_i(\m)) 
\ll_{L,r,k} (\log T)^{O_{r,k,H}(1)}.$$
\end{lemma}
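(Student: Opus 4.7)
The plan is to apply AM-GM pointwise to reduce to a first moment of a single multiplicative function, to exploit the divisor-sum hypothesis on the $\psi_i$ to reduce to a one-dimensional sum over the integers, and finally to close with Shiu's theorem. First I apply the pointwise inequality $a_1 \cdots a_r \leq r^{-1}(a_1^r + \cdots + a_r^r)$ for non-negative reals (a form of AM-GM) to obtain
$$
\prod_{i=1}^r f^k(\psi_i(\m)) \leq \frac{1}{r}\sum_{i=1}^r f^{kr}(\psi_i(\m)).
$$
Setting $g := f^{kr}$, this is a non-negative multiplicative function with $g(p^j) \leq H^{krj}$, and in particular $g(n) = n^{o(1)}$ via the crude bound $g(n) \leq H^{kr\,\Omega(n)}$. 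It therefore suffices to prove, for each fixed $i$, that $\EE_{\m \in \ZZ^s \cap T\mathfrak{K}} g(\psi_i(\m)) \ll_L (\log T)^{O_{r,k,H}(1)}$.

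Next, I partition the sum by the value of $\psi_i(\m)\in (0,T]$ and use that the fibre $\{\m \in \ZZ^s \cap T\mathfrak{K} : \psi_i(\m) = n\}$ is contained in $\{\m : n \mid \psi_i(\m)\}$. The divisibility hypothesis stated just before the lemma then yields
$$
\EE_{\m \in \ZZ^s \cap T\mathfrak{K}} g(\psi_i(\m))
\leq \sum_{n=1}^{T} g(n) \cdot \EE_{\m} \1_{n \mid \psi_i(\m)}
\ll_L \sum_{n=1}^T \frac{g(n)}{n}.
$$
This step has the virtue of not requiring any assumption on the size of $|\ZZ^s \cap T\mathfrak{K}|$, since the cancellation happens already at the level of the divisibility condition.

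Finally, I invoke Shiu's bound \cite[Thm.~1]{shiu}, whose hypotheses on $g$ are immediate from Definition \ref{def:M_reg}(a). It gives
$$
\sum_{n \leq T} g(n) \ll \frac{T}{\log T} \exp\Big(\sum_{p \leq T} \frac{g(p)}{p}\Big) \ll T (\log T)^{H^{kr}-1},
$$
since $g(p) \leq H^{kr}$. Summation by parts converts this into $\sum_{n \leq T} g(n)/n \ll (\log T)^{H^{kr}}$, and combining with the preceding display produces the claimed estimate. No real obstacle arises; the argument is a routine combination of standard manipulations, the only ingredient not fully internal to the paper being Shiu's theorem, which has already appeared in the proof of Lemma \ref{lem:A>0}.
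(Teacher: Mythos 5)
Your reduction---AM--GM pointwise, then the fibre bound $\1_{\psi_i(\m)=n}\le\1_{n\mid\psi_i(\m)}$ together with the divisor density hypothesis---is a legitimately different decomposition from the paper's, which writes $g=\mu*f$, expands $f^{rk}(\psi_i(\m))$ as an $rk$-fold divisor sum, applies the divisor density hypothesis once at $d=\lcm(d_1,\dots,d_{rk})$, and then counts tuples with a given lcm. Had it worked, your route would have been shorter. But there is a genuine gap in the sentence ``this is a non-negative multiplicative function \dots and in particular $g(n)=n^{o(1)}$ via the crude bound $g(n)\le H^{kr\Omega(n)}$'', and consequently in the appeal to Shiu.

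The bound $H^{kr\Omega(n)}$ is \emph{not} $n^{o(1)}$. For $n=2^m$ one has $\Omega(n)=m=\log_2 n$, so $H^{kr\Omega(n)}=n^{kr\log_2 H}$, a fixed positive power of $n$ whenever $H>1$; the average order $\Omega(n)\approx\log\log n$ is irrelevant here, and the powers of a small prime are exactly the problematic inputs. Shiu's Theorem~1 assumes, besides the prime-power bound, that the multiplicative function is $O_\ve(n^\ve)$---this is precisely Definition~\ref{def:M_reg}(b), not~(a)---and its conclusion really does fail without it: for $g=3^{\Omega(\cdot)}\in\cM'(3)$ one has $\sum_{n\le x}g(n)\ge 3^{\lfloor\log_2 x\rfloor}\gg x^{\log_2 3}$, which exceeds the Shiu-type bound $x(\log x)^{2}$ by a power of $x$. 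So Shiu's hypotheses are not ``immediate from Definition~\ref{def:M_reg}(a)'', and the argument cannot be closed this way under the stated assumption. The paper avoids Shiu entirely and is self-contained: it passes to $\ell=\lcm(d_1,\dots,d_{rk})$ and bounds the count of preimages by $C^{\Omega(\ell)}$. To salvage your fibre reduction you would either need a direct estimate for $\sum_{n\le T}g(n)/n$ in the spirit of that bookkeeping, or you would have to add Definition~\ref{def:M_reg}(b) to the hypotheses (which holds for $\tau^{n_i}$ in the paper's actual application, but is not part of the lemma as stated).
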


\begin{proof}
Let $g = \mu * f$. Then H{\"o}lder's inequality implies
\begin{align*}
  \EE_{\m \in \ZZ^s \cap T\mathfrak{K}} \prod_{i=1}^r f^k(\psi_i(\m)) 
 &\leq \prod_{i=1}^r 
   \bigg(\EE_{\m \in \ZZ^s \cap T\mathfrak{K}} 
    f^{kr}(\psi_i(\m))\bigg)^{1/r}\\
 &\leq \prod_{i=1}^r \left(
    \sum_{\substack{d_1,\dots, d_{rk} \leq T }}
    \EE_{\m \in \ZZ^s \cap T\mathfrak{K}} 
    \prod_{j = 1}^{rk}
    \1_{d_j|\psi_i(\m)} 
    g(d_j)
    \right)^{1/r}\\
 &\ll_{L,\ve} \sum_{\substack{d_1,\dots, d_{rk} \leq T }}
    \frac{\min\{(d_1\dots d_{rk})^\ve,H^{\Omega(d_1 \dots d_{rk})}\}}
         {\lcm(d_1, \dots, d_{rk})}\\
&\ll_{L,\ve}         \prod_{p\leq T}\left(1+\sum_{\substack{\delta_1,\dots,\delta_{rk}\geq 0\\ \delta_1+\dots+\delta_{rk}\geq 1}} \frac{\min\{ p^{\ve(\delta_1+\dots+\delta_{rk})},
H^{\delta_1+\dots+\delta_{rk}}\}}{p^{\max\{\delta_1,\dots, \delta_{rk}\}}}\right),
\end{align*}
for any $\ve>0$. 
For given $\ell\in \NN$ there are at most $(1+\ell)^{rk}$ choices of $\delta_1,\dots,\delta_{rk}$ for which $\max\{\delta_1,\dots,\delta_{rk}\}=\ell$.
Taking $\ve=\frac{1}{2rk}$, it follows that the right hand side is
\begin{align*}
&\leq 
\prod_{p\leq T}\left(1+
\sum_{\ell\geq 1} \frac{(1+\ell)^{rk}\min\{ p^{\ell/2},  H^{\ell rk}\}}{p^{\ell}}
\right)\\
&\leq 
\prod_{p\leq H^{2rk}}\left(1+
\sum_{\ell\geq 1} \frac{(1+\ell)^{rk}}{p^{\ell/2}}
\right)
\prod_{H^{2rk}<p\leq T}\left(1+
\sum_{\ell\geq 1} \frac{(1+\ell)^{rk}H^{\ell rk}}{p^{\ell}}
\right)\\
&\ll_{r,k,H} (\log T)^{O_{r,k,H}(1)},
\end{align*}
as required. 
\end{proof}

\begin{proposition}[Reduction to unexceptional residues]
\label{p:unexceptional}
Let 
$\mathcal{S}_{C_1,T}$ be the exceptional set from  Definition~\ref{def:ex}. 
Suppose that $f_1, \dots, f_r : \ZZ_{\geq 0} \to \RR$ are functions that are all
bounded pointwise in modulus by some function $f \in \cM'(H)$.
Let $i\in \{1,\dots, r\}$. 
Suppose that 
$f'_i: \{1,\dots, T\} \to \RR$ denotes a function which agrees with $f_i$
on $\{1,\dots, T\} \setminus \mathcal{S}_{C_1,T}$ and satisfies
$|f'_i(m)| \leq f(m)$ for all $m \in \mathcal{S}_{C_1,T}$.
If the parameter $C_1$ of the exceptional set is sufficiently large
depending on $r$ and $H$, and
if $\Psi$ and $\mathfrak{K}$ are as above, then
$$
\mathcal E=
\Big| 
\sum_{\m \in \ZZ^s \cap T\mathfrak{K}}
\prod_{i=1}^r
f_i(\psi_i(\m))
- 
\sum_{\m \in \ZZ^s \cap T\mathfrak{K}}
\prod_{i=1}^r
f'_i(\psi_i(\m)) \Big|
= O(T^s (\log T)^{-C_1/4}).
$$
\end{proposition}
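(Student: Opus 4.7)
\textbf{Proof proposal for Proposition \ref{p:unexceptional}.}
The plan is to telescope the difference of the two products into a sum of terms each of which contains an indicator of the exceptional set, and then apply Cauchy--Schwarz, using the sparsity estimate \eqref{eq:exceptional} for the exceptional set against the moment bound from Lemma \ref{lem:kth-moment}.

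First, since $f_i$ and $f_i'$ agree on $\{1,\dots,T\}\setminus \mathcal{S}_{C_1,T}$, and both are dominated in modulus by $f$, one has the pointwise telescoping bound
\[
 \Big| \prod_{i=1}^r f_i(\psi_i(\m)) - \prod_{i=1}^r f_i'(\psi_i(\m))\Big|
 \leq 2^r \sum_{i=1}^r \1_{\psi_i(\m)\in\mathcal{S}_{C_1,T}}
  \prod_{j=1}^r f(\psi_j(\m)),
\]
obtained by swapping one factor at a time and using $|f_i-f_i'|\leq 2f\cdot\1_{\mathcal{S}_{C_1,T}}$ together with the uniform bound $|f_j|,|f_j'|\leq f$ on the remaining factors. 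Consequently
\[
 \mathcal E \leq 2^r \sum_{i=1}^r
 \sum_{\m\in \ZZ^s \cap T\mathfrak{K}}
 \1_{\psi_i(\m)\in\mathcal{S}_{C_1,T}} \prod_{j=1}^r f(\psi_j(\m)).
\]

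Next I would apply Cauchy--Schwarz (in expectation form over $\m\in\ZZ^s\cap T\mathfrak{K}$) to each of the $r$ terms, writing
\[
\EE_\m \Big( \1_{\psi_i(\m)\in\mathcal{S}_{C_1,T}} \prod_{j} f(\psi_j(\m)) \Big)
 \leq
 \Big(\EE_\m \1_{\psi_i(\m)\in\mathcal{S}_{C_1,T}} \Big)^{1/2}
 \Big(\EE_\m \prod_{j} f(\psi_j(\m))^{2}\Big)^{1/2}.
\]
The first factor is $O_{L,C_1}((\log T)^{-C_1/4})$ by \eqref{eq:exceptional}. For the second factor, observe that $f\in\cM'(H)$ satisfies condition (a) of Definition \ref{def:M_reg} (this is all that is needed in the proof of Lemma \ref{lem:kth-moment}), so Lemma \ref{lem:kth-moment} with $k=2$ yields a bound of the shape $(\log T)^{O_{r,H}(1)}$. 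Multiplying by $|\ZZ^s\cap T\mathfrak{K}|\ll T^s$, summing the $r$ terms, and combining the two estimates gives
\[
 \mathcal E \ll_{L,r,H} T^s (\log T)^{-C_1/4 + O_{r,H}(1)}.
\]

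Finally, since $C_1$ may be taken as large as we please in terms of $r$ and $H$, the additive constant $O_{r,H}(1)$ in the exponent can be absorbed: after increasing $C_1$ by a fixed amount depending only on $r$ and $H$, one obtains the desired bound $\mathcal E = O(T^s(\log T)^{-C_1/4})$. The assumption on $\mathfrak{K}$ (convex, or with an $(s-1)$-Lipschitz parametrisable boundary) is used only to justify the applicability of \eqref{eq:exceptional} and of the divisor-type bound $\EE_\m \1_{d\mid \psi_i(\m)}\ll_L 1/d$ appearing in the proof of Lemma \ref{lem:kth-moment}; no other feature of $\mathfrak{K}$ enters. The only genuine subtlety is ensuring that the telescoping step produces no interference between the factors $f_j'$ for $j>i$ (which is why we bound them by $f$ beforehand), so that Lemma \ref{lem:kth-moment} applies uniformly to a product of values of the single function $f$; this is the step I expect to require the most care, though it is not really a serious obstacle.
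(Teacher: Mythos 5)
Your proposal is correct and follows essentially the same route as the paper: the one--line argument there is exactly your telescoping bound followed by Cauchy--Schwarz against \eqref{eq:exceptional} and Lemma~\ref{lem:kth-moment}. Two small remarks. First, the standard telescoping identity
$\prod_i a_i - \prod_i b_i = \sum_i \bigl(\prod_{j<i}a_j\bigr)(a_i-b_i)\bigl(\prod_{j>i}b_j\bigr)$
yields the sharper constant $2$ in place of your $2^r$, though of course either suffices. Second, the final ``absorption'' step is not quite as you phrase it: increasing $C_1$ changes the exceptional set $\mathcal{S}_{C_1,T}$, so you cannot simply replace $C_1$ by $C_1+O_{r,H}(1)$ and keep the same quantity $\mathcal{E}$. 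What your computation actually gives is $\mathcal{E}\ll T^s(\log T)^{-C_1/4+c(r,H)}$ for a fixed $c(r,H)$ independent of $C_1$, and the exponent $-C_1/4$ in the statement of the proposition should be read with this harmless loss built in; in every application what is used is only that the exponent can be made as negative as desired by taking $C_1$ large, which your bound does deliver. This slight imprecision is present in the paper's own (very terse) proof as well, so it is not a gap in your argument specifically.
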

\begin{proof}
Since
\begin{align*}
\mathcal E
 \leq 2 
 \sum_{\m \in \ZZ^s \cap T\mathfrak{K}}
 \sum_{i=1}^r \1_{\psi_i(\m) \in \mathcal{S}_{C_1,T}}
 \prod_{j=1}^r
 f(\psi_j(\m)),
\end{align*}
the proposition follows by the Cauchy--Schwarz inequality from
\eqref{eq:exceptional}
and Lemma \ref{lem:kth-moment}. 
\end{proof}

\bigskip
\section{Construction of the majorant}\label{s:majorant}

The previous section described the construction of majorants for a general class
of positive  multiplicative functions.
Returning to the proof of Theorem \ref{t:NB}, we shall now consider the
representation functions $R_i$ from Definition \ref{def:repr-fn}.
Building on the results from Section~\ref{s:gen-maj}, we construct for
each of these representation functions a family of majorant functions 
$$(\nu_{R_i}^{(T)}:\{1,\dots,T\} \to \RR_{\geq 0})_{T \in \mathcal{T}}$$ 
with the properties (i)--(iii) described at the start of 
Section \ref{s:gen-maj}.
(The cut-off parameter $T \in \mathcal{T}$ will later correspond to the
parameter $T$ that appears in Theorem \ref{t:NB}.)

We begin with an easy estimate for $R_i$ that relates it to the 
multiplicative function $r_{K_i}(|m|)$ whose values are given by
the coefficients of the Dedekind zeta function \eqref{eq:Dedekind} for the
number field $K_i/\QQ$ of degree $n_i$.

\begin{lemma}\label{lem:7.1}
We have
$
R_i(m) \leq 2|\mu_{K_i}|r_{K_i}(|m|),
$
for non-zero $m \in\ZZ$.
\end{lemma}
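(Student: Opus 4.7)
My plan is to pass from the lattice point count defining $R_i(m)$ to a count of elements of the ring of integers $\fo_{K_i}$, then group those elements by orbits of the norm-$+1$ unit group and finally bound the number of orbits by the number of principal ideals of norm $|m|$.

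First, I would drop the constraint $\x\equiv \b_i\bmod M$ and enlarge $\mathfrak{X}_i$ to all of $\mathfrak{D}_{i,+}$; both operations only increase $R_i(m)$, so it suffices to bound
\[
\#\{\alpha\in\fo_{K_i}:N_{K_i/\QQ}(\alpha)=m,\ \phi(\alpha)\in\mathfrak{F}_+\},
\]
using the identification $\x\leftrightarrow\alpha=x_1\omega_{i,1}+\dots+x_{n_i}\omega_{i,n_i}$ and the fact that $\mathfrak{D}_{i,+}$ is exactly the preimage of the fundamental domain $\mathfrak{F}_+$ for $\phi(Y_{K_i}^{(+)})$ acting on $V^*$ (Lemma~\ref{lem:F+}).

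Next I would partition the set $\{\alpha\in\fo_{K_i}:N_{K_i/\QQ}(\alpha)=m\}$ into orbits under the action of $U_{K_i}^{(+)}=\mu_{K_i}^{(+)}\times Y_{K_i}^{(+)}$, which is well-defined since every element of $U_{K_i}^{(+)}$ has norm $+1$. Because $\mathfrak{F}_+$ is a fundamental domain for $\phi(Y_{K_i}^{(+)})$ and $\mu_{K_i}^{(+)}\cap Y_{K_i}^{(+)}=\{1\}$, each $U_{K_i}^{(+)}$-orbit contributes precisely $|\mu_{K_i}^{(+)}|$ representatives whose image under $\phi$ lies in $\mathfrak{F}_+$ — one for each root of unity in $\mu_{K_i}^{(+)}$, combined with the unique $Y_{K_i}^{(+)}$-representative in the fundamental domain.

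Finally, I would bound the number of such orbits. If $\alpha,\alpha'\in\fo_{K_i}$ both have norm $m$ and generate the same principal ideal, then $\alpha'=\eta\alpha$ for some $\eta\in U_{K_i}$, and the equality of norms forces $N_{K_i/\QQ}(\eta)=1$, i.e., $\eta\in U_{K_i}^{(+)}$. Hence each principal ideal $(\alpha)\subset\fo_{K_i}$ with $\n(\alpha)=|m|$ accounts for at most one $U_{K_i}^{(+)}$-orbit of norm-$m$ elements, and the total number of such ideals is at most $r_{K_i}(|m|)$. Combining the three steps yields
\[
R_i(m)\leq |\mu_{K_i}^{(+)}|\cdot r_{K_i}(|m|)\leq |\mu_{K_i}|\, r_{K_i}(|m|),
\]
which is in fact stronger than the claimed bound and implies it a fortiori. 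The only subtlety is the bookkeeping of the unit subgroups and the verification that $\mathfrak{F}_+$ selects exactly $|\mu_{K_i}^{(+)}|$ elements per $U_{K_i}^{(+)}$-orbit; this is a straightforward consequence of the decompositions recorded in Section~\ref{s:units}.
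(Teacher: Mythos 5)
Your proof is correct, and it in fact yields the slightly sharper bound $R_i(m)\leq |\mu_{K_i}^{(+)}|\,r_{K_i}(|m|)$. The paper's argument follows the same basic outline — drop the congruence and cone constraints, count fundamental-domain representatives, bound by principal ideals — but it makes a detour through the full unit group $U_{K_i}$: it first enlarges $\mathfrak{D}_{i,+}$ to at most two copies of a fundamental domain $\mathfrak{D}_i$ for $Y_{K_i}$ (paying a factor $2$ from the index $[Y_{K_i}:Y_{K_i}^{(+)}]\leq 2$), then counts $|\mu_{K_i}|$ representatives per $U_{K_i}$-orbit, and finally identifies $U_{K_i}$-orbits of elements of absolute norm $|m|$ with principal ideals of norm $|m|$. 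You stay with $U_{K_i}^{(+)}$ throughout, which avoids the comparison between the two domains; the one extra observation you need for this is that two elements of the \emph{same} norm $m$ generating the same principal ideal differ by a unit of norm $1$, so the assignment of a $U_{K_i}^{(+)}$-orbit of norm-$m$ elements to its principal ideal is injective. Both routes are valid and short; yours is marginally more economical and gives a constant that is better by a factor between $2$ and $4$, though this makes no difference to the applications of the lemma, which only use the order of magnitude $R_i(m)\ll r_{K_i}(|m|)$.
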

\begin{proof}
Replacing $\mathfrak{X}_i\subset \mathfrak{D}_{i,+}$ by $\mathfrak{D}_{i,+}$ and
dropping the congruence condition from Definition~\ref{def:repr-fn},
we obtain the upper bound
\begin{align*}
R_i(m) 
&\leq
\#\left\{
   \x\in \ZZ^{n_i}\cap \mathfrak{D}_{i,+}: \nf_{K_i}(\x)=m 
  \right\}.
\end{align*}
The discussion of the unit groups $U_{K_i}$ and $U_{K_i}^{(+)}$ in 
Section \ref{s:units} showed that the index of $Y_{K_i}^{(+)}$ in $Y_{K_i}$ is at
most two, which implies that
\begin{align*}
R_i(m) 
&\leq  2 \#\left\{\x\in \ZZ^{n_i}\cap \mathfrak{D}_i: 
 \nf_{K_i}(\x)=m 
\right\}\\
&\leq  2 |\mu_{K_i}| \#\left\{\x\in \ZZ^{n_i}/U_{K_i}:
|\nf_{K_i}(\x)|=|m| 
\right\},
\end{align*}
where $\mathfrak{D}_i$ is a fundamental domain (in the coordinate space)
for the free part of $U_{K_i}$.
The cardinality in the final line equals 
$\#\{(\alpha) \subset \fo: \n (\alpha)= |m|\}$, 
which itself is bounded by $r_{K_i}(|m|)$, as required.
\end{proof}

Let $\mathcal{S}_{C_1,T}$ be the  set from Definition
\ref{def:ex} and recall the definition \eqref{eq:def-count} of
$N(T)$.

\begin{proposition} \label{p:unexceptional'}
For each $1 \leq i \leq r$, let $R'_i: \{-T,\dots,T\} \to \RR_{\geq 0}$ 
denote a function 
such that $R_i'(m)=R_i(m)$ for all 
$m$ satisfying $|m|\in\{1,\dots, T\} \setminus \mathcal{S}_{C_1,T}$,
and which further satisfies
$0 \leq R'_i(m) \leq R_i(m)$ when  $|m| \in \mathcal{S}_{C_1,T}$.
If the parameter $C_1$ of the exceptional set is sufficiently large, then
$$ 
N(T)
=\beta_\infty \prod_p \beta_p \cdot T^s +o(T^{s})
$$
if and only if
$$ 
\sum_{\substack{
\u \in \ZZ^s\cap T\mathfrak{K}\\
\u\= \a\bmod{M}}}
\prod_{i=1}^r
R_i'(f_i(\u))
=\beta_\infty \prod_p \beta_p \cdot T^s +o(T^{s}).
$$
\end{proposition}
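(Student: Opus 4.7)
The plan is to deduce Proposition \ref{p:unexceptional'} from Proposition \ref{p:unexceptional} by constructing a common multiplicative majorant for all of the $R_i$ and $R_i'$, and then absorbing the congruence condition $\u\equiv\a\bmod{M}$ by a linear change of variables. Once these two adjustments are made, the bound of Proposition \ref{p:unexceptional} immediately controls the difference between the two counting functions in the statement.

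First I would produce the common majorant. By Lemma \ref{lem:7.1}, $R_i(m)\leq 2|\mu_{K_i}|r_{K_i}(|m|)$ for $1\leq i\leq r$ and every non-zero $m\in\ZZ$, and by \eqref{eq:r-upper} we have $r_{K_i}(p^k)\leq(k+1)^{n_i}\leq 2^{kn_i}$. Setting
\[
f(m)=\prod_{i=1}^r r_{K_i}(|m|),
\]
we obtain a non-negative multiplicative function with $f(1)=1$ satisfying $f(p^k)\leq H^k$ for $H=2^{n_1+\dots+n_r}$, so that $f\in\cM'(H)$. With $C=2\max_i|\mu_{K_i}|$ we then have $0\leq R_i(m),R_i'(m)\leq Cf(m)$ for every $m$ and every $i$ (both functions vanish at $m=0$ by construction, so the majorisation is unambiguous).

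Second I would recast the two sums so that Proposition \ref{p:unexceptional} applies. Fix a representative $\a'\in\{0,\dots,M-1\}^s$ of $\a\bmod M$ and substitute $\u=M\v+\a'$, so that $\v$ runs over $\ZZ^s\cap\mathfrak{K}^\dagger$ with $\mathfrak{K}^\dagger=M^{-1}(T\mathfrak{K}-\a')$, and each $f_i$ is replaced by the linear polynomial $\psi_i(\v)=Mf_i(\v)+f_i(\a')$. The linear parts $Mf_i$ of these polynomials are pairwise non-proportional with integer coefficients bounded in terms of $L$ and $M$; the region $\mathfrak{K}^\dagger$ is convex and hence has an $(s-1)$-Lipschitz parametrisable boundary; and since no $\psi_i$ is constant, the divisibility condition $\EE_{\v\in\ZZ^s\cap\mathfrak{K}^\dagger}\1_{d\mid\psi_i(\v)}\ll 1/d$ follows. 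Thus every hypothesis imposed on $\Psi$ and $\mathfrak{K}$ in the paragraph preceding Proposition \ref{p:unexceptional} is met by the system $(\psi_1,\dots,\psi_r)$ and the rescaled region.

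Third, applying Proposition \ref{p:unexceptional} to this system with the majorant $Cf\in\cM'(H)$ (the constant $C$ only affects implied constants, since it enters the proof of the proposition via H\"older's inequality and Lemma \ref{lem:kth-moment} as a multiplicative factor of $C^r$), I obtain
\[
\Bigg|\sum_{\substack{\u\in\ZZ^s\cap T\mathfrak{K}\\ \u\equiv\a\bmod{M}}}\prod_{i=1}^r R_i(f_i(\u))\;-\;\sum_{\substack{\u\in\ZZ^s\cap T\mathfrak{K}\\ \u\equiv\a\bmod{M}}}\prod_{i=1}^r R_i'(f_i(\u))\Bigg|\ll T^s(\log T)^{-C_1/4},
\]
whenever $C_1$ is chosen sufficiently large in terms of $r$ and $H$. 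The right-hand side is $o(T^s)$, so the two sums are asymptotic to the same main term (or fail to be) simultaneously, establishing the equivalence claimed in Proposition \ref{p:unexceptional'}. The main technical point is not a genuine obstacle but rather the verification that Proposition \ref{p:unexceptional} is robust under (a) scaling the majorant by a constant and (b) restricting the domain of summation to a fixed affine sublattice; both robustness properties follow at once from the structure of its proof via \eqref{eq:exceptional} and Lemma \ref{lem:kth-moment}.
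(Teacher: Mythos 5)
Your overall strategy---bounding the difference of the two sums by Proposition~\ref{p:unexceptional}---is the right one, and your change of variables $\u = M\v + \a'$ to absorb the congruence condition is fine. But there are two genuine gaps.

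First, your proposed common majorant $f(m) = \prod_{i=1}^r r_{K_i}(|m|)$ does not in fact majorise any of the $R_i$. Lemma~\ref{lem:7.1} gives $R_i(m) \leq 2|\mu_{K_i}| r_{K_i}(|m|)$, but the extra factors $r_{K_j}(|m|)$ for $j \neq i$ are \emph{not} bounded below by $1$: the coefficients of the Dedekind zeta function vanish frequently (e.g.\ $r_K(p)=0$ whenever $p$ is inert in $K$). So for $m$ with $r_{K_i}(|m|)>0$ but $r_{K_j}(|m|)=0$ for some $j\neq i$, your $f(m)$ is zero while $R_i(m)$ can be positive, and the required inequality $R_i(m) \leq Cf(m)$ fails. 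The fix, which is what the paper does, is to use $r_{K_i}(m) \leq \tau(m)^{n_i}$ from \eqref{eq:r-upper} and take (a constant multiple of) $\tau^{\max_i n_i} \in \cM'(2^{\max_i n_i})$ as the common majorant; since $\tau\geq 1$ everywhere, the pointwise domination is genuine.

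Second, you have not addressed the sign restriction. Proposition~\ref{p:unexceptional} and Lemma~\ref{lem:kth-moment} require $\Psi(T\mathfrak{K}) \subset (0,T]^r$ and treat the $f_i$ as functions on $\ZZ_{\geq 0}$, but the forms $f_i$ change sign on $T\mathfrak{K}$ and the functions $R_i, R_i'$ are defined on $\{-T,\dots,T\}$. The paper handles this by first decomposing $\mathfrak{K}$ into the $2^r$ pieces $\mathfrak{K}_{\beps} = \mathfrak{K} \cap \mathbf{f}^{-1}(\RR_{\epsilon_1}\times\dots\times\RR_{\epsilon_r})$ (the residual set where some $f_i$ vanishes contributes nothing since $R_i(0)=0$), and then applying Proposition~\ref{p:unexceptional} to each piece with the reinterpretation $m \mapsto R_i(\epsilon_i m)$. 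Without this decomposition your invocation of Proposition~\ref{p:unexceptional} does not satisfy its hypotheses.
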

\begin{proof}
We have 
$R_i(m) \leq 
2 |\mu_{K_i}| \tau(|m|)^{n_i}$ by Lemma \ref{lem:7.1} and
\eqref{eq:r-upper}, 
where $\tau^{n_i} \in \cM(2^{n_i})$.
For $\beps=(\epsilon_1,\dots,\epsilon_r)\in \{\pm \}^r$ we 
let 
$\mathfrak{K}_{\beps}
=\mathfrak{K}\cap \mathbf{f}^{-1}(\RR_{\epsilon_1}\times \dots \times
\RR_{\epsilon_r})$.
Then we may decompose $\mathfrak{K}$ as the union of the $2^{r}$ sets
$\mathfrak{K}_{\beps}$, 
together with one set  $\mathfrak{K}_{0}$ such that at each
point $\u \in \mathfrak{K}_{0}$ at least one $f_i$ vanishes.  
We may discard $\mathfrak{K}_{0}$ since $\prod_{i=1}^r R_i(f_i(\u))=0$
at all of its integral points.
Proposition \ref{p:unexceptional} 
may be applied 
separately
to each of the $2^r$ remaining sums,  by reinterpreting $R_i$ and $R'_i$ as functions on
$\{1,\dots,T\}$ via $m \mapsto R_i(\epsilon_i m)$.
\end{proof}

Proposition  \ref{p:unexceptional'} allows us to work with functions 
\begin{equation}\label{eq:Ri'}
R_i'(m) =
\begin{cases}
 R_i(m), & \mbox{if $|m| \not\in \mathcal{S}_{C_1,T}$,}\\
 0,& \mbox{if $|m| \in \mathcal{S}_{C_1,T}$,}
\end{cases}
\end{equation}
instead of the original counting functions.
Thus the majorant function only needs to majorise $R_i$ outside the
exceptional set. 

\begin{rem}
For later applications it is convenient at this point to note that
\begin{equation}\label{eq:R_i-in-S}
\EE_{|m| < T/W} 
 R_{i}(Wm+A) 
\1_{Wm+A \in \mathcal{S}_{C_1,T}}
\ll (\log T)^{-C_1/4}
\end{equation}
and
\begin{equation}\label{eq:sunny}
\begin{split}
\EE_{m \leq (T-A)/W} R'_{i}(Wm+A)
&= \EE_{m \leq (T-A)/W} R_{i}(Wm+A) + O\left((\log T)^{-C_1/4} \right)\\
&\asymp \frac{\rho_i(W,A;M)}{W^{n_i-1}}.
\end{split}\end{equation}
The first bound follows from Proposition \ref{p:unexceptional}, applied
with (cf.\ Proposition \ref{p:unexceptional'})
$r=s=1$, $ f_1=R_i$ and $ \psi_1(m)=m.$ 
The second part follows by combining \eqref{eq:R_i-in-S} with 
Lemma \ref{lem:average-order}.
\end{rem}

For most of the remainder of this section we consider a typical representation function and
will drop the index $i$ in such situations. 
 Lemma \ref{lem:7.1} implies that 
$R(m)\ll r_K(|m|)$. 
Taking $q=1$ in Lemma \ref{lem:average-order}, we deduce that for 
$\epsilon \in \{\pm\}$
$$
\frac{1}{T}
\sum_{0 < \epsilon m \leq T} R(m)\sim \kappa^{\epsilon}, \quad
(T\rightarrow\infty),
$$
whereas \eqref{eq:210'} yields
$$
\frac{1}{T}\sum_{0< m \leq T} r_K(m)\sim h\kappa, \quad
(T\rightarrow\infty).
$$
Hence $R$ is 
majorised by 
 $r_K$ and has the same average order as it.

Given $T \in \NN$, consider $P_A = \{m \equiv A \bmod{W}\}$, for 
$A \in \cA$ given by \eqref{eq:def-A} and $W$ as in \eqref{def:W}.
Lemma \ref{lem:A>0} only provides us with precise
information on $\rho(p^m,A)$ when $p \nmid D_K$ and $v_p(A) < m$. 
This limits our ability to deduce that $R$ and $r_K$ have  the same
average order on progressions $P_A$ unless $\gcd(A,D_K)=1$.
For this reason we will need to refine the bound on $R$ from 
Lemma~\ref{lem:7.1} to one that is tighter at integers $m$ with 
$\gcd(m,D_K)>1$.
We set
$$
W_0 = \prod_{p\mid D_K} p^{\alpha(p)}
$$
and proceed to establish the following result.

\begin{lemma}\label{lem:7.1'}
Let $A \in \cA$ and write $A_0 = \gcd(A,W_0)$.
Then
\begin{align*}
 R(Wm + A) 
\ll 
 \frac{\rho(W_0,A)}{W_0^{n-1}} 
 r_K\left(\frac{|Wm + A|}{A_0}\right).
\end{align*}
\end{lemma}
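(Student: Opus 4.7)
The plan is to deduce the lemma from Lemma \ref{lem:7.1} by factoring $r_K(|Wm+A|)$ across the primes dividing $W_0$ and controlling the resulting $W_0$-part by $\rho(W_0,A)/W_0^{n-1}$.

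First, Lemma \ref{lem:7.1} gives $R(Wm+A)\leq 2|\mu_K|\,r_K(|Wm+A|)$. Since $A\in\cA$ forces $v_p(A) < \alpha(p)/3 < v_p(W)$ at every $p\leq w(T)$, and $W_0\mid W$, one has $v_p(Wm+A) = v_p(A) = v_p(A_0)$ for every $p\mid W_0$. Consequently $\gcd(|Wm+A|/A_0,\,W_0)=1$ and multiplicativity of $r_K$ yields
$$r_K(|Wm+A|) = r_K(A_0)\, r_K(|Wm+A|/A_0).$$
The task therefore reduces to
$$r_K(A_0)\;\ll\;\frac{\rho(W_0,A)}{W_0^{n-1}},$$
which by the Chinese remainder theorem splits into the prime-by-prime claim
$$r_K(p^{v_p(A)})\;\ll_p\;\frac{\rho(p^{\alpha(p)},A)}{p^{\alpha(p)(n-1)}}\qquad(p\mid nD_K).$$

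The main obstacle is this local inequality at primes $p\mid nD_K$, where the exact formula of Lemma \ref{lem:A>0} (second part) is unavailable and only the matching upper bound (first part) is available in the reverse direction. The plan to overcome this is to exploit that, since $A\in\cA$, the hypothesis of Lemma \ref{lem:C6.4} is met for $m=\alpha(p)$ and the density $\rho(p^m,A)/p^{m(n-1)}$ has already stabilised at the lower threshold $m_0 := 2(v_p(A)+v_p(n))+1 < \alpha(p)$. Applying \eqref{eq:sum-r_i} summed over ideal classes, with representatives $\fc_i$ chosen by the Chebotarev density theorem (in a suitable ray class field) to be coprime to $p$ and to satisfy $\n\fc_i\equiv 1\pmod{p^{m_0}}$, and invoking Lemma \ref{lem:equivalent'}, produces the identity
$$\lim_{x\to\infty}\frac{S(x;p^{m_0},A)}{x}\;=\;\frac{h\bigl(\kappa^+\rho(p^{m_0},A)+\kappa^-\rho(p^{m_0},-A)\bigr)}{|\mu_K|\,p^{m_0 n}}.$$
On the other hand, the trivial bound $r_K(\ell)\geq r_K(p^{v_p(A)})$ for every $\ell\equiv A\pmod{p^{m_0}}$ — which follows from multiplicativity of $r_K$ together with $v_p(\ell)=v_p(A)$ — gives $S(x;p^{m_0},A)\geq r_K(p^{v_p(A)})\lfloor x/p^{m_0}\rfloor$. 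Comparing the two supplies the required lower bound on $\kappa^+\rho(p^{m_0},A)+\kappa^-\rho(p^{m_0},-A)$.

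The remaining delicate step is isolating the $\rho(p^{m_0},A)$ term from this combined bound. I plan to handle it by combining with the analogous inequality obtained by applying the same argument to $-A$, and by exploiting the sign structure implicit in $R$: since $R(Wm+A)=0$ unless $Wm+A$ has the sign corresponding to the non-empty stratum of $\mathfrak{X}\cap(\mathfrak{D}_+^+\cup\mathfrak{D}_+^-)$, only one of the two densities is genuinely relevant to the values of $m$ at which $R(Wm+A)$ is non-zero, allowing the combined bound to be converted into the single-term bound needed to conclude the lemma.
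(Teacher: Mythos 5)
Your opening reduction is reasonable in spirit: Lemma \ref{lem:7.1} together with multiplicativity of $r_K$ (using that $v_p(Wm+A)=v_p(A_0)$ for $p\mid W_0$, which follows from $A\in\cA$) reduces the lemma to the claim $r_K(A_0)\ll\rho(W_0,A)/W_0^{n-1}$, and this is essentially the content of the paper's identity \eqref{eq:small-primes-rho-sum}. But your route to the local inequality at primes $p\mid nD_K$ breaks down at the crucial step. The claim ``$r_K(\ell)\geq r_K(p^{v_p(A)})$ for every $\ell\equiv A\pmod{p^{m_0}}$'' is false: multiplicativity gives $r_K(\ell)=r_K(p^{v_p(A)})\,r_K(\ell/p^{v_p(A)})$, and $r_K(\ell/p^{v_p(A)})$ vanishes for a positive proportion of $\ell$ in the progression (for example whenever $\ell/p^{v_p(A)}$ has a prime factor that is inert in $K$). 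Consequently the pointwise lower bound $S(x;p^{m_0},A)\geq r_K(p^{v_p(A)})\lfloor x/p^{m_0}\rfloor$ does not hold. If you instead try to argue on average, after factoring out $r_K(p^{v_p(A)})$ you must lower-bound $\sum_{\ell'\leq y,\ \ell'\equiv a'\bmod{p^{m_0-v_p(A)}}}r_K(\ell')$ with $a'=A/p^{v_p(A)}$, and that sum is governed by a local density modulo a power of the ramified prime $p$ --- exactly the quantity Lemma \ref{lem:A>0} does not evaluate when $p\mid nD_K$ and exactly the thing you set out to bound. The argument is circular.

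The paper sidesteps this by never trying to isolate $r_K(A_0)$ as a standalone factor via a density comparison. It works at the level of ideals: using unique factorisation into coprime pieces it expresses $R_K(m)$ (for $m=m_1m_2$ coprime) as a sum over ideal classes $c_i$ and signs $\epsilon\in\{\pm\}/N_{K/\QQ}U_K$ of products $R_K(\cdot\,A_0\,\cdot;\fc_i)\,R_K(\cdot\,(m/A_0)\,\cdot;\fc_{i^*})$ as in \eqref{eq:R_K-sum}, then matches this against the companion density identity \eqref{eq:small-primes-rho-sum}, which writes $\rho(W_0,A)/W_0^{n-1}$ as $\sum_{i,\epsilon}R_K(\cdot\,A_0\,\cdot;\fc_i)\,\rho_{i,\epsilon}$. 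The only lower bound it then needs is $\rho_{i,\epsilon}\gg 1$, which is available directly from Lemma \ref{lem:C6.4} once positivity is known, and positivity is supplied by the membership $A\in\cA$ (via a non-vanishing $R_K$-value in the relevant class). This is a genuinely different route that never requires bounding $S(x;p^{m_0},A)$ from below. Finally, your last step of ``separating $\rho(p^{m_0},A)$ from $\rho(p^{m_0},-A)$ via the sign of $\mathfrak X$'' also does not close: the lemma's conclusion involves $\rho(W_0,A)$ and must hold for all $m$ of either sign, and when $r_1>0$ both $\kappa^+$ and $\kappa^-$ are positive; the paper's sum over $\epsilon\in\{\pm\}/N_{K/\QQ}U_K$ handles both signs intrinsically without reference to $\mathfrak X$.
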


\begin{proof}
The following shorter proof of this result was suggested to us by the referee.
To begin with, Lemma  
\ref{lem:7.1} implies that $ R(Wm + A)\ll r_K(|Wm+A|)$. Moreover, 
for any $A\in \cA$ it is clear that $A_0$ is coprime to $|Wm+A|/A_0$.
Since $r_K$ is multiplicative it is therefore enough to show that 
$$
r_K(A_0)\ll \frac{\rho(W_0,A)}{W_0^{n-1}}.
$$
Now since $A\in \cA$ we must have $v_p(A_0)<v_p(M)\ll 1$ for any $p\mid W_0$. Thus $r_K(A_0)\ll 1$ by \eqref{eq:r-upper}. Hence it remains to prove that 
$\rho(W_0,A)/W_0^{n-1}\gg 1$.
Let $p\mid D_K$. 
Setting 
$$m_0 = 1 + 2\left(v_p(M) + v_p(A) +v_p(n)\right) < 1 + 4v_p(M) + 2v_p(n) \ll 1,$$
we apply Lemma \ref{lem:C6.4} to obtain
\begin{equation}\label{eq:dove}
\frac{\rho(p^{\alpha(p)},A)}{p^{\alpha(p)(n-1)}}
\geq 
\frac{\rho(p^{\alpha(p)},A;p^{v_p(M)})}{p^{\alpha(p)(n-1)}}
=\frac{\rho(p^{m_0},A;p^{v_p(M)})}{p^{m_0(n-1)}}
\geq\frac{1}{p^{m_0(n-1)}}
\gg 1,
\end{equation}
since $\rho(W,A;M)>0$ in \eqref{eq:def-A}.
This completes the proof of the lemma. 
\end{proof}

In view of Lemma \ref{lem:7.1'} we are therefore led to construct a
majorant function for each restriction of the multiplicative function
$r_K$ to a progression $P_A$, with $A\in \cA$.
Given  $T \in \ZZ_{>1}$, we write 
\begin{equation}\label{eq:m'}
m'=\prod_{p<w(T)}p^{v_p(m)},
\end{equation}
for $m\in \ZZ$.
Then $r_K(m')$ is constant for $m \in P_A$.
Thus we seek a majorant for the function $m \mapsto r_K(m/m')$, which is
free from the contributions of small prime factors.

Note that $r_K$ is an unbounded function with sparse support.
Our next aim, accomplished in Lemma \ref{lem:r_K-decomposition} below, is
to simplify the task by separating these two properties, replacing $r_K$
by the product of a bounded function with sparse support and an unbounded
function with dense support.

In general we write
$\<\cP\> = \{n \in \NN: p\mid n \Rightarrow p \in \cP\}$
for a set $\cP$ of rational primes.
Recall the definition \eqref{eq:PPP} of $\cP_{0}, \cP_1$ and $\cP_2$.
Additionally, we require the sets
$$
\bar\cP_j = \{p \in \cP_j: p>w(T)\},
$$ 
for $j=1,2$.
It follows from \eqref{eq:r} that the restriction of $r_K$ to square-free
numbers is supported on  $\<\cP_0 \cup \cP_1\>$. 
Let $r_{\mathrm{res}}$ denote the multiplicative function defined via
\begin{equation}\label{eq:r_res}
r_{\mathrm{res}}(p^m) = 
\begin{cases}
 r_K(p^m), &  \mbox{if $p \in \cP_0 \cup \cP_1$,}\\
 1,        &  \mbox{if $p \in \cP_2$.}
\end{cases}
\end{equation}
We have the following result. 

\begin{lemma} \label{lem:r_K-decomposition}
For all $m \in \NN$, we have
\begin{equation*}
r_K(m) \leq 
 r_{\mathrm{\rm res}}\left(m\right)
 \sum_{\substack{q \in \<\cP_2\> \\ v_p(q) \not= 1 ~ \forall p}} 
 \1_{q|m}
 \tau(q)^n
 \1_{\<\cP_0 \cup \cP_1\>}\left(\frac{m}{q}\right). 
\end{equation*}
\end{lemma}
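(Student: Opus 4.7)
The plan is to reduce the statement to a clean comparison by exploiting the multiplicativity of $r_K$ and the partition $\{\cP_0,\cP_1,\cP_2\}$ of the primes. Write $m = m_0 m_1 m_2$, where
$$m_j = \prod_{\substack{p\mid m\\ p\in \cP_j}} p^{v_p(m)}, \qquad j=0,1,2.$$
Since $r_K$ is multiplicative and, by \eqref{eq:r_res}, $r_{\mathrm{res}}$ agrees with $r_K$ on $\cP_0\cup\cP_1$ and equals $1$ on $\cP_2$, we have
$$r_K(m) = r_K(m_0)\,r_K(m_1)\,r_K(m_2), \qquad r_{\mathrm{res}}(m) = r_K(m_0)\,r_K(m_1).$$

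Next I would analyse the right-hand sum. The condition $\1_{\<\cP_0\cup\cP_1\>}(m/q)$ forces $m_2\mid q$; combined with $q\mid m$ and $q\in\<\cP_2\>$, this pins down $q=m_2$. Hence the sum is either empty or reduces to the single term $q=m_2$, depending on whether the extra condition $v_p(m_2)\neq 1$ holds for every $p\in\cP_2$ dividing $m_2$.

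The first case is when some $p\in\cP_2$ divides $m$ to exact order $1$. Then by the definition \eqref{eq:PPP} of $\cP_2$ and the formula \eqref{eq:r}, we have $r_K(p)=0$, so $r_K(m_2)=0$ and thus $r_K(m)=0$; the inequality holds trivially as the right-hand side is non-negative. In the complementary case, $v_p(m_2)\geq 2$ for every $p\in\cP_2$ with $p\mid m$, so the right-hand side equals $r_{\mathrm{res}}(m)\,\tau(m_2)^n$, and the claim reduces to
$$r_K(m_2) \leq \tau(m_2)^n.$$
This is the main (and only non-trivial) inequality, and it follows immediately from the prime-power bound \eqref{eq:r-upper}, which gives $r_K(p^k)\leq (k+1)^n = \tau(p^k)^n$, together with multiplicativity of $r_K$ and $\tau$.

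I do not anticipate a serious obstacle; the only subtlety is recognising that the sparse-support condition $v_p(q)\neq 1$ is exactly the device that allows the bound to remain valid in the otherwise-degenerate case where $\cP_2$-primes appear to the first power in $m$, precisely because $r_K$ vanishes at such primes.
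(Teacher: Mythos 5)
Your proof is correct and takes essentially the same approach as the paper: both identify $q=m_2$ (the $\cP_2$-part of $m$) as the unique possible term in the sum, observe that $r_{\mathrm{res}}$ strips off the $\cP_2$-contribution from $r_K$, and close with the bound $r_K(q)\leq\tau(q)^n$ from \eqref{eq:r-upper}. Your explicit decomposition $m=m_0m_1m_2$ and the two-case split on whether some $p\in\cP_2$ divides $m$ to exact order $1$ is a slightly more verbose rendering of the paper's opening remark that the sum is empty unless $r_K(m)>0$, but the substance is identical.
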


\begin{proof}
If $r_K(m)$ is positive, then $m$ has no prime divisor
$p \in \cP_2$ for which $p^2 \nmid m$.
In this case the sum on the right hand side has exactly one term,
corresponding to the factorisation of $m$ into the product of 
$q \in \<\cP_2\>$ and $m/q \in \<\cP_0\cup\cP_1\>$.
The multiplicativity of $r_K$ implies
$$
r_K(m) 
= r_K(q)r_K\left(\frac{m}{q}\right)
= r_K(q)r_{\mathrm{res}}\left(\frac{m}{q}\right)
=  r_K(q)r_{\mathrm{res}}\left(m\right)
\leq \tau(q)^n r_{\mathrm{res}}\left(m\right),
$$
where we used \eqref{eq:r-upper} to bound $r_K(q)$.
\end{proof}

As a direct consequence of this lemma, we obtain
\begin{equation}\label{eq:r-model-bound'}
r_K(m) \leq r_K(m') \cdot r_{\mathrm{\rm res}}\left(\frac{m}{m'}\right)
\sum_{\substack{q \in \<\bar\cP_2\> \\ v_p(q) \not= 1 ~ \forall p}} 
 \1_{q|m}
 \tau(q)^n
 \1_{\<\cP_0 \cup \cP_1\>}\left(\frac{m}{qm'}\right),
\end{equation}
where $m'$ is given by \eqref{eq:m'} and we have observed that $r_K(m)=r_K(m')r_K(m/m'),$ by multiplicativity.
In view of \eqref{eq:r-model-bound'} we proceed by constructing two
families of majorant functions in Sections \ref{s:rres} and
\ref{s:sieve}: 
one for the positive
multiplicative function $m \mapsto r_{\mathrm{res}}(m/m')$ and one
 for the characteristic function $\1_{\<\cP_0 \cup \cP_1\>}$.
Inserting these majorants into the bound \eqref{eq:r-model-bound'}, we
will obtain a family of majorant functions for $r_K$.
In Section \ref{s:linear-forms} we check that the resulting majorants for
$r_K$ form a family of pseudorandom majorants when restricting them to
the arithmetic progressions $P_A$.

\subsection{Majorant for $r_{\mathrm{res}}$}\label{s:rres}

Our first task is to check that Proposition \ref{p:majorant}
applies to the function~$r_{\mathrm{res}}$.

\begin{lemma}\label{lem:start}
Let $T \in \ZZ_{>1}$. Given $m \in \NN$, let $m'$ be defined by
\eqref{eq:m'}.
If $h_T$ denotes the function $m \mapsto r_{\mathrm{res}} (m/m')$,
then $h_T\in \cM(2^{n})$, provided that $T\geq \exp(\exp(2|D_K|))$.
\end{lemma}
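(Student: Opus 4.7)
The plan is to verify each of the three defining conditions (a), (b), (c) of the class $\cM(2^n)$ directly from the definition of $h_T$, after first checking that $h_T$ is multiplicative. Writing any $m \in \NN$ as $m = m' \cdot (m/m')$, where $m'$ collects the prime-power factors with $p < w(T)$, the map $m \mapsto m/m'$ is completely multiplicative, and $r_{\mathrm{res}}$ is multiplicative, so $h_T$ is multiplicative. At a prime power $p^k$, one has $h_T(p^k) = 1$ if $p < w(T)$ and $h_T(p^k) = r_{\mathrm{res}}(p^k)$ if $p \geq w(T)$.

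For condition (a), I would use that $r_{\mathrm{res}}(p^k) \leq r_K(p^k)$ at every prime power, combined with the bound $r_K(p^k) \leq (k+1)^n$ coming from \eqref{eq:r-upper}. Since $(k+1)^n \leq 2^{nk}$ holds for every $k \geq 1$, this gives $h_T(p^k) \leq 2^{nk}$ in all cases. For condition (b), the pointwise inequality $h_T(m) \leq r_K(m)$ reduces the bound to the standard estimate $r_K(m) \ll_\delta m^\delta$ for the Dirichlet coefficients of $\zeta_K$.

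The only real content is condition (c), and this is where the hypothesis $T \geq \exp(\exp(2|D_K|))$ enters. This hypothesis guarantees $w(T) = \log\log T \geq 2|D_K|$, so every rational prime $p$ dividing $D_K$ satisfies $p < w(T)$, and therefore contributes $h_T(p^k) = 1 = h_T(p^{k-1})$. For $p \geq w(T)$ one has $p \nmid D_K$, so $p$ lies in $\cP_1 \cup \cP_2$ (the troublesome case $\cP_0$ has been excluded). If $p \in \cP_2$, then $h_T(p^k) = 1$ for all $k$, and monotonicity is trivial. If $p \in \cP_1$, write $(p) = \fp_1 \cdots \fp_r$ with some $f_i = 1$, say $f_1 = 1$. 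Then
\[
r_K(p^k) = \#\{(m_1, \dots, m_r) \in \ZZ_{\geq 0}^r : f_1 m_1 + \cdots + f_r m_r = k\},
\]
and the map $(m_1, m_2, \dots, m_r) \mapsto (m_1 + 1, m_2, \dots, m_r)$ is an injection from solutions with sum $k-1$ to solutions with sum $k$, giving $r_K(p^k) \geq r_K(p^{k-1})$.

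The main obstacle, to the extent that there is one, is recognising why the hypothesis $T \geq \exp(\exp(2|D_K|))$ is needed: without excluding the ramified primes $\cP_0$ from the range $p \geq w(T)$, the monotonicity in (c) can genuinely fail (for instance when $p$ is totally ramified with residue degree $f > 1$, the function $k \mapsto r_K(p^k)$ is supported only on multiples of $f$). The quantitative lower bound $w(T) \geq 2|D_K|$ is a convenient way of enforcing $p \mid D_K \Rightarrow p < w(T)$, since any rational prime dividing $D_K$ is at most $|D_K|$.
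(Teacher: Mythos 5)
Your proof follows the same structure as the paper: verify conditions (a), (b), (c) of Definition~\ref{def:M_reg}, with the hypothesis on $T$ used only to force the ramified primes $\cP_0$ below $w(T)$ so that (c) need only be checked at primes in $\cP_1 \cup \cP_2$. Your treatment of (c) is correct and is in substance the paper's argument, which uses the ideal-theoretic phrasing $\fa \mapsto \fa\fp$ for a degree-one prime $\fp \mid (p)$; your explicit injection $(m_1,\dots,m_r)\mapsto (m_1+1,m_2,\dots,m_r)$ on representations $f_1 m_1 + \dots + f_r m_r = k$ is exactly that map in coordinates.

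Your argument for (b), however, rests on a false inequality: $h_T(m)\leq r_K(m)$ does not hold pointwise. Take $p\geq w(T)$ inert in $K$, so $p\in\cP_2$; then $h_T(p)=r_{\mathrm{res}}(p)=1$ by \eqref{eq:r_res}, whereas $r_K(p)=0$ since $K$ has no integral ideal of norm $p$. (The same phenomenon means your stated intermediate bound $r_{\mathrm{res}}(p^k)\leq r_K(p^k)$ in part (a) can also fail at $p\in\cP_2$; there it is harmless because $r_{\mathrm{res}}(p^k)=1\leq (k+1)^n$ directly from \eqref{eq:r_res}, but in (b) the broken inequality is the whole reduction.) The repair is immediate from what you already have from (a): $h_T(p^k)\leq (k+1)^n = \tau(p^k)^n$, so $h_T(m)\leq \tau(m)^n \ll_\delta m^\delta$. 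This is precisely what the paper means when it says (b) follows ``from the respective property for the divisor function,'' rather than from a pointwise comparison with $r_K$.
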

\begin{proof}
We need to check conditions (a)--(c) in Definition \ref{def:M_reg}.
By \eqref{eq:r-upper}, we have 
$$
r_{\mathrm{res}}(p^m)
 \leq r_{K}(p^m)
 \leq (m+1)^n
 = (\tau(p^m))^n
 \leq 2^{mn}.
$$
Thus part (a) holds with $H=2^{n}$. 
Part (b) follows immediately from the respective property for the divisor
function. 
To check part (c), we may restrict attention to $p \in \cP_1 \cup \cP_2$,
since $p<w(T)$ when $p\in \cP_0$ and $T\geq \exp(\exp(2|D_K|))$.
Recalling \eqref{eq:r_res} we see that  condition (c) is trivially
satisfied for $p \in \cP_2$.
If $p \in \cP_1$, then there is a prime ideal $\fp|(p)$ of residue degree
$1$ in $K/\QQ$. 
Thus, if $\fa \subset \fo$ is counted by $r_{\mathrm{res}}(p^{m})$,
that is to say $\n\fa = p^{m}$, then $\fa\fp$ is an ideal counted by
$r_{\mathrm{res}}(p^{m+1})$.
Hence 
$r_{\mathrm{res}}(p^m) \leq r_{\mathrm{res}}(p^{m+1}),$
as required for (c).
\end{proof}

Lemma \ref{lem:start} implies that 
$r_{i,\mathrm{res}}(m/m')\in \mathcal{M}(2^{n_i})$ 
for each $1\leq i\leq r$, with $m'$ given by \eqref{eq:m'}.
Taking 
$f=r_{i,\mathrm{res}}$ and $h_T=r_{i,\mathrm{res}}(m/m')$, let
$h_\gamma^{(T)}$ be as in Definition \ref{def:truncation}, with
$g=\mu*h_T$. 
Let 
\begin{align}\label{eq:nu_i}
 \nu_i^{(T)}(m) = 
 \sum_{\kappa = 4/\gamma}^{[(\log \log T)^3]}
 \sum_{\lambda = \lceil\log_2 \kappa - 2\rceil}^{[\log_2((\log \log T)^3)]}
 \sum_{u \in U(\lambda,\kappa)}
 2^{n_i\kappa} \1_{u|m} r_{i,\mathrm{res}}(u)
 h_{\gamma}^{(T)}\left(\frac{m}{m'\prod_{p|u}p^{v_p(m)}}\right).
\end{align}
Then Proposition \ref{p:majorant} implies that $\nu_i^{(T)}(m)$
majorises $r_{i,\mathrm{res}}(m/m')$ on 
$\{1,\dots,T\} \setminus \mathcal{S}_{C_1,T}$.
According to Remark \ref{rem:coprime}, furthermore, it is a truncated
divisor sum that only involves divisors from $\<\bar\cP_1^{(i)}\>$
provided $T \geq \exp(\exp(2 |D_{K_i}|))$.
Indeed, if $p \in \cP_0^{(i)}$ then $p<w(T)$, and if $p \in
\bar\cP_2^{(i)}$ then $r_{i,\mathrm{res}}(p^k)=1$ for any $k\in \NN$. 

Proposition \ref{p:majorant} provides the upper bound
\begin{align*}
\EE_{m \leq (T-A)/W} \nu_i^{(T)}(Wm + A) \ll 
\frac{\EE_{m \leq (T-A)/W} r_{i,\mathrm{res}}(Wm + A)}
 {r_{i,\mathrm{res}}(\gcd(A,W))},
\end{align*}
for any $A$ belonging to the set $ \cA_i$ defined in \eqref{eq:def-A}.
We proceed by deducing the following bound in terms of the arithmetic data
that is involved.
\begin{lemma}\label{lem:nu_i-average}
For $A \in \mathcal{A}_i$ we have
\begin{align*}
\EE_{m \leq (T-A)/W} &\nu_i^{(T)}(Wm + A) \\
\ll& 
 (\log T)^{1-\delta_i}
 \prod_{\substack{p<w(T)\\ p \in \cP_0^{(i)} \cup \cP_1^{(i)}}} 
 \Big(1-\frac{1}{p}\Big)^{-1}
 \prod_{\substack{\fp|p\\ \fp\subset \fo_{K_i}}}
 \Big(1-\frac{1}{\n \fp}\Big),
\end{align*}
where $\cP_0^{(i)},  \cP_1^{(i)}, \cP_2^{(i)}$ are the sets \eqref{eq:PPP}
of rational primes corresponding to $K_i$ and $\delta_i$ is the Dirichlet 
density of $\cP_1^{(i)}$.
\end{lemma}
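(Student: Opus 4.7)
The plan is to invoke Proposition \ref{p:majorant} to pass from $\nu_i^{(T)}$ to $r_{i,\mathrm{res}}$, apply Shiu's theorem \cite{shiu} on an arithmetic progression, and then evaluate the resulting Mertens-type sum via the analytic structure of $\zeta_{K_i}$. Write $A_0:=\gcd(A,W)$ and $X:=(T-A)/W$. Since $h_T(m)=r_{i,\mathrm{res}}(m/m')\in\cM(2^{n_i})$ by Lemma \ref{lem:start}, Proposition \ref{p:majorant} yields
\[
\EE_{m\leq X}\nu_i^{(T)}(Wm+A)\ll \EE_{m\leq X}r_{i,\mathrm{res}}(Wm+A)/r_{i,\mathrm{res}}(A_0).
\]
The condition $A\in\cA_i$ forces $v_p(A)<v_p(W)$ for every $p<w(T)$, so $v_p(Wm+A)=v_p(A)$ for such $p$ and the $w(T)$-smooth part of $Wm+A$ equals $A_0$. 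By multiplicativity of $r_{i,\mathrm{res}}$, the right hand side equals $\EE_{m\leq X}r_{i,\mathrm{res}}((Wm+A)/A_0)$, whose argument ranges over a residue class modulo $q:=W/A_0$ coprime to $q$; moreover $p\mid q$ iff $p<w(T)$.

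Since $r_{i,\mathrm{res}}\in\cM(2^{n_i})$ and $q\leq W=T^{o(1)}$ is smooth, Shiu's theorem yields
\[
\EE_{m\leq X}r_{i,\mathrm{res}}((Wm+A)/A_0)
\ll \frac{q/\phi(q)}{\log T}\exp\!\Bigg(\sum_{w(T)\leq p\leq T}\frac{r_{i,\mathrm{res}}(p)}{p}\Bigg),
\]
with $q/\phi(q)=\prod_{p<w(T)}(1-1/p)^{-1}\asymp\log w(T)$ by Mertens. Splitting $r_{i,\mathrm{res}}(p)=r_{K_i}(p)\1_{\cP_0^{(i)}\cup\cP_1^{(i)}}(p)+\1_{\cP_2^{(i)}}(p)$, and combining $\sum_{p\leq T}r_{K_i}(p)/p=\log\log T+O(1)$ (from the simple pole of $\zeta_{K_i}$ at $s=1$ together with the boundedness of $\zeta_{K_i}/\zeta$ there) with $\sum_{p\in\cP_2^{(i)},\,p\leq T}1/p=(1-\delta_i)\log\log T+O(1)$ (from Corollary \ref{cor:log-asymp}), the exponent reads $(2-\delta_i)\log\log T+R+O(1)$, where $R$ is the $p<w(T)$ correction. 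Using $\sum_{\fp\mid p}\n\fp^{-1}=-\sum_\fp\log(1-\n\fp^{-1})+O(p^{-2})$, one identifies
\[
R=\sum_{\substack{p<w(T)\\ p\in\cP_0^{(i)}\cup\cP_1^{(i)}}}\sum_{\fp\mid p}\log(1-\n\fp^{-1})+\sum_{\substack{p<w(T)\\ p\in\cP_2^{(i)}}}\log(1-1/p)+O(1).
\]

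Exponentiating and dividing by $\log T$, the leading contribution becomes $(\log T)^{1-\delta_i}$ multiplied by $\prod_{p<w(T),\,\cP_0^{(i)}\cup\cP_1^{(i)}}\prod_\fp(1-\n\fp^{-1})\cdot\prod_{p<w(T),\,\cP_2^{(i)}}(1-1/p)$, together with an extra $\log w(T)$ from the $q/\phi(q)$ prefactor. The Mertens identity $\prod_{p<w(T)}(1-1/p)^{-1}\asymp\log w(T)$, together with the splitting by prime class, gives
\[
\log w(T)\cdot\prod_{\substack{p<w(T)\\p\in\cP_2^{(i)}}}(1-1/p)\asymp\prod_{\substack{p<w(T)\\ p\in\cP_0^{(i)}\cup\cP_1^{(i)}}}(1-1/p)^{-1},
\]
so that the $\cP_2$-factor cancels against the Mertens prefactor to leave precisely the Euler product displayed in the statement. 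The main obstacle is this final algebraic rearrangement, which requires the Tauberian equivalences supplied by Lemma \ref{lem:F(s)} and Corollary \ref{cor:log-asymp} to line up compatibly with Mertens's theorem across the three prime classes $\cP_0^{(i)}$, $\cP_1^{(i)}$, $\cP_2^{(i)}$ above and below the cutoff $w(T)$; the finite $\cP_0^{(i)}$-contribution is absorbed into the implied constant depending on $K_i$.
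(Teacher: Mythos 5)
Your proof is correct and arrives at the same conclusion, but it takes a genuinely different route from the paper's in one step. After both proofs reduce, via Proposition~\ref{p:majorant}, to bounding $\EE_{m\leq(T-A)/W}r_{i,\mathrm{res}}(Wm+A)/r_{i,\mathrm{res}}(\gcd(A,W))$, the paper reuses the two-sided identity \eqref{eq:sq} (already established in the proof of Proposition~\ref{p:majorant}) to replace this mean by the divisor sum $\sum_{d\leq T,\,\gcd(d,W)=1}g(d)/d$ with $g=\mu*h_T$, and then bounds that sum by the Euler product $\prod_{p\leq T}(1+\sum_{k\geq1}g(p^k)p^{-k})\ll\exp(\sum_{p\leq T}g(p)/p)$; since $g(p)=r_{i,\mathrm{res}}(p)-1$ for $p>w(T)$ and $g(p)=0$ for $p\leq w(T)$, the exponent is $\sum_{w(T)<p\leq T}(r_{i,\mathrm{res}}(p)-1)/p$. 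You instead invoke Shiu's Brun--Titchmarsh theorem directly on the progression $qm+a$ with $q=W/\gcd(A,W)$, picking up the extra prefactor $q/(\phi(q)\log T)$; since $q/\phi(q)=\prod_{p<w(T)}(1-1/p)^{-1}$ and $(\log T)^{-1}\asymp\exp(-\sum_{p\leq T}1/p)$, multiplying these into the exponential recovers exactly the same exponent, and from there the evaluation via the prime ideal theorem and Corollary~\ref{cor:log-asymp} is identical. The two arguments are therefore equivalent in substance; the paper's is marginally shorter because \eqref{eq:sq} is already on hand, while yours is somewhat more self-contained in making the Brun--Titchmarsh structure explicit. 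Your observation that $p\mid q\iff p<w(T)$, hinging on $v_p(A)<v_p(W)$ for $A\in\cA_i$, is the key point licensing the application of Shiu, and you handled it correctly.
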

\begin{proof}
Let $g = \mu * h_T$ and $h_T = r_{i,\mathrm{res}}(m/m')$ with 
$m' = \prod_{p<w(T)}p^{v_p(m)}$, as before. 
We may assume that $T$ is large enough to guarantee that $g$ is
non-negative, with $g \in \cM'(2^{n_i})$.
Then it follows from \eqref{eq:sq} that
\begin{align*}
\frac{\EE_{m \leq (T-A)/W} r_{i,\mathrm{res}}(Wm + A)}
 {r_{i,\mathrm{res}}(\gcd(A,W))}
\ll \sum_{d \leq T} \frac{g(d)}{d}
\leq
\prod_{p\leq T}\left(1+\sum_{k\geq 1}\frac{g(p^k)}{p^k}\right)
\ll  
\exp \left(\sum_{p\leq T} \frac{g(p)}{p} \right).
\end{align*}
Since $r_{i,\mathrm{res}}(p)=r_{K_i}(p) + \1_{\cP_2^{(i)}}(p)$ and 
$g(p)=0$ for $p\leq w(T)$, the sum in the argument of the exponential
function is equal to
$$
\sum_{p \leq T} \frac{g(p)}{p}
= \sum_{p \leq T} \frac{r_{K_i}(p)-1}{p}
+ \sum_{p \leq T} \frac{\1_{\cP_2^{(i)}}(p)}{p}
+ \sum_{\substack{p < w(T) \\ p \in \cP_0^{(i)} \cup \cP_1^{(i)}}}
\left(\frac{1}{p} - \frac{r_{K_i}(p)}{p}\right).
$$
The prime ideal theorem \cite[Satz~192]{landau_alg} implies that the
first sum is 
$$
\sum_{p \leq T} \frac{r_{K_i}(p)-1}{p}
= \log\log T - \log\log T + O(1) = O(1).
$$
Corollary \ref{cor:log-asymp} shows that the second sum satisfies
$$
\exp\left(\sum_{p \leq T} \frac{\1_{\cP_2^{(i)}}(p)}{p}\right) 
\asymp (\log T)^{1-\delta_i}.
$$
For the final sum we obtain
\begin{align*}
\exp 
\Bigg(
\sum_{\substack{p < w(T) \\ p \in \cP_0^{(i)} \cup \cP_1^{(i)}}}
\left(\frac{1}{p} - \frac{r_{K_i}(p)}{p}\right)
\Bigg)
\asymp  
\prod_{\substack{p<w(T)\\ p \in \cP_0^{(i)} \cup \cP_1^{(i)}}} 
 \Big(1-\frac{1}{p}\Big)^{-1}
 \prod_{\substack{\fp|p\\ \fp\subset \fo_{K_i}}}
\Big(1-\frac{1}{\n \fp}\Big)
\end{align*}
by combining the approximation $\log(1 \pm m^{-1})= \pm m^{-1}+O(m^{-2})$,
valid for integers $m>2$, with the identity 
$r_{K_i}(p)=\#\{ \fp\mid (p): \fp\subset \fo_{K_i},~\n \fp =p \}$. 
This completes the proof.
\end{proof}

\subsection{Sieve majorant} \label{s:sieve}
In this section we drop the index $i$ and work with a typical number field
$K$ of degree $n$ over $\QQ$.
Our next objective is to construct a majorant function of the correct
average order for the characteristic function $\1_{\<\cP_0 \cup \cP_1\>}$
in any of the arithmetic progressions
$\{m \equiv A \bmod W\}$, for $A \in \cA$.

Let
$\chi: \RR \to \RR_{\geq 0}$ be a smooth even function with 
$\supp \chi \subset [-1,1]$ and $\chi(x)=1$ for 
$x \in [-1/2,1/2]$.
As before we let $\gamma>0$, to be viewed as a small fixed
constant.
In analogy to the construction from \cite[App.~D]{GT}, which itself
builds on work of Goldston and Y{\i}ld{\i}r{\i}m 
\cite{GY2,GPY}, we consider the functions
$\nu_{\mathrm{sieve}}^{(T)}, {\nu'}_{\mathrm{sieve}}^{(T)}
:\{1,\dots,T\} \to \RR_{\geq0}$, defined via
\begin{align} \label{eq:sieve}
 \nu_{\mathrm{sieve}}^{(T)}(m)= 
 \left( \sum_{\substack{d \in \<\bar\cP_2\> \\ d|m}} \mu(d) 
  \chi\Big(\frac{\log d}{\log T^{\gamma}}\Big) 
  \right)^2
\end{align}
and
\begin{align} \label{eq:sieve'}
{\nu'}_{\mathrm{sieve}}^{(T)}(m)=
\sum_{\substack{q\in \<\bar\cP_2\> \\
 v_p(q) \not= 1 ~\forall p}} \1_{q|m}
 \tau(q)^n \chi\Big(\frac{\log q}{\log T^{\gamma}}\Big)
 \nu_{\mathrm{sieve}}^{(T)}\left(\frac{m}{q}\right).
\end{align}
Both of these functions are non-negative. 
Moreover we note that for $m\in \<\mathcal{P}_0\cup\mathcal{P}_1\>$ we
have $\nu_{\mathrm{sieve}}^{(T)}(m) = 1$.
Hence $\nu_{\mathrm{sieve}}^{(T)}$ majorises $\1_{\<\cP_0 \cup \cP_1\>}$.
The main goal of this section is to establish  the following lemma.
\begin{lemma}\label{lem:sieve-upperbound}
For every $A \in \cA$ we have
\begin{align*}
\frac{W}{T}
 \sum_{m\leq (T-A)/W} {\nu'}_{\mathrm{sieve}}^{(T)} (W m+A)
\ll (\log T)^{\delta-1}
\prod_{\substack{p \in \cP_2 \\ p \leq w(T)}}
\Big(1+\frac{1}{p}\Big),
\end{align*}
where $\delta$ is the Dirichlet density of $\mathcal{P}_1$.
\end{lemma}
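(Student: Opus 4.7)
The plan is to expand ${\nu'}_{\mathrm{sieve}}^{(T)}$ as a triple divisor sum, reduce the inner sum over $m$ using coprimality with $W$, and then evaluate the resulting Selberg-sieve-type double sum via a Goldston--Y{\i}ld{\i}r{\i}m contour argument driven by Lemma~\ref{lem:F(s)}. Expanding the square in \eqref{eq:sieve} yields
$$
{\nu'}_{\mathrm{sieve}}^{(T)}(m)
= \sum_{q, d_1, d_2 \in \<\bar\cP_2\>}
  \tau(q)^n \mu(d_1)\mu(d_2)\, \chi_q \chi_{d_1}\chi_{d_2}\, \1_{q[d_1,d_2]\mid m},
$$
with $v_p(q)\neq 1$ for all primes $p$ and $\chi_x := \chi(\log x/\log T^\gamma)$. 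Every prime factor of $q[d_1,d_2]$ exceeds $w(T)$, hence $\gcd(q[d_1,d_2], W)=1$, so the congruence $q[d_1,d_2]\mid Wm+A$ picks out a single residue class modulo $q[d_1,d_2]$. Summing over $m\leq (T-A)/W$ gives $T/(Wq[d_1,d_2]) + O(1)$ solutions; the aggregate $O(1)$-error is bounded via \eqref{eq:r-upper} by $\ll T^{3\gamma+o(1)} = o(T/W)$ provided $\gamma$ is small enough.

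The remaining main term factorises as $(T/W)\cdot S_q\cdot S_{d_1,d_2}$, where
$$
S_q = \sum_{\substack{q\in \<\bar\cP_2\>\\ v_p(q)\neq 1}} \frac{\tau(q)^n \chi_q}{q}, \qquad
S_{d_1,d_2} = \sum_{d_1,d_2\in \<\bar\cP_2\>} \frac{\mu(d_1)\mu(d_2)\chi_{d_1}\chi_{d_2}}{[d_1,d_2]}.
$$
The cube-full constraint on $q$ forces each local factor of $S_q$ to equal $1 + O(p^{-2})$, giving $S_q \ll \prod_{p > w(T)}(1 + O(p^{-2})) = O(1)$. The heart of the argument is $S_{d_1,d_2}$, which I would handle by Mellin/Fourier inversion for $\chi_d$ to represent it as a contour integral, weighted by a Schwartz factor, of the two-variable Euler product
$$
G(s_1,s_2) = \prod_{p\in \bar\cP_2} \bigl(1 - p^{-1-s_1} - p^{-1-s_2} + p^{-1-s_1-s_2}\bigr).
$$
A local comparison of Euler factors delivers the factorisation $G(s_1,s_2) = \bar F(1+s_1+s_2)\bar F(1+s_1)^{-1}\bar F(1+s_2)^{-1}\, H(s_1,s_2)$, where $\bar F(s) = \prod_{p\in \bar\cP_2}(1-p^{-s})^{-1}$ and $H$ is absolutely convergent and bounded near the origin.

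Since $\bar F$ differs from $F$ of \eqref{eq:F(s)} only by a finite product over $p\in\cP_2$ with $p\leq w(T)$, Lemma~\ref{lem:F(s)} shows that $\bar F(1+s)^{-1}$ vanishes to order $1-\delta$ at $s=0$. Shifting the contours of integration past the origin therefore extracts the dominant contribution
$$
S_{d_1,d_2} \ll (\log T^\gamma)^{-(1-\delta)} \prod_{\substack{p\in\cP_2\\ p\leq w(T)}}(1-1/p)^{-1} \ll (\log T)^{\delta-1} \prod_{\substack{p\in\cP_2\\ p\leq w(T)}}(1+1/p),
$$
using $(1-1/p)^{-1}\asymp(1+1/p)$ up to the convergent product $\prod_p(1-1/p^2)^{-1}=\zeta(2)$. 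Combined with $S_q=O(1)$ and dividing through by $T/W$, this yields the bound of the lemma. The principal obstacle is the Goldston--Y{\i}ld{\i}r{\i}m contour analysis of $S_{d_1,d_2}$: one must carefully track the Euler factors at small primes $p\leq w(T)$ in the passage between $\bar F$ and $F$, so that Lemma~\ref{lem:F(s)} delivers exactly the $(\log T)^{\delta-1}$ saving with the correct small-prime factor $\prod_{p\in\cP_2,\,p\leq w(T)}(1+1/p)$.
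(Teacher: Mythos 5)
Your proposal follows essentially the same route as the paper: expand $\nu'_{\mathrm{sieve}}$ into a divisor sum, reduce the sum over $m$ to $T/(Wq[d_1,d_2])+O(1)$ using coprimality with $W$, bound the $q$-sum trivially by a convergent Euler product, and then handle the Selberg-type double sum over $d_1,d_2$ by the Goldston--Y\i ld\i r\i m/Green--Tao smooth-cutoff Mellin argument, factoring the resulting Euler product in terms of $F$ and the finite product $\tilde F$ and invoking Lemmas~\ref{lem:F(s)} and~\ref{lem:E-asymp} to extract the $(\log T)^{\delta-1}\prod_{p\in\cP_2,p\leq w(T)}(1-1/p)^{-1}$ behaviour. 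The only cosmetic differences are that you factor the triple sum directly instead of first isolating the single-sum estimate \eqref{eq:rain} for $\nu_{\mathrm{sieve}}$ and then summing over $q$, and your phrase ``shifting the contours past the origin'' is a slight misnomer --- the paper keeps the contour fixed at $\Re(s)=1/\log T^\gamma$ and simply bounds the integrand there using the rapid decay of $\vartheta$.
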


The key element used for both the proof of 
Lemma \ref{lem:sieve-upperbound} and for asymptotically evaluating linear
correlations of $\nu'_{\mathrm{sieve}}$ in the next section is the
observation, due to Green and Tao \cite[App.~D]{GT}, that one can turn the
smooth cut-off $\chi$ in \eqref{eq:sieve} into multiplicative functions as
follows.
Let $\vartheta$ be the transform of $\chi$ that is defined via
$$e^x \chi(x) = \int_{\RR} \vartheta(\xi)e^{-ix\xi} \d\xi.$$
Recall that $\chi$ has compact support and is smooth. 
Fourier inversion and partial integration therefore yield the bound
\begin{equation}\label{eq:theta-bound}
\vartheta(\xi) \ll_E (1 + |\xi|)^{-E} 
\end{equation}
for any  $E > 0$.
Following \cite[App.~D]{GT}, we make use of this rapid decay to truncate
the integral representation of $\chi$ which will enable us to swap
integrations and summations later on.
Let $I=\{\xi \in \RR : |\xi|\leq \sqrt{\log T^{\gamma}}\}$, then for any 
 $m\in \NN$ we have
\begin{equation}
\begin{split}\label{eq:chi-integral}
\chi\left(\frac{\log m}{\log T^{\gamma}}\right) 
&= \int_{\RR}
   {m}^{-\frac{1+i\xi}{\log T^{\gamma}}} \vartheta(\xi)~\d\xi \\
&= \int_{I} m^{-\frac{1+i\xi}{\log T^{\gamma}}} \vartheta(\xi)~\d\xi
 + O_{E}\left(\frac{m^{-1/\log T^{\gamma}}}{ (\log T^{\gamma})^{E}}\right).
\end{split}
\end{equation}

\begin{proof}[Proof of Lemma \ref{lem:sieve-upperbound}]
We begin by estimating, for any parameter $T'\leq T$ and $0\leq A' < W$, 
the sum
$$
S(T') = \sum_{m\leq T'/W} \nu_{\mathrm{sieve}}^{(T)} (W m+A').
$$
We will show that 
\begin{equation}\label{eq:rain}
S(T')
\ll_E \frac{T'}{W} (\log T)^{\delta - 1} 
 \prod_{\substack{p\in \mathcal{P}_2\\ p<w(T)}} 
 \left(1 - \frac{1}{p}\right)^{-1} 
+ \frac{T'}{W (\log T)^{E}} 
+ T^{2\gamma},
\end{equation}
for any $E>0$.
In order to bound the average order of 
${\nu'}_{\mathrm{sieve}}^{(T)}(Wm+A)$ we apply this estimate with 
$T'= T/q-A'$, for $q\leq T^{\gamma}$ and for $A'$ with $qA' \equiv A 
\bmod{W}$. 
Since $W \ll T^{o(1)}$, the first term in the bound dominates and
we obtain
\begin{align*}
\Bigg(
(\log T)^{1-\delta} 
\prod_{\substack{p\in \mathcal{P}_2\\ p<w(T)}} 
\left(1 - \frac{1}{p}\right)
\Bigg)
\sum_{m\leq (T-A)/W} {\nu'}_{\mathrm{sieve}}^{(T)} (W m+A)
&\ll \frac{T}{W} 
\sum_{\substack{q \in \<\cP_2\> \\ v_p(q)\not=1 ~\forall p}}  
\frac{\tau(q)^n}{q}  \\
&\ll \frac{T}{W}.
\end{align*}
This shows that the lemma will follow if we can establish \eqref{eq:rain}.

Let $d \in \<\bar\cP_2\>$.
We first note that 
$$
\#\left\{m\leq T'/W: W m+A'\equiv 0\bmod{d}\right\}=
\frac{T'}{Wd}+O(1),
$$
since $\gcd(d,W)=1$.
Hence 
\begin{align*}
S(T')
&= 
 \sum_{m\leq T'/W} \sum_{\substack{d,d' \in \<\bar\cP_2\> \\ [d,d']|W m+A'}}
 \mu(d) \mu(d') 
 \chi\Big(\frac{\log d}{\log T^{\gamma}}\Big)
 \chi\Big(\frac{\log d'}{\log T^{\gamma}}\Big) \\
&= 
 \sum_{\substack{d,d' \in \<\bar\cP_2\> }}
 \mu(d) \mu(d') 
 \chi\Big(\frac{\log d}{\log T^{\gamma}}\Big)
 \chi\Big(\frac{\log d'}{\log T^{\gamma}}\Big) 
  \left(\frac{T'}{W[d,d']}+O(1)\right).
\end{align*}
The overall contribution from the error term is $O(T^{2\gamma})$.
Applying \eqref{eq:chi-integral}, we obtain
\begin{align*}
S(T')=~& \frac{T'}{W}
 \sum_{\substack{d,d' \in \<\bar\cP_2\> }}
 \frac{ \mu(d) \mu(d')}{[d,d']}
 \int_{I} \int_{I} 
 d^{-\frac{1+i\xi}{\log T^{\gamma}}}
 d'^{-\frac{1+i\xi'}{\log T^{\gamma}}}
 \vartheta(\xi) \vartheta(\xi') ~\d\xi~\d\xi' \\
& 
 + O_{E}\Big(\frac{T' }{W (\log T^{\gamma})^{E}} 
   \sum_{d,d'} 
   \frac{(dd')^{-1/\log T^{\gamma}}}{[d,d']} 
   \Big) + O(T^{2\gamma}),
\end{align*}
for any $E>0$.
Let us denote the main term, temporarily, by $M(T')$.
The first of the error terms may be bounded by noting that
\begin{align*}
\frac{T' }{W (\log T^{\gamma})^{E}} 
   \sum_{d,d'} 
   \frac{(dd')^{-1/\log T^{\gamma}}}{[d,d']} 
&\leq
\frac{T' }{W (\log T^{\gamma})^{E}} 
   \sum_{d,d',d''} (dd'd'')^{-1-1/\log T^{\gamma}}\\
&\leq 
\frac{T' }{W (\log T^{\gamma})^{E-3}}.
\end{align*}
Thus both error terms are satisfactory for \eqref{eq:rain}, 
on redefining $E$.

It remains to estimate the main term $M(T')$. On interchanging the sum
over $d,d'$ with the double integral and taking the Euler product, we obtain
\begin{align*}
|M(T')|&\leq
\frac{T'}{W} \left|\int_{I} \int_{I} 
 \prod_{p \in \bar\cP_2} 
 \Big(1 
 - p^{-1-\frac{1+i\xi}{\log T^{\gamma}}}
 - p^{-1-\frac{1+i\xi'}{\log T^{\gamma}}}
 + p^{-1-\frac{1+i\xi + 1 + i\xi'}{\log T^{\gamma}}}\Big)
 \vartheta(\xi) \vartheta(\xi') ~\d\xi~\d\xi' \right|\\
& \ll \frac{T'}{W} \int_{I} \int_{I} 
\left| \Pi(\xi,\xi')
 \vartheta(\xi) \vartheta(\xi')\right| ~\d\xi~\d\xi',
\end{align*}
where
$$
\Pi(\xi,\xi')=
 \prod_{p \in \bar\cP_2} 
 \Big(1- p^{-1-\frac{1+i\xi}{\log T^{\gamma}}} \Big)
 \Big(1- p^{-1-\frac{1+i\xi'}{\log T^{\gamma}}} \Big)
 \Big(1 + p^{-1-\frac{1+i\xi + 1 + i\xi'}{\log T^{\gamma}}}\Big).
$$
We denote the final integral by
$$
\mathcal I=\int_{I} \int_{I} 
\left| \Pi(\xi,\xi')
 \vartheta(\xi) \vartheta(\xi')\right| ~\d\xi~\d\xi'.
$$
Our aim is to estimate $\mathcal I$ by bounding the
product $\Pi(\xi,\xi')$ from above.

The product $\Pi(\xi,\xi')$ is intimately related to the Euler product 
$$
F(s)
=\prod_{\substack{p\in \cP_2}} 
\Big(1 - \frac{1}{p^{s}}\Big)^{-1}, \quad (\Re(s)>1) 
$$
that we met in \eqref{eq:F(s)}. 
From Lemma \ref{lem:F(s)} we deduce that 
there is a function $G(s)$, which is holomorphic and non-zero in the
closed half-plane $\Re(s)\geq 1$, such that 
$$
F(1+s) 
= G(1+s)\left(\frac{1}{s} + O(1)\right)^{1-\delta} 
= G(1+s)\frac{1}{s^{1-\delta}}(1 + O(|s|))^{1-\delta} 
$$
for $\Re (s) > 0$. 
The primes in $\Pi(\xi,\xi')$ only run over $\bar\cP_2$.
Thus, since we may freely disregard finitely many primes, it suffices to
ally our knowledge of $F(s)$ with an investigation of 
$$
\tilde F(s)
=\prod_{\substack{p\in \cP_2 \\ C<p<w(T)}} 
 \Big(1 - \frac{1}{p^{s}}\Big)^{-1}
$$
near $s=1$, for a suitable absolute constant $C=O(1)$. 
Lemma \ref{lem:E-asymp} applies to $\tilde F(s)$ with
$h=\1_{\<\mathcal{P}_2\>}$ and $H=1$.
The primes in $\tilde F$ run up to $x=w(T)=\log\log T$ and we are
interested in $s_0=s$ satisfying
$\Re (s) = (\log T^{\gamma})^{-1}$ or
$\Re (s) = 2(\log T^{\gamma})^{-1}$
and 
$|s| \leq 3 (\log T^{\gamma})^{-1/2}
 = 3 \gamma^{-1/2} e^{-w(T)/2}$. 
Thus the conditions of Lemma \ref{lem:E-asymp} are satisfied and we obtain
\begin{equation*}
|\tilde F(1+s) |
\asymp \tilde F(1) 
\asymp \prod_{\substack{p\in \mathcal{P}_2\\ p<w(T)}}
\left(1 - \frac{1}{p}\right)^{-1}.
\end{equation*}
Thus, invoking \eqref{eq:theta-bound}, we obtain
\begin{align*}
 \mathcal I 
&\ll \int_{I} \int_{I}
 \Big|\frac{1+i\xi}{\log T^{\gamma}}\Big|^{1 - \delta}
 \Big|\frac{1+i\xi'}{\log T^{\gamma}}\Big|^{1 - \delta}
 \Big|\frac{1+i\xi + 1 + i\xi'}{\log T^{\gamma}}\Big|^{\delta - 1} \\
& \qquad 
 \times  \left|\tilde F\Big(1+\frac{1+i\xi}{\log T^{\gamma}}\Big)
 \tilde F\Big(1+\frac{1+i\xi'}{\log T^{\gamma}}\Big)
 \tilde F\Big(1+\frac{2+i(\xi+\xi')}{\log T^{\gamma}}\Big)^{-1}
 \theta(\xi)\theta(\xi')\right|~\d\xi~\d\xi' \\
&\ll (\log T)^{\delta - 1} \tilde F(1)
 \int_{I} \int_{I}
 (1+|\xi|)^{-2} (1+|\xi'|)^{-2} ~\d\xi~\d\xi' \\
&\ll (\log T)^{\delta - 1} 
\prod_{\substack{p\in \mathcal{P}_2\\ p<w(T)}} 
\left(1 - \frac{1}{p}\right)^{-1}.
\end{align*}
This concludes the proof of \eqref{eq:rain} and so completes the proof of
the lemma. 
\end{proof}

\subsection{Conclusion}
Let $1\leq i\leq r$.
We are finally in a position to reveal the majorant for the 
representation function $R_i':\{-T,\dots,T\} \to \RR_{\geq 0}$ in
\eqref{eq:Ri'}, where  $\mathcal{S}_{C_1,T}$ is the exceptional set from
Definition~\ref{def:ex}.

Let $A\in \mathcal{A}_i$ 
and let $W_0 = \prod_{p\mid D_{K_i}}p^{\alpha(p)}$.
Let 
$A'= \gcd(A,W)$ and
let $A_0=\gcd(A,W_0)$. Then 
$$
\gcd\left( \frac{A'}{A_0}, \frac{Wt+A}{A'}\right)=1,
$$
for any $t\in \ZZ$. Hence it follows from 
 Lemma \ref{lem:7.1'} 
 \begin{equation}\label{eq:lemma7.1'-implies}
R_i(Wt+A) 
\ll 
 \frac{\rho_i(W_0,A)}{W_0^{n_i-1}} 
 r_{K_i}\left(\frac{A'}{A_0}\right)
 r_{K_i}\left(\frac{Wt+A}{A'}\right),
\end{equation}
when $Wt+A>0$.
Put $m=Wt+A$
and assume that $m\not \in  \mathcal{S}_{C_1,T}$.
Then $A'=m'$, in the notation of \eqref{eq:m'}.
Combining the majorants \eqref{eq:nu_i} and \eqref{eq:sieve'} according
to \eqref{eq:r-model-bound'}, we obtain 
\begin{equation}\begin{split} \label{eq:r_K-majorant}
r_{K_i}\left(\frac{m}{m'}\right) 
&\leq
 r_{i,\mathrm{res}} \left(\frac{m}{m'}\right)
\sum_{\substack{q \in \<\bar\cP_2^{(i)}\> \\ 
   v_p(q)\not=1 ~\forall p }} 
 \1_{q|m}
 \tau(q)^{n_i}
 \1_{\<\cP_0^{(i)} \cup \cP_1^{(i)}\>}
 \left(\frac{m}{qm'}\right)
\\
& \ll 
 \nu_i^{(T)}(m)
 \sum_{\substack{q \in \<\bar\cP_2^{(i)}\> \\ 
 v_p(q)\not=1 ~\forall p  }} \1_{q\mid m}
 \tau(q)^{n_i}
 \chi\Big(\frac{\log q}{\log T^{\gamma}}\Big) 
 \nu_{i,\mathrm{sieve}}^{(T)}
 \left(\frac{m}{q}\right) \\
&=
 \nu_i^{(T)}(m) 
 {\nu'}^{(T)}_{i,\mathrm{sieve}}(m).
\end{split}
\end{equation}
Here we have 
noted that 
$
\nu_{i,\mathrm{sieve}}^{(T)}(m/(qm'))=  \nu_{i,\mathrm{sieve}}^{(T)}(m/q) 
$
and 
truncated the $q$ summation using $\chi$.
To see that the latter  is valid, suppose that $q\mid m$ with $q> T^{\gamma/2}$
and recall that $\chi(x)=1$ for $x\in[-1/2,1/2]$. 
If there is a prime divisor 
$p> T^{1/(\log \log T)^3}$ of $q$, then there exists a divisor $p^2\mid m$
with $p^2>(\log T)^{C_1}$, since $q$ is square-full, which implies that $m$
is rough in the sense of part (1) of Lemma \ref{lem:erdos}. 
If, on the other hand,  $p\leq  T^{1/(\log \log T)^3}$ for every 
$p\mid q$, then $m$ is smooth in the sense of part (2) of the lemma. 
Neither case can occur since $m\not\in\mathcal{S}_{C_1,T}$.

Our final task is to check condition (ii) from the start of Section
\ref{s:gen-maj}, which states that the mean value of our majorant
should agree with the mean value of  $R_i'$, with respect to $T$.

\begin{lemma}\label{lem:majorant-average}
Let
 $1\leq i \leq r$ and define $\mathcal{A}_i$ as in \eqref{eq:def-A}.
Suppose $1\leq A <W$ with $A \in \mathcal{A}_i$ and write
$A'=\gcd(A,W)$.
Then
$$
\frac{\rho_i(W_0,A)}{W_0^{n_i-1}} 
r_{K_i}\left(\frac{A'}{A_0}\right)
\EE_{m \leq (T-A)/W} 
\nu_i^{(T)}(Wm + A) 
{\nu'}^{(T)}_{i,\mathrm{\rm sieve}}(Wm+A)
\asymp 
\frac{\rho_i(W,A;M)}{W^{n_i-1}},
$$
provided the parameter $\gamma$ appearing in $\nu_i^{(T)}$ and 
${\nu'}^{(T)}_{i,\mathrm{\rm sieve}}$ is sufficiently small.
\end{lemma}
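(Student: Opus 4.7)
The plan is to establish both directions of the asymptotic separately.

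For the lower bound $\rho_i(W,A;M)/W^{n_i-1}\ll\mathrm{LHS}$, the key is to combine the pointwise majorisation with \eqref{eq:sunny}. The condition $v_p(A)<\alpha(p)$ built into \eqref{eq:def-A} forces $v_p(Wm+A)=v_p(A)$ for all $p<w(T)$, so the quantity $m'$ from \eqref{eq:m'} coincides with $A'$. Combining \eqref{eq:lemma7.1'-implies} with \eqref{eq:r_K-majorant} then gives the pointwise estimate
$$
R'_i(Wm+A)\ll\frac{\rho_i(W_0,A)}{W_0^{n_i-1}}\,r_{K_i}(A'/A_0)\,\nu_i^{(T)}(Wm+A)\,{\nu'}^{(T)}_{i,\mathrm{sieve}}(Wm+A)
$$
for $Wm+A\notin\mathcal{S}_{C_1,T}$, and averaging together with \eqref{eq:sunny} closes this direction.

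The upper bound $\mathrm{LHS}\ll\rho_i(W,A;M)/W^{n_i-1}$ is the substantive direction. By Remark \ref{rem:coprime} and the construction of ${\nu'}^{(T)}_{i,\mathrm{sieve}}$, both majorants are truncated divisor sums $\nu_i^{(T)}(m)=\sum_D\Lambda_D\1_{D\mid m}$ and ${\nu'}^{(T)}_{i,\mathrm{sieve}}(m)=\sum_E\Xi_E\1_{E\mid m}$, supported on $D\in\<\bar\cP_1^{(i)}\>$ and $E\in\<\bar\cP_2^{(i)}\>$ respectively, of size at most $T^{O(\gamma)}$. Since these prime sets are disjoint and lie above $w(T)$, every pair $(D,E)$ satisfies $\gcd(DE,W)=1$, and the elementary count
$$
\#\{m\leq(T-A)/W:DE\mid Wm+A\}=\frac{T/W}{DE}+O(1)
$$
factorises the joint average as a product of the two individual averages, with an error of $O(T^{2\gamma}W/T)$ that is acceptable for small $\gamma$. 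Applying Lemmas \ref{lem:nu_i-average} and \ref{lem:sieve-upperbound} to each factor (the $(\log T)^{1-\delta_i}$ and $(\log T)^{\delta_i-1}$ prefactors cancel) bounds the joint average by
$$
\prod_{\substack{p\leq w(T)\\p\in\cP_0^{(i)}\cup\cP_1^{(i)}}}\!\!\!(1-1/p)^{-1}\prod_{\fp\mid p}(1-1/\n\fp)\;\cdot\!\!\prod_{\substack{p\leq w(T)\\p\in\cP_2^{(i)}}}(1+1/p).
$$

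It remains to verify that this bound, multiplied by $\rho_i(W_0,A)/W_0^{n_i-1}\cdot r_{K_i}(A'/A_0)$, is $\ll\rho_i(W,A;M)/W^{n_i-1}$. Applying CRT to the latter and absorbing the contributions of the finitely many primes dividing $n_iD_{K_i}M$ into the implied constants, the task reduces to a prime-by-prime comparison at each $p\leq w(T)$ with $p\nmid n_iD_{K_i}M$. Here Lemma \ref{lem:A>0} (second part), whose hypothesis $v_p(A)<\alpha(p)$ is guaranteed by $A\in\cA_i$, yields
$$
\frac{\rho_i(p^{\alpha(p)},A)}{p^{\alpha(p)(n_i-1)}}=r_{K_i}(p^{v_p(A)})(1-1/p)^{-1}\prod_{\fp\mid p}(1-1/\n\fp),
$$
which agrees exactly with the LHS local factor when $p\in\cP_1^{(i)}$; for $p\in\cP_2^{(i)}$, every $\fp\mid p$ satisfies $\n\fp\geq p^2$, so $(1+1/p)$ and $(1-1/p)^{-1}\prod_{\fp\mid p}(1-1/\n\fp)$ differ by a factor $1+O(1/p^2)$, whose product over all such primes converges absolutely. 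The main obstacle is the careful local bookkeeping in this final step: one must confirm that the factorisation error in the upper bound direction remains genuinely negligible across residue classes where $\rho_i(W,A;M)/W^{n_i-1}$ may be small, and match the local factors uniformly even when $r_{K_i}(p^{v_p(A)})$ vanishes (in which case both sides vanish, as can be verified from Lemma \ref{lem:ff} and the positivity hypothesis in \eqref{eq:def-A}).
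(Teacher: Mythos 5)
Your proposal is correct and follows essentially the same route as the paper: both directions rest on the same ingredients, namely the pointwise majorisation combined with \eqref{eq:sunny} for the lower bound, and the factorisation of the joint average into the product of two averages (using the coprimality of the supports in $\<\bar\cP_1^{(i)}\>$ and $\<\bar\cP_2^{(i)}\>$) followed by Lemmas \ref{lem:nu_i-average} and \ref{lem:sieve-upperbound} for the upper bound. The only difference is cosmetic: the paper applies \eqref{eq:sunny'} and \eqref{eq:cloudy} up front to reduce the task to the single estimate $S(T-A)\asymp \prod_{p<w(T)}(1-1/p)^{-1}\prod_{\fp\mid p}(1-1/\n\fp)$, whereas you carry the factors $\rho_i(W_0,A)W_0^{1-n_i}r_{K_i}(A'/A_0)$ through both directions and invoke the prime-by-prime identity from Lemma \ref{lem:A>0} at the end; the two formulations are equivalent.
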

\begin{proof}
To begin with, recall that $\rho_i(W,A;M)>0$ and that $v_p(A) < v_p(M) \ll 1$ for 
all $p\mid M$ by our assumption \eqref{eq:def-A}.
Thus, for $p\mid M$, the first part of Lemma \ref{lem:A>0} yields
$$
0< \frac{\rho_i(p^{\alpha(p)},A;p^{v_p(M)})}{p^{\alpha(p)(n_i-1)}} 
\leq \frac{\rho_i(p^{\alpha(p)},A)}{p^{\alpha(p)(n_i-1)}}
\ll 1.
$$
Since the above is positive, we may use Lemma \ref{lem:C6.4} 
to deduce a matching 
lower bound  as in \eqref{eq:dove}.
Thus, the multiplicativity of $\rho_i$ implies that 
\begin{equation}\label{eq:sunny'}
\frac{\rho_i(W,A)}{W^{n_i-1}}\ll 
\frac{\rho_i(W,A;M)}{W^{n_i-1}}
\leq  \frac{\rho_i(W,A)}{W^{n_i-1}}.
\end{equation}
Next we note that 
$A'/A_0=\gcd(W/W_0,A)$.
Hence  the second part of Lemma \ref{lem:A>0} yields
\begin{equation}\label{eq:cloudy}
\frac{\rho_i(W,A)}{W^{n_i-1}}
= 
 \frac{\rho_i(W_0,A)}{W_0^{n_i-1}} 
 r_{K_i}\left(\frac{A'}{A_0}\right)
 \prod_{\substack{p<w(T)\\ p\nmid D_{K_i}}} 
 \Big(1-\frac{1}{p}\Big)^{-1}
 \prod_{\substack{\fp|p\\ \fp\subset \fo_{K_i}}}
 \Big(1-\frac{1}{\n \fp}\Big).
\end{equation}
This  reduces our task to establishing, for sufficiently small $\gamma$, the 
estimate
$$
\EE_{m \leq (T-A)/W} 
 \nu_i^{(T)}(Wm + A) 
 {\nu'}^{(T)}_{i,\mathrm{sieve}}(Wm+ A)
\asymp  
 \prod_{\substack{p<w(T)}} 
 \Big(1-\frac{1}{p}\Big)^{-1}
 \prod_{\substack{\fp|p\\ \fp\subset \fo_{K_i}}}
 \Big(1-\frac{1}{\n \fp}\Big).
$$

We temporarily set
$$
S(T')=
\EE_{m \leq T'/W}
 \nu_i^{(T)}(Wm + A)
 {\nu'}^{(T)}_{i,\mathrm{sieve}}(Wm+ A),
$$
for any $T'\leq T$.
The two factors of the majorant are truncated divisor sums. 
According to the discussion following \eqref{eq:nu_i}, the first function
$\nu_i^{(T)}$ is constructed from divisors in $\<\bar\cP_1^{(i)}\>$
provided $T$ is sufficiently large, 
whereas the function ${\nu'}^{(T)}_{i,\mathrm{sieve}}$ is constructed from
divisors belonging to $\<\bar\cP_2^{(i)}\>$.  
In particular the divisors used in the construction of the former are all
coprime to the divisors appearing  in the latter. 
We therefore deduce  (cf.~\cite[p.~262]{lm1}) that
\begin{align*}
S(T')=~& \EE_{m \leq T'/W} \nu_i^{(T)}(Wm + A)\\ &\times
\EE_{m \leq T'/W} {\nu'}^{(T)}_{i,\mathrm{sieve}}(Wm + A)
+O(T^{O(\gamma)}W/T'),
\end{align*}
which provides an asymptotic formula whenever $T^{O(\gamma)}W = o(T')$.
Combining Lemmas \ref{lem:nu_i-average} and
\ref{lem:sieve-upperbound}, this allows us to deduce the upper bound
\begin{align*}
S(T-A)
&\ll
 \prod_{p<w(T)} 
 \Big(1-\frac{1}{p}\Big)^{-1}
 \prod_{\substack{\fp|p\\ \fp\subset \fo_{K_i}}}
 \Big(1-\frac{1}{\n \fp}\Big)
\end{align*}
if $\gamma$ is sufficiently small. 
To obtain the lower bound for $S(T-A)$, we combine 
\eqref{eq:lemma7.1'-implies}, \eqref{eq:r_K-majorant}, \eqref{eq:sunny} to get
\begin{align*}
\frac{\rho_i(W_0,A)}{W_0^{n_i-1}} 
r_{K_i}\left(\frac{A'}{A_0}\right)
S(T-A)
&\gg \EE_{m \leq (T-A)/W} R'_{i}(Wm+A)\\
&\gg 
\frac{\rho_i(W,A;M)}{W^{n_i-1}}.
\end{align*}
But then it follows that 
$$S(T-A)\gg  \prod_{p<w(T)} 
 \Big(1-\frac{1}{p}\Big)^{-1}
 \prod_{\substack{\fp|p\\ \fp\subset \fo_{K_i}}}
 \Big(1-\frac{1}{\n \fp}\Big),
 $$
by  \eqref{eq:sunny'}  and \eqref{eq:cloudy}.
\end{proof}

For every $i \in \{1,\dots,r\}$ let $A_i$ be such that $1\leq A_i< W$ and
$A_i\bmod{W}\in\cA_i$.  The proof of 
Lemma~\ref{lem:majorant-average} shows that there is a function
\begin{equation}\label{eq:clock}
\phi_i(T;A_i)
\asymp
\prod_{p<w(T)} 
 \Big(1-\frac{1}{p}\Big)
 \prod_{\substack{\fp|p\\ \fp\subset \fo_{K_i}}}
 \Big(1-\frac{1}{\n \fp}\Big)^{-1}
\end{equation}
such that
\begin{equation}\label{eq:C_AB-def} 
\phi_i(T;A_i)
\EE_{m < T/W} 
\nu_i^{(T)}(Wm + A_i) {\nu'}_{i, \mathrm{sieve}}^{(T)}(Wm + A_i) 
=1.
\end{equation}
We define the joint normalised majorant function 
\begin{equation}\label{eq:sim}
\varpi^{(T)}_{A_1,\dots,A_r}(m)= 
\frac{1}{r} \sum_{i=1}^r 
\phi_i(T;A_i)
\nu_i^{(T)}(Wm + A_i) {\nu'}^{(T)}_{i, \mathrm{sieve}}(Wm + A_i).
\end{equation}
We will often write
$$
\varpi^{(T)}(m)=\varpi^{(T)}_{A_1,\dots,A_r}(m),
$$
for short. It 
satisfies
$
\EE_{m < T/W} \varpi^{(T)}(m) = 1.
$
Moreover, $\varpi^{(T)}$ simultaneously majorises the
{\em normalised} counting functions
$$
\Big(\frac{\rho_i(W,A_i;M)}{W^{n_i-1}}\Big)^{-1}
R'_i(Wm+A_i), 
$$
for $1 \leq i \leq r$ and $R'_i$ as in \eqref{eq:Ri'},
in the sense of (i) from the start of Section \ref{s:gen-maj}.

\bigskip
\section{The majorant is pseudorandom}\label{s:linear-forms}
Let $A_i \in \cA_i$ for $1 \leq i \leq r$, in the notation of \eqref{eq:def-A},
and recall the definition \eqref{eq:sim} of 
$\varpi^{(T)}=\varpi_{A_1,\dots,A_r}^{(T)}$.
Given $D >1$, our aim in this section is to show that the family 
$(\varpi^{(T)})_{T \in \NN}$ gives rise to a family of $D$-pseudorandom
majorants, in the sense of \cite[\S 6]{GT} with $m_0=d_0=L_0=D$, provided
that the parameter $\gamma$  appearing in the truncations is sufficiently
small.  In our setting it suffices to consider $D \ll_L 1$, where $L$ is
as in \eqref{eq:def-L}.

For each $T$ let $\tilde T$ be a prime number such that 
$T/W < \tilde T \ll_{L} T/W$.
Choosing $\tilde T$ sufficiently large in terms of $L$ allows us to pass
from counting problems within the set of integers $\{1,\dots, [T/W]\}$ to
counting problems in the group $\ZZ/\tilde T\ZZ$, without creating new
solutions due to the wrap-around effect.
The majorants are extended to $\ZZ/\tilde T\ZZ$ by defining
${\varpi'}^{(T)}:\ZZ/\tilde T\ZZ \to \RR_{>0}$ via
$$
{\varpi'}^{(T)}(m)=
\begin{cases}
(1+\varpi^{(T)}(m))/2, & \text{if } m \leq T/W, \\
 1,                     & \text{if } T/W <m \leq \tilde T.
\end{cases}
$$
By \cite[App.~D]{GT} it suffices to prove the following two propositions
in order to show that $({\varpi'}^{(T)})_{T \in \NN}$ is a family of
$D$-pseudorandom majorants. As indicated above, we will apply them with
$D\ll_L 1$.

\begin{proposition}[$D$-Linear forms estimate]
\label{p:linear-forms}
Let $T \in \NN$, $T' = [\frac{T}{W}]$, and let $D>1$. 
Suppose that $1 \leq r',s' \leq D$ and let
$\mathbf{h}=(h_1,\dots,h_{r'}):\ZZ^{s'} \to \ZZ$
be a system of linear polynomials whose non-constant  parts are pairwise
non-proportional.
Suppose that coefficients of each $h_i$, other than possibly the constant terms, 
are bounded in absolute value by $D$, while $h_i(0)= O_D(T)$. 
Suppose $\mathfrak K \subset [-1,1]^{s'}$ is a convex body such
that $\mathbf{h}(T' \mathfrak{K}) \subset [1,T']^{r'}$ and 
$\vol(\mathfrak{K})\gg 1$.
Then we have 
\begin{align}\label{eq:linearforms-1} 
 \frac{1}{\vol(T'\mathfrak K)} \sum_{\m \in \ZZ^{s'} \cap T'\mathfrak K} 
 \prod_{j=1}^{r'} \varpi_{A_1,\dots,A_r}^{(T)}(h_j(\m))
= 1 
 + o_D(1),
\end{align}
provided $\gamma$ is small enough.
\end{proposition}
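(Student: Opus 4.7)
The proof follows the Goldston--Y{\i}ld{\i}r{\i}m/Green--Tao method for linear correlations of truncated divisor sums, as carried out in \cite[App.~D]{GT} and developed for number-theoretic majorants in \cite[\S\S11--12]{lm1}. Expanding the definition \eqref{eq:sim}, the left hand side of \eqref{eq:linearforms-1} becomes
$$
r^{-r'}\sum_{(i_1,\ldots,i_{r'})\in\{1,\ldots,r\}^{r'}}
\Bigl(\prod_{j=1}^{r'}\phi_{i_j}(T;A_{i_j})\Bigr)
\EE_{\m\in \ZZ^{s'}\cap T'\mathfrak{K}}
\prod_{j=1}^{r'}\nu_{i_j}^{(T)}(Wh_j(\m)+A_{i_j})\,
{\nu'}^{(T)}_{i_j,\mathrm{sieve}}(Wh_j(\m)+A_{i_j}),
$$
so it suffices to show that each tuple $(i_1,\ldots,i_{r'})$ contributes $1+o_D(1)$; averaging over tuples then gives the result. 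Before that, I would first discard those $\m$ for which some $Wh_j(\m)+A_{i_j}$ lies in the exceptional set $\mathcal{S}_{C_1,T}$ by means of Proposition \ref{p:unexceptional}, the bound \eqref{eq:exceptional}, and the divisor moment bound of Lemma \ref{lem:kth-moment} applied to a divisor-function upper bound on $\nu_i^{(T)}{\nu'}^{(T)}_{i,\mathrm{sieve}}$, choosing $C_1$ sufficiently large in terms of $D$.

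With a tuple fixed, I would expand the remaining truncated divisor sums inside $\nu_{i_j}^{(T)}$ and ${\nu'}^{(T)}_{i_j,\mathrm{sieve}}$ and use the Mellin-type representation \eqref{eq:chi-integral} (whose tail is controlled by \eqref{eq:theta-bound}) to replace each smooth cutoff by a contour integral that can be swapped with the $\m$-average. The resulting inner quantity
$$
\EE_{\m\in\ZZ^{s'}\cap T'\mathfrak{K}}\prod_{j=1}^{r'}\1_{d_j\mid Wh_j(\m)+A_{i_j}}
$$
factors by the Chinese remainder theorem into local densities at primes $p>w(T)$ (since $\gcd(d_j,W)=1$), with a boundary error controlled by the convexity of $\mathfrak{K}$ and the assumption $\vol(\mathfrak{K})\gg 1$. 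The pairwise non-proportionality of the linear parts of the $h_j$, combined with the bound $D$ on their coefficients, forces the local density at each prime $p$ larger than some $C=C(D)$ to equal the independent product $\prod_j p^{-\1_{p\mid d_j}}$; the finitely many small ``bad'' primes contribute a bounded Euler factor that is harmless once $\gamma$ is sufficiently small.

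After this step, the assembled Euler product splits, for each $i$, into its $\bar\cP_1^{(i)}$-factor arising from $\nu_i^{(T)}$ and its $\bar\cP_2^{(i)}$-factor arising from ${\nu'}^{(T)}_{i,\mathrm{sieve}}$, because those two prime sets are disjoint. I would evaluate both factors by shifting the contour $\Re s = 1+(\log T^\gamma)^{-1}$ and invoking Lemma \ref{lem:E-asymp} together with the analytic structure of $F(s)$ from Lemma \ref{lem:F(s)}; after integrating against $\vartheta$, the result reproduces exactly the factor $\phi_{i_j}(T;A_{i_j})^{-1}$ defined in \eqref{eq:clock}--\eqref{eq:C_AB-def}, and hence cancels the normalising prefactor in the displayed expansion. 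The hard part will be the bookkeeping of ``off-diagonal'' terms in the Euler-product step: primes $p$ dividing $d_j$ for several values of $j$ simultaneously, as well as primes dividing the cluster factors $u\in U(\lambda,\kappa)$ inside $\nu_i^{(T)}$, must all be shown to contribute at most $O((\log T)^{-1})$ relative to the diagonal main term, using the divisor-density bound \eqref{eq:jen} and the smallness of $\gamma$. Once this is controlled, the factorisation of the main term and its exact cancellation with $\prod_j\phi_{i_j}(T;A_{i_j})$ are straightforward.
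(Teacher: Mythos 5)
Your proposal is on the same Goldston--Y{\i}ld{\i}r{\i}m/Green--Tao track as the paper's proof: open all truncated divisor sums, insert the Mellin representation \eqref{eq:chi-integral}, interchange with the lattice-point average, factor into local divisor densities via the Chinese remainder theorem, and evaluate the resulting Euler products using Lemmas \ref{lem:E-asymp} and \ref{lem:F(s)}. Three points of difference are worth flagging. First, your preliminary step of excising $\m$ with some $Wh_j(\m)+A_{i_j}\in\mathcal{S}_{C_1,T}$ is both unnecessary and counterproductive: by \eqref{eq:nu_i} and \eqref{eq:sieve'}, $\nu_i^{(T)}$ and ${\nu'}^{(T)}_{i,\mathrm{sieve}}$ are already pure truncated divisor sums carrying no exceptional-set indicator (that pruning happened earlier, in Proposition~\ref{p:unexceptional'}, when passing from $R_i$ to $R_i'$), and removing such $\m$ would destroy the convexity of the summation domain that Lemma~\ref{lem:2} requires. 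Second, you fold the cluster divisors $u\in U(\lambda,\kappa)$ into a single ``off-diagonal bookkeeping'' step, but the paper handles them \emph{before} applying \eqref{eq:chi-integral}, replacing $\alpha_{\h'}(\Delta_1,\dots,\Delta_{r'})$ by $\alpha_{\h'}(\tilde\Delta_1,\dots,\tilde\Delta_{r'})/(u_1\cdots u_{r'})$ first; this ordering lets one exploit the convergence of the $\bka,\bla,U$-sums together with the fact that prime factors of $u_j$ are of size $T^{\Omega(1/(\log\log T)^3)}$, vastly larger than the lower bound $w(T)$ that governs primes in $d_j,q_j,e_j,e'_j$, which is what makes the error in the subsequent Mellin step controllable. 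Third, rather than computing $\phi_{i_j}(T;A_{i_j})^{-1}$ directly from the Euler products (which, as you partially anticipate, requires the $L^1$-bound of Lemma~\ref{lem:L1-bound} to justify the passage to a product over $j$, i.e.\ Lemma~\ref{lem:ratio}), the paper observes that once the main term has factored into $r'$ pieces independent of $\h$, one can specialise to $r'=s'=1$, $h_1(m)=m$ and read off the constant from the normalisation \eqref{eq:C_AB-def}; this bypasses the explicit contour computation and guarantees the exact cancellation of the prefactor.
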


\begin{proposition}[$D$-correlation estimate]
\label{p:correlation-condition}
Let $T \in \NN$, $T' = [\frac{T}{W}]$ and let $D>1$.
Then there exists a function 
$\sigma: \{-T',\dots,T'\} \to \RR_{\geq 0}$ with bounded moments
$$
\EE_{|m| \leq T'} \sigma^q(m) \ll_{D,q} 1,
$$
such that for every discrete interval $I \subset \{1, \dots, T'\}$, 
every $1 \leq d \leq D$, every  $(i_1, \dots, i_{d}) 
\in \{1,\dots,r\}^{d}$ and every choice of (not necessarily distinct)
$a_1,\dots,a_{d} \in \{1, \dots, T'\}$, we have
\begin{align*}
\sum_{m \in I} \prod_{j=1}^d \phi_{i_j}(T;A_{i_j})
  &\nu_{i_j}^{(T)}(W(m+a_j) + A_{i_j})
  {\nu'}_{i_j, \mathrm{sieve}}^{(T)}(W(m+a_j) + A_{i_j}) \\
&\leq T' \sum_{1 \leq j < j' \leq d} \sigma(a_j - a_{j'}),
\end{align*}
provided $\gamma$ is small enough.
\end{proposition}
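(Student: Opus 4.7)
The plan is to adapt the Goldston--Yıldırım style analysis of \cite[App.~D]{GT} to the two-component structure of our majorant, while keeping careful track of the $\phi_{i_j}(T;A_{i_j})$ normalisations. Expand each factor as a multiple divisor sum involving $u_j\in U(\lambda_j,\kappa_j)$ and a divisor $d_j\in\langle\bar\cP_1^{(i_j)}\rangle$ coming from $\nu_{i_j}^{(T)}$ (using Remark \ref{rem:coprime}), together with divisors $q_j,e_j,e_j'\in\langle\bar\cP_2^{(i_j)}\rangle$ coming from ${\nu'}_{i_j,\mathrm{sieve}}^{(T)}$. After swapping the order of summation, the average over $m\in I$ of a product of indicators $\1_{[d_j u_j q_j e_j e_j']\mid W(m+a_j)+A_{i_j}}$ reduces, via the Chinese remainder theorem, to a product of local densities over primes $p>w(T)$, plus an error of size $T^{O(\gamma)}/|I|$ which is harmless when $\gamma$ is chosen sufficiently small in terms of $D$ and $|I|/T^{o(1)}$.

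To analyse the resulting local factors, I would apply the integral representation \eqref{eq:chi-integral} to each smooth cutoff, converting each divisor sum into a complex contour integral of an Euler product, exactly as in the proof of Lemma \ref{lem:sieve-upperbound}. Using Lemma \ref{lem:F(s)} to describe the behaviour of the product over $\bar\cP_2^{(i)}$ near $s=1$ and Lemma \ref{lem:E-asymp} to control the truncated Euler products over primes below $w(T)$, the Euler product factorises into a main term involving $\zeta$-factors that precisely match the inverse normalisations $\phi_{i_j}(T;A_{i_j})$ (compare \eqref{eq:clock}, \eqref{eq:C_AB-def} and the proof of Lemma \ref{lem:majorant-average}), times a correction at each prime $p\mid \prod_{j<j'}(a_j-a_{j'})$ encoding the resonance $a_j\equiv a_{j'}\bmod p$. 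The contribution from the $u$-terms in $\nu_{i_j}^{(T)}$ is bounded by the exponentially convergent $\kappa$-sum appearing at the end of the proof of Proposition \ref{p:majorant} and is absorbed into the error.

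The resonance correction from primes $p\mid a_j-a_{j'}$ inflates the local density by a bounded power of $1+O(p^{-1})$; taking the product over such primes gives a quantity bounded, up to constants, by $\tau(|a_j-a_{j'}|)^{C_D}$ for a sufficiently large constant $C_D=O_D(1)$. Setting
\[
\sigma(a)=\tau(|a|)^{C_D}+T'\cdot \1_{a=0},
\]
for $a\in\ZZ/T'\ZZ$ represented in $\{-T'/2,\dots,T'/2\}$, the desired bound follows from distributing the resonance product across the $\binom{d}{2}$ pairs via the inequality $\prod_{j<j'}(1+x_{jj'})\leq 1+\sum_{j<j'}x_{jj'}\cdot\prod(1+x_{jj'})$ applied to bounded quantities. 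The moment estimate $\EE_{m\in\ZZ/T'\ZZ}\sigma^q(m)\ll_{D,q}1$ then reduces to the classical bound $\sum_{n\leq N}\tau(n)^{qC_D}\ll N(\log N)^{O_{D,q}(1)}$. The main obstacle will be the careful Euler-product bookkeeping in the presence of three different classes of divisors distributed over disjoint sets of primes, combined with the need to match the main term \emph{exactly} against the $\phi_{i_j}$ normalisations rather than merely up to constants; this forces a unified treatment of the asymptotic expansions underlying Lemmas \ref{lem:nu_i-average} and \ref{lem:sieve-upperbound}.
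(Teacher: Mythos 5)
Your overall strategy---expand into divisor sums, reduce to Euler products, isolate a resonance correction at primes dividing the pairwise differences---is indeed the Goldston--Y{\i}ld{\i}r{\i}m framework that underlies \cite[Lemma 9.9]{GT}, which the paper invokes directly. The degenerate case is handled in the same way in both arguments: you set $\sigma(0) = T'\cdot \1_{a=0}$ (in the paper: $\sigma(0)$ is chosen $o(T')$ but large, justified by H\"older together with Definition \ref{def:M_reg}(b) for $r_{i,\mathrm{res}}$).

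However, there is a genuine gap at the heart of your construction: the function $\sigma(a)=\tau(|a|)^{C_D}+T'\cdot\1_{a=0}$ does \emph{not} have bounded moments. You yourself write that the moment estimate ``reduces to the classical bound $\sum_{n\le N}\tau(n)^{qC_D}\ll N(\log N)^{O_{D,q}(1)}$'', but this gives $\EE_{m\in\ZZ/T'\ZZ}\sigma^q(m)\ll (\log T')^{O_{D,q}(1)}$, which is unbounded as $T\to\infty$---precisely the statement you need to \emph{rule out}. The same problem persists if one restricts $\tau$ to prime factors $>w(T)$ and replaces $\tau$ by $C^{\omega(\cdot)}$: the resulting $q$-th moment is $\asymp \prod_{w(T)<p\le T'}(1+O_{D,q}(1/p))$, which still grows like a power of $\log T'/\log w(T)$.

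What makes the correlation estimate work in \cite[App.\ D]{GT} and in the present paper is that the Goldston--Y{\i}ld{\i}r{\i}m analysis supplies the much sharper per-prime resonance correction $(1+O(p^{-1/2}))$, not $(1+O(1))$; see the displayed definition of $\sigma$ in the paper's proof, $\sigma(m)=\exp\bigl(\sum_{p>w(T),\,p\mid\Delta(m)}O(p^{-1/2})\bigr)$. With this form, the $q$-th moment is controlled by $\prod_{p>w(T)}(1+O_{D,q}(p^{-3/2}))=O_{D,q}(1)$ because the density of $m$ with $p\mid\Delta(m)$ is $O_D(1/p)$ and the correction contributes a further $O(p^{-1/2})$. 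This extra half-power saving at resonant primes, which emerges from the $\chi$-smoothing and the $(1-p^{-1-z})$ structure of the Euler factors, is exactly the ingredient missing from your $\tau^{C_D}$ bound. Your proposal should be repaired by replacing $\tau(|a|)^{C_D}$ with the restricted product $\prod_{p>w(T),\,p\mid a}(1+O_D(p^{-1/2}))$ and verifying that your Euler-product expansion actually delivers the $p^{-1/2}$ exponent at each resonant prime---which it does, provided you track the $z_{j,k}$-shifted local factors as in Lemma \ref{lem:L1-bound} rather than bounding crudely by $(1+O(1/p))$.

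As a lesser point, your plan to ``match the main term exactly against the $\phi_{i_j}$ normalisations'' is more work than needed: for a correlation estimate one only requires an upper bound, so the paper simply cites \cite[\S 9]{GT-longAPs} and \cite[\S 7]{lm0} rather than redoing the full asymptotic Euler-product analysis of Proposition \ref{p:linear-forms}.
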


In proving Propositions \ref{p:linear-forms} and 
\ref{p:correlation-condition}, we will allow all of our implied constants
to depend on the parameter $D$. 
We begin with the proof of the former. 
Unravelling definitions, we see that \eqref{eq:linearforms-1} is implied
by the estimate
\begin{equation}
\begin{split}
 \label{eq:linearforms-2} 
\frac{1}{\vol(T'\mathfrak K)}
 &\sum_{\m \in \ZZ^{s'} \cap T'\mathfrak K} 
 \prod_{j=1}^{r'} 
 \nu_{i_j}^{(T)}(Wh_j(\m) + A_{i_j})
 {\nu'}^{(T)}_{i_j, \mathrm{sieve}}(Wh_j(\m) + A_{i_j}) \\
&= \left( 1      + o(1)
   \right) 
   \prod_{j=1}^{r'}
   \phi_{i_j}(T;A_{i_j})^{-1},
\end{split}
\end{equation}
for every collection of indices
$1\leq i_1,\dots,i_{r'} \leq r$.  Here we have 
$
\phi_{i_j}(T;A_{i_j})^{-1} \asymp 
\Pi_{i_j}$, 
by \eqref{eq:clock}, where
\begin{equation}\label{eq:def-Pi}
\Pi_i=\prod_{p<w(T)} 
 \Big(1-\frac{1}{p}\Big)^{-1}
 \prod_{\substack{\fp|p\\ \fp\subset \fo_{K_i}}}
 \Big(1-\frac{1}{\n \fp}\Big).
\end{equation}

The strategy to proving \eqref{eq:linearforms-2} is the same as in \cite[\S9]{lm1}, which is
related to that of \cite[App.~D]{GT}.
Inserting all definitions and writing $g_i=\mu * r_{i,\mathrm{res}}$,
we have
\begin{align*}
 \nu_{i}^{(T)}(Wm + A)
  {\nu'}^{(T)}_{i, \mathrm{sieve}}&(Wm + A) \\
=~& 
 \sum_{\kappa = 4/\gamma}^{[(\log \log T)^3]}
 \sum_{\lambda = \lceil\log_2 \kappa - 2\rceil}^{[\log_2((\log \log T)^3)]}
 \sum_{u \in U(\lambda,\kappa)}
 2^{n_i\kappa} \1_{u|Wm + A} \, r_{i,\mathrm{res}}(u)\\
 &\times
 \sum_{\substack{d \in \<\bar\cP_1^{(i)} \> \\ \gcd(d,u)=1}} 
  \1_{d|Wm + A} \, g_i(d)
  \chi\Big(\frac{\log d}{\log T^{\gamma}}\Big)   \\
&\times\sum_{\substack{q\in \<\bar\cP_2^{(i)}\> \\
 v_p(q) \not= 1~\forall p}} \1_{q|Wm + A} \,
 \tau(q)^{n_i} \chi\Big(\frac{\log q}{\log T^{\gamma}}\Big)
 \Big( \sum_{\substack{e \in \<\bar\cP_2^{(i)}\> \\ qe|Wm + A}} \mu(e) 
  \chi\Big(\frac{\log e}{\log T^{\gamma}}\Big) 
 \Big)^2.
\end{align*}
Here the restriction to $d\in \langle \bar\cP_1^{(i)}\rangle$ arises from 
Remark \ref{rem:coprime}
and the fact that $g_i(d)=0$ when $d$ has a prime factor $p<w(T)$.
Noting that $\gcd(qe,d)=1$, the right hand side is seen to be 
\begin{align*}
&\sum_{\kappa =4/\gamma}^{[(\log \log T)^3]}
 \sum_{\lambda = \lceil\log_2 \kappa - 2\rceil}^{[\log_2((\log \log T)^3)]}
 \sum_{u \in U(\lambda,\kappa)}
 \sum_{\substack{d \in \<\bar\cP_1^{(i)} \> \\ \gcd(d,u)=1}} 
 \sum_{\substack{q\in \<{\bar\cP_2^{(i)}}\> \\ v_p(q) \not= 1~\forall p}}\\
&\qquad\times  \sum_{e,e' \in \<\bar\cP_2^{(i)}\>}
 2^{n_i\kappa} 
 r_{i,\mathrm{res}}(u)
 \tau(q)^{n_i}
 \mu(e)\mu(e')
 g_i(d)
 \1_{\Delta|Wm+A}
 \prod_{x \in \{d,e,e',q\}} 
 \chi\Big(\frac{\log x}{\log T^{\gamma}}\Big),
\end{align*}
where
$
\Delta=\lcm(u,d,qe,qe').
$
Together, Remark \ref{back} and  the compact support of $\chi$ ensure that all 
divisors $d,e,e' ,q,u$ are bounded by $T^{\gamma}$. 
For each $1\leq j \leq r'$ we define the linear polynomial
$$
h'_j(\m) = Wh_j(\m)+A_{i_j}.
$$
We may assume that $T$ is  sufficiently large in terms of $D$ to ensure that the
non-constant
 parts of the polynomials $h_1, \dots, h_{r'}$ are pairwise
non-proportional modulo any prime $p>w(T)$.
The same then holds for the polynomials $h'_1,\dots,h'_{r'}$.

Let $\Delta_j=\lcm(u_j,d_j,q_je_j,q_je'_j)$, for $1\leq j\leq r'$.
We are interested in estimating the cardinality 
\begin{align*}
\#\{\m \in \ZZ^{s'} &\cap T'\mathfrak{K}: 
 \Delta_j \mid h'_j(\m)\}\\
 &=
 \sum_{\substack{\s\bmod{\Delta_{\u,\bd,\q,\e,\e'}}\\  
       \Delta_j \mid h'_j(\m)}}
\hspace{-0.4cm}
\#\{\m \in \ZZ^{s'}\cap T'\mathfrak{K}: 
    \m\equiv \s\bmod{\Delta_{\u,\bd,\q,\e,\e'}}\},
 \end{align*}
where
$
\Delta_{\u,\bd,\q,\e,\e'} 
= \lcm (\Delta_1, \dots ,\Delta_{r'})
\leq T^{O(\gamma)}.
$
Extending the notion of local divisor densities multiplicatively from 
\eqref{eq:def-div.density}, with $\mathcal{U}_m=(\ZZ/p^m\ZZ)^{s'}$ and the
set of polynomials $\mathbf{h}'$, the outer sum has cardinality 
$
\alpha_{\h'}(\Delta_1,\dots,\Delta_{r'})
\Delta_{\u,\bd,\q,\e,\e'}^{s'}.
$
The inner cardinality is equal to
$$
\# \left((\Delta_{\u,\bd,\q,\e,\e'} \ZZ^{s'} + \s) 
         \cap T' \mathfrak{K}\right)
=\#\left(\ZZ^{s'}
   \cap (\Delta_{\u,\bd,\q,\e,\e'}^{-1} T' \mathfrak{K}
   + \Delta_{\u,\bd,\q,\e,\e'}^{-1} \s) \right).
$$
We may therefore apply Lemma \ref{lem:2} with   
$\mathcal B = \mathfrak{K}$ and $T=\Delta_{\u,\bd,\q,\e,\e'}^{-1} T'$ to
each of the above cardinalities.
This leads to the conclusion that
\begin{equation}
\begin{split}
 \label{eq:linearforms-3} 
\frac{1}{\vol(T'\mathfrak K)}
& \sum_{\m \in \ZZ^{s'} \cap T'\mathfrak K} 
 \prod_{j=1}^{r'}
 \nu_{i_j}^{(T)}(h_j'(\m) )
 {\nu'}^{(T)}_{i_j, \mathrm{sieve}}(h_j'(\m) ) 
\\ 
=~&
 \sum_{\bka}
 \sum_{\bla}
 \sum_{\substack{\u\\  u_j \in U(\lambda_j,\kappa_j)}}
 \sum_{\substack{\bd \\ \gcd(d_j,u_j)=1}} 
 \sum_{\q}
 \sum_{\e,\e'} 
 \bigg(
 \alpha_{\h'}(\Delta_1,\dots,\Delta_{r'})
 + O\Big(\frac{T'^{-1 + O(\gamma)}}{\vol( \mathfrak K)}\Big)
 \bigg)\\
&\times \prod_{j=1}^{r'}
 2^{\kappa_jn_{i_j}} 
 \tau(q_j)^{n_{i_j}}
 \mu(e_j)\mu(e'_j)
 g_{i_j}(d_j)  
 \prod_{x \in \{d_j,e_j,e'_j,q_j\}} 
 \chi\Big(\frac{\log x}{\log T^{\gamma}}\Big),
\end{split}
\end{equation}
where $\bka,\bla,\u,\bd,\q,\e,\e' \in \ZZ^{r'}$ are assumed to 
satisfy the correct multiplicative restrictions component-wise.
Thus, for example, the sum over $\bd$ is restricted to
$$
\{\bd: d_j \in \<\bar\cP_{1}^{(i_j)}\>, 1\leq j\leq r'\}.
$$
Similarly, those over $\q$,$\e$ and $\e'$ are restricted to 
$$
\{\x: x_j \in \<\bar\cP_{2}^{(i_j)}\>, 1\leq j\leq r'\}.
$$
We  assume, furthermore,  that all coordinates of $\q$ satisfy
$v_p(q_j) \not= 1$ for all primes $p$.

We begin by examining the error term in \eqref{eq:linearforms-3}.
As mentioned above, each of the sums over $u_j$, $d_j$, $e_j$, $e'_j$ and
$q_j$ have at most $T^{\gamma}$ terms.
Together with the trivial bounds
$\tau(q) \leq T^{\gamma}$ for $q \leq T^{\gamma}$ and
$
2^{ \kappa_j} \leq 2^{(\log \log T)^3}\ll  T^{o(1)},
$
this implies that the error term makes a total contribution of
$$
O\Big(\frac{T'^{-1 + O(\gamma)}}{\vol \mathfrak K}\Big)=o(1),
$$
by our assumptions on $\mathfrak{K}$.

The main term will now be analysed in much the same way as in
\cite[\S6]{lm0} and \cite[\S9]{lm1}.
Our majorant very closely resembles that from \cite{lm1},
the latter in fact being a special case of it.
The analysis of  \eqref{eq:linearforms-3} is therefore
only a minor adaptation of what is established in \cite[\S9]{lm1}.
Given the length of the argument we include an overview here as guidance,
and only include the details of the more complicated proofs where it
may not be immediately clear that the corresponding argument from
\cite[\S9]{lm1} still applies.

Any prime $p\mid \Delta$ satisfies $p>w(T)$. 
Hence $\alpha_{\h'}(\Delta_1,\dots,\Delta_{r'})$ will be determined using
the first three alternatives from \eqref{eq:ev-alpha}. 
In particular, 
$$
\alpha_{\h'}(\Delta_1,\dots,\Delta_{r'})
= \prod_{j=1}^{r'}\frac{1}{\Delta_j}
$$
whenever $\Delta_1,\dots,\Delta_{r'}$ are pairwise coprime.
Put $\tilde \Delta_j=d_j q_j \lcm(e_j,e'_j)$, for $1\leq j\leq r'$.
The first step is to show that we may replace
$\alpha_{\h'}(\Delta_1,\dots,\Delta_{r'})$
in the main term by
$$
\frac{\alpha_{\h'}(\tilde\Delta_1,\dots,\tilde \Delta_{r'})}
     {u_1\dots u_{r'}},
$$
at the expense of an overall error term $o(1)$.
To prove this, it suffices to show that we may restrict the
summation to vectors $(\u,\bd,\q,\e,\e')$ for which 
$$
 \gcd(u_j, \tilde \Delta_j)=1
 \quad \text{and} \quad
 \gcd(u_i,u_j \tilde \Delta_j)=1
$$
for all $j$ and all $i\not=j$.
Since $\gcd(d_j,u_j)=1$, it follows from Remark~\ref{rem:coprime} that the first 
condition is always satisfied.
Furthermore, the set of all vectors $(\u,\bd,\q,\e,\e')$ failing the second 
condition makes a negligible contribution.
The proof follows (cf.~the proof of \cite[Claim 2]{lm1}) by the
Cauchy--Schwarz inequality from a second-moment estimate together with a
lower bound on the prime divisors of any $u_j$.

We note that
\begin{equation} \label{eq:c_0<infty}
 \sum_{\bka}
 \sum_{\bla}
 \sum_{\substack{\u\\  u_j \in U(\lambda_j,\kappa_j)}}
 \prod_{j=1}^{r'} 
 \frac{
 2^{\kappa_jn_{i_j}} r_{j,\mathrm{res}}(u_j)}
 {u_{j}}
 < \infty,
\end{equation}
The absolute convergence of this sum follows from the proof of 
Proposition~\ref{p:majorant}.

The next step is to replace $\chi$
by a multiplicative function using  \eqref{eq:chi-integral}.
For 
$1\leq j \leq r'$ and $1 \leq k \leq 4$ we
write $$
z_{j,k} = \frac{1+i\xi_{j,k}}{\log T^{\gamma}}.
$$ 
Likewise we set 
$\d \bxi=\prod_{j,k}\d \xi_{j,k}$
and
\begin{equation}\label{eq:def-jj}
J_j=\mu(e_j)\mu(e'_j) 
 e_j^{-z_{j,1}} e_j'^{-z_{j,2}}
 g_{i_j}(d_j) 
 d_j^{-z_{j,3}}
 \tau(q_j)^{n_{i_j}}
 q_j^{-z_{j,4}},
\end{equation}
for $1\leq j\leq r'$.
With this notation the  new main term is equal to
\begin{align*}
 &\sum_{\bka}
 \sum_{\bla}
 \sum_{\u}
 \prod_{j=1}^{r'} 
 \frac{
 2^{\kappa_jn_{i_j}} r_{j,\mathrm{res}}(u_j)}
 {u_{j}}\\
&\qquad
 \left(
 J(\u)+
 O_{E}\left(
 \frac{1}{(\log T)^{E}}
 \sum_{\substack{\bd\\(d_j,u_j)=1}} 
 \sum_{\q}
 \sum_{\e,\e'} 
 \prod_{j=1}^{r'} \frac{
 H^{\Omega(d_jq_j)}  \alpha_{\h'}(\tilde\Delta_1, \dots, \tilde\Delta_{r'})
 }{ (e_je'_jd_jq_j)^{1/\log T^{\gamma}}}
 \right)
 \right)
\end{align*}
for any $E>0$, 
where $H = \max_{1\leq i \leq r'} 2^{n_i}$ and 
\begin{align*}
J(\u)=~&
 \sum_{\substack{\bd\\(d_j,u_j)=1}} 
 \sum_{\q}
 \sum_{\e,\e'} 
 \alpha_{\h'}(\tilde\Delta_1, \dots, \tilde\Delta_{r'})
 \int_{I} \dots \int_{I} \left(\prod_{j=1}^{r'}
J_j
 \prod_{k=1}^4 \vartheta(\xi_{j,k})\right) \d\bxi.
\end{align*}
The error terms that appear in the next step will again depend on how
small the prime factors of the relevant numbers can be.
This time these are the coordinates of $\bd,\q,\e$ and $\e'$ instead of
$\u$ and we can only assume that the primes are larger than $w(T)$,
which is much smaller than the lower bound on prime factors of the
$u_i$.
For this reason it was essential to treat the $u_j$ separately first,  in
order to make use of the convergence of the sums over $\bka$, $\bla$ and
$U(\lambda_j,\kappa_j)$ when showing that the new error term is negligible.

The next step is to show that we may swap the product over $j$ with all
the sums. That is, we replace 
$\alpha_{\h'}(\tilde\Delta_1, \dots, \tilde\Delta_{r'})$
by 
$(\tilde\Delta_1 \dots \tilde\Delta_{r'})^{-1}$, while only
introducing a small error. 
We will show that
\begin{equation}
\begin{split} \label{eq:linearforms-5}
J(\u)+o\left(\prod_{j=1}^{r'}\Pi_{i_j}\right)=~&
 \int_{I} \dots \int_{I}
\left(
 \prod_{j=1}^{r'}
 \sum_{\substack{d_j\\(d_j,u_j)=1}} 
 \sum_{\substack{q_j\in \<\bar\cP_2^{(i_j)}\>\\v_p(q_j)\not= 1~\forall p}}
 \sum_{e_j,e'_j \in \<\bar\cP_2^{(i_j)}\>} \frac{J_j}{\tilde \Delta_j}
 \prod_{k=1}^4 \vartheta(\xi_{j,k})\right) \d\bxi 
\\ 
=~&
 \prod_{j=1}^{r'}
 \sum_{\substack{d_j\\(d_j,u_j)=1}} 
 \sum_{\substack{q_j\in \<\bar\cP_2^{(i_j)}\>\\v_p(q_j)\not= 1~\forall p}}
 \sum_{\substack{e_j,e'_j \in \<\bar\cP_2^{(i_j)}\>}} 
  \int_{I^4} \left(\frac{J_j}{\tilde \Delta_j}
 \prod_{k=1}^4 \vartheta(\xi_{j,k})\right) \d\xi_{j,1}\dots \d\xi_{j,4},
\end{split}
\end{equation}
where
$\Pi_i$ is given by \eqref{eq:def-Pi} for $1\leq i\leq r$.
Before establishing this estimate, we remark that the final main term is
now a product of $r'$ factors that are independent of each other and
independent of the system $\h$ of linear polynomials that we started with.
In particular, we may consider this estimate in the special case where
$s'=r'=1$ and where $h_1(m)=m$.
Reinstating the sums over $\bka$, $\bla$ and $\u$, this relates the $j$th factor 
of the above product to the average value of the majorant function. 
By \eqref{eq:C_AB-def} and \eqref{eq:c_0<infty}, we therefore deduce that
$$
 \sum_{\bka}
 \sum_{\bla}
 \sum_{\u}
 \prod_{j=1}^{r'} 
 \frac{
 2^{\kappa_jn_{i_j}} r_{j,\mathrm{res}}(u_j)}
 {u_{j}}
 \left( J(\u) + o\left(\prod_{j=1}^{r'}\Pi_{i_j}\right) \right)
 =(1 + o(1)) \prod_{j=1}^{r'} 
 \phi_{i_j}(T;A_{i_j})^{-1}.
$$
This completes the proof of \eqref{eq:linearforms-2}, and hence the proof
of Proposition \ref{p:linear-forms}, subject to the verification of
\eqref{eq:linearforms-5}.
Our proof of  \eqref{eq:linearforms-5} will be undertaken in two steps, 
as recorded in the following two results.
We fix values of $\bka$ and $\bla$ for now.

\begin{lemma} \label{lem:L1-bound}
For each $1\leq j \leq r'$ and each $u_j \in U(\lambda_j,\kappa_j)$, we have 
\begin{align*}
 \int_{I^4} 
 \bigg|
\sum_{\substack{d_j\\ \gcd(d_j,u_j)=1}}
 \sum_{\substack{q_j\in \<\bar\cP_2^{(i_j)}\> \\ 
   v_p(q_j) \not= 1~\forall p}}
&\sum_{e_j,e'_j \in \<\bar\cP_2^{(i_j)}\>} 
\frac{J_j}{\tilde \Delta_j}
 \prod_{k=1}^4 \vartheta(\xi_{j,k})\bigg|
 \d\xi_{j,1}\dots \d\xi_{j,4} 
\ll \Pi_{i_j},
\end{align*}
where $\Pi_{i_j}$ is given by \eqref{eq:def-Pi}, and where the implied constant 
is independent of $\lambda_j$, $\kappa_j$, $u_j$.
\end{lemma}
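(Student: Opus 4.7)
My plan is to factor the sum over $(d_j,q_j,e_j,e_j')$ as an Euler product, identify its singular behaviour near $z_{j,k}=0$ in terms of $\zeta_{K_{i_j}}(s)/\zeta(s)$ and the auxiliary function $F$ of \eqref{eq:F(s)}, and then exploit the rapid decay \eqref{eq:theta-bound} of $\vartheta$. The disjointness of $\bar\cP_1^{(i_j)}$ and $\bar\cP_2^{(i_j)}$, together with the support conditions on the summation variables, causes the sum to split as $S_1(z_{j,3})\cdot S_2(z_{j,1},z_{j,2},z_{j,4})$. Moreover, at each $p\in\bar\cP_2^{(i_j)}$ the exponent of $p$ in $q_j$ is independent of its exponents in $e_j,e_j'$, and since $v_p(q_j)\neq 1$ forces the $q$-local factor to be $1+O(p^{-2})$, the sum $S_2$ factors further as $S_2^{(e,e')}(z_{j,1},z_{j,2})\cdot S_2^{(q)}(z_{j,4})$ with $|S_2^{(q)}|\ll 1$ uniformly on $I$.

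The Euler factor of $S_2^{(e,e')}$ at $p\in\bar\cP_2^{(i_j)}$ equals $1-p^{-1-z_{j,1}}-p^{-1-z_{j,2}}+p^{-1-z_{j,1}-z_{j,2}}$, which up to an absolutely convergent correction equals $(1-p^{-1-z_{j,1}})(1-p^{-1-z_{j,2}})(1+p^{-1-z_{j,1}-z_{j,2}})$ --- precisely the factor treated in the proof of Lemma~\ref{lem:sieve-upperbound}. Repeating the argument there, the resulting product is $F_{>w(T)}(1+z_{j,1})^{-1}F_{>w(T)}(1+z_{j,2})^{-1}F_{>w(T)}(1+z_{j,1}+z_{j,2})$ times a bounded factor, where $F_{>w(T)}=F/\tilde F_{i_j}$; Lemmas~\ref{lem:F(s)} and~\ref{lem:E-asymp} then give the pointwise estimate
$$|S_2^{(e,e')}(z_1,z_2)|\ll \tilde F_{i_j}(1)\,|z_1|^{1-\delta_{i_j}}|z_2|^{1-\delta_{i_j}}|z_1+z_2|^{\delta_{i_j}-1},$$
where $\tilde F_{i_j}(1)=\prod_{p\in\cP_2^{(i_j)},\,p<w(T)}(1-1/p)^{-1}$ and $\delta_{i_j}$ is the Dirichlet density of $\cP_1^{(i_j)}$.

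For $S_1$, the multiplicativity of $g_{i_j}=\mu\ast r_{i_j,\mathrm{res}}$ together with the identity $r_{i_j,\mathrm{res}}(p^k)=r_{K_{i_j}}(p^k)$ for $p\in\cP_1^{(i_j)}$ identifies the local Euler factor of $S_1(1+z)$ at $p\in\bar\cP_1^{(i_j)}$ as $(1-p^{-1-z})\prod_{\fp|p}(1-p^{-(1+z)f_\fp})^{-1}$, which is the local factor of $\zeta_{K_{i_j}}(s)/\zeta(s)$. Consequently $S_1(1+z)$ equals $\zeta_{K_{i_j}}(1+z)/\zeta(1+z)$ divided by the Euler factors at $p\in\cP_0^{(i_j)}$, at $p\in\cP_1^{(i_j)}$ with $p\leq w(T)$, and at all $p\in\bar\cP_2^{(i_j)}$. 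The first two classes jointly contribute a bounded small-prime factor $C_1:=\prod_{p\leq w(T),\,p\in\cP_0^{(i_j)}\cup\cP_1^{(i_j)}}(1-1/p)^{-1}\prod_{\fp|p}(1-1/\n\fp)$; the third class, by absolute convergence away from the $(1-p^{-s})$ term, contributes $F_{>w(T)}(1+z)$ times a bounded correction. Since $\zeta_{K_{i_j}}(1+z)/\zeta(1+z)$ is analytic and nonzero at $z=0$, Lemmas~\ref{lem:F(s)} and~\ref{lem:E-asymp} yield $|S_1(1+z)|\ll C_1\cdot|z|^{\delta_{i_j}-1}/\tilde F_{i_j}(1)$ uniformly for $|z|\ll(\log T)^{-1/2}$.

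Multiplying the estimates for $S_1$, $S_2^{(e,e')}$ and $S_2^{(q)}$, the factors of $\tilde F_{i_j}(1)$ cancel exactly; writing $z_{j,k}=(1+i\xi_{j,k})/\log T^\gamma$, the net exponent of $\log T^\gamma$ arising from the powers $|z|^{1-\delta_{i_j}}$ and $|z|^{\delta_{i_j}-1}$ sums to zero as well, and \eqref{eq:theta-bound} makes the remaining integrals over $\xi_{j,k}\in I$ absolutely convergent. The resulting bound is $\ll C_1$, and since the $\cP_2^{(i_j)}$-part of $\Pi_{i_j}$ equals $\tilde F_{i_j}(1)$ up to an absolutely convergent factor and $\tilde F_{i_j}(1)\geq 1$, one has $C_1\leq C_1\cdot\tilde F_{i_j}(1)\asymp\Pi_{i_j}$, giving $\ll\Pi_{i_j}$. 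The main technical obstacle is the careful bookkeeping of all small-prime correction factors, verifying that no spurious logarithmic factor survives the cancellation between the singular parts of $\zeta_{K_{i_j}}/\zeta$, $F_{>w(T)}$ and the sieve Euler product, and checking that the absolutely convergent corrections introduced at every stage remain uniformly bounded.
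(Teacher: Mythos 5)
Your overall strategy---expand the integrand as an Euler product, identify the singular behaviour near $z=0$ using $F$ (or equivalently $\zeta_{K_{i_j}}/\zeta$) and Lemma~\ref{lem:E-asymp}, and then integrate against the rapidly decaying $\vartheta$---is the same as the paper's, and the factorisation $S_1\cdot S_2^{(e,e')}\cdot S_2^{(q)}$ is a clean way to organise it. However, the estimate you claim for $S_1$ is incorrect, and the proof only lands on the right answer because a lossy inequality at the end happens to compensate.

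Specifically, $S_1(1+z)=\prod_{p\in\bar\cP_1^{(i_j)}} L_p(1+z)$, where $L_p$ is the local factor of $\zeta_{K_{i_j}}/\zeta$; to recover $\zeta_{K_{i_j}}(1+z)/\zeta(1+z)$ you must multiply by $L_p$ at \emph{every} $p\notin\bar\cP_1^{(i_j)}$, i.e.\ at $\cP_0^{(i_j)}$, at $\cP_1^{(i_j)}\cap\{p\leq w(T)\}$, \emph{and at all of $\cP_2^{(i_j)}$}, not merely $\bar\cP_2^{(i_j)}$. Your decomposition drops the factors at $\cP_2^{(i_j)}\cap\{p\leq w(T)\}$; dividing by the $\cP_2^{(i_j)}$-factors of $\zeta_{K_{i_j}}/\zeta$ contributes $F(1+z)$, not $F_{>w(T)}(1+z)$. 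The correct estimate is therefore $|S_1(1+z)|\asymp C_1\,|z|^{\delta_{i_j}-1}$, with no factor of $\tilde F_{i_j}(1)^{-1}$. Multiplying by $|S_2^{(e,e')}|\asymp\tilde F_{i_j}(1)\,|z_1|^{1-\delta_{i_j}}|z_2|^{1-\delta_{i_j}}|z_1+z_2|^{\delta_{i_j}-1}$ then gives $\asymp C_1\,\tilde F_{i_j}(1)\asymp\Pi_{i_j}$ for the integral, not $\ll C_1$. Your final step ``$C_1\leq C_1\tilde F_{i_j}(1)\asymp\Pi_{i_j}$'' silently reinserts exactly the factor you dropped, so the stated conclusion $\ll\Pi_{i_j}$ comes out right, but only by way of two cancelling errors; the intermediate claim that the integral is $\ll C_1$ (which would be strictly smaller than $\Pi_{i_j}$ by a factor of order $(\log\log\log T)^{1-\delta_{i_j}}\to\infty$) is false. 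The fix is to divide by the local factors at all of $\cP_2^{(i_j)}$ when relating $S_1$ to $\zeta_{K_{i_j}}/\zeta$, which produces $C_1\tilde F_{i_j}(1)\asymp\Pi_{i_j}$ directly and removes the need for the compensating inequality.
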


This lemma corresponds to \cite[Claim 5]{lm1}.
We take the opportunity to provide a full proof here, since the extra
factors $g_{i_j}(d_j) \tau(q_j)^{n_{i_j}}$ implicit in $J_j$ make the 
analysis slightly more delicate.
Moreover, while the proof of \cite[Claim 5]{lm1} is correct, it requires
an application of Lemma \ref{lem:E-asymp}, which is not present in
\cite{lm1}.

\begin{proof}[Proof of Lemma \ref{lem:L1-bound}]
The first step is to express the integrand, which we denote by 
$K=K(\xi_{j,1},\dots, \xi_{j,4})$, as an Euler product. 
The fact that $\Re (z_{j,k}) = (\log T^{\gamma})^{-1}>0$ will allow us to
restrict to the square-free part. 
Recall that 
$$
\frac{J_j}{\tilde \Delta_j}= \frac{\mu(e_j)\mu(e'_j) 
 e_j^{-z_{j,1}} e_j'^{-z_{j,2}}
 g_{i_j}(d_j) 
 d_j^{-z_{j,3}}
 \tau(q_j)^{n_{i_j}}
 q_j^{-z_{j,4}}}
 {d_j q_j \lcm(e_j,e'_j)} 
$$
and put 
$$
\tilde L=\prod_{k=1}^4 (1+|\xi_{j,k}|).
$$
By \eqref{eq:theta-bound} we have 
\begin{align*}
K\ll_{E}~& \tilde L^{-E}
 \bigg| \prod_{p \in \bar\cP_{2}^{(i_j)}}
  (1 - p^{-1-z_{j,1}} - p^{-1-z_{j,2}} + p^{-1-z_{j,1}-z_{j,2}} +
   O(p^{-2})) 
 \bigg|
\\
&\times
 \bigg|
 \prod_{\substack{p \in \bar\cP_{1}^{(i_j)}\\ p \nmid u}}
 (1 + (r_{K_{i_j}}(p) - 1) p^{-1-z_{j,3}} + O(p^{-2})) 
 \bigg|,
\end{align*}
for any $E>3$.
Since for sufficiently large primes $p$ each of the factors of these
products can be analysed via the logarithmic series, we deduce that 
\begin{align*}
K
\ll~& \tilde L^{-E}
 \bigg|
 \prod_{p \in \bar\cP_{2}^{(i_j)}}
 (1 - p^{-1-z_{j,1}})
 (1 - p^{-1-z_{j,2}})
 (1 + p^{-1-z_{j,1}-z_{j,2}})
 \bigg|
\\
& \times
 \bigg|
 \prod_{p \in \bar\cP_{1}^{(i_j)}}
 (1 - p^{-1-z_{j,3}})
 (1 + r_{K_{i_j}}(p) p^{-1-z_{j,3}})
 \bigg|
 \prod_{p | u }
 \bigg(1 + \frac{r_{K_{i_j}}(p) + 1}{p}\bigg).
\end{align*}
Since any prime divisor of $u$ comes from an interval of the form $[y,y^2]$, 
with $y=T^{1/(2^{\lambda + 1})}$, the final product over $p|u$ is easily seen to 
be $O_{n_{i_j}}(1)$, and can be ignored. 

We will now proceed as in Section \ref{s:sieve}. 
Let $1\leq i\leq r$.
Recall that there are  functions $G_1,G_2$, which are 
non-zero and 
 holomorphic  
on $\Re (s) \geq 1$, such that 
\begin{align*}
F_{i,1}(s)&=\prod_{p \in \cP_{1}^{(i)}}\left(1 - \frac{1}{p^{s}}\right)^{-1}
= \zeta^{\delta_{i}}(s) G_1(s), \\
F_{i,2}(s) &=\prod_{p \in \cP_{2}^{(i)}}\left(1 - \frac{1}{p^{s}}\right)^{-1}
= \zeta^{1-\delta_{i}}(s) G_2(s).
\end{align*}
Hence, when $\Re s > 0$ and $|s| \ll 1$, then 
$$
|F_{i,1}(1+s)| \asymp |s|^{-\delta_{i}},
\quad
|F_{i,2}(1+s)| \asymp |s|^{-1+\delta_{i}}.
$$
Likewise, 
$|F_{i,1}^{-1}(1+s)| \asymp |s|^{\delta_{i}}$
 and 
$|F_{i,2}^{-1}(1+s)| \asymp |s|^{1-\delta_{i}}$.
In order to employ these asymptotic orders to bound the integral above,
we apply Lemma~\ref{lem:E-asymp} to deduce that there is an absolute
positive constant $C$ such that each of the three Euler products $E(s)$,
given by
\begin{align*}
H_{i,1}(s)
=\prod_{\substack{p\in \cP_{1}^{(i)}\\C<p<w(T)}}\left(1 
-\frac{1}{p^{s}}\right)^{-1},
\quad 
H_{i,2}(s)
=\prod_{\substack{p\in \cP_{2}^{(i)}\\C<p<w(T)}}\left(1 - 
\frac{1}{p^{s}}\right)^{-1},
\end{align*}
and
$$
H_{K_i}(s)=\prod_{\substack{C<p<w(T)}}
\Big(1+\frac{r_{K_i}(p)}{p^s}\Big)^{-1},
$$
satisfies $|E(1+s)| \asymp E(1)$ and \eqref{eq:number}
when $|s| \ll (\log T^{\gamma})^{-1/2}$.
Recall the definition \eqref{eq:def-Pi} of $\Pi_{i_j}$.
We may conclude that 
\begin{align*}
K \ll~&
\tilde L^{-E}
 |z_{j,1}|^{1-\delta_{i_j}}
 |z_{j,2}|^{1-\delta_{i_j}}
 |z_{j,1} + z_{j,2}|^{\delta_{i_j}-1}
 |z_{j,3}|^{\delta_{i_j}}
 |z_{j,3}|^{-1} \\
&  \times
 H_{i_j,2}(1)
 H_{i_j,1}(1)
 \prod_{\substack{p<w(T)}}
 \Big(1-\frac{r_{K_i}(p)}{p}\Big) \\
\ll~&  \tilde L^{-E}
 \bigg(\frac{1 + |\xi_{j,1}|}{\log T^{\gamma}}\bigg)^{1-\delta_{i_j}}
 \bigg(\frac{1 + |\xi_{j,2}|}{\log T^{\gamma}}\bigg)^{1-\delta_{i_j}}
 \bigg(\frac{1 + |\xi_{j,1}+\xi_{j,2}|}{\log T^{\gamma}}
 \bigg)^{\delta_{i_j}-1} \\
& \qquad \times
 \bigg(\frac{1 + |\xi_{j,3}|}{\log T^{\gamma}}\bigg)^{\delta_{i_j}-1}
\Pi_{i_j}\\
\ll~& \tilde L^{-E/2} \Pi_{i_j},
\end{align*}
since $0< \delta_{i_j}\leq 1$.
The lemma now follows since
$
\int_{I^4} \tilde L^{-E/2} \d\xi_{j,1}\dots \d\xi_{j,4} = O_E(1).
$
\end{proof}

\begin{lemma}\label{lem:ratio}
For every $\u \in U(\bka,\bla)$, we have
\begin{align*}
\sum_{\substack{\bd,\q, \e,\e' \\ \gcd(d_j,u_j)=1}} 
 &\alpha_{\h'}\big(\tilde \Delta_1, \dots\tilde \Delta_{r'}\big)
 J_1\dots J_{r'} =  
 (1+ o(1))
 \prod_{j=1}^{r'}
 \sum_{\substack{d_j: \\ (d_j,u_j)=1}} 
 \sum_{\substack{q_j\in \<\bar\cP_2^{(i_j)}\> \\ 
    v_p(q_j) \not= 1~\forall p}}
 \sum_{e_j,e'_j \in \<\bar\cP_2^{(i_j)}\>} \frac{J_j}{\tilde\Delta_j}.
\end{align*}
\end{lemma}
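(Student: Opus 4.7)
The plan is to show that replacing the local divisor density $\alpha_{\h'}(\tilde\Delta_1,\dots,\tilde\Delta_{r'})$ by the fully factored expression $\prod_{j=1}^{r'} \tilde\Delta_j^{-1}$ produces only a negligible error. By multiplicativity, the density factors as $\alpha_{\h'}(\tilde\Delta_1,\dots,\tilde\Delta_{r'}) = \prod_p \alpha_{\h'}(p^{v_p(\tilde\Delta_1)},\dots,p^{v_p(\tilde\Delta_{r'})})$, the product running over primes $p\mid \tilde\Delta_1\cdots\tilde\Delta_{r'}$, all of which exceed $w(T)$. Because the linear parts of $h'_1,\dots,h'_{r'}$ are pairwise non-proportional modulo every such $p$, the strengthened case of \eqref{eq:ev-alpha} applies: when $p$ divides at most one $\tilde\Delta_j$, the local factor equals $p^{-v_p(\tilde\Delta_j)}$; when $p$ divides at least two of them, the local factor is at most $p^{-v_p(\tilde\Delta_a) - v_p(\tilde\Delta_b)}$ for some $a\neq b$, which saves an additional factor of $p^{-1}$ compared with $\prod_j p^{-v_p(\tilde\Delta_j)}$.

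Consequently I can write
$$
\alpha_{\h'}(\tilde\Delta_1,\dots,\tilde\Delta_{r'})
= \prod_{j=1}^{r'} \tilde\Delta_j^{-1} \cdot E_{\bd,\q,\e,\e'},
\qquad 0 \leq E_{\bd,\q,\e,\e'} \leq 1,
$$
with $E_{\bd,\q,\e,\e'} = 1$ whenever the $\tilde\Delta_j$ are pairwise coprime. The difference between the two sides of the lemma is therefore bounded in absolute value by
$$
\sum_{1 \leq a < b \leq r'} \sum_{p > w(T)}
\sum_{\substack{\bd,\q,\e,\e' \\ p \mid \gcd(\tilde\Delta_a,\tilde\Delta_b)}}
\prod_{j=1}^{r'} \frac{|J_j|}{\tilde\Delta_j}.
$$

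For a fixed pair $(a,b)$ and prime $p$, the constraint $p \mid \gcd(\tilde\Delta_a,\tilde\Delta_b)$ forces $p$ to divide one of $d_a q_a e_a e'_a$ and one of $d_b q_b e_b e'_b$. Since $|J_j|/\tilde\Delta_j$ is multiplicative in the variables $d_j,q_j,e_j,e'_j$, the sum factorises as a product of Euler products over primes, identical to the Euler product analysis in the proof of Lemma \ref{lem:L1-bound}, except that the local factors at $p$ in indices $a$ and $b$ are replaced by their restrictions to $p \mid d_a q_a e_a e'_a$ and $p \mid d_b q_b e_b e'_b$. Each such local factor is $O(p^{-1})$, yielding an overall extra saving of $O(p^{-2})$ compared with the unrestricted Euler product. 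Carrying out the remainder of the Euler product analysis exactly as in Lemma \ref{lem:L1-bound} and then integrating against $\prod_{j,k}\vartheta(\xi_{j,k})$ via \eqref{eq:theta-bound}, the contribution of this pair and prime is $O\!\left(p^{-2}\prod_{j=1}^{r'} \Pi_{i_j}\right)$. Summing over $p > w(T)$ and the $\binom{r'}{2}$ pairs, the total error is
$$
O\!\left(\frac{1}{w(T)}\prod_{j=1}^{r'} \Pi_{i_j}\right)
= o\!\left(\prod_{j=1}^{r'} \Pi_{i_j}\right),
$$
since $w(T)\to\infty$.

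The main obstacle is ensuring that the Euler product analysis of Lemma \ref{lem:L1-bound} goes through when one prime $p$ is singled out for special treatment, and that the additional factor $O(p^{-2})$ is genuinely uniform in the remaining $\xi$-variables — otherwise the sum over $p>w(T)$ might lose the decay. This is however a routine modification: restricting to $p\mid d_a q_a e_a e'_a$ and $p\mid d_b q_b e_b e'_b$ simply removes one local Euler factor from each of the two indices and replaces it by its complementary piece, which is bounded by a constant times $p^{-1}$ uniformly in the remaining complex parameters $z_{a,k},z_{b,k}$ in the range of integration. All the other factors, and the integrations over the $\xi_{j,k}$, are handled by the estimates already established in the proof of Lemma \ref{lem:L1-bound}.
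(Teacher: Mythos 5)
The proposal has a genuine gap: the claimed decomposition
$$\alpha_{\h'}(\tilde\Delta_1,\dots,\tilde\Delta_{r'})
= \prod_{j=1}^{r'}\tilde\Delta_j^{-1}\cdot E_{\bd,\q,\e,\e'},
\qquad 0\leq E_{\bd,\q,\e,\e'}\leq 1,$$
is false. You assert that the $n(\c)>1$ case of \eqref{eq:ev-alpha} ``saves an additional factor of $p^{-1}$ compared with $\prod_j p^{-v_p(\tilde\Delta_j)}$,'' but the comparison runs the other way. The bound there is $\alpha(p^{c_1},\dots,p^{c_{r'}})\leq p^{-\max_{a\neq b}(c_a+c_b)}$, and since $\sum_j c_j\geq \max_{a\neq b}(c_a+c_b)$ with strict inequality whenever three or more of the $c_j$ are nonzero, this bound is \emph{weaker} than $\prod_j p^{-c_j}$, not stronger. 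And the true value of $\alpha$ can genuinely exceed $\prod_j p^{-c_j}$: take $s'=2$ and $h'_1=u_1$, $h'_2=u_2$, $h'_3=u_1+u_2$ (pairwise non-proportional linear parts) with $c_1=c_2=c_3=1$; the local density is $p^{-2}$, whereas $\prod_j p^{-c_j}=p^{-3}$, so the local contribution to $E$ is $p>1$. Since $E$ can exceed $1$ by an unbounded amount (a factor of $p$ per such prime), the step where you bound $|\mathrm{LHS}-\mathrm{RHS}|$ by $\sum_{a<b}\sum_{p}\sum_{p\mid\gcd(\tilde\Delta_a,\tilde\Delta_b)}\prod_j|J_j|/\tilde\Delta_j$ is not justified; you would need $|E-1|\leq\1[\text{not pairwise coprime}]$, which fails.

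The paper's proof sidesteps this by never comparing $\alpha$ to $\prod_j\tilde\Delta_j^{-1}$ pointwise. Instead it factors each tuple $(\bd,\q,\e,\e')$ into a pairwise-coprime part $(\tilde\bd,\tilde\q,\tilde\e,\tilde\e')$ and a part $(\bar\bd,\bar\q,\bar\e,\bar\e')$ that fails coprimality at every prime it contains, uses the multiplicativity of $\eta=\alpha\cdot J_1\cdots J_{r'}$ to express sums as Euler products, and bounds the local Euler factor on the non-coprime part by $1+O(C^{2r'}/p^2)$ directly from the $n(\c)\geq 2$ case of \eqref{eq:ev-alpha} --- that is, it exploits $\alpha(p^{\c})\leq p^{-\max_{a\neq b}(c_a+c_b)}\leq p^{-2}$ as an \emph{absolute} bound on $\alpha$, rather than a relative saving over $\prod p^{-c_j}$. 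If you want to keep your ``replace $\alpha$ by $\prod\tilde\Delta_j^{-1}$'' framing, you would have to bound both $\sum_{\text{not coprime}}\alpha\,|J_1\cdots J_{r'}|$ and $\sum_{\text{not coprime}}\prod_j|J_j|/\tilde\Delta_j$ separately (the former needing the paper's Euler-product decomposition anyway), rather than controlling the difference via a pointwise inequality that does not hold.
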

Before establishing this result, let us indicate how it suffices to
conclude the proof of \eqref{eq:linearforms-5}.
The second equality in \eqref{eq:linearforms-5} is obvious, 
and so only the first requires a proof. 
Lemma \ref{lem:ratio} implies that the difference of the two integrands
is pointwise bounded by
$$
o\Bigg( \bigg|
 \prod_{j=1}^{r'}
 \sum_{\substack{d_j: \\ (d_j,u_j)=1}} 
 \sum_{\substack{q_j\in \<\bar\cP_2^{(i_j)}\> \\ 
    v_p(q_j) \not= 1~\forall p}}
 \sum_{e_j,e'_j \in \<\bar\cP_2^{(i_j)}\>} \frac{J_j}{\tilde\Delta_j}
\bigg|\Bigg).
$$ 
Lemma \ref{lem:L1-bound} implies that the integral over this bound equals
$o(\prod_{j}\Pi_{i_j})$,
 which implies the first part of \eqref{eq:linearforms-5}.

\begin{proof}[Proof of Lemma \ref{lem:ratio}]
Our argument is identical to that of \cite[Claim 3]{lm1},
but we provide more detail here. 
Throughout this proof we assume, without explicitly mentioning so, that all 
entries $d_j$ of any vector $\bd$ satisfy $\gcd(d_j,u_j)=1$.
The aim is to  study the multiplicative function
\begin{align*}
&\eta(\bd,\q,\e,\e') 
= \alpha_{\h'}\big(\tilde \Delta_1,\dots, \tilde \Delta_{r'} \big)
J_1\dots J_{r'},
\end{align*}
where $J_j$ is given by \eqref{eq:def-jj}.
We may factorise
\begin{align*}
&\eta(\bd,\q,\e,\e')=
\eta(\tilde\bd,\tilde\q,\tilde\e,\tilde\e')
\eta(\bar\bd,\bar\q,\bar\e,\bar\e')
\end{align*}
in such a way that in the first factor the $r'$ entries 
$\tilde d_j \tilde q_j \tilde e_j \tilde e'_j$ for 
$1\leq j \leq r'$ of $\alpha_{\h'}$ are pairwise coprime, while in the
second factor, any prime that divides one entry of $\alpha_{\h'}$ also
divides a second entry.
The aim is to show that the main contribution from either side of the
expression in the statement of the lemma comes from such vectors
$(\bd,\q,\e,\e')$ for which the second factor in this decomposition is 
$1=\eta(\1,\1,\1,\1)$.

We begin with the left hand side.
Let $k$ be an integer.
Then, with the notation
$p^{\x}=(p^{x_1},\dots,p^{x_{r'}})$
for a prime $p$ and $\x \in \ZZ_{\geq 0}^{r'}$, we have
\begin{align*}
\dsum_{\bd,\q,\e,\e'} \eta(\bd,\q,\e,\e')
=
 \bigg(
  \dsum_{\substack{\bd,\q,\e,\e'\\ \gcd(d_jq_je_je'_j,k)=1 \\ 
         1\leq j\leq r'}}
  \eta(\bd,\q,\e,\e')
 \bigg)
 \prod_{p|k}
 \bigg(
  \dsum_{\substack{\bd,\q,\e,\e' \in \ZZ_{\geq 0}^{r'}}}
  \eta(p^{\bd},p^{\q},p^{\e},p^{\e'})
 \bigg),
\end{align*}
where $\sum'$ denotes that the sum is restricted to coprime vectors in
the above sense; i.e.
$(\bd,\q,\e,\e')=(\tilde\bd,\tilde\q,\tilde\e,\tilde\e')$ in the first
two sums, and in the third sum only one of the $r'$ integers
$\max(d_j,q_j,e_j,e'_j)$ may be non-zero.
We claim that for $p>w(T)$ and sufficiently large $T$ the latter sum
satisfies
\begin{equation} \label{eq:splitting-1}
 \dsum_{\bd,\q,\e,\e' \in \ZZ_{\geq 0}^{r'}}
 \eta(p^{\bd},p^{\q},p^{\e},p^{\e'})
= 1 + O(p^{-1}).
\end{equation}
Taking this on trust for a moment, we see that the previous two equations
imply
\begin{align*}
  \dsum_{\substack{\bd,\q,\e,\e'\\ \gcd(d_jq_je_je'_j,k)=1 \\ 
         1\leq j\leq r'}}
  \eta(\bd,\q,\e,\e')
= \dsum_{\bd,\q,\e,\e'} \eta(\bd,\q,\e,\e')
 \prod_{p|k} (1 + O(p^{-1})).
\end{align*}
Applying this with 
$k= \prod_{j=1}^{r'} \bar d_j\bar q_j\bar e_j\bar e'_j$, we obtain
\begin{align*}
 \sum_{\substack{(\bd,\q,\e,\e') \\
   \{d_j q_j e_j e_j':1 \leq j \leq r'\}\\
   \text{not pairwise coprime}}}
&\eta(\bd,\q,\e,\e')\\
&\hspace{-2cm}= \left(
   \dsum_{\bd,\q,\e,\e'} \eta(\bd,\q,\e,\e')
   \right)
   \left(
   \dagsum_{\substack{(\bar\bd,\bar\q,\bar\e,\bar\e')\\
     \neq(\1,\1,\1,\1)}}
   \eta(\bar\bd,\bar\q,\bar\e,\bar\e')
   \prod_{p|k}
   (1 + O(p^{-1}))
   \right),
\end{align*}
where $\sum^\dagger$ denotes that the sum is restricted to vectors failing
coprimality at every prime $p$; i.e. the vector
$(v_p(\bar d_j\bar q_j\bar e_j \bar e'_j))_{j=1}^{r'}$ has either no or
at least two non-zero entries.

Our next aim is to bound this second factor from above. 
We will do this by writing it as an Euler product and analysing
contributions for each prime factor separately.
The saving in the bound will come from the factor $\alpha_{\h'}$ in
$\eta$.
The remaining factors may be bounded trivially by
\begin{align}\label{eq:splitting-2}
 \Big|
 \mu(p^{e_j})\mu(p^{e'_j}) 
 p^{-e_j z_{j,1}} p^{-e_j' z_{j,2}}
 g_{i_j}(p^{d_j}) 
 p^{- d_j z_{j,3}}
 \tau(p^{q_j})^{n_{i_j}}
 p^{-q_j z_{j,4}}
 \Big|
\leq 2^{d_j n_{i_j}} (q_j +1)^{n_{i_j}}.
\end{align}
In order to turn the sum over $\bar\bd,\bar\q,\bar\e,\bar\e'$ into one
that directly runs over the entries of $\alpha_{\h'}$, note that any
integer $k_j$ may be factorised as $\tilde \Delta_j$
in at most $\tau_5(k_j)$ ways, corresponding to the five factors $d_j$,
$q_j$, $\gcd(e_j,e'_j)$, $e_j/\gcd(e_j,e'_j)$ and
$e'_j/\gcd(e_j,e'_j)$.
We will employ the crude bound $\tau_5(p^{a_j}) \ll a_j^4$.
Let $n(\x)$ denote the number of non-zero components of $\x \in \ZZ^{r'}$.
Then the previous inequality implies
\begin{align*}
 \sum_{\substack{\bd,\q,\e,\e' \\
 n(\bd+\q+\max(\e,\e'))\geq 2}}   
 \eta(p^{\bd},p^{\q},p^{\e},p^{\e'})
\ll 
 \sum_{\substack{\a \in \ZZ_{\geq 0}^{r'} \\ n(\a) \geq 2}}
 \alpha_{\h'}(p^{a_1},\dots,p^{a_{r'}})
 \prod_{j=1}^{r'} C^{a_j} (a_j+1)^C,
\end{align*}
for some absolute positive constant $C$. 
Assuming $p>w(T)$ for sufficiently large $T$ and
introducing the variable $J = \max_{j \not=j'}(a_j+a_{j'})$,
the third case of \eqref{eq:ev-alpha} shows that this in turn is bounded
by
\begin{align*}
\ll 
 \sum_{J \geq 2} p^{-J} C^{r' J}J^{r'C}
\ll \frac{1}{p^2}.
\end{align*}
Recall that all components of $\bd$,$\q$,$\e$ and $\e'$ are composed only
of prime factors larger than $w(T)$.
In total, we deduce that
\begin{align*}
\sum_{\substack{(\bd,\q,\e,\e') \\
  (\bar\bd,\bar\q,\bar\e,\bar\e') \not= (\1,\1,\1,\1)}}
\eta(\bd,\q,\e,\e')
&\leq 
 \dsum_{\bd,\q,\e,\e'} \eta(\bd,\q,\e,\e')
 \Bigg(
 \prod_{p>w(T)}
 \bigg(
 1+ O\bigg(
 \frac{1}{p^2}\bigg)
 \bigg)
 -1
 \Bigg) \\
&\leq \dsum_{\bd,\q,\e,\e'} \eta(\bd,\q,\e,\e')
 \Bigg( \sum_{m>w(T)} m^{-3/2}
 \Bigg) \\
&\ll w(T)^{-1/2} \dsum_{\bd,\q,\e,\e'} \eta(\bd,\q,\e,\e').
\end{align*}

Thus, for the treatment of the left hand side of the expression from the lemma, 
it remains to prove \eqref{eq:splitting-1}.
Employing \eqref{eq:splitting-2} and the bound on $\tau_5$ another time,
we turn the sum into one that only involves $\alpha_{\h'}$ and may be
estimated using the second part of \eqref{eq:ev-alpha}. 
Introducing the variable $Q= \max_j(a_j)$, we have
\begin{align*}
 \dsum_{p^{\bd},p^{\q},p^{\e},p^{\e'}}
 \eta(p^{\bd},p^{\q},p^{\e},p^{\e'})
&= 1 + O\left(
 \sum_{\substack{\a \in \ZZ_{\geq 0}^{r'}: \\ n(\a) = 1}}
 \alpha_{\h'}(p^{a_1},\dots,p^{a_{r'}})
 \prod_{j=1}^{r'} C^{a_j} (a_j+1)^C \right)\\
&= 1 + O\left(\sum_{Q \geq 1} p^{-Q} C^Q (Q+1)^{C+4}\right)\\
&= 1 + O(p^{-1}).
\end{align*}
This completes the proof of the estimate
\begin{align*}
\sum_{\bd,\q, \e,\e'} 
 \alpha_{\h'}\big(\tilde \Delta_1, \dots\tilde \Delta_{r'}\big)
 J_1\dots J_{r'} 
&= (1+ o(1))
 \dsum_{\bd,\q, \e,\e'} 
 \alpha_{\h'}\big(\tilde \Delta_1, \dots\tilde \Delta_{r'}\big)
 J_1\dots J_{r'} 
\\ &= (1+ o(1))
 \dsum_{\bd,\q, \e,\e'}
 \prod_{j=1}^{r'}
 \frac{J_j}{\tilde \Delta_j}.
\end{align*}
To complete the proof of the lemma, we need to show that in fact
$$
 \dsum_{\bd,\q, \e,\e'}
 \prod_{j=1}^{r'}
 \frac{J_j}{\tilde \Delta_j}
= (1+o(1))  \sum_{\bd,\q, \e,\e'}
 \prod_{j=1}^{r'}
 \frac{J_j}{\tilde \Delta_j}. 
$$
This follows by arguing as above when $\eta(\bd,\q,\e,\e')$ is redefined to equal 
$\prod_{j=1}^{r'} J_j /\tilde \Delta_j$, and when taking into account that the 
product $(p^{a_1} \dots p^{a_{r'}})^{-1}$, which replaces 
$\alpha_{\h'}(p^{a_1},\dots,p^{a_{r'}})$, trivially satisfies the bounds 
\eqref{eq:ev-alpha}. 
\end{proof}

\begin{proof}[Proof of 
Proposition \ref{p:correlation-condition}]
A slight adaptation of \cite[Lemma 9.9]{GT} yields the following.
Let $\Delta:\ZZ \to \ZZ$ denote the polynomial
$$\Delta(m)= \prod_{1 \leq j < j' \leq d} (W m + A_{i_j} - A_{i_{j'}}).$$
Suppose $\sigma: \{-T',\dots,T'\} \to \RR$ 
satisfies the two conditions $\sigma(0) = O({T'}^{1/q})$ and
$$
\sigma(m) 
= \exp \left( \sum_{p>w(T),~ p|\Delta(m)} O_T(p^{-1/2}) \right)
$$
for $m\not=0$.
Then $\EE_{0\leq m \leq \tilde T} \sigma^q(m) \ll_q 1$.

Whenever the collection of $a_j$ contains two identical elements, then 
$\sigma(0)$ appears in the bound we seek to establish.
Following \cite{GT-longAPs,GT} closely, we use the fact that $\sigma(0)$
may be chosen to be rather large in order to handle this case.
More precisely, it follows from H\"{o}lder's inequality and the fact that 
$r_{i,\mathrm{res}}$ satisfies part (b) of Definition~\ref{def:M_reg}, that
\begin{align*}
\frac{1}{T'}\sum_{m \in I} \prod_{j=1}^d \phi_{i_j}(T;{A_{i_j}})
  \nu_{i_j}^{(T)}(W(m+a_j) + A_{i_j})
  {\nu'}^{(T)}_{i_j, \mathrm{sieve}}(W(m+a_j) + A_{i_j})
\ll_{c,d} T^{cd}. 
\end{align*}
See \cite[\S9]{GT-longAPs} and \cite[\S7]{lm0} for details.
Choosing $c = 1/(2qd)$ ensures that the value on the right hand side is of 
order $o((T/W)^{1/q})$, so that we may set $\sigma(0) = T^{1/2q}$.

In the remaining case where the $a_j$ are pairwise distinct, the system
of linear forms is less degenerate and we may employ the same techniques
used to prove Proposition \ref{p:linear-forms}.
The key observation is that whenever a prime $p$ divides two distinct
polynomials $W(m+a_j) + A_{i_j}$ and $W(m+a_{j'}) + A_{i_{j'}}$ at $m$,
then it divides 
$W(a_j-a_{j'}) + A_{i_j} - A_{i_{j'}}$.
This provides sufficient information to handle the divisor densities 
$\alpha(p^{a_1},\dots,p^{a_d})$ that occur.
See \cite[\S7]{lm0} for details.
\end{proof}

\bigskip
\section{Conclusion of the proof}\label{s:proof}
We have now everything in place in order to complete the proof of Theorem
\ref{t:NB}.
Recall from \eqref{eq:def-count} that
$$
N(T)=
\sum_{\substack{
\u \in \ZZ^s\cap T\mathfrak{K}\\
\u\= \a\bmod{M}}}
\prod_{i=1}^r
R_i(f_i(\u)),
$$
where 
$R_i(m)=R_i(m;\mathfrak{X}_i,\b_i;M)$ 
is given by Definition \ref{def:repr-fn}
for non-zero $m\in \ZZ$.
Here, $M\in \NN$, 
 $\a\in (\ZZ/M\ZZ)^s$ and 
$\b_i\in (\ZZ/M\ZZ)^{n_i}$ for $1\leq i\leq r$.
Moreover,  $\mathfrak{X}_i\subset \mathfrak{D}_{i,+}$
is a cone for which the bounded
set $\mathfrak{X}_i \cap \mathfrak{D}^{\epsilon}_{i,+}(1)$ has
an $(n_i-1)$-Lipschitz parametrisable boundary, unless it is empty. Finally, 
  $\mathfrak{K} \subset \RR^{s}$ is a convex bounded set.

Recall the definition \eqref{eq:def-A} of $\mathcal{A}_i$, for each 
$1\leq i\leq r$.
Let 
$$
\cW =\left\{ \u_0 \in (\ZZ/W\ZZ)^s: 
\begin{array}{l}
 f_i(\u_0) \in \cA_i \text{ for } i = 1,\dots,r \cr
 \u_0 \equiv \a \bmod{M}
\end{array}
\right\},
$$
where $W$ is given by \eqref{def:W} and is divisible by $M$.
For each $p < w(T)$ we define the corresponding set
$$
\cW_p =\left\{ \u_0 \in (\ZZ/p^{v_p(W)}\ZZ)^s: 
\begin{array}{l}
 v_p(f_i(\u_0)) < v_p(W)/3 \text{ for } i = 1,\dots,r \cr
 \u_0 \equiv \a \bmod{p^{v_p(M)}}\\
 \rho_i(p^{v_p(W)},f_i(\u_0);p^{v_p(M)})>0
\end{array}
\right\},
$$
where we recall that 
$ \rho_i(p^{v_p(W)},f_i(\u_0);p^{v_p(M)})$ also depends on 
$\b_i$.
Define
the functions $R'_i$ as in \eqref{eq:Ri'} and recall the
exceptional set $\mathcal{S}_{C_1,T}$ from Definition
\ref{def:ex}.
We note that 
$$
R'_i(m)=\sum_{A\in \mathcal{A}_i} \1_{m\=A\bmod{W}} R'_i(m),
$$
for $1\leq i\leq r$.
Indeed, suppose  $|m|\not\in \mathcal{S}_{C_1,T}$ with $R'_i(m)\neq 0$. 
Then it is clear that the reduction of $m$ modulo $W$ must belong to
$\mathcal{A}_i$. 
By Proposition \ref{p:unexceptional'}, it therefore  suffices to obtain an 
asymptotic for 
\begin{equation}
\label{eq:warwick}
N'(T) 
=
\sum_{\u_0 \in \cW}
\sum_{\substack{
\u_1 \in \ZZ^s\\
W \u_1 + \u_0 \in T\mathfrak{K}}}
\prod_{i=1}^r
R'_i(f_i(W \u_1 + \u_0)).
\end{equation}

Next, let $i \in \{1,\dots,r\}$. 
We have 
$f_i(W \u_1 + \u_0)=Wf_i( \u_1) +f_i( \u_0)$, since each $f_i$ is a
linear form.
Let $0<A'_i(\u_0)<W$ be such that $A'_i(\u_0) \equiv f_i(\u_0)\bmod{W}$. 
We proceed to define a linear polynomial $h_i\in \ZZ[\u]$ via
$$
Wf_i( \u) +f_i( \u_0)=
W h_i(\u) + A'_i(\u_0).
$$
Note that $h_i$ may be  inhomogeneous.

For any fixed residue $\u_0 \in \cW$, let $\mathfrak{K}_{\u_0,T}$ be the
set of $\u_1 \in \RR^s$ for which $W \u_1+ \u_0 \in T\mathfrak{K}$.
Thus, $W\mathfrak{K}_{\u_0,T}+\u_0 = T\mathfrak{K}$.
We proceed to split $\mathfrak{K}_{\u_0,T}$ into regions on which the 
sign of $W h_i(\u_1) + A'_i$ is constant for $1\leq i \leq r$.
Thus, for $\beps=(\epsilon_1,\dots,\epsilon_r )\in \{\pm\}^r$ let 
$$\mathfrak{K}_{\u_0,T}(\beps)
=\{\u_1 \in \mathfrak{K}_{\u_0,T}: 
\mathbf{f}(W\u_1 + \u_0) \in 
\RR_{\epsilon_1} \times \dots \times \RR_{\epsilon_r}
\},
$$
where $\RR_\epsilon=\{x\in \RR: \epsilon x>0\}$. Note 
 that this is a finite union of convex subsets of
$[-CT/W,CT/W]^s$ for some absolute constant $C>0$.
Furthermore
\begin{align*}
\vol\left(\mathfrak{K}_{\u_0,1}(\beps)\right)
= \frac{\vol(\mathfrak{K} \cap 
 {\mathbf f}^{-1}(\RR_{\epsilon_1} \times \dots \times \RR_{\epsilon_r}))}
 {W^s}.
\end{align*}
Since $|\mathbf{f}(T\mathfrak{K})|\leq T$, it follows that 
$|Wh_i(\mathfrak{K}_{\u_0,T})+A'_i(\u_0)|\leq T$ for each $\u_0\in \cW$.

The existence of a simultaneous pseudorandom majorant for each collection
of functions
$$
\widetilde R_i:\quad 
m \mapsto \Big(\frac{\rho_i(W,f_i(\u_0);M)}{W^{n_i-1}}\Big)^{-1}
R'_i(W \epsilon_i m + A'_i(\u_0))$$
defined on the range 
$\{m: 0 < W m + \epsilon_i A'_i(\u_0) \leq T\}$ was
established in Sections \ref{s:majorant} and  \ref{s:linear-forms}.
This existence allows us to employ the generalised von Neumann theorem
\cite[Prop.~7.1]{GT} to deduce that
the sum over $\u_1$ in \eqref{eq:warwick} is equal to 
$$
T^s
\sum_{\beps\in \{\pm\}^r}
 \vol(\mathfrak{K}_{\u_0,1}(\beps))
\prod_{i=1}^r
 \frac{\rho_i(W,f_i(\u_0);M)}{W^{n_i-1}} 
 \kappa_i^{\epsilon_i}(\mathfrak{X}_i) 
 + o\left(\frac{T^s}{W^s}\right),
$$
provided that for each $\u_0 \in \cW$ the normalised representation
function $\widetilde R_i(m)$
satisfies
$$
\max_{1 \leq i \leq r} 
\| \widetilde R_i - \kappa_i^{\epsilon_i}(\mathfrak{X}_i)\|_{U^{r-1}} 
= o(1).
$$
The latter, however, follows from the inverse theorem \cite{GTZ} for the
Gowers uniformity norms from Proposition \ref{p:nilsequences} and the
bound
$$
\EE_{|m| < T/W} R_{i}(Wm+A) \1_{Wm+A \in \mathcal{S}_{C_1,T}}
\ll (\log T)^{-C_1/4},
$$
 provided by \eqref{eq:R_i-in-S}.

Let
$$
\mathfrak{S}(T)=
 \frac{1}{W^s}
 \sum_{\u_0 \in \cW} \prod_{i=1}^r \prod_{p \leq w(T)}
 \frac{\rho_i(p^{\alpha(p)},f_i(\u_0);p^{v_p(M)})}{p^{\alpha(p) (n_i-1)}},
$$
with $\alpha(p)=v_p(W)$. 
We conclude that
\begin{align*}
N'(T)
=~& \beta_\infty \mathfrak{S}(T) T^s 
+ o(T^s),
\end{align*}
with
$\beta_\infty$ as in  the statement of Theorem \ref{t:NB}.
It therefore remains to analyse $\mathfrak{S}(T).$ 
An application of the Chinese remainder theorem yields 
\begin{align*}
\mathfrak{S}(T)=
 \prod_{p \leq w(T)}
 \frac{1}{p^{s\alpha(p)}} \sum_{\u_0 \in \cW_p}
 \prod_{i=1}^r
 \frac{\rho_i(p^{\alpha(p)},f_i(\u_0);p^{v_p(M)})}{p^{\alpha(p) (n_i-1)}}.
\end{align*}
Let us define $\ve_p$ via
$$
\beta_p=
 \frac{1}{p^{s\alpha(p)}} \sum_{\u_0 \in \cW_p}
 \prod_{i=1}^r
 \frac{\rho_i(p^{\alpha(p)},f_i(\u_0);p^{v_p(M)})}{p^{\alpha(p) (n_i-1)}}
 +\eps_p,
$$
where $\beta_p$ is as in the statement of Theorem \ref{t:NB}.
In order to complete the proof, it remains to check that
$\ve_p$ is sufficiently small to be able to  conclude that
$$
\prod_{p<w(T)} (\beta_p - \eps_p)
= \prod_{p<w(T)} \beta_p + o(1),
$$
as $T \to \infty$.  This will certainly suffice,  since Proposition
\ref{p:1} implies that 
$$
\prod_{p>w(T)} \beta_p = 1 + o(1),
$$
as $T\rightarrow\infty$. 
Recalling that  $w(T)= \log\log T$, it will be enough to show that
$\eps_p \ll (\log T)^{-C}$ for some absolute constant $C>0$.

For this we shall apply 
Lemma \ref{lem:C6.4}  to 
$\gamma(p^m,A;p^{\ell})=\rho_i(p^m,f_i(\u_0);p^{v_p(M)})$
for $m\geq \alpha(p)$. This  yields
\begin{align*}
\ve_p
& = \lim_{m\to \infty} \frac{1}{p^{ms}} 
 \sum_{\substack{\u \in \cU^*_m \\ \u \equiv \a \bmod{M}}} 
 \prod_{i=1}^r
 \frac{\rho_i(p^m,f_i(\u);p^{v_p(M)})}{p^{m(n_i-1)}} \\
& \leq \lim_{m\to \infty} \frac{1}{p^{ms}} 
 \sum_{\u \in \cU^*_m} 
 \prod_{i=1}^r
 \frac{\rho_i(p^m,f_i(\u))}{p^{m(n_i-1)}} ,
\end{align*}
where
$$
\cU^*_m = \left\{ \u \in (\ZZ/p^m\ZZ)^s:
 \max_{1\leq  i \leq r} v_p(f_i(\u)) \geq \min \{\alpha(p)/3,m\}
\right\}.
$$
Proceeding as in the analysis of $\beta_p$ for large $p$ in the proof of
Proposition \ref{p:1}, we obtain for $m \geq \alpha(p)$ the bound 
\begin{align*}
\frac{1}{p^{ms}} 
 \sum_{\u \in \cU^*_m} 
 \prod_{i=1}^r
 \frac{\rho_i(p^m,f_i(\u))}{p^{m(n_i-1)}} 
&\ll \frac{1}{p^{ms}}
 \sum_{\u \in \cU^*_m} 
 \prod_{i=1}^r \min\{v_p(f_i(\u))+1,m\}^{n_i}\\
&\leq \sum_{\substack{\k \in \ZZ_{\geq0}^r \\ 
   \max_i k_i \geq \alpha(p)/3}}
 \alpha_{\mathbf{f}}(p^{k_1}, \dots, p^{k_r})
 \prod_{i=1}^r k_i^{n_i} \\
&\ll  p^{-\alpha(p)/3}\\ 
&\leq (\log T)^{-C_1/4}.
\end{align*}
This completes the proof of Theorem \ref{t:NB}.

\bigskip

\end{document}